\documentclass[twoside]{article}
\usepackage{latexsym}
\usepackage{pstricks}
\usepackage{amssymb}
\usepackage{amsmath}
\usepackage{amsfonts}
\usepackage{amsthm,array}
\usepackage{array}
\usepackage{epsfig}
\usepackage{graphicx}
\usepackage[numbers,sort&compress]{natbib}

\setlength{\textwidth}{15truecm}
\setlength{\textheight}{21truecm}
\setlength{\oddsidemargin}{.65truecm}
\setlength{\evensidemargin}{.25truecm}
\setlength{\topmargin}{-.15in}

\newtheorem{theorem}{Theorem}[section]

\newtheorem{lemma}{Lemma}[section]
\newtheorem{corollary}{Corollary}[section]

\newtheorem{definition}{Definition}[section]

\newtheorem{remark}{Remark}[section]

\newtheorem{remark-definition}{Remark and Definition}[section]
\newtheorem{rem-not}{Remark and Notation}[section]

\begin{document}
\title{\bf Positive Definiteness of $4$th Order $3$-Dimensional Symmetric Tensors with entries $-1$, $0$, $1$\thanks{The first author's work was supported by the National Natural Science Foundation of P.R. China (Grant No.12171064), by The team project of innovation leading talent in chongqing (No.CQYC20210309536) and by the Foundation of Chongqing Normal university (20XLB009)} }
\date{}
\author{ Li Ye, Yisheng Song\thanks{Corresponding author E-mail: yisheng.song@cqnu.edu.cn}\\
School of Mathematical Sciences,  Chongqing Normal University, \\
Chongqing, 401331, P.R. China. \\ Email: neutrino1998@126.com (Ye); yisheng.song@cqnu.edu.cn (Song)} 
%{\small\textit{$^{a} ,}} \\
%{\small\textit{ }}\\
%{\small\textit{$^{b}$}}\\
%{\small\textit{ }}
%}
\maketitle

{\noindent\bf Abstract.} It is well-known that a symmetric matrix with its entries $\pm1$ is not positive definite.  But this is not ture for symmetric tensors (hyper-matrix). In this paper, we mainly  dicuss  the positive (semi-)definiteness criterion of a class of $4$th order $3$-dimensional symmetric tensors  with entries $t_{ijkl}\in\{-1,0,1\}$. Through theoretical derivations and detailed classification discussions, the criterion for determining the positive (semi-)definiteness of such a class of tensors   are provided based on the relationships and number values of its entries. Which establishes some unique properties of higher  symmetric tensors that distinct from ones of matrces

\vspace{.3cm}
{\noindent\bf Mathematics Subject Classification.} 15A69,  90C23, 15A72, 15A63, 90C20, 90C30
\vspace{.3cm}

{\noindent\bf Keywords.} Positive definite, Fourth order tensors, Homogeneous polynomial, Analytical expression.

\section{Introduction}
\setcounter{section}{1}

The positive definiteness and positive (semi-)definiteness of tensors were initially introduced by Qi \cite{qi1}. When the order $m=2$, the concept of a positive (semi-)definite tensor coincides with that of a positive (semi-)definite matrix. The Sylvester's Criterion, as a well-known method, can efficiently determines the positive (semi-)definiteness of a matrix. The positive (semi-)definiteness of an $m$th order $n$-dimensional symmetric tensor $\mathcal{T}=(t_{i_1i_2\cdots i_m})$  is essentially equivalent to  solve the positive (non-negative) conditions of an $m$th degree homogeneous polynomial of $n$ variables, $f_{\mathcal{T}}(x)$, denoted by
$$f_{\mathcal{T}}(x)=\sum_{i_1\cdots i_m=1}^nt_{i_1\cdots i_m}x_{i_1}\cdots x_{i_m},$$
where $x=(x_1,x_2,x_3,\cdots,x_n)^\top\in\mathbb{R}^n$ \cite{qi1,LQW2016,DLB2018,LWZ2014,WLX2016}. This  positive (semi-)definiteness  problem has   been widely used  in many  scientific and engineering fields \cite{bd,nk,nkp,nkr,mf,ma,wh,qlf,glfy,lk,SQ2024}. However,  it is well-known that this problem  is  an NP-hard problem in general ($n>2$) even though the order $m=4$  \cite{HL2013,MK1987}. 

For a quartic binary homogeneous polynomial ($n=2$),  the positive (non-negative) conditions has been perfectly found.   By determining the number of real roots of a equation, the positive (non-negative) conditions of a quartic binary homogeneous polynomial were established by Rees \cite{re}, Lazard \cite{la}, Gadem-Li \cite{gl}, Ku \cite{ku}, and Jury-Mansour \cite{jm}. Wang-Qi \cite{wq} improved the proof of the above conclusions and perfectly gave the  positive (non-negative) conditions of such a polynimial untill 2005.  Recently,  Qi-Song-Zhang \cite{qsz} took the distinct approach to provide  new necessary and sufficient conditions  in different forms. This actually gives the positive (semi-)definite criterion of  $4$th order $2$-dimensional symmetric tensors, and moreover, it different from ones of matrices. In fact, a symmetric matrix with its entries $\pm1$ is not positive definite since by Sylvester's Criterion, its principal matrice 
$$\begin{pmatrix}
	1 & 1 \\
	1 & 1 
\end{pmatrix}\ \mbox{ and } \ \begin{pmatrix}
	1 & -1 \\
	-1 & 1 
\end{pmatrix}$$
must not be positive definite. However, a $4$th order $2$-dimensional symmetric tensor $\mathcal{T}=(t_{i_1i_2\cdots i_m})$  with $|t_{ijkl}|=1$ is positive definite if $t_{1122}=1$ and $t_{1112}t_{1222}= -1$ (see Lemma \ref{lem:24}), i.e., $$\mathcal{T}=\begin{pmatrix}
	\begin{pmatrix}
		1\ &\ 1 \\
		1\ &\ 1 
	\end{pmatrix}& \begin{pmatrix}
		1 & 1 \\
		1 & -1
	\end{pmatrix}\\[0.4cm]
	\begin{pmatrix}
		1 & 1 \\
		1 & -1 
	\end{pmatrix} & \begin{pmatrix}
		1 & -1 \\
		-1 & 1 
\end{pmatrix}\end{pmatrix}\mbox{ or }\begin{pmatrix}
	\begin{pmatrix}
		1 & -1 \\
		-1 & 1 
	\end{pmatrix}& \begin{pmatrix}
		-1 & 1 \\
		1 &1 
	\end{pmatrix}\\[0.4cm]
	\begin{pmatrix}
		-1& 1 \\
		1 & 1
	\end{pmatrix}& \begin{pmatrix}
		1\ &\ 1 \\
		1\ &\ 1
\end{pmatrix}\end{pmatrix}.$$
For a $4$th order $3$-dimensional symmetric tensor with its entries $\pm1$,  Song \cite{S2025} showed its positive definiteness conditions after added cyclic symmetric condition; Song-Liu \cite{sl}  provided  its ( strict) copositivity conditions. Recently, Hu-Yan \cite{HY2024} presented a DCA (difference of convex algorithm) method for quartic minimization over the sphere.
For a  matrix $M$ with its entries $\pm1,0$ and every diagonal entry being $1$,  Hoffman-Pereira \cite{aj} showed $M$ is positive semi-definite if and only if it has no $3\times 3$ principal submatrices which, after principal rearrangement, are of the form
$$
\begin{pmatrix}
	1 & -1  & -1\\
	-1 & 1 & -1\\
	-1 & -1 & 1
\end{pmatrix},\ \ \begin{pmatrix}
	1 & -1  & -1\\
	-1 & 1 & 0\\
	-1 & 0 & 1
\end{pmatrix},\ \ \begin{pmatrix}
	1 &\ 1  &\ 1\\
	1 &\ 1 &\ 0\\
	1 &\ 0 &\ 1
\end{pmatrix},\ \ \begin{pmatrix}
	1 & 1  & -1\\
	1 & 1 & 0\\
	-1 & 0 & 1
\end{pmatrix}, \ \ \begin{pmatrix}
	1 & -1  & 1\\
	-1 & 1 & 1\\
	1 & 1 & 1
\end{pmatrix}.
$$
Obviously, the above 5 $3\times 3$ matrices are not positive semi-definite. Spontaneously,  we want to know that what sort of conditions  is met a $4$th order $3$-dimensional symmetric tensor with entries $0, \pm1$ to be positive semi-definite. 

Motived by the above results,  we mainly focus on the positive (semi-)definite criterion of $4$th order $3$-dimensional symmetric tensors whose elements are restricted in the set $\{-1,0,1\}$. By separating the values of $t_{1112}t_{1222}$, $t_{2223}t_{2333}$ and $t_{1333}t_{1113}$ into groups,   a series of systematic analyses are conducted on positive (semi-)definiteness of such a class of symmetric tensors. The sum-of-squares forms of most of the cases are presented, so that their positive (semi-)definiteness can be directly obtained. Additionally, the methods in \cite{nka}, which based on a theorem of \cite{eil,ei}, is utilized to reach our main conclusions. And with the help of these results, the criterion for determining the positive semi-definiteness of tensors are obtained  evenwhen there is diagonal element equals to $0$.

\section{Preliminaries}
\setcounter{section}{2}

Denote ${\mathcal{S}}_{m,n}$ by a set of $m$th order $n$-dimensional symmetric tensors, and ${\mathcal{E}}_{m,n}$ by a set of $m$th order $n$-dimensional symmetric tensors in which every entry is $-1$, $0$, or $1$, $\widehat{{\mathcal{E}}}_{m,n}\subset{\mathcal{E}}_{m,n}$ by a set in which every diagonal entry is $1$.
\begin{definition}\label{def1}\textup{\cite{qi1}}
Let ${\mathcal{T}}=(t_{i_1i_2\cdots i_m})\in{\mathcal{S}}_{m,n}$. $\mathcal{T}$ is called
\begin{itemize}
\item[(a)] {\bf positive semi-definite} if $m$ is an even number and in the Euclidean space ${\mathbb{R}}^n$, its associated homogeneous polynomial
    $${\mathcal{T}} x^m=\sum_{i_1,i_2,\cdots,i_m=1}^nt_{i_1i_2\cdots i_m}x_{i_1}x_{i_2}\cdots x_{i_m}\geq0;$$
\item[(b)] {\bf positive definite} if $m$ is an even number and ${\mathcal{T}}x^m>0$ for all $x\in{\mathbb{R}}^n\setminus\{0\}$.
\end{itemize}
\end{definition}
The following theorems and lemmas will be required for the subsequent work.
\begin{lemma}\label{lem:21}\textup{\cite{S2025}}
Let ${\mathcal{T}}=(t_{ijkl})\in{\mathcal{S}}_{4,n}$. Then $\mathcal{T}$ is positive definite if and only if
$$\left
\{\begin{array}{ll}
{\mathcal{T}}=0\Rightarrow x=0,\\
\mbox{there is a }y\in{\mathbb{R}}^n\setminus\{0\}\mbox{ such that }{\mathcal{T}}y^4>0.
\end{array}
\right.$$
\end{lemma}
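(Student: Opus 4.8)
The plan is to prove the equivalence by treating the two directions separately, with all of the substance in the ``if'' direction. For the ``only if'' direction there is nothing to do: if $\mathcal{T}$ is positive definite then $\mathcal{T}x^4>0$ for every $x\in\mathbb{R}^n\setminus\{0\}$, so in particular $\mathcal{T}x^4=0$ forces $x=0$ and any nonzero $y$ satisfies $\mathcal{T}y^4>0$; hence both displayed conditions hold.

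For the ``if'' direction I would argue by contradiction, using continuity of the form together with connectedness of the unit sphere. Suppose the two conditions hold but $\mathcal{T}$ is not positive definite. Then there is some $x_0\neq 0$ with $\mathcal{T}x_0^4\le 0$, and by the first condition $\mathcal{T}x_0^4\neq 0$, so in fact $\mathcal{T}x_0^4<0$. By the second condition there is $y\neq 0$ with $\mathcal{T}y^4>0$. Normalising, set $u=x_0/\|x_0\|$ and $v=y/\|y\|$ on $S^{n-1}$; by $4$-homogeneity of $x\mapsto\mathcal{T}x^4$ we get $\mathcal{T}u^4<0<\mathcal{T}v^4$. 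Since $S^{n-1}$ is path-connected (for $n\ge 2$; the case $n=1$ is handled directly below), pick a continuous path $\gamma\colon[0,1]\to S^{n-1}$ with $\gamma(0)=u$, $\gamma(1)=v$. Then $t\mapsto\mathcal{T}\gamma(t)^4$ is continuous and changes sign, so by the intermediate value theorem it vanishes at some $t^\ast\in(0,1)$. But $\gamma(t^\ast)\neq 0$ and $\mathcal{T}\gamma(t^\ast)^4=0$ contradicts the first condition. Therefore $\mathcal{T}x^4>0$ for all $x\neq 0$, i.e.\ $\mathcal{T}$ is positive definite.

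The argument is elementary, and I do not anticipate a genuine obstacle; the lemma is essentially a repackaging of the intermediate value theorem. The only points needing a moment's care are the reduction to the sphere via homogeneity (so that ``positive somewhere'' and ``negative somewhere'' can be joined by a path), the observation that the first condition upgrades ``$\le 0$'' to ``$<0$'', and the degenerate case $n=1$, where $\mathbb{R}^n\setminus\{0\}$ is disconnected but $\mathcal{T}x^4=t_{1111}x^4$ makes the claim immediate (the conditions force $t_{1111}\neq 0$ and $t_{1111}>0$). In the paper this lemma is what lets us certify positive definiteness of a tensor in $\widehat{\mathcal{E}}_{4,3}$ by merely exhibiting one point where the associated quartic is positive, once nontrivial real zeros have been excluded.
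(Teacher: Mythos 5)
Your proof is correct. The paper itself gives no proof of this lemma---it is imported from \cite{S2025}---but your argument (trivial ``only if'' direction; for the ``if'' direction, restrict to the unit sphere by $4$-homogeneity, use path-connectedness of $S^{n-1}$ and the intermediate value theorem to show that a form which is negative somewhere and positive somewhere must vanish at a nonzero point, contradicting the first condition; handle $n=1$ separately) is exactly the standard argument one expects behind this statement, and you correctly read the first displayed condition as ``$\mathcal{T}x^4=0\Rightarrow x=0$'' despite the typo in the statement.
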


Let ${\mathcal{T}}=(t_{ijkl})\in{\mathcal{S}}_{4,2}$. Then for $x=(x_1,x_2)^{\top}$,
\begin{equation}\label{4-2homo}
{\mathcal{T}} x^4=t_{1111}x_1^4+4t_{1112}x_1^3x_2+6t_{1122}x_1^2x_2^2+4t_{1222}x_1x_2^3+t_{2222}x_2^4.
\end{equation}

\begin{lemma}\label{lem:22}\textup{\cite{qsz,S2025}}
Let $\mathcal{T}=(t_{ijkl})$  be a 4th-order 2-dimensional symmetric tensor with its entires $|t_{ijkl}|\leq1$ and $t_{1111}=t_{2222}=1.$  Then \begin{itemize}
	\item[(i)]   $\mathcal{T}$ is positive semi-definite if and only if$$\begin{cases}
		-\dfrac13\leq t_{1122}\leq1,	(t_{1112}-t_{1222})^2\leq6t_{1122}+2,\\
		27(t_{1122}+2t_{1112}t_{1122}t_{1222}-t_{1122}^3-t_{1222}^2-t_{1112}^2)^2\le(1-4t_{1112}t_{1222}+3t_{1122}^2)^3.
	\end{cases}$$
	\item[(ii)]   $\mathcal{T}$ is positive definite if and only if $$\begin{cases}
		\dfrac13\leq t_{1122}<1$, $2t_{1112}^2+1=3t_{1122}$, $t_{1112}=t_{1222}; \\
		-\dfrac13< t_{1122}\leq1,	(t_{1112}-t_{1222})^2\leq6t_{1122}+2,\\
		27(t_{1122}+2t_{1112}t_{1122}t_{1222}-t_{1122}^3-t_{1222}^2-t_{1112}^2)^2< (1-4t_{1112}t_{1222}+3t_{1122}^2)^3.
	\end{cases}$$
\end{itemize}
\end{lemma}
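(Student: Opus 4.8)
Since $t_{1111}=t_{2222}=1>0$, homogeneity of degree four lets me reduce to a single variable. For $x_2\neq0$, equation \eqref{4-2homo} gives
\[
\mathcal{T}x^4=x_2^4\,p(x_1/x_2),\qquad p(t):=t^4+4t_{1112}t^3+6t_{1122}t^2+4t_{1222}t+1,
\]
while $\mathcal{T}(1,0)^4=\mathcal{T}(0,1)^4=1>0$. So $\mathcal{T}$ is positive semi-definite (resp. positive definite) if and only if $p(t)\ge0$ (resp. $p(t)>0$) for every $t\in\mathbb{R}$, and the whole statement becomes a criterion for a monic real quartic with constant term $1$ to be nonnegative/positive on the line.

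My plan is to read the answer off from the classical theory of real binary quartic forms applied to $f(u,v)=u^4+4t_{1112}u^3v+6t_{1122}u^2v^2+4t_{1222}uv^3+v^4$, whose two basic invariants are
\[
I=1-4t_{1112}t_{1222}+3t_{1122}^2,\qquad J=t_{1122}+2t_{1112}t_{1122}t_{1222}-t_{1122}^3-t_{1222}^2-t_{1112}^2,
\]
with discriminant proportional to $\Delta:=I^3-27J^2$; thus the cubic inequality in the statement is exactly $\Delta\ge0$ (resp. $\Delta>0$), which in particular forces $I\ge0$. I would then use the root trichotomy: $\Delta<0$ forces $p$ to have exactly two simple real roots (so $p$ changes sign and $\mathcal{T}$ is not even positive semi-definite); $\Delta>0$ forces $p$ to have either four distinct real roots (negative on a subinterval) or no real root (everywhere positive); $\Delta=0$ forces a multiple real root, where one must still separate a genuine square of a definite quadratic from configurations carrying an indefinite real factor. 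So the steps are: first, show $\Delta\ge0$ is necessary; second, show that among forms with $\Delta\ge0$, the two auxiliary inequalities $-\tfrac13\le t_{1122}\le1$ (the upper bound being automatic from $|t_{ijkl}|\le1$) and $(t_{1112}-t_{1222})^2\le6t_{1122}+2$ are precisely what rules out the four-distinct-real-roots case together with the sign-changing $\Delta=0$ cases. The necessity of $t_{1122}\ge-\tfrac13$ is easy, since $p(t)\ge0$ forces $p(t)+p(-t)=2(t^4+6t_{1122}t^2+1)\ge0$, and substituting $u=t^2\ge0$ into $u^2+6t_{1122}u+1$ yields $1-9t_{1122}^2\ge0$ whenever $t_{1122}<0$; the remaining inequality I would extract from the sum-of-squares ansatz $p(t)=(t^2+2t_{1112}t+b)^2+(ct+d)^2$ (legitimate after rotating away the leading coefficient of the second square) by writing down when it is solvable in real $b,c,d$, or equivalently via a Sturm/subresultant count of the real roots of $p$.

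For the positive-definite refinement (ii), the only new phenomenon is at the boundary $\Delta=0$: if $t_{1112}=t_{1222}$ and $2t_{1112}^2+1=3t_{1122}$, a direct expansion gives $\mathcal{T}x^4=(x_1^2+2t_{1112}x_1x_2+x_2^2)^2$, which is positive definite exactly when its quadratic factor is definite, i.e. $t_{1112}^2<1$, and combined with $2t_{1112}^2+1=3t_{1122}$ this is the same as $\tfrac13\le t_{1122}<1$ — the first listed case. Every other positive-definite tensor needs $p$ to have no real root, which by the trichotomy is the regime $\Delta>0$ together with the same two auxiliary inequalities and the strict bound $t_{1122}>-\tfrac13$. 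I expect the main obstacle to lie entirely in the second step: proving that these explicit inequalities match the ``no sign change'' locus exactly requires an exhaustive discussion of the root configurations at and near $\Delta=0$, which is the technical heart of the arguments in \cite{wq,qsz}; here one only needs the specialization $t_{1111}=t_{2222}=1$, so the lemma may also simply be quoted from \cite{qsz,S2025}.
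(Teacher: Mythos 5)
The paper does not prove this lemma at all: it is imported verbatim from \cite{qsz,S2025}, so there is no internal proof to compare against. Your strategy is the standard one used in those sources (reduce to the one-variable quartic $p(t)=t^4+4t_{1112}t^3+6t_{1122}t^2+4t_{1222}t+1$, identify the two quartic invariants $I$ and $J$ so that the cubic inequality is the discriminant condition $I^3-27J^2\ge0$, and classify root configurations), and several of your self-contained pieces are correct: the homogeneity reduction, the necessity of $t_{1122}\ge-\tfrac13$ via $p(t)+p(-t)$, the observation that the sum-of-two-squares ansatz with first square monic and second square linear is fully general here, and the verification that the first branch of (ii) is exactly the perfect-square case $\mathcal{T}x^4=(x_1^2+2t_{1112}x_1x_2+x_2^2)^2$ with $|t_{1112}|<1$.

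However, as a proof the proposal has a genuine gap, and you name it yourself: the entire sufficiency direction and the exact matching of the two auxiliary inequalities to the root-configuration dichotomy are only announced, not carried out. Concretely, you never show (a) that when $I^3-27J^2>0$ the pair of conditions $t_{1122}\ge-\tfrac13$ and $(t_{1112}-t_{1222})^2\le6t_{1122}+2$ is \emph{equivalent} to excluding the four-distinct-real-roots configuration (this is the place where a wrong auxiliary inequality would silently pass your sketch); (b) that on the boundary $I^3=27J^2$ these same conditions separate the nonnegative repeated-root configurations (double non-real pair, or real double roots of even multiplicity) from the sign-changing ones; and (c) the necessity of $(t_{1112}-t_{1222})^2\le6t_{1122}+2$ itself, for which you offer two possible mechanisms (SOS solvability, Sturm count) without executing either. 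Until (a)--(c) are done, the argument establishes only that the stated conditions are plausible, not that they are necessary and sufficient; if the intent is instead to quote the lemma from \cite{qsz,S2025}, as the paper does, then the sketch is unnecessary and the citation suffices.
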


For ${\mathcal{T}}\in\widehat{{\mathcal{E}}}_{4,2}$,  the following conclusions are easily established by Lemma \ref{lem:22}.

\begin{lemma}\label{lem:23}
	Let ${\mathcal{T}}=(t_{ijkl})\in\widehat{{\mathcal{E}}}_{4,2}$. Then
	\begin{itemize}
		\item[(i)] $\mathcal{T}$ is positive semi-definite if and only if $t_{1122}=t_{1112}=t_{1222}=0$, or $t_{1122}=1$;
		\item[(ii)] $\mathcal{T}$ is positive definite if and only if if and only if $t_{1122}=t_{1112}=t_{1222}=0$, or  $t_{1122}=1$ and $t_{1112}t_{1222}\in\{0,-1\}$.
	\end{itemize}
\end{lemma}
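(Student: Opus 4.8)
The plan is to derive everything from Lemma~\ref{lem:22}. Any ${\mathcal T}=(t_{ijkl})\in\widehat{{\mathcal E}}_{4,2}$ has $t_{1111}=t_{2222}=1$ and $t_{1112},t_{1122},t_{1222}\in\{-1,0,1\}$, so in particular $|t_{ijkl}|\le 1$; hence the hypotheses of Lemma~\ref{lem:22} are met, and the positive (semi-)definiteness of ${\mathcal T}$ is equivalent to the explicit inequalities listed there, which now only have to be tested on the finite set $(t_{1122},t_{1112},t_{1222})\in\{-1,0,1\}^3$. The number of cases is further reduced by the two symmetries of ${\mathcal T}x^4$ in \eqref{4-2homo}: swapping $x_1\leftrightarrow x_2$ interchanges $t_{1112}$ and $t_{1222}$, while $x_2\mapsto-x_2$ flips the signs of both $t_{1112}$ and $t_{1222}$; neither operation affects positive (semi-)definiteness or the diagonal entries.

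Next I would split on the value of $t_{1122}$. If $t_{1122}=-1$, the first requirement $-\tfrac13\le t_{1122}$ of Lemma~\ref{lem:22}(i) fails, so ${\mathcal T}$ is not positive semi-definite, hence not positive definite (equivalently, $({\mathcal T}x^4)|_{x=(1,1)^\top}+({\mathcal T}x^4)|_{x=(1,-1)^\top}=4+12t_{1122}=-8<0$, so one of these values is negative). If $t_{1122}=1$, then $6t_{1122}+2=8\ge(t_{1112}-t_{1222})^2$ always, the cubic inequality of Lemma~\ref{lem:22}(i) simplifies to $27(t_{1112}-t_{1222})^4\le 64(1-t_{1112}t_{1222})^3$, and the boundary alternative of Lemma~\ref{lem:22}(ii) is vacuous, since $2t_{1112}^2+1=3t_{1122}$ forces $t_{1112}=\pm1$, incompatible with the required $t_{1122}<1$. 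Running the three symmetry classes $t_{1112}=t_{1222}$, $\{t_{1112},t_{1222}\}$ of type $\{1,0\}$, and of type $\{1,-1\}$ shows $27(t_{1112}-t_{1222})^4\le 64(1-t_{1112}t_{1222})^3$ always holds, with equality exactly when $t_{1112}=t_{1222}\in\{1,-1\}$, i.e.\ when $t_{1112}t_{1222}=1$; hence ${\mathcal T}$ is always positive semi-definite, and positive definite precisely when $t_{1112}t_{1222}\in\{0,-1\}$. The two non-definite cases are also visible directly: $t_{1122}=t_{1112}=t_{1222}=1$ gives ${\mathcal T}x^4=(x_1+x_2)^4$, and $t_{1122}=1,\ t_{1112}=t_{1222}=-1$ gives $(x_1-x_2)^4$.

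Finally, for $t_{1122}=0$: the condition $(t_{1112}-t_{1222})^2\le 6t_{1122}+2=2$ rules out $\{t_{1112},t_{1222}\}=\{1,-1\}$, and the cubic condition of Lemma~\ref{lem:22}(i) becomes $27(t_{1112}^2+t_{1222}^2)^2\le(1-4t_{1112}t_{1222})^3$. For every surviving sign pattern with $(t_{1112},t_{1222})\ne(0,0)$ one has $t_{1112}t_{1222}\ge 0$, so the right-hand side is at most $1$ whereas the left-hand side is at least $27$ — a contradiction; thus ${\mathcal T}$ is positive semi-definite here only when $t_{1112}=t_{1222}=0$, in which case ${\mathcal T}x^4=x_1^4+x_2^4>0$ on ${\mathbb R}^2\setminus\{0\}$, so it is in fact positive definite (equally, the inequalities of Lemma~\ref{lem:22}(ii) all hold strictly). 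Collecting the three cases gives exactly the stated characterizations: ${\mathcal T}$ is positive semi-definite $\iff$ $t_{1122}=1$ or $t_{1122}=t_{1112}=t_{1222}=0$, and positive definite $\iff$ the latter, or $t_{1122}=1$ with $t_{1112}t_{1222}\in\{0,-1\}$.

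The main obstacle is organizational rather than conceptual: one must parse the two alternatives of Lemma~\ref{lem:22}(ii) correctly (and observe that the first is vacuous on $\widehat{{\mathcal E}}_{4,2}$) and then run the finitely many sign patterns without slip, which the symmetries above keep short. As a safety check, the appeals to Lemma~\ref{lem:22} in the non-positive-semi-definite cases can be replaced by explicit witnesses where ${\mathcal T}x^4\le 0$ (e.g.\ $x=(1,-1)^\top$ when $t_{1122}=-1$, and $x=(2,-1)^\top$ when $t_{1122}=0$ and $t_{1112}\ne 0$, the other patterns following by the symmetries), and the positive-semi-definite-but-not-definite cases by the fourth-power identities $(x_1\pm x_2)^4$ above.
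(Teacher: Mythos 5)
Your proposal is correct and follows essentially the same route as the paper: reduce to Lemma \ref{lem:22}, discard $t_{1122}=-1$ via the condition $-\tfrac13\le t_{1122}$, and check the finitely many sign patterns for $t_{1122}\in\{0,1\}$, where the cubic inequality becomes $27(t_{1112}-t_{1222})^4\le 64(1-t_{1112}t_{1222})^3$ resp.\ $27(t_{1112}^2+t_{1222}^2)^2\le(1-4t_{1112}t_{1222})^3$. Your write-up is in fact more complete than the paper's (which dismisses part (ii) with ``similarly''), since you explicitly note that the first alternative of Lemma \ref{lem:22}(ii) is vacuous here and supply direct witnesses for the failing and boundary cases.
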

\begin{proof}
	It follows from Lemma \ref{lem:22} that $t_{1122}=1$ or $t_{1122}=0$.
	
	(i) If $t_{1122}=1$, then by Lemma \ref{lem:22} (i),  $\mathcal{T}$ is positive semi-definite if and only if
	$$27(t_{1222}-t_{1112})^4\leq 64(1-t_{1112}t_{1222})^3.$$
	By this time, the above inequality holds if and only if \begin{center}
		either $t_{1112}t_{1222}=1$, or $t_{1112}t_{1222}=-1$, or $t_{1112}t_{1222}=0$.
	\end{center}
	
	Similarly, when $t_{1122}=0$, $\mathcal{T}$ is positive semi-definite if and only if
	$$(t_{1112}-t_{1222})^2\leq 2\mbox{ and }27(t_{1222}^2-t_{1112}^2)^2\leq (1-4t_{1112}t_{1222})^3.$$
	It is easy to verify that the above inequalities hold if and only if  $t_{1112}=t_{1222}=0$.  This shows (i).
	
	Similarly,  (ii)  can be proved also.
\end{proof}

From Lemma \ref{lem:23}, we  obviously obtain  the following lemma (also see Ref. \cite{S2025}).

\begin{lemma}\label{lem:24}
Let $\mathcal{T}$$=(t_{ijkl})\in{\mathcal{S}}_{4,2}$ with its entries
$|t_{ijkl}|=1$ and $t_{1111}=t_{2222}=1$. Then
\begin{itemize}
\item[(${\romannumeral 1}$)] $\mathcal{T}$ is positive semi-definite if and only if $t_{1122}=1$;
\item[(${\romannumeral 2}$)] $\mathcal{T}$ is positive definite if and only if $t_{1122}=1$ and $t_{1112}t_{1222}= -1$.
\end{itemize}
\end{lemma}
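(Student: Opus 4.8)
The plan is to obtain Lemma~\ref{lem:24} as an immediate specialization of Lemma~\ref{lem:23}. A tensor $\mathcal{T}=(t_{ijkl})\in\mathcal{S}_{4,2}$ with $|t_{ijkl}|=1$ and $t_{1111}=t_{2222}=1$ is in particular an element of $\widehat{\mathcal{E}}_{4,2}$: all entries lie in $\{-1,0,1\}$ and the diagonal entries $t_{1111},t_{2222}$ equal $1$. Hence both parts of Lemma~\ref{lem:23} apply verbatim, and the only extra input to exploit is that now each of $t_{1122},t_{1112},t_{1222}$ equals $\pm1$ and therefore cannot vanish.

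For part~(i), Lemma~\ref{lem:23}(i) states that $\mathcal{T}$ is positive semi-definite if and only if either $t_{1122}=t_{1112}=t_{1222}=0$ or $t_{1122}=1$. The first alternative forces $t_{1122}=0$, contradicting $|t_{1122}|=1$, so it cannot occur; the criterion thus collapses to $t_{1122}=1$. For part~(ii), Lemma~\ref{lem:23}(ii) states that $\mathcal{T}$ is positive definite if and only if either $t_{1122}=t_{1112}=t_{1222}=0$ or $\big(t_{1122}=1$ and $t_{1112}t_{1222}\in\{0,-1\}\big)$. Again the first alternative is impossible under $|t_{ijkl}|=1$. In the second alternative, $t_{1112},t_{1222}\in\{-1,1\}$ gives $t_{1112}t_{1222}\in\{-1,1\}$, so the requirement $t_{1112}t_{1222}\in\{0,-1\}$ is equivalent to $t_{1112}t_{1222}=-1$. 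This yields exactly the stated conditions.

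There is essentially no obstacle; the mathematical content sits entirely in Lemma~\ref{lem:23} (and ultimately in Lemma~\ref{lem:22}). The single point that needs a moment's care is recognizing that the ``all three entries zero'' branch of Lemma~\ref{lem:23} is vacuous in the present $\pm1$ setting. As a sanity check, and in case a self-contained argument is preferred, one can instead substitute $t_{1111}=t_{2222}=1$, $t_{1122}=1$, and $t_{1112},t_{1222}\in\{\pm1\}$ into the quartic $\mathcal{T}x^4$ of \eqref{4-2homo}: when $t_{1112}=t_{1222}=1$ it equals $(x_1+x_2)^4$, when $t_{1112}=t_{1222}=-1$ it equals $(x_1-x_2)^4$ (positive semi-definite but not definite), and when $t_{1112}t_{1222}=-1$ a short computation exhibits a strictly positive sum-of-squares form; meanwhile $t_{1122}=-1$ makes $\mathcal{T}(1,1)^4<0$ or $\mathcal{T}(1,-1)^4<0$, ruling out that value. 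The one-line deduction from Lemma~\ref{lem:23} is, however, the cleaner route and the one I would present.
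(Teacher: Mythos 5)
Your proposal is correct and matches the paper's route exactly: the paper also obtains Lemma~\ref{lem:24} as an immediate consequence of Lemma~\ref{lem:23}, simply noting that it follows "obviously" once the entries are restricted to $\pm1$. Your explicit observation that the all-zero branch is vacuous and that $t_{1112}t_{1222}\in\{0,-1\}$ collapses to $t_{1112}t_{1222}=-1$ is precisely the intended (and correct) deduction.
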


\begin{lemma}\label{lem:25} \textup{\cite{eil,ei}}
The number of distinct real roots $N$ of a single variable polynomial,
$$F(x)=a_mx^m+a_{m-1}x^{m-1}+\cdots+a_1x+a_0=0,\ \ a_m>0$$
with $a_i$'s defined over a real number field, is given by
$$N=\mbox{var}[+,-|\triangle_1^1|,|\triangle_3^1|,\cdots,(-1)^m|\triangle_{2m-1}^1|]-\mbox{var}[+,|\triangle_1^1|,|\triangle_3^1|,\cdots,|\triangle_{2m-1}^1|]$$
where "var" denotes the number of sign variations and $|\triangle_i^1|$, $i=1,3,\cdots,2m-1$ are the inner determinants in the matrix $[\triangle^1]$ shown in following and $|\triangle_{2m-1}^1|\neq0$. Critical cases when other $|\triangle_i^1|$ may be zero are handled routinely in \textup{\cite{arn}}.
\end{lemma}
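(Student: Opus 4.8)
Since Lemma \ref{lem:25} is a classical real--root--counting statement from the "inners'' tradition of \cite{eil,ei}, I would not reproduce the full argument but only indicate the route one follows. The plan is to attach to $F$ the Hermite (Bezout--Hankel) quadratic form of the pair $(F,F')$, to recognize the odd--order inner determinants $|\triangle_{i}^{1}|$ as essentially the leading principal minors of that form, and then to extract $N$ from its signature by a Jacobi--type sign rule.

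First I would recall Hermite's theorem. If $s_{k}=\sum_{\ell}\zeta_{\ell}^{\,k}$ denotes the $k$th power sum of the roots of $F$ counted with multiplicity and $H=(s_{i+j})_{0\le i,j\le m-1}$ is the associated Hankel matrix, then $\operatorname{rank}H$ equals the number of distinct complex roots of $F$ and the signature $\operatorname{sign}H$ equals the number of distinct \emph{real} roots; equivalently one may use the Bezoutian $\operatorname{Bez}(F,F')$, which is congruent over $\mathbb{R}$ to $H$. Thus $N=\operatorname{sign}H$, and the task becomes to compute this signature directly from the coefficients $a_{0},\dots,a_{m}$, without factoring $F$.

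Second, I would use the subresultant theory of $F$ and $F'$. The matrix $[\triangle^{1}]$ is built by interleaving shifted coefficient rows of $F$ and $F'$, and its odd--order leading principal minors $|\triangle_{1}^{1}|,|\triangle_{3}^{1}|,\dots,|\triangle_{2m-1}^{1}|$ reproduce, up to explicit positive normalizing factors and predictable signs, the successive principal subresultant coefficients of $F,F'$, which in turn are proportional to the leading principal minors $\det H_{1},\dots,\det H_{m}$ of the Hankel form; the even--order minors of $[\triangle^{1}]$ carry no independent sign information, which is exactly why only the odd ones enter. After this identification the two bracketed sequences in the statement become, up to those positive factors, $[\,+,-\det H_{1},\det H_{2},\dots\,]$ and $[\,+,\det H_{1},\det H_{2},\dots\,]$, and the Jacobi--Gundelfinger--Frobenius sign rule says precisely that $\operatorname{sign}H=\mathrm{var}[\,+,\det H_{1},\dots\,]$ taken with the alternating--sign convention minus the plain one, i.e. the displayed difference of the two variation counts. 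Frobenius's refinement of Jacobi's rule supplies the count in the degenerate configurations where some intermediate $|\triangle_{i}^{1}|$ vanish, which is the content of the remark that such cases are handled routinely in \cite{arn}; the hypothesis $|\triangle_{2m-1}^{1}|\ne 0$ corresponds to $\det H_{m}\ne 0$, and when it fails one first passes to the squarefree part $F/\gcd(F,F')$, which has the same real roots.

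The hard part is bookkeeping rather than conceptual: pinning down the exact proportionality constants and signs along the chain (inner determinants of $[\triangle^{1}]$) $\leftrightarrow$ (principal subresultants of $F,F'$) $\leftrightarrow$ (Hankel minors of the power sums) so that the two sign--variation counts align with the Jacobi--Frobenius rule exactly, and then carefully enumerating the boundary cases with vanishing intermediate minors. None of this affects the use made of the lemma in the present paper, where $m=4$ and the determinants $|\triangle_{i}^{1}|$ are written out and evaluated explicitly.
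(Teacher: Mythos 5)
The paper does not prove Lemma \ref{lem:25} at all: it is imported verbatim from \cite{eil,ei} (with the degenerate cases deferred to \cite{arn}), so there is no internal argument to compare yours against. Your outline is nevertheless the standard route by which results of this type are established in the inners literature, and the conceptual chain you describe is sound: Hermite's theorem identifies the number of distinct real roots with the signature of the Hankel form of the power sums (equivalently of $\operatorname{Bez}(F,F')$); the matrix $[\triangle^1]$ is visibly the interleaved Sylvester-type array of $F$ and $F'$, whose odd-order inners are proportional to the principal subresultant coefficients and hence to the Hankel minors; and the displayed difference of variation counts is exactly $p-q=\operatorname{sign}H$ once one notes that attaching the alternating signs $(-1)^k$ turns the Jacobi count of negative eigenvalues of $H$ into the count for $-H$, i.e.\ the number of positive eigenvalues. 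The hypothesis $|\triangle_{2m-1}^1|\neq 0$ corresponds to nondegeneracy of the form, as you say.

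The one substantive caveat is that your proposal is a road map rather than a proof: the assertion that the odd inners agree with the Hankel minors ``up to explicit positive normalizing factors and predictable signs'' is precisely the content of the lemma, and without verifying those factors one cannot exclude a sign discrepancy that would break the variation formula (this is why the even-order inners must be shown to carry no independent information, not merely asserted). Since the paper itself treats the lemma as a citation and only ever applies it with $m=4$ and explicitly computed $\triangle_1^1,\triangle_3^1,\triangle_5^1,\triangle_7^1$, the omission is harmless for the present purposes, but as a standalone proof your text would need the subresultant--Hankel identification carried out in detail, together with the Frobenius treatment of vanishing intermediate minors that the statement relegates to \cite{arn}.
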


$$[\triangle^1]=\left[
\begin{array}{cccccccccc}
a_m & a_{m-1} & ~  & \cdots & a_0 & 0 & \cdots & 0 \\
0 & a_m & ~  & \cdots & a_1 & a_0 & \cdots & 0 \\
\vdots& ~ & ~  & ~  & ~  & ~  & ~ &\vdots \\
0 & \cdots & a_m & a_{m-1} & a_{m-2} & ~ &\cdots& a_0\\
0 & \cdots & 0 & ma_m & (m-1)a_{m-1} & ~ &\cdots& a_1\\
0 & \cdots & ma_m & (m-1)a_{m-1} & (m-2)a_{m-2} & ~ &\cdots& 0\\
\vdots& ~ & ~  & ~  & ~  & ~  & ~ &\vdots \\
0 & ma_m & ~  & \cdots & ~ & ~ & ~ & 0 \\
ma_m & (m-1)a_{m-1} & ~ & \cdots & a_1 & 0 & \cdots & 0
\end{array}
\right],$$
then
$$\triangle_{1}^1=ma_m, \ \  \triangle_3^1=\left|
\begin{array}{cccccccccc}
 a_m & a_{m-1} & a_{m-2} \\
0 & ma_m & (m-1)a_{m-1} \\
ma_m & (m-1)a_{m-1} & (m-2)a_{m-2}\\
\end{array}
\right|,\cdots,\triangle_{2m-1}^1=|\triangle^1|.$$

\begin{lemma}\label{lem:26}\cite{S2025}
	Assume a 4th order  symmetric tensor $\mathcal{T}$ is positive semi-definite. Then $$t_{iiii}=0  \  \Longrightarrow\  t_{iiij}=0\mbox{ and }t_{iijj}\ge0, \mbox{ for all }i,j, i\ne j.$$
\end{lemma}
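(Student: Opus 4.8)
The plan is to test positive semi-definiteness only against vectors supported on the two coordinates $i$ and $j$. Fix $i\neq j$ and restrict attention to $x$ with $x_k=0$ for all $k\notin\{i,j\}$; by the symmetry of $\mathcal{T}$, the value $\mathcal{T}x^4$ on such $x$ has exactly the form \eqref{4-2homo}, with the roles of $1,2$ played by $i,j$, namely
$$\mathcal{T}x^4=t_{iiii}x_i^4+4t_{iiij}x_i^3x_j+6t_{iijj}x_i^2x_j^2+4t_{ijjj}x_ix_j^3+t_{jjjj}x_j^4\ge0$$
for all $x_i,x_j\in\mathbb{R}$, the inequality being the assumed positive semi-definiteness.

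First I would impose $t_{iiii}=0$, set $x_i=1$, and view the right-hand side as a one-variable function $\varphi(x_j)=4t_{iiij}x_j+6t_{iijj}x_j^2+4t_{ijjj}x_j^3+t_{jjjj}x_j^4$. Then $\varphi\ge0$ on all of $\mathbb{R}$ and $\varphi(0)=0$, so $0$ is an interior global minimum of $\varphi$; hence $\varphi'(0)=0$, and since $\varphi'(0)=4t_{iiij}$ this gives $t_{iiij}=0$. (Equivalently: if $t_{iiij}\neq0$, then $\varphi(x_j)=4t_{iiij}x_j+O(x_j^2)$ is negative for small $x_j$ of the sign opposite to $t_{iiij}$, contradicting $\varphi\ge0$.) Next, with $t_{iiii}=t_{iiij}=0$ already established, for $x_j\neq0$ I would factor
$$\mathcal{T}x^4=x_j^2\bigl(6t_{iijj}x_i^2+4t_{ijjj}x_ix_j+t_{jjjj}x_j^2\bigr)\ge0,$$
so the quadratic form $q(x_i,x_j)=6t_{iijj}x_i^2+4t_{ijjj}x_ix_j+t_{jjjj}x_j^2$ is nonnegative whenever $x_j\neq0$, hence everywhere by continuity. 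Putting $x_j=1$ and letting $x_i\to\pm\infty$ in $q(x_i,1)$ forces the leading coefficient $6t_{iijj}$ to be nonnegative, i.e.\ $t_{iijj}\ge0$, which completes the argument.

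I do not expect a genuine obstacle here: the whole proof is elementary two-variable analysis. The only place requiring a touch of care is the first step, where one must use the \emph{sign} of the linear term $4t_{iiij}x_j$ and not merely its coefficient — cleanly handled by the first-order optimality condition at an interior minimum of $\varphi$, or by the explicit small-$x_j$ expansion above. (The quadratic-form step also yields $t_{jjjj}\ge0$ together with a discriminant inequality, but only $t_{iijj}\ge0$ is needed for the statement.)
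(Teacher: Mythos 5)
Your argument is correct. Note that the paper itself states Lemma \ref{lem:26} without proof, citing \cite{S2025}, so there is no in-text proof to compare against; your restriction to the principal sub-tensor on coordinates $i,j$ (giving exactly the binary quartic \eqref{4-2homo}), followed by the first-order optimality condition at $x_j=0$ to kill $t_{iiij}$ and the leading-coefficient argument as $x_i\to\pm\infty$ to force $t_{iijj}\ge0$, is the natural elementary route and is complete as written.
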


\vspace{.3cm}

\section{Positive definiteness of $4$th order $3$-dimensional symmetric tensors}
\setcounter{section}{3}
Let ${\mathcal{T}}=(t_{ijkl})\in\widehat{{\mathcal{E}}}_{4,3}$. Since each principle sub-tensor of positive (semi-)definite tensor must be positive (semi-)definite \cite{qi1}, then  it follow from Lemma \ref{lem:23} that  the following necessary conditions are obvious for positive (semi-)definiteness of  ${\mathcal{T}}$:
${\mathcal{T}}$ is positive semi-definite only if  \begin{equation}\label{eq:2} t_{iijj}=t_{iiij}=t_{ijjj}=0  \mbox{ or }t_{iijj}=1, \forall \ i,j\in\{1,2,3\}, i\neq j;\end{equation}
${\mathcal{T}}$  is positive definite only if for all $ i,j\in\{1,2,3\},i\neq j$, \begin{equation}\label{eq:3}\mbox{either }t_{iijj}=t_{iiij}=t_{ijjj}=0\mbox{ or }t_{iijj}=1\mbox{ and }t_{iiij}t_{ijjj}\in\{0,-1\}.\end{equation}
Therefore, we categorize the conditions on the basis of the above Eqs. \eqref{eq:2} or \eqref{eq:3}  bubilding the positive (semi-)definiteness of  ${\mathcal{T}}$.

\begin{theorem}\label{th3.1}
Let ${\mathcal{T}}=(t_{ijkl})\in\widehat{{\mathcal{E}}}_{4,3}$ and $t_{iiij}=0$, for all $i,j\in\{1,2,3\}$, $i\neq j$. Then $\mathcal{T}$ is positive definite if and only if one of the following conditions is satisfied.
\begin{itemize}
  \item [(a)] $t_{1123}=t_{1223}=t_{1233}=t_{1122}=t_{1133}=t_{2233}=1$;
  \item [(b)] $t_{1123}=t_{1223}=t_{1233}=0$ and $t_{iijj}\in\{0,1\}$ for all $i,j\in\{1,2,3\}$, $i\neq j$;
  \item [(c)] Two of $\{t_{1123},t_{1223},t_{1233}\}$ are $-1$ and the third one is $1$, $t_{1122}=t_{1133}=t_{2233}=1$;
  \item [(d)] $t_{iijk}=\pm1$, $t_{ijjk}=t_{ijkk}=0$,  $t_{iijj}=t_{iikk}=1$ and $t_{jjkk}\in\{0,1\}$ for $i,j,k\in\{1,2,3\}$, $i\neq j$, $i\neq k$, $j\neq k$.
\end{itemize}
\end{theorem}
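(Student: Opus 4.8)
The plan is to work directly with the quartic form. Using $t_{iiii}=1$ and the hypothesis $t_{iiij}=0$ for all $i\neq j$, one has
\[
\mathcal{T}x^{4}=\sum_{i=1}^{3}x_i^4+6\!\!\sum_{1\le i<j\le 3}\!\!t_{iijj}x_i^2x_j^2+12\,x_1x_2x_3\,(t_{1123}x_1+t_{1223}x_2+t_{1233}x_3).
\]
By the necessary condition \eqref{eq:3} (equivalently, Lemma \ref{lem:23} applied to each $2$-dimensional principal subtensor, recalling $t_{iiij}=0$), positive definiteness forces $t_{iijj}\in\{0,1\}$, so I set $a=t_{1122},b=t_{1133},c=t_{2233}\in\{0,1\}$ and $p=t_{1123},q=t_{1223},r=t_{1233}\in\{-1,0,1\}$. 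I would then cut down the number of configurations using the group generated by the coordinate permutations in $S_{3}$ and the sign changes $x_i\mapsto-x_i$: each such map keeps $\mathcal{T}$ in $\widehat{\mathcal{E}}_{4,3}$ and preserves $t_{iiii}=1$, $t_{iiij}=0$; it permutes $(a,b,c)$ accordingly; and it acts on $(p,q,r)$ by the matching permutation together with sign flips, where $x_i\mapsto-x_i$ negates exactly the two among $p,q,r$ whose monomial contains $x_i$ to an odd power. Hence a single nonzero one among $p,q,r$ can be given either sign freely, and when all three are nonzero the sign of the product $pqr$ is an invariant; this reduces the analysis to a handful of representatives.

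For the ``if'' direction I would exhibit $\mathcal{T}x^{4}$ as a manifestly positive form in each case. In case (b), $\mathcal{T}x^{4}\ge x_1^4+x_2^4+x_3^4$. In cases (a) and (c), $\mathcal{T}x^{4}=x_1^4+x_2^4+x_3^4+6\bigl(\alpha x_1x_2+\beta x_2x_3+\gamma x_1x_3\bigr)^2$ with $\alpha,\beta,\gamma\in\{-1,1\}$ chosen so that $\gamma\alpha=p$, $\alpha\beta=q$, $\beta\gamma=r$; such a choice exists precisely because $pqr=1$, and it simultaneously covers the pattern $(1,1,1)$ of (a) and the three patterns of (c) with exactly two entries $-1$. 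In case (d), after a permutation taking $i$ to $1$, $\mathcal{T}x^{4}=x_1^4+6x_1^2(x_2+t_{1123}x_3)^2+x_2^4+x_3^4+6\,t_{2233}x_2^2x_3^2$. In all four cases the form is a sum of squares vanishing only at $x=0$, so $\mathcal{T}$ is positive definite by Definition \ref{def1}(b).

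For the ``only if'' direction I would show that every remaining configuration admits an $x\neq0$ with $\mathcal{T}x^{4}\le0$, organised by the number of nonzero entries of $(p,q,r)$. If all are zero we are in (b). If exactly two are nonzero, reduce to $p=0$, $q=r=1$; then $x=(-1,1,1)^{\top}$ gives $\mathcal{T}x^{4}=6(a+b+c)-21<0$, so this never occurs (consistently, no such case appears in (a)--(d)). If all three are nonzero with $pqr=-1$, reduce to $(p,q,r)=(-1,-1,-1)$; then $x=(1,1,1)^{\top}$ gives $\mathcal{T}x^{4}=6(a+b+c)-33<0$. If all three are nonzero with $pqr=1$, reduce to $(p,q,r)=(1,1,1)$; if $(a,b,c)=(1,1,1)$ we are in (a) or (c), and otherwise apply a permutation moving a vanishing $t_{iijj}$ into the $\{2,3\}$ slot, so that $x=(-1,2,2)^{\top}$ gives $\mathcal{T}x^{4}=24(a+b)-111<0$. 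If exactly one is nonzero, reduce to $p=q=0$, $r=1$; if $t_{1133}=t_{2233}=1$ we are in (d), and otherwise (after possibly swapping the first two coordinates) $t_{1133}=0$, and $x=(-1,1,\sqrt{3})^{\top}$ gives $\mathcal{T}x^{4}=6\,t_{1122}+18\,t_{2233}-25<0$. Collecting these cases yields exactly (a)--(d).

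The main obstacle is the family in which all three of $t_{1123},t_{1223},t_{1233}$ are nonzero with $t_{1123}t_{1223}t_{1233}=1$ but $(t_{1122},t_{1133},t_{2233})\neq(1,1,1)$: here $\mathcal{T}x^{4}$ is nonnegative at every sign vector $(\pm1,\pm1,\pm1)$, so the certificate of non–positive–definiteness must be a genuinely non–symmetric point such as $(-1,2,2)^{\top}$. Pinning down this witness, keeping the symmetry reductions tight enough that precisely this configuration is left to rule out, and handling the irrational witness $(-1,1,\sqrt{3})^{\top}$ in the one–nonzero case are where the real work lies; the rest is the sum–of–squares bookkeeping described above.
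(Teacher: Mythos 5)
Your proposal is correct and follows essentially the same route as the paper: reduce to $t_{iijj}\in\{0,1\}$ via the $2$-dimensional principal subtensors, certify sufficiency by sum-of-squares decompositions (your unified $6(\alpha x_1x_2+\beta x_2x_3+\gamma x_1x_3)^2$ with $pqr=1$ covers the paper's cases (a) and (c) at once), and refute all remaining configurations with explicit witness vectors. The only differences are cosmetic: you organize the case analysis through the permutation/sign-change group where the paper uses ad hoc ``without loss of generality'' reductions, and your witnesses (e.g.\ $(-1,1,1)^{\top}$, $(-1,1,\sqrt{3})^{\top}$) differ from but are interchangeable with the paper's integer choices such as $(1,1,-1)^{\top}$ and $(3,2,-2)^{\top}$.
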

\begin{proof}
 Let $t_{iiij}=0$ for all $i,j\in\{1,2,3\}$, $i\neq j$. Then for $x=(x_1,x_2,x_3)^{\top}\in{\mathbb{R}}^3$,
\begin{align*}
{\mathcal{T}}x^4=&x_1^4+x_2^4+x_3^4+12(t_{1123}x_1^2x_2x_3+t_{1223}x_1x_2^2x_3+t_{1233}x_1x_2x_3^2)\\&+6(t_{1122}x_1^2x_2^2+t_{1133}x_1^2x_3^2+t_{2233}x_2^2x_3^2).
\end{align*}

``\textbf{if (Sufficiency)}."
(a) $t_{1123}=t_{1223}=t_{1233}=t_{1122}=t_{1133}=t_{2233}=1$.  Then for all $x=(x_1,x_2,x_3)^{\top}\in{\mathbb{R}}^3$,
\begin{equation}\label{th3.1a}
{\mathcal{T}}x^4=x_1^4+x_2^4+x_3^4+6(x_1x_2+x_1x_3+x_2x_3)^2\geq0.
\end{equation}

(b) $t_{1123}=t_{1223}=t_{1233}=0$ and $t_{1122}, t_{1133}, t_{2233}\in\{0,1\}$.  Then for all $x=(x_1,x_2,x_3)^{\top}\in{\mathbb{R}}^3$,
\begin{equation}\label{th3.1b}
{\mathcal{T}}x^4=x_1^4+x_2^4+x_3^4+6(t_{1122}x_1^2x_2^2+t_{1133}x_1^2x_3^2+t_{2233}x_2^2x_3^2)\geq0.
\end{equation}

(c)  $t_{iijk}=-t_{ijjk}=-t_{ijkk}=t_{iijj}=t_{jjkk}=t_{iikk}=1$ for $i,j,k\in\{1,2,3\}$, $i\neq j$, $i\neq k$, $j\neq k$. Then for all $x=(x_1,x_2,x_3)^{\top}\in{\mathbb{R}}^3$
\begin{equation}\label{th3.1c}
{\mathcal{T}}x^4= x_1^4+x_2^4+x_3^4+6(x_ix_j+x_ix_k-x_jx_k)^2\geq0.
\end{equation}

(d) $t_{iijk}=\pm1$, $t_{ijjk}=t_{ijkk}=0$, $t_{iijj}=t_{iikk}=1$, $t_{jjkk}\in\{0,1\}$ for $i,j,k\in\{1,2,3\}$, $i\neq j$, $i\neq k$, $j\neq k$. Then for all $x=(x_1,x_2,x_3)^{\top}\in{\mathbb{R}}^3$,
\begin{align}\label{th3.1d}
{\mathcal{T}}x^4&\geq x_1^4+x_2^4+x_3^4\pm12x_i^2x_jx_k+6(x_i^2x_j^2+x_i^2x_k^2)\nonumber\\
&=x_1^4+x_2^4+x_3^4+6(x_ix_j\pm x_ix_k)^2\geq0.
\end{align}

Furthermore, in (\ref{th3.1a})-(\ref{th3.1d}), it is easily verified that ${\mathcal{T}}x^4=0$ if and only if $x=(0,0,0)^{\top}$, and then, $\mathcal{T}$ is positive definite.

``\textbf{only if (Necessity)}."  It follows from the positive definiteness of  $\mathcal{T}$ with Eq. \eqref{eq:3} that \begin{center}
	$t_{iijj}=1$ or $0$ for all $i,j\in\{1,2,3\}$ and $i\neq j$.
\end{center} And in the meantime, for $x=(1,1,1)^\top$, we have\begin{align*}
	{\mathcal{T}}x^4=&3+12(t_{1123}+t_{1223}+t_{1233})+6(t_{1122}+t_{1133}+t_{2233})>0,
\end{align*}
and hence,  $$2(t_{1123}+t_{1223}+t_{1233})+(t_{1122}+t_{1133}+t_{2233})>-\dfrac12.$$
So, the following cases  could not occur, \begin{itemize}
	\item Two of $\{t_{1123},t_{1223},t_{1233}\}$ are $-1$ and the third one is $0$;
	\item $t_{1123}=t_{1223}=t_{1233}=-1 ;$
	\item $t_{1123}+t_{1223}+t_{1233}=-1 $ and $t_{1122}+t_{1133}+t_{2233}=0$ or $1$.
\end{itemize}
Next we use a proof by contradiction to prove the necessity.

\textbf{Case 1.} $t_{1123}=t_{1223}=t_{1233}=1$ and there is at least $0$ in $\{t_{1122}, t_{1133},t_{2233}\}$. Without loss the generality, we might take $t_{2233}=0$. Then  for $x=(-1,2,2)^{\top}$, we have
$${\mathcal{T}}x^4\leq x_1^4+x_2^4+x_3^4+12(x_1^2x_2x_3+x_1x_2^2x_3+x_1x_2x_3^2)+6(x_1^2x_2^2+x_1^2x_3^2)=-63<0.$$

\textbf{Case 2.} Two of $\{t_{1123},t_{1223},t_{1233}\}$ are $1$, the third one is $0$ or $-1$. Without loss the generality, we might take $t_{1123}=t_{1223}=1$. Let $x=(1,1,-1)^{\top}$, then
$${\mathcal{T}}x^4\leq x_1^4+x_2^4+x_3^4+12(x_1^2x_2x_3+x_1x_2^2x_3)+6(x_1^2x_2^2+x_1^2x_3^2+x_2^2x_3^2)=-3<0.$$

\textbf{Case 3.} One of $\{t_{1123},t_{1223},t_{1233}\}$ is $1$.

\textbf{Subcase 3.1} Two of $\{t_{1123},t_{1223},t_{1233}\}$ are $0$.  When $t_{iijk}=1$ and at least one of $\{t_{iijj},t_{iikk}\}$ is $0$ for $i,j,k\in\{1,2,3\}$, $i\neq j$, $i\neq k$, $j\neq k$. Without loss the generality, suppose $t_{1123}=1$. If at least one of $\{t_{1122},t_{1133}\}$ is $0$, take $x=(3,2,-2)^{\top}$, then
$${\mathcal{T}}x^4\leq x_1^4+x_2^4+x_3^4+12x_1^2x_2x_3+6(t_{1122}x_1^2x_2^2+t_{1133}x_1^2x_3^2+x_2^2x_3^2)=-7<0.$$

\textbf{Subcase 3.2}  Two of $\{t_{1123},t_{1223},t_{1233}\}$ are $-1$, and there is $t_{iijj}=0$ for $i,j\in\{1,2,3\}$, $i\neq j$. Without loss the generality, we might take $t_{1123}=1$, and $t_{1223}=t_{1233}=-1$. When $t_{1122}=0$, take $x=(3,2,-2)^{\top}$, then
$${\mathcal{T}}x^4\leq x_1^4+x_2^4+x_3^4+12x_1^2x_2x_3-12(x_1x_2^2x_3+x_1x_2x_3^2)+6(x_1^2x_3^2+x_2^2x_3^2)=-7<0,$$
$t_{1133}=0$ is similarly. When $t_{2233}=0$, take $x=(1,2,2)^{\top}$
$${\mathcal{T}}x^4\leq x_1^4+x_2^4+x_3^4+12x_1^2x_2x_3-12(x_1x_2^2x_3+x_1x_2x_3^2)+6(x_1^2x_2^2+x_1^2x_3^2)=-63<0.$$

\textbf{Subcase 3.3} One of $\{t_{1123},t_{1223},t_{1233}\}$ is $-1$, the other two ones are $0$ and $1$, respectively. Without loss the generality, we might take $t_{1123}=1$, $t_{1223}=-1$, and $t_{1233}=0$.  Let $x=(3,-2,2)^{\top}$, then
$${\mathcal{T}}x^4\leq x_1^4+x_2^4+x_3^4+12x_1^2x_2x_3-12x_1x_2^2x_3+6(x_1^2x_2^2+x_1^2x_3^2+x_2^2x_3^2)=-79<0.$$

\textbf{Case 4.} There is not $1$ in $\{t_{1123},t_{1223},t_{1233}\}$. Two of $\{t_{1123},t_{1223},t_{1233}\}$ are $0$, the third one is  $-1$.  When $t_{iijk}=-1$ and at least one of $\{t_{iijj},t_{iikk}\}$ is $0$ for $i,j,k\in\{1,2,3\}$, $i\neq j$, $i\neq k$, $j\neq k$. Without loss the generality, suppose $t_{1123}=-1$. If at least one of $\{t_{1122},t_{1133}\}$ is $0$, take $x=(3,2,2)^{\top}$, then
$${\mathcal{T}}x^4\leq x_1^4+x_2^4+x_3^4-12x_1^2x_2x_3+6(t_{1122}x_1^2x_2^2+t_{1133}x_1^2x_3^2+x_2^2x_3^2)=-7<0.$$
The necessity is proved.
\end{proof}

By comparing the polynomials corresponding to $t_{iiii}=1$ and $t_{iiij}=0$ with that $t_{iiii}=t_{iiij}=0$, for all $i,j\in\{1,2,3\}$, $i\neq j$, in ${\mathcal{E}}_{4,3}$. the following results can be obtained.
\begin{corollary}\label{cor1}
Let ${\mathcal{T}}=(t_{ijkl})\in{\mathcal{E}}_{4,3}$ and $t_{iiii}=0$, for all $i\in\{1,2,3\}$. Then $\mathcal{T}$ is positive semi-definite if and only if $t_{iiij}=0$ for all $i,j\in\{1,2,3\}$, $i\neq j$, and one of the following conditions is satisfied.
\begin{itemize}
  \item [(a)] $t_{1123}=t_{1223}=t_{1233}=t_{1122}=t_{1133}=t_{2233}=1$;
  \item [(b)] $t_{1123}=t_{1223}=t_{1233}=0$ and $t_{iijj}\in\{0,1\}$ for all $i,j\in\{1,2,3\}$, $i\neq j$;
  \item [(c)] Two of $\{t_{1123},t_{1223},t_{1233}\}$ are $-1$ and the third one is $1$, $t_{1122}=t_{1133}=t_{2233}=1$;
  \item [(d)] $t_{iijk}=\pm1$, $t_{ijjk}=t_{ijkk}=0$,  $t_{iijj}=t_{iikk}=1$ and $t_{jjkk}\in\{0,1\}$ for $i,j,k\in\{1,2,3\}$, $i\neq j$, $i\neq k$, $j\neq k$.
\end{itemize}
\end{corollary}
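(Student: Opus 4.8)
The plan is to reduce Corollary \ref{cor1} to Theorem \ref{th3.1} by directly comparing the two associated homogeneous polynomials. First I would record that for ${\mathcal{T}}\in{\mathcal{E}}_{4,3}$ with $t_{iiii}=0$ for all $i$, the polynomial ${\mathcal{T}}x^4$ has exactly the form obtained from the polynomial in the proof of Theorem \ref{th3.1} by deleting the terms $x_1^4+x_2^4+x_3^4$ and adding back the pure-mixed-diagonal terms $4t_{1112}x_1^3x_2+\cdots$ and the $t_{iijk}x_i^2x_jx_k$, $t_{iijj}x_i^2x_j^2$ contributions — in other words, ${\mathcal{T}}x^4$ is a quartic form all of whose monomials have each variable appearing to an even total degree across the product only in the square-type terms, while the $x_i^3x_j$ and $x_i^2x_jx_k$ terms are genuinely ``degree-odd'' in some variable.

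The first real step is the \emph{necessity} of $t_{iiij}=0$. I would invoke Lemma \ref{lem:26}: since $t_{iiii}=0$ and $\mathcal{T}$ is positive semi-definite, Lemma \ref{lem:26} forces $t_{iiij}=0$ and $t_{iijj}\ge 0$ for all $i\ne j$; as the entries lie in $\{-1,0,1\}$ this gives $t_{iijj}\in\{0,1\}$ and kills all the cubic-in-one-variable terms. So without loss the polynomial reduces to
$${\mathcal{T}}x^4=12\big(t_{1123}x_1^2x_2x_3+t_{1223}x_1x_2^2x_3+t_{1233}x_1x_2x_3^2\big)+6\big(t_{1122}x_1^2x_2^2+t_{1133}x_1^2x_3^2+t_{2233}x_2^2x_3^2\big),$$
i.e. exactly the Theorem \ref{th3.1} polynomial minus $x_1^4+x_2^4+x_3^4$.

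Now the key observation to carry the equivalence both ways: for the Theorem \ref{th3.1} tensor $\widehat{\mathcal{T}}$ (same mixed entries, but with $t_{iiii}=1$) one has $\widehat{\mathcal{T}}x^4={\mathcal{T}}x^4+(x_1^4+x_2^4+x_3^4)$. For the ``only if'' direction of Corollary \ref{cor1}: if ${\mathcal{T}}x^4\ge 0$ for all $x$ then $\widehat{\mathcal{T}}x^4\ge x_1^4+x_2^4+x_3^4\ge 0$, and moreover $\widehat{\mathcal{T}}x^4=0$ forces $x_1=x_2=x_3=0$, so $\widehat{\mathcal{T}}$ is positive definite; Theorem \ref{th3.1} then yields one of (a)--(d). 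Conversely, if the mixed entries satisfy one of (a)--(d), the sum-of-squares identities \eqref{th3.1a}--\eqref{th3.1d} \emph{without} the $x_1^4+x_2^4+x_3^4$ term already exhibit ${\mathcal{T}}x^4$ as a nonnegative form (each right-hand side there is $x_1^4+x_2^4+x_3^4$ plus an explicit square or sum of squares, so dropping the quartic powers leaves a sum of squares), giving positive semi-definiteness of ${\mathcal{T}}$. I would then note that whichever of the extra constraints in (d) is an inequality ($t_{jjkk}\in\{0,1\}$) still leaves the displayed expression a sum of squares, so no positivity is lost.

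The main obstacle is not the inequality bookkeeping but making sure the reduction is watertight at the boundary: one must confirm that Lemma \ref{lem:26} really applies (it needs $\mathcal{T}$ positive semi-definite, which is exactly the hypothesis), that the vanishing of every diagonal $t_{iiii}$ simultaneously does not reopen any case excluded in Theorem \ref{th3.1}, and that the converse SOS identities genuinely survive deletion of $x_1^4+x_2^4+x_3^4$ — which they do because in \eqref{th3.1a}--\eqref{th3.1d} those quartic-power terms are added, never subtracted. A careful statement of the one-line bridge $\widehat{\mathcal{T}}x^4={\mathcal{T}}x^4+\sum_i x_i^4$ together with Lemma \ref{lem:21}'s characterization of positive definiteness (to handle the ``${\mathcal{T}}x^4=0\Rightarrow x=0$'' clause) closes the argument.
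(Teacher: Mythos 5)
Your proposal is correct, and its skeleton matches the paper's: Lemma \ref{lem:26} forces $t_{iiij}=0$ and $t_{iijj}\in\{0,1\}$, after which everything rests on comparing ${\mathcal{T}}x^4$ with the Theorem \ref{th3.1} polynomial, which is exactly the comparison the paper announces before stating the corollary. Where you genuinely improve on the paper's two-line sketch is the necessity direction: the paper's ``similar proof techniques'' implicitly means re-running the whole case analysis of Theorem \ref{th3.1} (checking that each counterexample point still gives a negative value once $x_1^4+x_2^4+x_3^4$ is deleted, which it does a fortiori), whereas your bridge identity $\widehat{\mathcal{T}}x^4={\mathcal{T}}x^4+x_1^4+x_2^4+x_3^4$ lets you invoke Theorem \ref{th3.1} as a black box, since ${\mathcal{T}}x^4\ge0$ immediately gives $\widehat{\mathcal{T}}x^4\ge x_1^4+x_2^4+x_3^4>0$ for $x\ne0$ and hence positive definiteness of $\widehat{\mathcal{T}}$. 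This reduction is both shorter and more clearly watertight than the paper's sketch; your sufficiency check (that the identities \eqref{th3.1a}--\eqref{th3.1d} remain sums of squares after the quartic powers are removed, including the inequality step coming from $t_{jjkk}\in\{0,1\}$ in case (d)) is exactly what is needed and is carried out correctly.
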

\begin{proof} It follows from Lemma \ref{lem:26} that
	$$t_{iiii}=t_{iiij}=0\mbox{ and }t_{iijj}=0 \mbox{ or }1, \mbox{ for all }i,j, i\ne j.$$
	And  then for $x=(x_1,x_2,x_3)^{\top}\in{\mathbb{R}}^3$,
$$
		{\mathcal{T}}x^4=12(t_{1123}x_1^2x_2x_3+t_{1223}x_1x_2^2x_3+t_{1233}x_1x_2x_3^2)+6(t_{1122}x_1^2x_2^2+t_{1133}x_1^2x_3^2+t_{2233}x_2^2x_3^2).
	$$
Using the similar proof techniques of Theorem \ref{th3.1},  the required conclusions follow.
\end{proof}

\begin{theorem}\label{th3.3}
Let ${\mathcal{T}}=(t_{ijkl})\in\widehat{{\mathcal{E}}}_{4,3}$ and $t_{iiij}t_{ijjj}=1$, for all $i,j\in\{1,2,3\}$, $i\neq j$.
Then $\mathcal{T}$ is positive semi-definite if and only if $t_{iiij}t_{jjjk}t_{ikkk}=t_{iijk}t_{iiij}t_{iiik}=t_{iijj}=1$ for all $i,j\in\{1,2,3\}$, $i\neq j$, $i\neq k$, $j\neq k$.
\end{theorem}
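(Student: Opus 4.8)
The plan is to extract the three claimed identities as necessary conditions by combining the $2$-dimensional results already available with one carefully chosen one-parameter family of test vectors, and to obtain sufficiency from an explicit perfect fourth power. Throughout, the argument is elementary; Lemma \ref{lem:25} is not needed here.

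\textbf{Necessary conditions from sub-tensors, and normalisation.} Under the hypothesis $t_{iiij}t_{ijjj}=1$ for all $i\neq j$, each $t_{iiij}$ is nonzero with $t_{iiij}=t_{ijjj}\in\{-1,1\}$; put $a=t_{1112}=t_{1222}$, $b=t_{1113}=t_{1333}$, $c=t_{2223}=t_{2333}$. Since every principal sub-tensor of a positive semi-definite tensor is positive semi-definite, Lemma \ref{lem:23}(i) (equivalently \eqref{eq:2}) applied to the three $2$-dimensional sub-tensors, together with $t_{iiij}\neq0$, forces $t_{1122}=t_{1133}=t_{2233}=1$; this is already the identity $t_{iijj}=1$ in the statement. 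Next I would normalise by the invertible substitution $x_2\mapsto ax_2$, $x_3\mapsto bx_3$, which preserves positive semi-definiteness and the form of the hypothesis, and after which $t_{1112}=t_{1222}=t_{1113}=t_{1333}=t_{1122}=t_{1133}=t_{2233}=1$, while $t_{2223}=t_{2333}$ becomes $d:=abc\in\{-1,1\}$ and $t_{1123},t_{1223},t_{1233}$ become $e,f,g\in\{-1,0,1\}$. Tracing the substitution shows that the other two identities in the statement, taken together, are equivalent to $d=e=f=g=1$, so it suffices to prove that positive semi-definiteness in the normalised case is equivalent to $d=e=f=g=1$.

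\textbf{Sufficiency and the heart of necessity.} When $d=e=f=g=1$ a monomial-by-monomial comparison of coefficients gives $\mathcal{T}x^4=(x_1+x_2+x_3)^4\geq0$ (equivalently, in the original coordinates, $\mathcal{T}x^4=(x_1+t_{1112}x_2+t_{1113}x_3)^4$), which also shows $\mathcal{T}$ is not positive definite. For necessity I would start from
$$\mathcal{T}x^4=(x_1+x_2+x_3)^4+4(d-1)(x_2^3x_3+x_2x_3^3)+12(e-1)x_1^2x_2x_3+12(f-1)x_1x_2^2x_3+12(g-1)x_1x_2x_3^2,$$
restrict to the plane $x_1+x_2+x_3=0$ to kill the leading term, and parametrise the line $x_2=u$, $x_3=1$, $x_1=-(u+1)$ inside it. A short computation turns $\mathcal{T}x^4$ along this line into $\alpha u^3+\beta u^2+\gamma u$ with $\alpha=4d+12e-12f-4$, $\beta=24e-12f-12g$, $\gamma=4d+12e-12g-4$. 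If $\mathcal{T}$ is positive semi-definite this cubic is $\geq0$ for all real $u$; since it vanishes at $u=0$, this forces $\alpha=0$ and $\gamma=0$, and because $d\in\{-1,1\}$ while $|f-e|,|g-e|\leq2$, the resulting equations $d=1+3(f-e)=1+3(g-e)$ give $d=1$ and $e=f=g$. Finally, evaluating at $x=(-3,1,1)$ gives $\mathcal{T}x^4=1+36(e-1)$, which is negative for $e\in\{0,-1\}$; hence $e=1$ and so $d=e=f=g=1$, as needed.

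\textbf{Main obstacle.} The only step that is not bookkeeping is the choice of test configuration: passing to $x_1+x_2+x_3=0$ removes the dominant fourth power, and the further restriction to $x_3=1$ produces an \emph{odd} cubic in $u$ whose non-negativity is extremely rigid --- both its cubic and its linear coefficients must vanish --- which is precisely what pins down $d$ and $e=f=g$; a less symmetric slice leaves a genuine two-variable non-negativity condition that is much harder to analyse. One should also check carefully that the two index-identities in the statement (which become mildly redundant once $d=1$) really do translate to $e=f=g=1$ under the sign normalisation; the $2$-dimensional reduction, the substitution, and the scalar evaluations at $(-3,1,1)$ are then routine.
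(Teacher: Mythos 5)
Your proof is correct, and it takes a genuinely different route from the paper's. The paper proves necessity by splitting into the two cases $t_{1112}t_{2223}t_{1333}=\pm1$, observing (as you do, but via rewriting the polynomial rather than an explicit change of variables) that each case reduces to a canonical sign pattern, and then disposing of roughly eight subcases with individually chosen test points such as $(1,1,-3)$, $(1,1,2)$, $(2,2,-1)$, $(-1,4,3)$, $(2,-1,2)$. You instead normalise the signs by the substitution $x_2\mapsto ax_2$, $x_3\mapsto bx_3$ (your bookkeeping checks out: the conditions of the theorem do translate to $d=e=f=g=1$, since the triple products give $d=1$, $e=1$, $df=1$, $dg=1$), and then extract almost all of the necessity from a single one-parameter family of test vectors: on the line $x_1=-(u+1)$, $x_2=u$, $x_3=1$ the quartic collapses to the odd-through-the-origin cubic $\alpha u^3+\beta u^2+\gamma u$, whose nonnegativity forces $\alpha=\gamma=0$, i.e.\ $d=1+3(f-e)=1+3(g-e)$, hence $d=1$ and $e=f=g$; one further evaluation at $(-3,1,1)$ pins down $e=1$. (I verified the coefficients $\alpha,\beta,\gamma$ and the value $1+36(e-1)$.) The sufficiency direction, $\mathcal{T}x^4=(x_1+t_{1112}x_2+t_{1113}x_3)^4$, is identical to the paper's. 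What your route buys is a substantial collapse of the case analysis — the rigidity of nonnegative cubics replaces most of the ad hoc point evaluations — at the cost of the sign-normalisation bookkeeping; the paper's version avoids that bookkeeping but pays with many subcases. Both arguments are elementary and neither needs Lemma \ref{lem:25}.
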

\begin{proof} Let $t_{iiii}=t_{iiij}t_{ijjj}=1$ for all $i,j\in\{1,2,3\}$, $i\neq j$.

``\textbf{if (Sufficiency)}."
Suppose $t_{iijk}t_{iiij}t_{iiik}=t_{iijj}=1$ for $i,j,k\in\{1,2,3\}$, $i\neq j$, $i\neq k$, $j\neq k$.  Then for any $x=(x_1,x_2,x_3)^{\top}\in{\mathbb{R}}^3$,
$${\mathcal{T}}x^4=(x_1+x_2+x_3)^4\geq0$$
when $t_{iiij}=1$ for all $i,j\in\{1,2,3\}$, $i\neq j$, and
$${\mathcal{T}}x^4=(x_i+x_j-x_k)^4\geq0$$
when $t_{iiij}=-t_{jjjk}=-t_{ikkk}=1$ for $i,j,k\in\{1,2,3\}$, $i\neq j$, $i\neq k$, $j\neq k$.

``\textbf{only if (Necessity)}." For  $i,j,k\in\{1,2,3\}$, $i\neq j$, $i\neq k$, $j\neq k$,
the condition, $t_{iiij}t_{ijjj}=t_{iiij}t_{jjjk}t_{ikkk}=1$, can be can be divided into two cases, 
\begin{itemize}
    \item [($\romannumeral1$)] $t_{1112}t_{2223}t_{1333}=1$;
	\item [($\romannumeral2$)] $t_{1112}t_{2223}t_{1333}=-1$.
\end{itemize}
It follows from the positive semi-definiteness of  $\mathcal{T}$ with Eq. \eqref{eq:2} that $$t_{1122}=t_{1133}=t_{2233}=1.$$
%Next we use a proof by contradiction to prove the necessity.

($\romannumeral1$) Since for $x=(x_1,x_2,x_3)^{\top}\in{\mathbb{R}}^3$,
\begin{align*}
{\mathcal{T}}x^4&=(x_1+x_2+x_3)^4+12[(t_{1123}-1)x_1^2x_2x_3+(t_{1223}-1)x_1x_2^2x_3+(t_{1233}-1)x_1x_2x_3^2]
\end{align*}
when $t_{1112}=t_{2223}=t_{1333}=1$, and
\begin{align*}
{\mathcal{T}}x^4=&[x_i+x_j+(-x_k)]^4+12[(-t_{iijk}-1)x_i^2x_j(-x_k)+(-t_{ijjk}-1)x_ix_j^2(-x_k)\\
&+(t_{ijkk}-1)x_ix_j(-x_k)^2]
\end{align*}
when $t_{iiij}=-t_{jjjk}=-t_{ikkk}=1$ for $i,j,k\in\{1,2,3\}$, $i\neq j$, $i\neq k$, $j\neq k$, we only need to discuss about $t_{1112}=t_{2223}=t_{1333}=1$ and there is $t_{iijk}\neq 1$, for $i,j,k\in\{1,2,3\}$, $i\neq j$, $i\neq k$, $j\neq k$.
%In fact, ${\mathcal{T}}x^4<0$ with $x=(-3,1,1)^{\top}$, or $x=(1,-3,1)^{\top}$, or $x=(1,1,-3)^{\top}$ under these situations.

Without loss the generality suppose $t_{1233}\neq1$ and $t_{1233}=\min\{t_{1123},t_{1223},t_{1233}\}$. Let $x=(1,1,-3)^{\top}$, then
$${\mathcal{T}}x^4\leq(x_1+x_2+x_3)^4-12(x_1^2x_2x_3+x_1x_2^2x_3+x_1x_2x_3^2)=-35<0.$$
%when $t_{1123}=t_{1223}=t_{1233}\in\{0,-1\}$.

($\romannumeral2$) Since for $x=(x_1,x_2,x_3)^{\top}\in{\mathbb{R}}^3$,
\begin{align*}
{\mathcal{T}}x^4=&[x_1+x_2+(-x_3)]^4-8(x_1^3x_2+x_1x_2^3)+12[(-t_{1123}-1)x_1^2x_2(-x_3)+(-t_{1223}-1)x_1x_2^2(-x_3)\\
&+(t_{1233}-1)x_1x_2(-x_3)^2]
\end{align*}
when $t_{1112}=t_{2223}=t_{1333}=-1$, and
\begin{align*}
{\mathcal{T}}x^4=&(x_i+x_j+x_k)^4-8(x_i^3x_j+x_ix_j^3)+12[(t_{iijk}-1)x_i^2x_jx_k+(t_{ijjk}-1)x_ix_j^2x_k\\
&+(t_{ijkk}-1)x_ix_jx_k^2]
\end{align*}
when $-t_{iiij}=t_{jjjk}=t_{ikkk}=1$, for $i,j,k\in\{1,2,3\}$, $i\neq j$, $i\neq k$, $j\neq k$, we only need to discuss about $t_{1112}=t_{2223}=t_{1333}=-1$.
%Then ${\mathcal{T}}x^4<0$ with $x=(2,1,1)^{\top}$, or $x=(1,1,2)^{\top}$, or $x=(1,2,1)^{\top}$, or $x=(-1,2,2)^{\top}$, or $x=(2,-1,2)^{\top}$, or  %$x=(2,2,-1)^{\top}$, or $x=(-1,4,3)^{\top}$, or $x=(3,-1,4)^{\top}$, or $x=(4,3,-1)^{\top}$.
Then, for $x=(1,1,2)^\top$, we have\begin{align*}
	{\mathcal{T}}x^4=-16+24(t_{1123}+t_{1223}+2t_{1233})\geq0.
\end{align*}
So, the following cases could not occur,
 \begin{itemize}
    \item $t_{1233}=-1$;
	\item $t_{1233}=0$, and both $t_{1123}$ and $t_{1223}$ are not $1$;
	\item $t_{1233}=1$ and $t_{1123}=t_{1223}=-1$.
\end{itemize}
Discuss other situations later.

\textbf{Case 1.} $t_{1233}=0$, and $t_{1123}=1$ or $t_{1223}=1$.

\textbf{Subcase 1.1} $t_{1123}=t_{1223}=1$, let $x=(2,2,-1)^{\top}$, then
$${\mathcal{T}}x^4=(x_1+x_2-x_3)^4-8(x_1^3x_2+x_1x_2^3)+24(x_1^2x_2x_3+x_1x_2^2x_3)-12x_1x_2x_3^2=-63<0.$$

\textbf{Subcase 1.2} One of $\{t_{1123},t_{1223}\}$ is $0$. Without loss the generality, we might take $t_{1123}=0$. Let $x=(-1,4,3)^{\top}$, then
$${\mathcal{T}}x^4=(x_1+x_2-x_3)^4-8(x_1^3x_2+x_1x_2^3)+12(x_1^2x_2x_3-x_1x_2x_3^2)+24x_1x_2x_3^2=-32<0.$$

\textbf{Subcase 1.3} One of $\{t_{1123},t_{1223}\}$ is $-1$. Without loss the generality, we might take $t_{1123}=-1$. Let $x=(-1,2,2)^{\top}$, then
$${\mathcal{T}}x^4=(x_1+x_2-x_3)^4-8(x_1^3x_2+x_1x_2^3)+24x_1x_2^2x_3-12x_1x_2x_3^2=-15<0.$$

\textbf{Case 2.} $t_{1233}=1$, and $t_{1123}\neq-1$ or $t_{1223}\neq-1$.

\textbf{Subcase 2.1} $t_{1123}=t_{1223}=1$. Let $x=(2,-1,2)^{\top}$, then
$${\mathcal{T}}x^4=(x_1+x_2-x_3)^4-8(x_1^3x_2+x_1x_2^3)+24(x_1^2x_2x_3+x_1x_2^2x_3)=-15<0.$$

\textbf{Subcase 2.2} One of $\{t_{1123},t_{1223}\}$ is $1$, the other one is $0$ or $-1$. Without loss the generality, we might take $t_{1123}=1$, let $x=(2,-1,2)^{\top}$, then
$${\mathcal{T}}x^4\leq(x_1+x_2-x_3)^4-8(x_1^3x_2+x_1x_2^3)+12(2x_1^2x_2x_3+x_1x_2^2x_3)=-63<0.$$

\textbf{Subcase 2.3} One of $\{t_{1123},t_{1223}\}$ is $0$, the other one is $-1$. Without loss the generality, we might take $t_{1123}=-1$. Let $x=(-1,2,2)^{\top}$, then
$${\mathcal{T}}x^4\leq(x_1+x_2-x_3)^4-8(x_1^3x_2+x_1x_2^3)+12x_1x_2^2x_3=-15<0,$$

The necessity is proved.
\end{proof}

\begin{theorem}\label{th3.5}
Let ${\mathcal{T}}=(t_{ijkl})\in\widehat{{\mathcal{E}}}_{4,3}$ and $-t_{iiij}t_{ijjj}=-t_{jjjk}t_{jkkk}=t_{iiik}t_{ikkk}=1$ for $i,j,k\in\{1,2,3\}$, $i\neq j$, $i\neq k$, $j\neq k$. Then $\mathcal{T}$ is positive semi-definite if and only if $t_{iijk}t_{jkkk}=t_{ijkk}t_{iiij}=t_{iiik}t_{jkkk}t_{iiij}=t_{1122}=t_{2233}=t_{1133}=1$, and $t_{ijjk}t_{iiik}\in\{0,1\}$.
\end{theorem}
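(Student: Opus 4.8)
The plan is to follow the pattern of Theorems \ref{th3.1} and \ref{th3.3}: expand ${\mathcal{T}}x^4$, prove the ``if'' direction by displaying an explicit sum of squares, and prove the ``only if'' direction by combining the principal-subtensor constraint \eqref{eq:2} with a finite case analysis driven by integer test vectors. For $x=(x_1,x_2,x_3)^\top$ the form ${\mathcal{T}}x^4$ has the monomials $x_i^4$ (coefficient $1$ since ${\mathcal{T}}\in\widehat{{\mathcal{E}}}_{4,3}$), $x_i^3x_j$ (coefficient $4t_{iiij}$), $x_i^2x_j^2$ (coefficient $6t_{iijj}$), and $x_i^2x_jx_k$ (coefficient $12t_{iijk}$). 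The hypothesis $-t_{iiij}t_{ijjj}=-t_{jjjk}t_{jkkk}=t_{iiik}t_{ikkk}=1$ singles out the index $j$ (it lies in both $-1$ ``edge products''), so I take $i=1,j=2,k=3$. Each substitution $x_\ell\mapsto -x_\ell$ preserves positive semi-definiteness and flips precisely the entries with an odd number of indices equal to $\ell$; the sign-flip substitutions may thus be used to normalize $(t_{1112},t_{1113})=(1,-1)$, whence $t_{1222}=-1$, $t_{1333}=-1$, and $t_{2223}t_{2333}=-1$. Finally, since every $2$-dimensional principal subtensor in a pair $\{x_i,x_j\}$ must be positive semi-definite and $t_{iiij}\neq 0$, Lemma \ref{lem:23} (equivalently \eqref{eq:2}) forces $t_{1122}=t_{1133}=t_{2233}=1$ in all cases below.

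For the ``if'' direction, under the stated conditions the normalization also gives $t_{2223}=1$, $t_{2333}=-1$, $t_{1123}=-1$, $t_{1233}=1$, and $t_{1223}\in\{0,-1\}$ (the two values allowed by $t_{1223}t_{1113}\in\{0,1\}$). The key step is the change of variable $w=x_1-x_3$: a direct expansion shows that with these entries
\[
{\mathcal{T}}x^4=\bigl((x_1-x_3)^2+2x_2(x_1-x_3)-x_2^2\bigr)^2+4x_2^2(x_1-x_3)^2\geq 0
\]
when $t_{1223}=-1$, and, writing the leftover cross term as $12x_1x_2^2x_3=3x_2^2(x_1+x_3)^2-3x_2^2(x_1-x_3)^2$,
\[
{\mathcal{T}}x^4=\bigl((x_1-x_3)^2+2x_2(x_1-x_3)-x_2^2\bigr)^2+x_2^2(x_1-x_3)^2+3x_2^2(x_1+x_3)^2\geq 0
\]
when $t_{1223}=0$. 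Conceptually, in the coordinates $(w,x_2)$ the form collapses to the positive-definite $2$-dimensional tensor of Lemma \ref{lem:24}(ii) (plus a nonnegative remainder), which also explains why ${\mathcal{T}}$ is only positive semi-definite: it vanishes on the line $x_1=x_3$, $x_2=0$.

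For the ``only if'' direction, assume ${\mathcal{T}}$ is positive semi-definite. By the first paragraph we have $t_{1122}=t_{1133}=t_{2233}=1$ and may assume $t_{1112}=1$, $t_{1113}=-1$, so it remains to determine $t_{2223},t_{2333}$ (one bit, product $-1$) and $(t_{1123},t_{1223},t_{1233})\in\{-1,0,1\}^3$. Evaluating ${\mathcal{T}}x^4$ at $x=(1,1,2)^\top$ and a few further integer vectors such as $(1,1,-3)^\top$, $(2,2,-1)^\top$, $(-1,2,2)^\top$, $(-1,4,3)^\top$, $(3,2,\pm2)^\top$ yields, in every sign pattern other than the ones listed in the theorem, a vector at which ${\mathcal{T}}x^4<0$; organizing the argument into Cases and Subcases on the value of $t_{1233}$ and then of $t_{1123},t_{1223}$ (mirroring the proof of Theorem \ref{th3.3}) shows that positive semi-definiteness forces exactly $t_{2223}=1$, $t_{2333}=-1$, $t_{1123}=-1$, $t_{1233}=1$ and $t_{1223}\in\{0,-1\}$, i.e.\ the asserted identities.

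I expect the substantive obstacle to be the ``if'' direction, specifically discovering the substitution $w=x_1-x_3$ and the resulting sum-of-squares form; once it is written down the verification is mechanical. The ``only if'' direction is a finite check that is routine modulo bookkeeping, the one genuinely delicate point being to justify the symmetry reduction honestly: the sign-flip substitutions do not collapse the two choices of $(t_{2223},t_{2333})$ (nor the choice $t_{1223}\in\{0,-1\}$), so the exclusion of $(t_{2223},t_{2333})=(-1,1)$ must come from an explicit witness vector rather than from normalization.
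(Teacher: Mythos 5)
Your approach is essentially the paper's. In the sufficiency direction your two sum-of-squares identities are, after your sign normalization $(t_{1112},t_{1113})=(1,-1)$, exactly the paper's decomposition
$\bigl(x_i^2-x_j^2+x_k^2+2t_{iiik}x_ix_k+2t_{jkkk}x_jx_k+2t_{iiij}x_ix_j\bigr)^2+4(x_ix_j+t_{ijjk}x_jx_k)^2$ (your first square is this quadric rewritten via $w=x_1-x_3$, and your $t_{1223}=0$ remainder equals the paper's $2(x_ix_j-t_{iiik}x_jx_k)^2+2(x_i^2x_j^2+x_j^2x_k^2)$); I checked the expansions and they match ${\mathcal{T}}x^4$ term by term. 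The necessity direction follows the paper's strategy as well (Eq.~\eqref{eq:2} forces $t_{iijj}=1$, then witness vectors eliminate the remaining sign patterns), but you have only sketched it: the vectors you list are the ones used in the proof of Theorem~\ref{th3.3}, which treats a different hypothesis, so they would have to be replaced by vectors verified to work here — the paper does this by splitting on the sign-flip invariant $t_{1112}t_{2223}t_{1333}=\pm1$ and running the subcases with vectors such as $(1,-4,1)^\top$, $(-5,1,3)^\top$, $(-3,1,3)^\top$, $(3,1,-3)^\top$, $(5,-5,3)^\top$. That finite check is routine but is the bulk of the work, and until it is carried out the ``only if'' half is a plan rather than a proof.
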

\begin{proof}
``\textbf{if (Sufficiency)}." Suppose $t_{iijk}t_{jkkk}=t_{ijkk}t_{iiij}=t_{iiik}t_{jkkk}t_{iiij}=t_{1122}=t_{2233}=t_{1133}=1$. Then for $x=(x_1,x_2,x_3)^{\top}\in{\mathbb{R}}^3$,
$${\mathcal{T}}x^4=(x_i^2-x_j^2+x_k^2+2t_{iiik}x_ix_k+2t_{jkkk}x_jx_k+2t_{iiij}x_ix_j)^2+4(x_ix_j+t_{ijjk}x_jx_k)^2\geq0$$
when $t_{ijjk}t_{iiik}=1$, and
\begin{align*}
{\mathcal{T}}x^4=&(x_i^2-x_j^2+x_k^2+2t_{iiik}x_ix_k+2t_{jkkk}x_jx_k+2t_{iiij}x_ix_j)^2+2(x_ix_j-t_{iiik}x_jx_k)^2\\
&+2(x_i^2x_j^2+x_j^2x_k^2)\geq0
\end{align*}
when $t_{ijjk}t_{iiik}=0$.

``\textbf{only if (Necessity)}." Obviously, the condition, 
$-t_{iiij}t_{ijjj}=-t_{jjjk}t_{jkkk}=t_{iiik}t_{ikkk}=1$ for $i,j,k\in\{1,2,3\}$, $i\neq j$, $i\neq k$, $j\neq k$, can be can be divided into two cases, 
\begin{itemize}
    \item [(i)] $t_{1112}t_{2223}t_{1333}=1$;
	\item [(ii)] $t_{1112}t_{2223}t_{1333}=-1$.
\end{itemize}
Similar to the proof of Theorem \ref{th3.3}, we only need to consider $-t_{iiij}=t_{jjjk}=t_{ikkk}=1$ and $t_{1112}=t_{2223}=t_{1333}=1$, respectively.
And it follows from the positive semi-definiteness of  $\mathcal{T}$ with Eq. \eqref{eq:2} that $$t_{1122}=t_{1133}=t_{2233}=1.$$
Without loss the generality, suppose $-t_{1112}t_{1222}=-t_{2223}t_{2333}=t_{1333}t_{1113}=1$.

($\romannumeral1$) We might take $-t_{1112}=t_{2223}=t_{1333}=1$. Then for $x=(x_1,x_2,x_3)^{\top}\in{\mathbb{R}}^3$
\begin{align*}
{\mathcal{T}}x^4=&(x_1+x_2+x_3)^4-8(x_1^3x_2+x_2x_3^3)+12[(t_{1123}-1)x_1^2x_2x_3+(t_{1223}-1)x_1x_2^2x_3\\
&+(t_{1233}-1)x_1x_2x_3^2].
\end{align*}
Then, for $x=(1,-4,1)^\top$, we have\begin{align*}
	{\mathcal{T}}x^4=-16-48(t_{1123}-4t_{1223}+t_{1233})\geq0,
\end{align*}
and hence,  $$3(t_{1123}-4t_{1223}+t_{1233})\leq-1.$$
So, the following cases could not occur,
 \begin{itemize}
    \item $t_{1223}=-1$;
	\item $t_{1223}=0$ and $t_{1123}+t_{1233}\ge0$.
\end{itemize}
Next we use a proof by contradiction to prove the necessity.

\textbf{Case 1.} $t_{1123}=-1$, $t_{1233}=0$ and $t_{1223}\in\{0,1\}$. Let $x=(-5,1,3)^{\top}$, then
$${\mathcal{T}}x^4\leq (x_1+x_2+x_3)^4-8(x_1^3x_2+x_2x_3^3)-24x_1^2x_2x_3-12(x_1x_2^2x_3+x_1x_2x_3^2)=-295<0.$$
$t_{1123}=0$, $t_{1233}=-1$ and $t_{1223}\in\{0,1\}$ is similarly.

\textbf{Case 2.} $t_{1223}=1$ and $\{t_{1123},t_{1233}\}$ are not $-1$.

\textbf{Subcase 2.1} At least one of is $0$. Without loss the generality, suppose $t_{1123}=0$. Let $x=(-1,1,2)^{\top}$, then
$${\mathcal{T}}x^4\leq (x_1+x_2+x_3)^4-8(x_1^3x_2+x_2x_3^3)-12(x_1^2x_2x_3+x_1x_2x_3^2)=-16<0.$$

\textbf{Subcase 2.2} $t_{1123}=t_{1233}=1$. Let $x=(-1,1,2)^{\top}$, then
$${\mathcal{T}}x^4\leq (x_1+x_2+x_3)^4-8(x_1^3x_2+x_2x_3^3)=-40<0.$$

($\romannumeral2$) Let $t_{1112}=t_{2223}=t_{1333}=1$. Then for $x=(x_1,x_2,x_3)^{\top}\in{\mathbb{R}}^3$,
\begin{align*}
{\mathcal{T}}x^4=&(x_1+x_2+x_3)^4-8(x_1x_2^3+x_2x_3^3)\\
&+12[(t_{1123}-1)x_1^2x_2x_3+(t_{1223}-1)x_1x_2^2x_3+(t_{1233}-1)x_1x_2x_3^2].
\end{align*}
Then, for $x=(-3,1,3)^\top$, we have\begin{align*}
	{\mathcal{T}}x^4=-83+108(3t_{1123}-t_{1223}-3t_{1233})\geq0.
\end{align*}
So, the following cases could not occur,
 \begin{itemize}
    \item $t_{1123}=-1$, and $t_{1223}\neq-1$ or $t_{1233}\neq-1$;
	\item $t_{1123}=0$ and $t_{1233}=1$;
     \item $t_{1123}=0$, $t_{1223}\neq-1$ and $t_{1233}=0$;
	\item $t_{1123}=t_{1233}=1$ and $t_{1223}\neq-1$.
\end{itemize}
Discuss other situations later.

\textbf{Case 1.} $t_{1123}=t_{1223}=t_{1233}=-1$. Let $x=(1,1,1)^{\top}$. Then
$${\mathcal{T}}x^4= (x_1+x_2+x_3)^4-8(x_1x_2^3+x_2x_3^3)-24(x_1^2x_2x_3+x_1x_2^2x_3+x_1x_2x_3^2)=-7<0.$$

\textbf{Case 2.} $t_{1123}=0$.

\textbf{Subcase 2.1} $t_{1233}=0$ and $t_{1223}=-1$. Let $x=(-6,1,3)^{\top}$. Then
$${\mathcal{T}}x^4= (x_1+x_2+x_3)^4-8(x_1x_2^3+x_2x_3^3)-24x_1x_2^2x_3-12(x_1^2x_2x_3+x_1x_2x_3^2)=-368<0.$$

\textbf{Subcase 2.2} $t_{1233}=-1$ and $t_{1223}\neq-1$. Let $x=(3,1,-3)^{\top}$. Then
$${\mathcal{T}}x^4\leq (x_1+x_2+x_3)^4-8(x_1x_2^3+x_2x_3^3)-24x_1x_2x_3^2-12(x_1^2x_2x_3+x_1x_2^2x_3)=-23<0.$$

\textbf{Subcase 2.3} $t_{1233}=t_{1223}=-1$. Let $x=(3,5,5)^{\top}$. Then
$${\mathcal{T}}x^4= (x_1+x_2+x_3)^4-8(x_1x_2^3+x_2x_3^3)-12x_1^2x_2x_3-24(x_1x_2^2x_3+x_1x_2x_3^2)=-139<0.$$

\textbf{Case 3.} $t_{1123}=1$.

\textbf{Subcase 3.1} $t_{1233}=-1$. Let $x=(3,1,-3)^{\top}$, Then
$${\mathcal{T}}x^4\leq (x_1+x_2+x_3)^4-8(x_1x_2^3+x_2x_3^3)-24(x_1x_2^2x_3+x_1x_2x_3^2)=-239<0.$$

\textbf{Subcase 3.2} $t_{1233}=1$ and $t_{1223}=-1$. Let $x=(1,-1,1)^{\top}$. Then
$${\mathcal{T}}x^4= (x_1+x_2+x_3)^4-8(x_1x_2^3+x_2x_3^3)-24x_1x_2^2x_3=-7<0.$$

\textbf{Subcase 3.3} $t_{1233}=0$ and $t_{1223}\neq-1$. Let $x=(3,1,-3)^{\top}$. Then
$${\mathcal{T}}x^4\leq (x_1+x_2+x_3)^4-8(x_1x_2^3+x_2x_3^3)-12(x_1x_2^2x_3+x_1x_2x_3^2)=-23<0.$$

\textbf{Subcase 3.4} $t_{1233}=0$ and $t_{1223}=-1$. Let $x=(5,-5,3)^{\top}$. Then
$${\mathcal{T}}x^4= (x_1+x_2+x_3)^4-8(x_1x_2^3+x_2x_3^3)-24x_1x_2^2x_3-12x_1x_2x_3^2=-139<0.$$

The necessity is proved.
\end{proof}

\begin{theorem}\label{th3.8}
Let ${\mathcal{T}}=(t_{ijkl})\in\widehat{{\mathcal{E}}}_{4,3}$ and $t_{iiij}t_{ijjj}=0$, but $t_{iiij}+t_{ijjj}\neq0$ for all $i,j\in\{1,2,3\}$, $i\neq j$. Then $\mathcal{T}$ is positive definite if and only if $t_{iijj}=1$ for all $i,j\in\{1,2,3\}$, $i\neq j$, and one of the following conditions is satisfied.
\begin{itemize}
  \item [(a)] $t_{1123}=t_{1223}=t_{1233}=0$,  $t_{iiij}=t_{jjjk}=t_{ikkk}=0$ and $t_{ijjj},t_{jkkk},t_{iiik}\in\{1,-1\}$ for $i,j,k\in\{1,2,3\}$, $i\neq j$, $i\neq k$, $j\neq k$.
  \item [(b)] $t_{iijk}=t_{ijjk}=0$ and $t_{ijkk}t_{ijjj}=t_{ijjj}t_{jkkk}t_{ikkk}=\pm1$, or $t_{ijjj}t_{jkkk}t_{ikkk}=-t_{iijk}t_{jkkk}=-t_{ijjk}t_{ikkk}=1$ and $t_{ijkk}=0$, if $t_{iiij}=t_{jjjk}=t_{iiik}=0$ for $i,j,k\in\{1,2,3\}$, $i\neq j$, $i\neq k$, $j\neq k$.
\end{itemize}
\end{theorem}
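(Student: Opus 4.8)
The plan is to follow the template of the proofs of Theorems~\ref{th3.1}, \ref{th3.3} and \ref{th3.5}: for the sufficiency I will decompose $\mathcal{T}x^4$ explicitly as a sum of squares (augmented, where needed, by terms that are manifestly nonnegative because $t_{iijj}=1$), so that $\mathcal{T}x^4>0$ for $x\neq0$ is immediate from Definition~\ref{def1}; for the necessity I will assume positive definiteness, read off the structural constraints it forces, and exhibit for every remaining configuration a vector $x\neq0$ with $\mathcal{T}x^4\le0$.

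First I record that, under the hypothesis $t_{iiii}=1$, $t_{iiij}t_{ijjj}=0$, $t_{iiij}+t_{ijjj}\neq0$, exactly one of $t_{iiij},t_{ijjj}$ is $\pm1$ and the other $0$ for each pair $\{i,j\}$, so the cubic part of $\mathcal{T}x^4$ has precisely three nonzero monomials. Orienting each edge $\{i,j\}$ of the triangle on $\{1,2,3\}$ towards the index occurring with multiplicity $3$ in its cubic coefficient yields an orientation of $K_3$; up to relabelling $\{1,2,3\}$, which preserves positive definiteness, there are exactly two orbits, the cyclic one and the acyclic one, and these are precisely the patterns singled out by $t_{iiij}=t_{jjjk}=t_{ikkk}=0$ in~(a) and by $t_{iiij}=t_{jjjk}=t_{iiik}=0$ in~(b); hence (a) and (b) together exhaust all admissible patterns. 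For the necessity, the constraint $t_{iijj}=1$ for all $i\neq j$ is immediate from \eqref{eq:3} applied to the $2$-dimensional principal sub-tensors, since $t_{iiij}+t_{ijjj}\neq0$ rules out the branch $t_{iijj}=t_{iiij}=t_{ijjj}=0$.

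In the cyclic case the sufficiency is clean: if the three nonzero cubic coefficients are $t_{iiik},t_{ijjj},t_{jkkk}\in\{\pm1\}$, then
\[
\mathcal{T}x^4=(x_i^2+2t_{iiik}x_ix_k)^2+(x_j^2+2t_{ijjj}x_ix_j)^2+(x_k^2+2t_{jkkk}x_jx_k)^2+2(x_i^2x_j^2+x_i^2x_k^2+x_j^2x_k^2),
\]
which is positive unless $x=0$ for every choice of signs; this proves the sufficiency of (a) and shows all eight sign patterns are admissible. For the necessity here I would combine a probe such as $x=(1,1,1)$ with evaluations at vectors like $(2,1,-1)$ and their cyclic images to show that any nonzero $t_{iijk}$ destroys positive definiteness, forcing $t_{1123}=t_{1223}=t_{1233}=0$, i.e.\ case~(a).

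The acyclic case carries the weight of the argument. For the first alternative in (b), with $i$ the sink and $k$ the source, a completion of squares still works:
\[
\mathcal{T}x^4=(x_i^2)^2+(x_j^2+2t_{ijjj}x_ix_j)^2+(x_k^2+2t_{jkkk}x_jx_k+2t_{ikkk}x_ix_k)^2+2x_i^2x_j^2+2x_k^2(x_i+t_{ijkk}x_j)^2,
\]
using $t_{iijk}=t_{ijjk}=0$ and $t_{ijkk}=t_{jkkk}t_{ikkk}$, and this vanishes only at $x=0$. The second alternative in (b) is the delicate point and I expect the main obstacle: a crude single square overshoots $\mathcal{T}x^4$, so one must either produce a more balanced sum of squares or, as announced in the introduction, regard $\mathcal{T}x^4$ as a quartic in the source variable with coefficients polynomial in the other two variables and certify its nonnegativity via the real-root count of Lemma~\ref{lem:25}. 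For the necessity in the acyclic case I would first normalise the cubic signs using the sign changes $x_i\mapsto-x_i$ (reducing the sign data to $t_{ijjj}t_{jkkk}t_{ikkk}=\pm1$), then use one or two probing vectors to obtain a linear inequality among $t_{1123},t_{1223},t_{1233}$ that eliminates most combinations, and finish the survivors with explicit witnesses. This last part is long precisely because the acyclic orientation of $K_3$ has no nontrivial automorphism, so there is almost no symmetry to exploit and the case tree is broad; checking that each displayed decomposition vanishes only at the origin then upgrades nonnegativity to positive definiteness and completes the proof.
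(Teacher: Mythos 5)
Your overall strategy coincides with the paper's: the cyclic/acyclic classification of the three nonzero cubic coefficients is exactly how the paper separates cases (a) and (b), your sum-of-squares identity for the cyclic case is verbatim the paper's display \eqref{th3.8a}, and your decomposition for the first alternative of (b) expands to the same polynomial as the paper's \eqref{th3.8b} (note only that you have the source and sink swapped in words: with $t_{ijjj},t_{jkkk},t_{ikkk}\neq 0$ the vertex $k$ is the sink and $i$ the source, though the formula you wrote is the correct one). The derivation of $t_{iijj}=1$ from \eqref{eq:3} is also the paper's first step.

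The gap is that the proposal defers precisely the two places where the actual work lies. For the second alternative of (b) you correctly predict that no clean sum of squares is available and that one must treat ${\mathcal{T}}x^4$ as a quartic in one variable and invoke Lemma \ref{lem:25}; the paper does exactly this (reducing by sign changes to $t_{ijjj}=t_{jkkk}=t_{ikkk}=1$, $t_{iijk}=t_{ijjk}=-1$, $t_{ijkk}=0$, then computing the inner determinants $\triangle_1^1,\triangle_3^1,\triangle_5^1,\triangle_7^1$ of \eqref{x33}, certifying $\triangle_7^1>0$ via a second application of the root count to a degree-$12$ polynomial in $x_1/x_2$, and concluding $N=0$ real roots so that ${\mathcal{T}}x^4=0$ forces $x=0$ by Lemma \ref{lem:21}). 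None of this computation is carried out in your proposal, and it cannot be waved through: the sign analysis of $\triangle_3^1$ and $\triangle_5^1$ splits on $x_1x_2\gtrless0$ and requires exhibiting explicit SOS certificates for $\triangle_5^1<0$. Likewise the necessity is only a plan; the paper's necessity argument occupies four subcases $(\romannumeral1)$--$(\romannumeral4)$ (cyclic/acyclic crossed with the product of signs $\pm1$), each needing a probing inequality from a vector such as $(1,1,-4)^\top$ or $(6,2,1)^\top$ plus five or more bespoke witness vectors for the surviving configurations of $(t_{1123},t_{1223},t_{1233})$. Your two sample vectors $(1,1,1)$ and $(2,1,-1)$ do not come close to covering this tree, and because (as you yourself observe) the acyclic orientation has no symmetry, there is no shortcut: each surviving triple must be killed individually. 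Until both of these components are supplied, the argument is an outline of the paper's proof rather than a proof.
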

\begin{proof}
``\textbf{if (Sufficiency)}." Suppose $t_{1122}=t_{1133}=t_{2233}=1.$

(a) For $x=(x_1,x_2,x_3)^{\top}\in{\mathbb{R}}^3$, we have
\begin{equation}\label{th3.8a}
{\mathcal{T}}x^4=(x_i^2+2t_{iiik}x_ix_k)^2+(x_j^2+2t_{ijjj}x_ix_j)^2+(x_k^2+2t_{jkkk}x_jx_k)^2+2(x_1^2x_2^2+x_2^2x_3^2+x_1^2x_3^2)\geq0,
\end{equation}
where $i,j,k\in\{1,2,3\}$, $i\neq j$, $i\neq k$.

(b) If $t_{iiij}=t_{jjjk}=t_{iiik}=0$ and $t_{ijjj}t_{jkkk}t_{ikkk}=\pm1$ for $i,j,k\in\{1,2,3\}$, $i\neq j$, $i\neq k$, $j\neq k$, then for $x=(x_1,x_2,x_3)^{\top}\in{\mathbb{R}}^3$,
\begin{align*}
{\mathcal{T}}x^4=&x_i^4+x_j^4+x_k^4+4(t_{ijjj}x_ix_j^3+t_{jkkk}x_jx_k^3+t_{ikkk}x_ix_k^3)\\
&+12(t_{iijk}x_i^2x_jx_k+t_{ijjk}x_ix_j^2x_k+t_{ijkk}x_ix_jx_k^2)+6(x_i^2x_j^2+x_j^2x_k^2+x_i^2x_k^2).
\end{align*}
If $t_{iijk}=t_{ijjk}=0$ and $t_{ijkk}t_{ijjj}=t_{ijjj}t_{jkkk}t_{ikkk}=\pm1$, then for any $x=(x_1,x_2,x_3)^{\top}\in{\mathbb{R}}^3$,
\begin{equation}\label{th3.8b}\aligned
{\mathcal{T}}x^4=&x_i^4+(x_j^2+2t_{ijjj}x_ix_j)^2+(x_k^2+2t_{jkkk}x_jx_k+2t_{ikkk}x_ix_k)^2\\&+2(x_jx_k+t_{ijkk}x_ix_k)^2+2x_i^2x_j^2\geq0,\endaligned
\end{equation}
where $i,j,k\in\{1,2,3\}$, $i\neq j$, $i\neq k$.

If $t_{ijjj}t_{jkkk}t_{ikkk}=-t_{iijk}t_{jkkk}=-t_{ijjk}t_{ikkk}=1$ and $t_{ijkk}=0$ for $i,j,k\in\{1,2,3\}$, $i\neq j$, $i\neq k$, $j\neq k$, then for $x=(x_1,x_2,x_3)^{\top}\in{\mathbb{R}}^3$,
$${\mathcal{T}}x^4=x_i^4+x_j^4+x_k^4+4(x_ix_j^3+x_jx_k^3+x_ix_k^3)-12(x_i^2x_jx_k+x_ix_j^2x_k)+6(x_i^2x_j^2+x_j^2x_k^2+x_i^2x_k^2)$$
when $t_{ijjj}=t_{jkkk}=t_{ikkk}=1$, and
\begin{align*}
{\mathcal{T}}x^4=&x_i^4+x_j^4+x_k^4+4[x_ix_j^3+x_j(-x_k)^3+x_i(-x_k)^3]-12[x_i^2x_j(-x_k)+x_ix_j^2(-x_k)]\\
&+6(x_i^2x_j^2+x_j^2x_k^2+x_i^2x_k^2)
\end{align*}
when $t_{ijjj}=-t_{jkkk}=-t_{ikkk}=1$, and
\begin{align*}
{\mathcal{T}}x^4=&x_i^4+x_j^4+x_k^4+4[(-x_i)x_j^3+x_jx_k^3+(-x_i)x_k^3]-12[(-x_i)^2x_jx_k+(-x_i)x_j^2x_k]\\
&+6(x_i^2x_j^2+x_j^2x_k^2+x_i^2x_k^2)
\end{align*}
when $-t_{ijjj}=t_{jkkk}=-t_{ikkk}=1$, and
\begin{align*}
{\mathcal{T}}x^4=&x_i^4+x_j^4+x_k^4+4[x_i(-x_j)^3+(-x_j)x_k^3+x_ix_k^3]-12[x_i^2(-x_j)x_k+x_i(-x_j)^2x_k]\\
&+6(x_i^2x_j^2+x_j^2x_k^2+x_i^2x_k^2)
\end{align*}
when $-t_{ijjj}=-t_{jkkk}=t_{ikkk}=1$. Thus we only need to discuss about $t_{ijjj}=t_{jkkk}=t_{ikkk}=1$. Without loss the generality, we might take $t_{1112}=t_{2223}=t_{1113}=0$ and $t_{1222}=t_{2333}=t_{1333}=1$. Then when $t_{1233}=0$, and $t_{1123}=t_{1223}=-1$, for $x=(x_1,x_2,x_3)^{\top}\in{\mathbb{R}}^3$
\begin{equation}\label{x3}
{\mathcal{T}}x^4=x_1^4+x_2^4+x_3^4+4(x_1x_2^3+x_2x_3^3+x_1x_3^3)-12(x_1^2x_2x_3+x_1x_2^2x_3)+6(x_1^2x_2^2+x_2^2x_3^2+x_1^2x_3^2).
\end{equation}

By Lemma \ref{lem:23}, when $x_1=0$, or $x_2=0$, or $x_3=0$, ${\mathcal{T}}x^4\geq0$, with equality if and only if $x_1=x_2=x_3=0$.

According the method in \cite{nka}, rewrite (\ref{x3}) as
\begin{eqnarray} \label{x33}
{\mathcal{T}}x^4=x_3^4+4(x_1+x_2)x_3^3+6(x_1^2+x_2^2)x_3^2-12(x_1^2x_2+x_1x_2^2)x_3+x_1^4+x_2^4+4x_1x_2^3+6x_1^2x_2^2.
\end{eqnarray}
Then the inner determinants corresponding to (\ref{x33}) are
$$\triangle_1^1=4,\ \ ~~~~~~~~\triangle_3^1=96x_1x_2,$$
$$\triangle_5^1=-192\times(3x_1^6+8x_1^5x_2+24x_1^4x_2^2-18x_1^3x_2^3+32x_1^2x_2^4+8x_1x_2^5+3x_2^6),$$
\begin{align*}
\triangle_7^1=&256\times(37x_1^{12}+144x_1^{11}x_2+432x_1^{10}x_2^2+229x_1^9x_2^3+480x_1^8x_2^4-144x_1^7x_2^5+3774x_1^6x_2^6\\
&-48x_1^5x_2^7 +912x_1^4x_2^8+2404x_1^3x_2^9+1344x_1^2x_2^{10}+336x_1x_2^{11}+37x_2^{12}).
\end{align*}
If $x_1,x_2\neq0$. Then when $x_1x_2>0$,
$$\triangle_3^1>0,$$
$$\triangle_5^1=-192\times(3x_1^6+8x_1^5x_2+15x_1^4x_2^2+(3x_1^2x_2-3x_1x_2^2)^2+21x_1^2x_2^4+8x_1x_2^5+3x_2^6)<0.$$
When $x_1x_2<0$,
$$\triangle_3^1<0,$$
$$\triangle_5^1=-192\times(2x_1^6+(x_1^3+4x_1^2x_2)^2+8x_1^4x_2^2-18x_1^3x_2^3+16x_1^2x_2^4+(4x_1x_2^2+x_2^3)^2+2x_2^6)<0.$$
If $x_2\neq0$,
\begin{align*}
\triangle_7^1=&256x_2^{12}\times(37y^{12}+144y^{11}+432y^{10}+229y^9+480y^8-144y^7+3774y^6-48y^5+912y^4\\
&+2404y^3+1344y^2+336y+37),
\end{align*}
where $y=\frac{x_1}{x_2}$. Using Lemma \ref{lem:25} for $\frac{\triangle_7^1}{256x_2^{12}}$, and we get the number of distinct real roots of $\frac{\triangle_7^1}{256x_2^{12}}=0$ with each $x_2\neq0$ is $0$.
\iffalse
The inner determinants corresponding to $\frac{\triangle_7^1}{256x_2^{12}}$ are
$$\triangle_{1,7}^1=444>0,\ \ \triangle_{3,7}^1\approx-5.7542\times10^6<0,\ \  \triangle_{5,7}^1\approx-1.3115\times10^{12}<0,$$ $$\triangle_{7,7}^1\approx1.2546\times10^{18}>0,\ \
\triangle_{9,7}^1\approx2.1561\times10^{24}>0,\ \ \triangle_{11,7}^1\approx-9.9006\times10^{30}<0,$$ $$\triangle_{13,7}^1\approx-2.3965\times10^{38}<0,\ \ \triangle_{15,7}^1\approx-1.8208\times10^{45}<0, \ \ \triangle_{17,7}^1\approx5.7548\times10^{52}>0,$$
$$\triangle_{19,7}^1\approx-2.5310\times10^{58}<0,\ \
\triangle_{21,7}^1\approx-7.1071\times10^{63}<0, \ \ \triangle_{23,7}^1\approx8.2211\times10^{66}>0.$$
The number of distinct real roots $N_7$ of $\frac{\triangle_7^1}{256x_2^{12}}=0$ with each $x_2\neq0$ is
\begin{eqnarray}
N_7&=&\mbox{var}[+,-,-,+,+,-,-,+,-,-,-,+,+]-\mbox{var}[+,+,-,-,+,+,-,-,-,+,-,-,+]\nonumber\\
&=&6-6=0.\nonumber
\end{eqnarray}
\fi
Since $\frac{\triangle_7^1}{256x_2^{12}}=37>0$ for $y=0$, $\frac{\triangle_7^1}{256x_2^{12}}>0$ for any $y\in{\mathbb{R}}$, thus $\triangle_7^1>0$ for any$ x=(x_1,x_2)^{\top}\in{\mathbb{R}}^2$ and $x_2\neq0$.

Then if $x_1,x_2\neq0$, the number of distinct real roots $N$ of ${\mathcal{T}}x^4=0$ is
$$N=\mbox{var}[+,-,+,+,+]-\mbox{var}[+,+,+,-,+]=2-2=0$$
when $x_1x_2>0$, and
$$N=\mbox{var}[+,-,-,+,+]-\mbox{var}[+,+,-,-,+]=2-2=0$$
when $x_1x_2<0$.
Thus, ${\mathcal{T}}x^4=0$ if and only if $x=(0,0,0)^{\top}$. By Lemma \ref{lem:21}, ${\mathcal{T}}$ is positive definite.

Furthermore, in (\ref{th3.8a}) and (\ref{th3.8b}), it is easily verified that ${\mathcal{T}}x^4=0$ if and only if $x=(0,0,0)^{\top}$, and then, $\mathcal{T}$ is positive definite.

``\textbf{only if (Necessity)}." The condition,
$t_{iiij}t_{ijjj}=t_{jjjk}t_{jkkk}=t_{iiik}t_{ikkk}=0$, but $\{t_{iiij}+t_{ijjj},t_{jjjk}+t_{jkkk},t_{iiik}+t_{ikkk}\}$ are not zero for $i,j,k\in\{1,2,3\}$, $i\neq j$, $i\neq k$, $j\neq k$ can be divided into four cases, 
\begin{itemize}
    \item [($\romannumeral1$)] $t_{iiij}=t_{jjjk}=t_{ikkk}=0$ with $t_{ijjj}t_{jkkk}t_{iiik}=1$;
	\item [($\romannumeral2$)] $t_{iiij}=t_{jjjk}=t_{ikkk}=0$ with $t_{ijjj}t_{jkkk}t_{iiik}=-1$;
    \item [($\romannumeral3$)] $t_{iiij}=t_{jjjk}=t_{iiik}=0$ with $t_{ijjj}t_{jkkk}t_{ikkk}=1$;
    \item [($\romannumeral4$)] $t_{iiij}=t_{jjjk}=t_{iiik}=0$ with $t_{ijjj}t_{jkkk}t_{ikkk}=-1$.
\end{itemize}
Similar to the proof of Theorem \ref{th3.3}, we only need to consider $t_{ijjj}=t_{jkkk}=t_{iiik}=1$, $t_{ijjj}=t_{jkkk}=t_{iiik}=-1$, $t_{ijjj}=t_{jkkk}=t_{ikkk}=1$, and $t_{ijjj}=t_{jkkk}=t_{ikkk}=-1$ respectively. And it follows from the positive definiteness of $\mathcal{T}$ with Eq. \eqref{eq:3} that 
	$$t_{1122}=t_{1133}=t_{2233}=1.$$

($\romannumeral1$) We might take $t_{1112}=t_{2223}=t_{1333}=0$ and $t_{1222}=t_{2333}=t_{1113}=1$. Then for $x=(x_1,x_2,x_3)^{\top}\in{\mathbb{R}}^3$
\begin{align*}
{\mathcal{T}}x^4=&(x_1+x_2+x_3)^4-4(x_1^3x_2+x_2^3x_3+x_1x_3^3)\\
&+12[(t_{1123}-1)x_1^2x_2x_3+(t_{1223}-1)x_1x_2^2x_3+(t_{1233}-1)x_1x_2x_3^2].
\end{align*}

\textbf{Case 1.} At least one of $\{t_{1112},t_{2223},t_{1333}\}$ is $1$. Without loss the generality, we might take $t_{1123}=1$ and $t_{1233}=\min\{t_{1123},t_{1223},t_{1233}\}$. Let $x=(2,1,-1)^{\top}$, then
$${\mathcal{T}}x^4\leq (x_1+x_2+x_3)^4-4(x_1^3x_2+x_2^3x_3+x_1x_3^3)=-4<0.$$

\textbf{Case 2.} All of $\{t_{1123},t_{1223},t_{1233}\}$ are not $1$.

\textbf{Subcase 2.1} At least one of $\{t_{1123},t_{1223},t_{1233}\}$ is $-1$ but not all of them are $-1$. Without loss the generality, we might take $t_{1223}=-1$ and $t_{1123}=0$. Let $x=(2,-5,1)^{\top}$, then
$${\mathcal{T}}x^4\leq (x_1+x_2+x_3)^4-4(x_1^3x_2+x_2^3x_3+x_1x_3^3)-12x_1^2x_2x_3-24(x_1x_2^2x_3+x_1x_2x_3^2)=-52<0.$$

\textbf{Subcase 2.2} $t_{1123}=t_{1223}=t_{1233}=-1$, let $x=(1,1,1)^{\top}$, then
$${\mathcal{T}}x^4= (x_1+x_2+x_3)^4-4(x_1^3x_2+x_2^3x_3+x_1x_3^3)-24(x_1^2x_2x_3+x_1x_2^2x_3+x_1x_2x_3^2)=-3<0.$$

($\romannumeral2$) We might take $t_{1112}=t_{2223}=t_{1333}=-1$ and $t_{1222}=t_{2333}=t_{1113}=0$. Then for $x=(x_1,x_2,x_3)^{\top}\in{\mathbb{R}}^3$
\begin{align*}
{\mathcal{T}}x^4=&x_1^4+x_2^4+x_3^4-4(x_1^3x_2+x_2^3x_3+x_1x_3^3)\\&+12(t_{1123}x_1^2x_2x_3+t_{1223}x_1x_2^2x_3+t_{1233}x_1x_2x_3^2)+6(x_1^2x_2^2+x_1^2x_3^2+x_2^2x_3^2).
\end{align*}
%And ${\mathcal{T}}x^4<0$ with $x=(1,1,1)^{\top}$, or $x=(1,-1,3)^{\top}$, or $x=(3,1,-1)^{\top}$, or $x=(-1,3,1)^{\top}$, or $x=(1,1,0)^{\top}$, or %$x=(1,0,1)^{\top}$, or $x=(0,1,1)^{\top}$, or $x=(4,2,1)^{\top}$, or $x=(2,1,4)^{\top}$, or $x=(1,4,2)^{\top}$, expect these situations which satisfy (a).

\textbf{Case 1.} At least one of $\{t_{1112},t_{2223},t_{1333}\}$ is $1$. Without loss the generality, we might take $t_{1223}=1$ and $t_{1123}=\min\{t_{1123},t_{1223},t_{1233}\}$. Let $x=(-1,3,1)^{\top}$, then
$${\mathcal{T}}x^4\leq x_1^4+x_2^4+x_3^4-4(x_1^3x_2+x_2^3x_3+x_1x_3^3)+12x_1x_2^2x_3+6(x_1^2x_2^2+x_1^2x_3^2+x_2^2x_3^2)=-3<0.$$

\textbf{Case 2.} All of $\{t_{1123},t_{1223},t_{1233}\}$ are not $1$, i.e., at least one of $\{t_{1123},t_{1223},t_{1233}\}$ is $-1$. Without loss the generality, we might take $t_{1123}=-1$, let $x=(4,2,1)^{\top}$, then
$${\mathcal{T}}x^4\leq x_1^4+x_2^4+x_3^4-4(x_1^3x_2+x_2^3x_3+x_1x_3^3)-12x_1^2x_2x_3+6(x_1^2x_2^2+x_1^2x_3^2+x_2^2x_3^2)=-167<0.$$

($\romannumeral3$) We might take $t_{1112}=t_{2223}=t_{1113}=0$ and $t_{1222}=t_{2333}=t_{1333}=1$. Then for $x=(x_1,x_2,x_3)^{\top}\in{\mathbb{R}}^3$
\begin{align*}\label{2}
{\mathcal{T}}x^4=&(x_1+x_2+x_3)^4-4(x_1^3x_2+x_2^3x_3+x_1^3x_3)\\
&+12[(t_{1123}-1)x_1^2x_2x_3+(t_{1223}-1)x_1x_2^2x_3+(t_{1233}-1)x_1x_2x_3^2].
\end{align*}
Then, for $x=(1,1,-4)^\top$, we have\begin{align*}
	{\mathcal{T}}x^4=-52-48(t_{1123}+t_{1223}-4t_{1233})>0.
\end{align*}
So, the following cases could not occur,
 \begin{itemize}
    \item $t_{1233}=-1$;
	\item $t_{1233}=0$, and $t_{1123}+t_{1223}\geq-1$.
\end{itemize}
Then we only need to consider $t_{1233}=1$ and at least one of $\{t_{1123},t_{1233}\}$ is not $0$. Next suppose $t_{1233}=1$, and at least one of $\{t_{1123},t_{1233}\}$ is not $0$.

\textbf{Case 1.}  $t_{1223}=1$, let $x=(-1,2,1)^{\top}$, then
$${\mathcal{T}}x^4\leq(x_1+x_2+x_3)^4-4(x_1^3x_2+x_2^3x_3+x_1^3x_3)=-4<0.$$

\textbf{Case 2.} $t_{1223}\neq1$ and $t_{1123}=-1$, Let $x=(-3,1,4)^{\top}$, then
$${\mathcal{T}}x^4=(x_1+x_2+x_3)^4-4(x_1^3x_2+x_2^3x_3+x_1^3x_3)-24(x_1^2x_2x_3+x_1x_2^2x_3)=-36<0.$$

\textbf{Case 3.} $t_{1223}=-1$ and $t_{1123}\neq-1$, or $t_{1223}=0$ and $t_{1123}=1$. Let $x=(1,-1,1)^{\top}$, then
$${\mathcal{T}}x^4\leq(x_1+x_2+x_3)^4-4(x_1^3x_2+x_2^3x_3+x_1^3x_3)-12x_1x_2^2x_3=-7<0.$$

($\romannumeral4$) We might take $t_{1112}=t_{2223}=t_{1113}=0$ and $t_{1222}=t_{2333}=t_{1333}=-1$. Then for $x=(x_1,x_2,x_3)^{\top}\in{\mathbb{R}}^3$
\begin{align*}
{\mathcal{T}}x^4=&x_1^4+x_2^4+x_3^4-4(x_1x_2^3+x_2x_3^3+x_1x_3^3)\\
&+12(t_{1123}x_1^2x_2x_3+t_{1223}x_1x_2^2x_3+t_{1233}x_1x_2x_3^2)+6(x_1^2x_2^2+x_1^2x_3^2+x_2^2x_3^2).
\end{align*}
Then, for $x=(6,2,1)^\top$, we have\begin{align*}
	{\mathcal{T}}x^4=-207+144(6t_{1123}+2t_{1223}+t_{1233})>0.
\end{align*}
So, the following cases could not occur,
 \begin{itemize}
    \item $t_{1123}=0$ and $t_{1223}\neq1$;
    \item $t_{1123}=0$, $t_{1223}=1$ and $t_{1233}=-1$;
	\item $t_{1123}=-1$.
\end{itemize}
Then we only need to consider $t_{1123}=0$ with $t_{1223}=1$ and $t_{1233}\neq-1$, or $t_{1123}=1$ with $t_{1223}\neq0$ or $t_{1233}\neq0$.

\textbf{Case 1.} $t_{1123}=0$, $t_{1223}=1$ and $t_{1233}\neq-1$. Let $x=(-1,4,2)^{\top}$, then
$${\mathcal{T}}x^4\leq x_1^4+x_2^4+x_3^4-4(x_1^3x_2+x_2^3x_3+x_1^3x_3)+12x_1x_2^2x_3+6(x_1^2x_2^2+x_1^2x_3^2+x_2^2x_3^2)=-95<0.$$

\textbf{Case 2.} $t_{1123}=1$.

\textbf{Subcase 2.1} When $t_{1233}=1$. Let $x=(4,-1,3)^{\top}$, then
$${\mathcal{T}}x^4\leq x_1^4+x_2^4+x_3^4-4(x_1^3x_2+x_2^3x_3+x_1^3x_3)+12(x_1^2x_2x_3+x_1x_2^2x_3+x_1x_2x_3^2)+6(x_1^2x_2^2+x_1^2x_3^2+x_2^2x_3^2)=-66<0.$$

\textbf{Subcase 2.2} When $t_{1233}\neq1$ and $t_{1223}=1$. Let $x=(4,3,-1)^{\top}$, then
$${\mathcal{T}}x^4\leq x_1^4+x_2^4+x_3^4-4(x_1^3x_2+x_2^3x_3+x_1^3x_3)+12(x_1^2x_2x_3+x_1x_2^2x_3)+6(x_1^2x_2^2+x_1^2x_3^2+x_2^2x_3^2)=-60<0.$$

\textbf{Subcase 2.3} When $t_{1233}=0$ and $t_{1223}=-1$. Let $x=(1,4,2)^{\top}$, then $${\mathcal{T}}x^4=x_1^4+x_2^4+x_3^4-4(x_1^3x_2+x_2^3x_3+x_1^3x_3)+12(x_1^2x_2x_3-x_1x_2^2x_3)+6(x_1^2x_2^2+x_1^2x_3^2+x_2^2x_3^2)=-59<0$$

\textbf{Subcase 2.4} When $t_{1233}=-1$ and $t_{1223}=0$. Let $x=(6,4,-5)^{\top}$, then
$${\mathcal{T}}x^4=x_1^4+x_2^4+x_3^4-4(x_1^3x_2+x_2^3x_3+x_1^3x_3)+12(x_1^2x_2x_3-x_1x_2x_3^2)+6(x_1^2x_2^2+x_1^2x_3^2+x_2^2x_3^2)=-263<0$$

\textbf{Subcase 2.5} When $t_{1233}=t_{1223}=-1$. Let $x=(1,1,1)^{\top}$, then
$${\mathcal{T}}x^4=x_1^4+x_2^4+x_3^4-4(x_1^3x_2+x_2^3x_3+x_1^3x_3)+12(x_1^2x_2x_3-x_1x_2^2x_3-x_1x_2x_3^2)+6(x_1^2x_2^2+x_1^2x_3^2+x_2^2x_3^2)=-3<0.$$
The necessity is proved.
\end{proof}

\begin{theorem}\label{th3.9}
Let ${\mathcal{T}}=(t_{ijkl})\in\widehat{{\mathcal{E}}}_{4,3}$ and $t_{iiij}t_{ijjj}=t_{jjjk}t_{jkkk}=t_{iiik}t_{ikkk}=t_{iiik}+t_{ikkk}=0$, $t_{iiij}+t_{ijjj},t_{jjjk}+t_{jkkk}\neq0$ for $i,j,k\in\{1,2,3\}$, $i\neq j$, $i\neq k$, $j\neq k$. Then $\mathcal{T}$ is positive definite if and only if one of the following conditions is satisfied.
\begin{itemize}
  \item [(a)]  When $t_{iiij},t_{jkkk}\in\{-1,1\}$,
  \begin{itemize}
    \item [(a$_1$)] $t_{1123}=t_{1223}=t_{1233}=0$, and $t_{iikk}\in\{0,1\}$, $t_{iijj}=t_{jjkk}=1$, or
    \item [(a$_2$)] $t_{ijjk}=\pm1$, $t_{iijk}=t_{ijkk}=0$, and $t_{1122}=t_{2233}=t_{1133}=1$, or
    \item [(a$_3$)] $t_{1122}=t_{2233}=t_{1133}=-t_{iiij}t_{jkkk}t_{ijjk}=1$, $t_{iijk}t_{jkkk}=-1$ with $t_{ijkk}=0$ or $t_{ijkk}t_{iiij}=-1$ with $t_{iijk}=0$.
  \end{itemize}

  \item [(b)] When $t_{iiij},t_{jjjk}\in\{-1,1\}$,
  \begin{itemize}
    \item [(b$_1$)] $t_{1123}=t_{1223}=t_{1233}=0$, and $t_{iikk}\in\{0,1\}$, $t_{iijj}=t_{jjkk}=1$, or
    \item [(b$_2$)] $t_{ijkk}=\pm1$, $t_{iijk}=t_{ijjk}=0$, and $t_{1122}=t_{2233}=t_{1133}=1$, or
    \item [(b$_3$)] $t_{1122}=t_{2233}=t_{1133}=-t_{iiij}t_{jjjk}t_{ijjk}=-t_{ijkk}t_{iiij}=1$, $t_{iijk}=0$.
  \end{itemize}

  \item [(c)] When $t_{ijjj},t_{jjjk}\in\{-1,1\}$, and $t_{ijjj}t_{jjjk}t_{ijjk}=1$,
  \begin{itemize}
    \item [(c$_1$)] $t_{ijjj}t_{ijkk}=t_{jjjk}t_{iijk}=t_{1122}=t_{2233}=t_{1133}=1$, or
    \item [(c$_2$)] $t_{iijk}=t_{ijkk}=0$, and $t_{iikk}\in\{0,1\}$, $t_{iijj}=t_{jjkk}=1$.
  \end{itemize}
\end{itemize}
\end{theorem}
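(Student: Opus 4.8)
The plan is to follow the scheme of Theorems~\ref{th3.1}, \ref{th3.3}, \ref{th3.5} and \ref{th3.8}. First I would unwind the standing hypothesis: $t_{iiik}t_{ikkk}=t_{iiik}+t_{ikkk}=0$ forces $t_{iiik}=t_{ikkk}=0$, while $t_{iiij}t_{ijjj}=0$ with $t_{iiij}+t_{ijjj}\ne0$ means exactly one of $t_{iiij},t_{ijjj}$ equals $\pm1$ and the other is $0$ (and likewise for the pair $(j,k)$). Using the invariance of $\widehat{\mathcal{E}}_{4,3}$ under permutations of $\{1,2,3\}$ and under the sign changes $x_m\mapsto-x_m$ (which flip every entry containing the index $m$ an odd number of times), I would normalise the ``doubly zero'' pair to $\{1,3\}$, so that $t_{1113}=t_{1333}=0$, and normalise the two surviving cubic entries to $+1$. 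The four possible positions of those two cubics collapse to the three canonical patterns of cases~(a), (b), (c), the fourth pattern mapping to~(b) under the transposition $1\leftrightarrow3$. In each pattern $\mathcal{T}x^4$ becomes an explicit quartic in $(x_1,x_2,x_3)$ in which only $t_{1122},t_{1133},t_{2233}$ and $t_{1123},t_{1223},t_{1233}$ remain free, and the proof then splits along the values of these.

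For necessity I would first invoke \eqref{eq:3} (the necessary condition coming from the two-dimensional principal subtensors via Lemma~\ref{lem:23}) to get $t_{iijj}\in\{0,1\}$, and by evaluating at $x=(1,1,1)^\top$ and a handful of other small integer vectors pin down $t_{1122}=t_{1133}=t_{2233}=1$ whenever the stated conclusion requires it and discard the forbidden sign patterns of $(t_{1123},t_{1223},t_{1233})$. Then, exactly as in the earlier theorems, for every residual configuration of the free cross entries that is \emph{not} among (a$_1$)--(c$_2$) I would exhibit a concrete test vector (of the type $(3,2,-2)^\top$, $(-1,4,3)^\top$, $(-5,1,3)^\top$ used before) at which $\mathcal{T}x^4\le0$, contradicting positive definiteness. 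In case~(c), where $t_{iiij}=t_{jkkk}=0$, one first shows that $t_{ijjj}t_{jjjk}t_{ijjk}=1$ is necessary and then reduces to (c$_1$) or (c$_2$); that split constitutes the bulk of case~(c).

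For sufficiency I would produce, for each listed condition, an explicit sum-of-squares (or ``square plus nonnegative remainder'') form of $\mathcal{T}x^4$ that is visibly $\ge0$ and vanishes only at the origin, hence positive definite. In the ``diagonal'' subcases (a$_1$), (b$_1$), (c$_2$), where $t_{1123}=t_{1223}=t_{1233}=0$, the polynomial factors into a sum of three squares of the type $(x_m^2+2t\,x_mx_{m'})^2$ plus $2(x_1^2x_2^2+x_1^2x_3^2+x_2^2x_3^2)$ (possibly with an extra $6t_{iikk}x_i^2x_k^2$), exactly of the shape of \eqref{th3.8a}. In the subcases carrying a single nonzero $t_{iijk}$ or $t_{ijkk}$ one completes squares using the $6(x_1^2x_2^2+x_1^2x_3^2+x_2^2x_3^2)$ contributed by $t_{iijj}=1$, leaving two or three squares plus a harmless $2(x_ix_j\pm x_jx_k)^2$ or $2x_m^2x_\ell^2$. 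For the genuinely tight configurations where no such elementary certificate exists, I would imitate the argument around \eqref{x33}: restrict first to the lines $x_m=0$ (handled by the two-dimensional Lemma~\ref{lem:23}); on the open stratum treat one coordinate as the leading variable, compute the inner determinants $\triangle_1^1,\triangle_3^1,\triangle_5^1,\triangle_7^1$ of Lemma~\ref{lem:25}, verify (splitting on the sign of $x_1x_2$, and for $\triangle_7^1$ passing to a univariate polynomial in $y=x_1/x_2$) that $\mathcal{T}x^4=0$ has no nonzero real solution, and conclude by Lemma~\ref{lem:21}.

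The main obstacle is the combinatorial bookkeeping: after the symmetry reduction each of (a), (b), (c) still unfolds into many subcases, each requiring either a bespoke SOS identity or a bespoke witness vector. The real work is guessing the correct square decompositions for the borderline configurations, recognising precisely which configurations are tight enough to need the Eilers--Arnon real-root count of Lemma~\ref{lem:25} rather than an elementary completion of squares, and then carrying out those several-variable determinant computations without error.
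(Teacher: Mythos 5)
Your proposal follows essentially the same route as the paper: the same reduction of the hypotheses to the three canonical placements (a), (b), (c) of the two surviving cubic entries via permutation/sign symmetry (with the fourth placement absorbed into (b) by $i\leftrightarrow k$), necessity via Eq.~\eqref{eq:3} plus the test vector $(1,1,1)^\top$ and bespoke integer witnesses, and sufficiency via explicit square decompositions for most subcases together with the inner-determinant real-root count of Lemma~\ref{lem:25} for the tight configurations (a$_3$) and (b$_3$). The plan is sound and correctly identifies which subcases need the Eilers--Jury machinery; what remains is only the (substantial) computational execution that the paper carries out case by case.
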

\begin{proof}
``\textbf{if (Sufficiency)}."
(a) Suppose $t_{iiij},t_{jkkk}\in\{-1,1\}$ and $t_{ijjj}=t_{jjjk}=t_{ikkk}=t_{iiik}=0$ for $i,j,k\in\{1,2,3\}$, $i\neq j$, $i\neq k$, $j\neq k$.

(a$_1$) If $t_{1123}=t_{1223}=t_{1233}=0$, and $t_{jjkk}\in\{0,1\}$, $t_{iikk}=t_{iijj}=1$. Then for $x=(x_1,x_2,x_3)^{\top}\in{\mathbb{R}}^3$,
\begin{align}
{\mathcal{T}}x^4&\geq x_1^4+x_2^4+x_3^4+4(t_{iiij}x_i^3x_j+t_{jkkk}x_jx_k^3)+6(x_i^2x_j^2+x_j^2x_k^2)\nonumber\\
 &= x_j^4+(x_i^2+2t_{iiij}x_ix_j)^2+(x_k^2+2t_{jkkk}x_jx_k)^2+2(x_i^2x_j^2+x_j^2x_k^2)\geq0.
\end{align}\label{th3.9a1}

(a$_2$) If $t_{ijjk}=\pm1$, $t_{iijk}=t_{ijkk}=0$, and $t_{1122}=t_{2233}=t_{1133}=1$. Then for $x=(x_1,x_2,x_3)^{\top}\in{\mathbb{R}}^3$,
\begin{align}
{\mathcal{T}}x^4=&(x_i^2+2t_{iiij}x_ix_j+x_k^2+2t_{jkkk}x_jx_k)^2+x_j^4+(t_{iiij}x_jx_k+t_{jkkk}x_ix_j-2x_ix_k)^2\nonumber\\
&+(x_jx_k+t_{ijjk}x_ix_j)^2\geq0,\label{th3.9a2}
\end{align}
when $t_{iiik}t_{jkkk}t_{ijjk}=1$,
\begin{align}\label{th3.9a3}
{\mathcal{T}}x^4=&(x_k^2+2t_{jkkk}x_jx_k-x_i^2-2t_{iiij}x_ix_j)^2+(t_{jkkk}x_ix_j+t_{iiij}x_jx_k+2x_ix_k)^2\nonumber\\
&+(t_{jkkk}x_jx_k-t_{iiij}x_ix_j)^2+(x_j^2+2t_{ijjk}x_ix_k)^2\geq0,
\end{align}
when $t_{iiij}t_{jkkk}t_{ijjk}=-1$.

(a$_3$) Let $t_{1122}=t_{2233}=t_{1133}=1$. If $t_{iiij}=t_{jkkk}=-t_{ijjk}=1$, and one of $\{t_{iijk},t_{ijkk}\}$ is $-1$, the other one is $0$. Without loss the generality, we might take $t_{1113}=t_{2333}=-t_{1123}=-t_{1223}=1$, $t_{1123}=0$. Then for $x=(x_1,x_2,x_3)^{\top}\in{\mathbb{R}}^3$,
\begin{equation}\label{x1}
{\mathcal{T}}x^4=(x_1^2+2x_1x_2-2x_2x_3-x_3^2)^2+2(2x_1x_3-x_1x_2+x_2x_3)^2+x_2^4-4x_1x_2x_3^2.
\end{equation}
From (\ref{x1}), we can deduce that ${\mathcal{T}}x^4>0$, when the signs of $x_1x_2<0$ are different.

By Lemma \ref{lem:23}, when $x_1=0$, or $x_2=0$, or $x_3=0$, ${\mathcal{T}}x^4\geq0$, with equality if and only if $x_1=x_2=x_3=0$.

According the method in \cite{nka}, rewrite (\ref{x1}) as
\begin{equation} \label{x11}
{\mathcal{T}}x^4=x_3^4+4x_2x_3^3+6(x_1^2+x_2^2)x_3^2-12(x_1^2x_2+x_1x_2^2)x_3+x_1^4+x_2^4+4x_1^3x_2+6x_1^2x_2^2.
\end{equation}
Then the inner determinants corresponding to (\ref{x11}) are
$$\triangle_1^1=4,\ \ ~~~~~~~~\triangle_3^1=-48x_1^2,$$
$$\triangle_5^1=-192\times[4x_1^6+(2x_1^3-x_1^2x_2)^2+2x_1^4x_2^2+(9x_1^2x_2+\frac{16}{3}x_1x_2^2)^2+3(3x_1x_2^2+x_2^3)^2+\frac{32}{9}x_1^2x_2^4],$$
\begin{align*}
\triangle_7^1&= 256\times[64x_1^{12}+192x_1^{11}x_2+912x_1^{10}x_2^2+5536x_1^9x_2^3+930x_1^8x_2^4+2904x_1^7x_2^5+(12x_1^4x_2^2\\
&~~~-13x_1^2x_2^4)^2 +216x_1^5x_2^7+2228x_1^4x_2^8+2028x_1^3x_2^9+990x_1^2x_2^{10}+252x_1x_2^{11}+37x_2^{12}].
\end{align*}
If $x_1x_2>0$, $\triangle_3^1<0, \ \ \triangle_5^1<0,\ \ \triangle_7^1>0.$
Then the number of distinct real roots $N$ of ${\mathcal{T}}x^4=0$ is
$$N=\mbox{var}[+,-,-,+,+]-\mbox{var}[+,+,-,-,+]=2-2=0,$$
when $x_1x_2>0$. Thus, an application of  Lemma \ref{lem:26} yields that ${\mathcal{T}}x^4=0$ if and only if $x=(0,0,0)^{\top}$. By Lemma \ref{lem:21}, ${\mathcal{T}}$ is positive definite.

When $t_{iiik}=t_{jjjk}=-1$, $t_{ijkk}=-1$ with one of $\{t_{iijk},t_{ijjk}\}$ is $1$, the other one is $0$, and
 $-t_{iiik}=t_{jjjk}=1$, $t_{ijkk}=1$ with $t_{iijk}=0$, $t_{ijjk}=1$, or $t_{iijk}=-1$, $t_{ijjk}=0$ are similar to (\ref{x1}), and by comparing it with (\ref{x1}) can infer that ${\mathcal{T}}x^4\geq0$ for any $x=(x_1,x_2,x_3)^{\top}\in{\mathbb{R}}^3$, and ${\mathcal{T}}x^4=0$ if and only if $x=(0,0,0)^{\top}$.

(b) Suppose $t_{iiij},t_{jjjk}\in\{-1,1\}$ and $t_{iiik}=t_{ijjj}=t_{ikkk}=t_{jkkk}=0$ for $i,j,k\in\{1,2,3\}$, $i\neq j$, $i\neq k$, $j\neq k$.

(b$_1$) If $t_{1123}=t_{1223}=t_{1233}=0$, and $t_{iikk}\in\{0,1\}$, $t_{iijj}=t_{jjkk}=1$. Then for $x=(x_1,x_2,x_3)^{\top}\in{\mathbb{R}}^3$,
\begin{align}\label{th3.9b1}
{\mathcal{T}}x^4&\geq x_1^4+x_2^4+x_3^4+4(t_{iiij}x_i^3x_k+t_{jjjk}x_j^3x_k)+6(x_i^2x_j^2+x_j^2x_k^2)\nonumber\\
&=(x_i^2+2t_{iiik}x_ix_j)^2+(x_j^2+2t_{jjjk}x_jx_k)^2+x_k^4+2(x_i^2x_j^2+x_j^2x_k^2)\geq0.
\end{align}

(b$_2$) If $t_{ijkk}=\pm1$, $t_{iijk}=t_{ijjk}=0$, and $t_{1122}=t_{2233}=t_{1133}=1$. Then for $x=(x_1,x_2,x_3)^{\top}\in{\mathbb{R}}^3$,
\begin{equation}\label{th3.9b2}
{\mathcal{T}}x^4=(x_i^2+2t_{iiij}x_ix_j+x_k^2)^2+(x_j^2+2t_{jjjk}x_jx_k+2t_{iiij}t_{jjjk}x_ix_k)^2+2(x_jx_k-t_{iiij}t_{jjjk}x_ix_j)^2\geq0,
\end{equation}
when $t_{iiij}t_{ijkk}=1$,
\begin{align}\label{th3.9b3}
{\mathcal{T}}x^4=&(x_i^2+2t_{iiij}x_ix_j-x_k^2)^2+(x_j^2+2t_{jjjk}x_jx_k-t_{iiij}t_{jjjk}x_ix_k)^2+(x_jx_k+t_{iiij}t_{jjjk}x_ix_j)^2\nonumber\\
&+(2x_ix_k+t_{ijkk}x_ix_k)^2+3x_i^2x_k^2+x_i^2x_j^2\geq0,
\end{align}
when $t_{iiij}t_{ijkk}=-1$.

(b$_3$) Let $t_{1122}=t_{2233}=t_{1133}=1$ and $t_{iijk}=0$. If $t_{iiij}=t_{jjjk}=-t_{ijjk}=1$. Without loss the generality, we might take $t_{1112}=t_{2223}=-t_{1233}=1$, then for $x=(x_1,x_2,x_3)^{\top}\in{\mathbb{R}}^3$,
\begin{align}\label{xx1}
{\mathcal{T}}x^4&=x_1^4+x_2^4+x_3^4+4(x_1^3x_2+x_2^3x_3)-12(x_1x_2x_3^2+x_1x_2^2x_3)+6(x_1^2x_2^2+x_2^2x_3^2+x_1^2x_3^2)\nonumber\\
&=x_1^4+4x_2x_1^3+6(x_2^2+x_3^2)x_1^2-12(x_2^2x_3+x_2x_3^2)x_1+x_2^4+x_3^4+4x_2^3x_3+6x_2^2x_3^2.
\end{align}

By Lemma \ref{lem:23}, when $x_1=0$, or $x_2=0$, or $x_3=0$, ${\mathcal{T}}x^4\geq0$, with equality if and only if $x_1=x_2=x_3=0$.

According the method in \cite{nka}, the inner determinants corresponding to (\ref{xx1}) are
$$\triangle_1^1=4,\ \ ~~~~~~~~\triangle_3^1=-48x_3^2,$$
\begin{align*}
\triangle_5^1&=-192\times[(\sqrt{2}x_2^3+\frac{9\sqrt{2}}{2}x_2^2x_3+6x_2x_3^2+2x_3^3)^2+(x_2^3+2x_3^3)^2+(\frac{44-29\sqrt{2}}{4}x_2^2x_3+4x_2x_3^2)^2\\
&~~~+(40-18\sqrt{2})x_2^2x_3^4+\frac{1180\sqrt{2}-1661}{8}x_2^4x_3^2],
\end{align*}
\begin{align*}\triangle_7^1&=257\times(37x_2^{12}+444x_2^{11}x_3+2334x_2^{10}x_3^2+6724x_2^9x_3^3+10605x_2^8x_3^4+6936x_2^7x_3^5-2616x_2^6x_3^6\\
&~~~-3744x_2^5x_3^7+3216x_2^4x_3^8+768x_2^3x_3^9+1152x_2^2x_3^{10}+64x_3^{12}).
\end{align*}
If $x_3\neq0$,
\begin{align*}
\triangle_7^1&=257x_3^{12}\times(37y^{12}+444y^{11}+2344y^{10}+6724y^9+10605y^8+6936y^7+2616y^6-3744y^5\\
&~~~+3216y^4+768y^3+1152y^2+64),
\end{align*}
where $y=\frac{x_2}{x_3}$. Using Lemma \ref{lem:26}  for $\frac{\triangle_7^1}{256x_3^{12}}$, and we get the number of distinct real roots of $\frac{\triangle_7^1}{256x_3^{12}}=0$ with each $x_3\neq0$ is $0$.
\iffalse
The inner determinants corresponding to $\frac{\triangle_7^1}{257x_3^{12}}$ are
$$\triangle_{1,7}^1=444>0,\ \ \triangle_{3,7}^1\approx3548448<0,\ \  \triangle_{5,7}^1\approx-6.4000\times10^{11}<0,$$ $$\triangle_{7,7}^1\approx-1.2642\times10^{17}>0,\ \
\triangle_{9,7}^1\approx3.6465\times10^{22}>0,\ \ \triangle_{11,7}^1\approx-4.0641\times10^{28}<0,$$ $$\triangle_{13,7}^1\approx-1.5539\times10^{35}<0,\ \ \triangle_{15,7}^1\approx2.6560\times10^{42}<0, \ \ \triangle_{17,7}^1\approx1.9368\times10^{50}>0,$$
$$\triangle_{19,7}^1\approx-3.5333\times10^{57}<0,\ \
\triangle_{21,7}^1\approx-2.0701\times10^{63}<0, \ \ \triangle_{23,7}^1\approx5.4409\times10^{68}>0.$$
The number of distinct real roots $N_7$ of $\frac{\triangle_7^1}{257x_3^{12}}=0$ with each $x_3\neq0$ is
$$N_7=\mbox{var}[+,-,+,+,-,-,-,+,+,-,-,+,+]-\mbox{var}[+,+,+,-,-,+,-,-,+,+,-,-,+]=6-6=0.$$
\fi
Since $\frac{\triangle_7^1}{257x_3^{12}}=64>0$ for $y=0$, $\frac{\triangle_7^1}{257x_3^{12}}>0$ for any $y\in{\mathbb{R}}$, thus $\triangle_7^1>0$ for any$x=(x_1,x_2)^{\top}\in{\mathbb{R}}^2$ and $x_2\neq0$.

Then if $x_1,x_2\neq0$, the number of distinct real roots $N$ of ${\mathcal{T}}x^4=0$ is
$$N=\mbox{var}[+,-,-,+,+]-\mbox{var}[+,+,-,-,+]=2-2=0.$$
Thus, ${\mathcal{T}}x^4=0$ if and only if $x=(0,0,0)^{\top}$. By Lemma \ref{lem:21}, ${\mathcal{T}}$ is positive definite.

Similarly, the conclusion is established under the condition that\begin{center}
	$t_{iiij}=t_{jjjk}=t_{ijjk}=-1$, $t_{iiij}=-t_{jjjk}=t_{ijjk}=1$ and $-t_{iiij}=t_{jjjk}=t_{ijjk}=1$.
\end{center}

(c) Suppose $t_{ijjj},t_{jjjk}\in\{-1,1\}$, $t_{iiik}=t_{iiij}=t_{ikkk}=t_{jkkk}=0$ and $t_{ijjj}t_{jjjk}t_{ijjk}=1$ for $i,j,k\in\{1,2,3\}$, $i\neq j$, $i\neq k$, $j\neq k$.

(c$_1$)  $t_{ijjj}t_{ijkk}=t_{jjjk}t_{iijk}=t_{1122}=t_{2233}=t_{1133}=1$. Then for $x=(x_1,x_2,x_3)^{\top}\in{\mathbb{R}}^3$,
\begin{align}\label{th3.9c1}
{\mathcal{T}}x^4&=(x_i^2-x_k^2)^2+(x_j^2+2t_{ijjj}x_ix_j+2t_{jjjk}x_jx_k+2t_{ijjk}x_ix_k)^2+2(x_ix_j+t_{iijk}x_ix_k)^2\nonumber\\
&~~~+2(x_ix_k+t_{ijkk}x_jx_k)^2\geq0.
\end{align}

(c$_2$) $t_{iijk}=t_{ijkk}=0$, and $t_{iikk}\in\{0,1\}$, $t_{iijj}=t_{jjkk}=1$. Then for $x=(x_1,x_2,x_3)^{\top}\in{\mathbb{R}}^3$,
\begin{equation}\label{th3.9c2}
{\mathcal{T}}x^4=x_i^4+x_k^4+(x_j^2+2t_{ijjj}x_ix_j+2t_{jjjk}x_jx_k)^2+2(x_ix_j+t_{ijjk}x_jx_k)^2\geq0.
\end{equation}

Furthermore, in ((\ref{th3.9a1})-(\ref{th3.9a3}), (\ref{th3.9b1})-(\ref{th3.9b3}), (\ref{th3.9c1}) and (\ref{th3.9c2}), it is easily verified that ${\mathcal{T}}x^4=0$ if and only if $x=(0,0,0)^{\top}$, and then, $\mathcal{T}$ is positive definite by Lemma \ref{lem:21}.

``\textbf{only if (Necessity)}." The conditions, 
$t_{iiij}t_{ijjj}=t_{jjjk}t_{jkkk}=t_{iiik}t_{ikkk}=t_{iiik}+t_{ikkk}=0$, and $t_{iiij}+t_{ijjj},t_{jjjk}+t_{jkkk}\neq0$ (that is, two of $\{t_{iiij},t_{ijjj},t_{jjjk},t_{jkkk}\}$ are not $0$) for $i,j,k\in\{1,2,3\}$, $i\neq j$, $i\neq k$, $j\neq k$, can be divided into three cases, 
\begin{itemize}
    \item [($\romannumeral1$)] $t_{iiij},t_{jkkk}\in\{-1,1\}$;
	\item [($\romannumeral2$)] $t_{iiij},t_{jjjk}\in\{-1,1\}$;
    \item [($\romannumeral3$)] $t_{ijjj},t_{jjjk}\in\{-1,1\}$.
\end{itemize}
And we only need to consider $t_{iiij}=t_{jkkk}=1$, $t_{iiij}=t_{jjjk}=1$, and $t_{ijjj}=t_{jjjk}=1$, respectively. It follows from the positive definiteness of $\mathcal{T}$ with Eq. \eqref{eq:3} that \begin{center}
	$t_{iijj}=t_{jjkk}=1$, $t_{iikk}\in\{0,1\}$.
\end{center}
And in the meantime, for $x=(1,1,1)^\top$, we have\begin{align*}
	{\mathcal{T}}x^4=23+12(t_{1123}+t_{1223}+t_{1233})+6t_{1133}>0.
\end{align*}
So, the following cases  could not occur, \begin{itemize}
	\item Two of $\{t_{1123},t_{1223},t_{1233}\}$ are $-1$, the third is $0$ and $t_{1133}=0$;
	\item $t_{1123}=t_{1223}=t_{1233}=-1$.
\end{itemize}
Next we use a proof by contradiction to prove the necessity.

($\romannumeral1$) We might take $t_{1112}=t_{2333}=1$ and $t_{1113}=t_{1333}=t_{1222}=t_{2223}=0$. Then for $x=(x_1,x_2,x_3)^{\top}\in{\mathbb{R}}^3$
\begin{align*}
{\mathcal{T}}x^4=&x_1^4+x_2^4+x_3^4+4(x_1^3x_2+x_2x_3^3)+12(t_{1123}x_1^2x_2x_3+t_{1223}x_1x_2^2x_3+t_{1233}x_1x_2x_3^2)\\
&+6(x_1^2x_2^2+x_2^2x_3^2+t_{1133}x_1^2x_3^2).
\end{align*}

\textbf{Case 1.} $t_{1123}=t_{1233}=1$. Let $x=(2,-1,2)^{\top}$, then
$${\mathcal{T}}x^4\leq x_1^4+x_2^4+x_3^4+4(x_1^3x_2+x_2x_3^3)+12(x_1^2x_2x_3+x_1x_2^2x_3+x_1x_2x_3^2)+6(x_1^2x_2^2+x_1^2x_3^2+x_2^2x_3^2)=-31<0.$$

\textbf{Case 2.} $t_{1123}\neq1$ or $t_{1233}\neq1$, and at least one of $\{t_{1123},t_{1223}t_{1233}\}$ is not $-1$. Without loss the generality, suppose $1>t_{1123}=\min\{t_{1123},t_{1233}\}$.

\textbf{Subcase 2.1} When $t_{1123}+1<t_{1223}+t_{1233}$. Let $x=(-1,1,1)^{\top}$, then
$${\mathcal{T}}x^4\leq x_1^4+x_2^4+x_3^4+4(x_1^3x_2+x_2x_3^3)+12(x_1x_2^2x_3+x_1x_2x_3^2)+6(x_1^2x_2^2+x_1^2x_3^2+x_2^2x_3^2)=-3<0.$$

\textbf{Subcase 2.2} When $t_{1123}=-1$ and $t_{1223}+t_{1233}=0$. Let $x=(-3,1,1)^{\top}$, then
$${\mathcal{T}}x^4\leq x_1^4+x_2^4+x_3^4+4(x_1^3x_2+x_2x_3^3)-12x_1^2x_2x_3+6(x_1^2x_2^2+x_1^2x_3^2+x_2^2x_3^2)=-15<0.$$

\textbf{Subcase 2.3} When $t_{1123}=t_{1223}=0$ and $t_{1233}=1$. Let $x=(1,-1,3)^{\top}$ then
$${\mathcal{T}}x^4\leq x_1^4+x_2^4+x_3^4+4(x_1^3x_2+x_2x_3^3)+12x_1x_2x_3^2+6(x_1^2x_2^2+x_1^2x_3^2+x_2^2x_3^2)=-23<0.$$

\textbf{Subcase 2.4} When $t_{1123}=t_{1233}=0$, $t_{1223}=1$ and $t_{1133}=0$. Let $x=(-2,1,1)^{\top}$, then
$${\mathcal{T}}x^4=x_1^4+x_2^4+x_3^4+4(x_1^3x_2+x_2x_3^3)+12x_1x_2^2x_3+6(x_1^2x_2^2+x_2^2x_3^2)=-4<0.$$

\textbf{Subcase 2.5} When $t_{1123}=0$ and $-t_{1223}=t_{1233}=1$. Let $x=(1,-1,1)^{\top}$ then
$${\mathcal{T}}x^4\leq  x_1^4+x_2^4+x_3^4+4(x_1^3x_2+x_2x_3^3)-12(x_1x_2^2x_3-x_1x_2x_3^2)+6(x_1^2x_2^2+x_1^2x_3^2+x_2^2x_3^2)=-11<0.$$

\textbf{Subcase 2.6} When $t_{1123}=t_{1233}=-1$ and $t_{1223}=0$. Let $x=(1,-1,1)^{\top}$, then
$${\mathcal{T}}x^4\leq x_1^4+x_2^4+x_3^4+4(x_1^3x_2+x_2x_3^3)-12(x_1^2x_2x_3+x_1x_2x_3^2)+6(x_1^2x_2^2+x_1^2x_3^2+x_2^2x_3^2)=-11<0.$$

\textbf{Subcase 2.7} When $t_{1123}=t_{1233}=0$, $t_{1223}=-1$ and $t_{1133}=0$. Let $x=(3,-1,3)^{\top}$, then
$${\mathcal{T}}x^4=x_1^4+x_2^4+x_3^4+4(x_1^3x_2+x_2x_3^3)-12x_1x_2^2x_3+6(x_1^2x_2^2+x_2^2x_3^2)=-53<0.$$

($\romannumeral2$) We might take $t_{1112}=t_{2223}=1$ and $t_{1113}=t_{1333}=t_{1222}=t_{2333}=0$. Then for $x=(x_1,x_2,x_3)^{\top}\in{\mathbb{R}}^3$
\begin{align*}
{\mathcal{T}}x^4=&x_1^4+x_2^4+x_3^4+4(x_1^3x_2+x_2^3x_3)+12(t_{1123}x_1^2x_2x_3+t_{1223}x_1x_2^2x_3+t_{1233}x_1x_2x_3^2)\\
&+6(x_1^2x_2^2+x_2^2x_3^2+t_{1133}x_1^2x_3^2).
\end{align*}

\textbf{Case 1.} $t_{1123}=1$.

\textbf{Subcase 1.1} When $t_{1233}=1$ or $t_{1223}=t_{1233}$. Let $x=(2,-1,1)^{\top}$, then
$${\mathcal{T}}x^4\leq x_1^4+x_2^4+x_3^4+4(x_1^3x_2+x_2^3x_3)+12x_1^2x_2x_3+6(x_1^2x_2^2+x_1^2x_3^2+x_2^2x_3^2)=-12<0.$$

\textbf{Subcase 1.2} When $t_{1233}\neq1$ and $t_{1223}=1$. Let $x=(2,3,-2)^{\top}$, then
$${\mathcal{T}}x^4\leq x_1^4+x_2^4+x_3^4+4(x_1^3x_2+x_2^3x_3)+12(x_1^2x_2x_3+x_1x_2^2x_3)+6(x_1^2x_2^2+x_1^2x_3^2+x_2^2x_3^2)=-199<0.$$

\textbf{Subcase 1.3} When $t_{1223}=-1$ and $t_{1233}=0$. Let $x=(1,-1,1)^{\top}$, then
$${\mathcal{T}}x^4\leq  x_1^4+x_2^4+x_3^4+4(x_1^3x_2+x_2^3x_3)+12(x_1^2x_2x_3-x_1x_2^2x_3)+6(x_1^2x_2^2+x_1^2x_3^2+x_2^2x_3^2)=-11<0.$$

\textbf{Subcase 1.4} When$t_{1223}=0$ and $t_{1233}=-1$. Let $x=(1,1,-1)^{\top}$, then
$${\mathcal{T}}x^4\leq  x_1^4+x_2^4+x_3^4+4(x_1^3x_2+x_2^3x_3)+12(x_1^2x_2x_3-x_1x_2x_3^2)+6(x_1^2x_2^2+x_1^2x_3^2+x_2^2x_3^2)=-3<0.$$

\textbf{Case 2.} $t_{1123}=0$.

\textbf{Subcase 2.1} When $t_{1223}=t_{1233}=1$. Let $x=(-3,2,3)^{\top}$, then
$${\mathcal{T}}x^4\leq x_1^4+x_2^4+x_3^4+4(x_1^3x_2+x_2^3x_3)+12(x_1x_2^2x_3+x_1x_2x_3^2)+6(x_1^2x_2^2+x_1^2x_3^2+x_2^2x_3^2)=-104<0.$$

\textbf{Subcase 2.2} When $t_{1223}=1$ and $t_{1233}\neq1$. Let $x=(1,3,-1)^{\top}$, then
$${\mathcal{T}}x^4\leq x_1^4+x_2^4+x_3^4+4(x_1^3x_2+x_2^3x_3)+12x_1x_2^2x_3+6(x_1^2x_2^2+x_1^2x_3^2+x_2^2x_3^2)=-7<0.$$

\textbf{Subcase 2.3} When $t_{1223}=0$, $t_{1233}\neq0$, and $t_{1133}=0$. Let $x=(-3,1,3)^{\top}$, then
$${\mathcal{T}}x^4=x_1^4+x_2^4+x_3^4+4(x_1^3x_2+x_2^3x_3)+12x_1x_2x_3^2+6(x_1^2x_2^2+x_2^2x_3^2)=-149<0$$
with $t_{1233}=-1$. Let $x=(2,1,-3)^{\top}$, then
$${\mathcal{T}}x^4=x_1^4+x_2^4+x_3^4+4(x_1^3x_2+x_2^3x_3)-12x_1x_2x_3^2+6(x_1^2x_2^2+x_2^2x_3^2)=-20<0$$
with $t_{1233}=-1$.

\textbf{Subcase 2.4} When $t_{1223}=-1$ and $t_{1233}\neq-1$. Let $x=(1,-3,1)^{\top}$, then
$${\mathcal{T}}x^4\leq x_1^4+x_2^4+x_3^4+4(x_1^3x_2+x_2^3x_3)-12x_1x_2^2x_3+6(x_1^2x_2^2+x_1^2x_3^2+x_2^2x_3^2)=-31<0.$$

\textbf{Subcase 3.} $t_{1123}=-1$.

\textbf{Subcase 3.1} When $t_{1223},t_{1233}\in\{0,1\}$. Let $x=(-3,1,1)^{\top}$, then
$${\mathcal{T}}x^4\leq x_1^4+x_2^4+x_3^4+4(x_1^3x_2+x_2^3x_3)-12x_1^2x_2x_3+6(x_1^2x_2^2+x_1^2x_3^2+x_2^2x_3^2)=-15<0.$$

\textbf{Subcase 3.2} When $t_{1223}=-1$ and $t_{1233}\neq-1$. Let $x=(1,-5,2)^{\top}$, then
$${\mathcal{T}}x^4\leq x_1^4+x_2^4+x_3^4+4(x_1^3x_2+x_2^3x_3)-12(x_1^2x_2x_3+x_1x_2^2x_3)+6(x_1^2x_2^2+x_1^2x_3^2+x_2^2x_3^2)=-84<0.$$

\textbf{Subcase 3.3} When $t_{1233}=-1$ and $t_{1223}\neq-1$. Let $x=(-5,2,1)^{\top}$, then
$${\mathcal{T}}x^4\leq x_1^4+x_2^4+x_3^4+4(x_1^3x_2+x_2^3x_3)-12(x_1^2x_2x_3+x_1x_2x_3^2)+6(x_1^2x_2^2+x_1^2x_3^2+x_2^2x_3^2)=-32<0.$$

($\romannumeral3$) We might take $t_{1222}=t_{2223}=1$ and $t_{1113}=t_{1112}=t_{2333}=t_{1333}=0$. Then for $x=(x_1,x_2,x_3)^{\top}\in{\mathbb{R}}^3$
\begin{align*}
{\mathcal{T}}x^4&=x_1^4+x_2^4+x_3^4+4(x_1x_2^3+x_2^3x_3)+12(t_{1123}x_1^2x_2x_3+t_{1223}x_1x_2^2x_3+t_{1233}x_1x_2x_3^2)\\
&+6(x_1^2x_2^2+x_2^2x_3^2+t_{1133}x_1^2x_3^2).
\end{align*}

\textbf{Case 1}  $t_{1223}\neq1$.

\textbf{Subcase 1.1} When $t_{1123}\geq t_{1223}$ or $t_{1233}\geq t_{1223}$, without loss the generality, suppose $t_{1123}\geq t_{1223}$. Let $x=(2,-7,1)^{\top}$, then
$${\mathcal{T}}x^4\leq x_1^4+x_2^4+x_3^4+4(x_1x_2^3+x_2^3x_3)-12x_1x_2x_3^2+6(x_1^2x_2^2+x_1^2x_3^2+x_2^2x_3^2)=-36<0.$$

\textbf{Subcase 1.2} When $t_{1123}<t_{1223}$ and $t_{1233}\geq t_{1223}$, i.e., $t_{1223}=0$ and $t_{1123}=t_{1233}=-1$. Let $x=(5,3,5)^{\top}$, then
$${\mathcal{T}}x^4\leq x_1^4+x_2^4+x_3^4+4(x_1x_2^3+x_2^3x_3)-12(x_1^2x_2x_3+x_1x_2x_3^2)+6(x_1^2x_2^2+x_1^2x_3^2+x_2^2x_3^2)=-139<0.$$

\textbf{Case 2} $t_{1223}=1$.

\textbf{Subcase 2.1} When $t_{1123}=t_{1233}=1$, and $t_{1133}=0$. Let $x=(2,-1,1)^{\top}$,
$${\mathcal{T}}x^4=x_1^4+x_2^4+x_3^4+4(x_1x_2^3+x_2^3x_3)+12(x_1^2x_2x_3+x_1x_2^2x_3+x_1x_2x_3^2)+6(x_1^2x_2^2+x_2^2x_3^2)=-12<0.$$

\textbf{Subcase 2.2} When only one of $\{t_{1123},t_{1233}\}$ is $1$, without loss the generality, suppose $t_{1233}=1$. Let $x=(-4,5,2)^{\top}$, then
$${\mathcal{T}}x^4\leq x_1^4+x_2^4+x_3^4+4(x_1x_2^3+x_2^3x_3)+12(x_1x_2^2x_3+x_1x_2x_3^2)+6(x_1^2x_2^2+x_1^2x_3^2+x_2^2x_3^2)=-79<0.$$

\textbf{Subcase 2.3} When $t_{1123}=t_{1233}=-1$. Let $x=(2,5,4)^{\top}$, then
$${\mathcal{T}}x^4\leq x_1^4+x_2^4+x_3^4+4(x_1x_2^3+x_2^3x_3)-12(x_1^2x_2x_3-x_1x_2^2x_3+x_1x_2x_3^2)+6(x_1^2x_2^2+x_1^2x_3^2+x_2^2x_3^2)=-79<0.$$

\textbf{Subcase 2.4} When one of $\{t_{1123},t_{1233}\}$ is $0$ and the other one is $-1$. Without loss the generality, suppose $t_{1123}=-1$ and $t_{1233}=0$. Let $x=(-2,1,1)^{\top}$, then
$${\mathcal{T}}x^4\leq x_1^4+x_2^4+x_3^4+4(x_1x_2^3+x_2^3x_3)-12(x_1^2x_2x_3-x_1x_2^2x_3)+6(x_1^2x_2^2+x_1^2x_3^2+x_2^2x_3^2)=-4<0.$$
The necessity is proved.
\end{proof}

\begin{theorem}\label{th3.10}
Let ${\mathcal{T}}=(t_{ijkl})\in\widehat{{\mathcal{E}}}_{4,3}$ and $t_{iiij}t_{ijjj}=t_{jjjk}t_{jkkk}=t_{iiik}t_{ikkk}=t_{jjjk}+t_{jkkk}=t_{iiik}+t_{ikkk}=0$, and $t_{iiij}+t_{ijjj}\neq0$ for $i,j,k\in\{1,2,3\}$, $i\neq j$, $i\neq k$, $j\neq k$ (i.e., only one of $\{t_{iiij},t_{ijjj},t_{jjjk},t_{jkkk},t_{iiik},t_{ikkk}\}$ is not $0$). Then $\mathcal{T}$ is positive definite with $t_{iiij}=\pm1$, if and only if one of the following conditions is satisfied.
\begin{itemize}
\item [(a)] $t_{1123}=t_{1223}=t_{1233}=0$, and $t_{iijj}=1$, $t_{iikk},t_{jjkk}\in\{0,1\}$.
\item [(b)] $t_{ijkk}=0$, $t_{iijk}t_{ijjk}t_{iiij}=1$, and $t_{1122}=t_{2233}=t_{1133}=1$.
\item [(c)] $t_{ijkk}=\pm1$, $t_{iijk}=t_{ijjk}=0$ and $t_{1122}=t_{2233}=t_{1133}=1$.
\item [(d)] $t_{iijk}=t_{ijkk}=0$, $t_{ijjk}=\pm1$ and $t_{1122}=t_{2233}=t_{1133}=1$.
\end{itemize}
\end{theorem}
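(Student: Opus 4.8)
The plan is to follow the pattern of Theorems~\ref{th3.1}--\ref{th3.9}. First I would exploit the invariance of positive definiteness under permutations of the coordinates to assume, without loss of generality, that $(i,j,k)=(1,2,3)$, so that $t_{1112}=\pm1$ and $t_{1222}=t_{1113}=t_{1333}=t_{2223}=t_{2333}=0$; then for $x=(x_1,x_2,x_3)^{\top}\in{\mathbb{R}}^3$,
$$
{\mathcal{T}}x^4=x_1^4+x_2^4+x_3^4+4t_{1112}x_1^3x_2+12\bigl(t_{1123}x_1^2x_2x_3+t_{1223}x_1x_2^2x_3+t_{1233}x_1x_2x_3^2\bigr)+6\bigl(t_{1122}x_1^2x_2^2+t_{1133}x_1^2x_3^2+t_{2233}x_2^2x_3^2\bigr).
$$
Since $t_{1112}\neq0$, applying Eq.~\eqref{eq:3} to the $\{1,2\}$ principal subtensor forces $t_{1122}=1$, while the $\{1,3\}$ and $\{2,3\}$ subtensors give $t_{1133},t_{2233}\in\{0,1\}$; this pins down the degree-$(2,2)$ entries and leaves free only $t_{1123},t_{1223},t_{1233}\in\{-1,0,1\}$ (together with $t_{1133},t_{2233}$ in case (a)). The coordinate sign changes $x_\ell\mapsto-x_\ell$ act on $(t_{1112},t_{1123},t_{1223},t_{1233})$ in a controlled way, so one may further normalize, e.g.\ take $t_{1112}=1$.

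For \emph{sufficiency} I would, as in Theorems~\ref{th3.1}--\ref{th3.9}, write ${\mathcal{T}}x^4$ as a sum of squares plus manifestly nonnegative terms and check the representation vanishes only at $x=0$, so that positive definiteness follows from Lemma~\ref{lem:21}. Cases (a) and (c) are immediate: in (a), ${\mathcal{T}}x^4=(x_1^2+2t_{1112}x_1x_2)^2+2x_1^2x_2^2+x_2^4+x_3^4+6t_{1133}x_1^2x_3^2+6t_{2233}x_2^2x_3^2$, and in (c), after normalizing, ${\mathcal{T}}x^4=(x_1^2+2x_1x_2)^2+2x_1^2x_2^2+x_2^4+x_3^4+6x_3^2(x_1\pm x_2)^2$. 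Cases (b) and (d), where the trilinear cross term couples to the $x_1^3x_2$ term, need a more careful square completion after the sign normalization ($t_{1112}=t_{iijk}t_{ijjk}=1$ in (b), $t_{1112}=t_{ijjk}=1$ in (d)); if a transparent SOS is not available, I would fall back on the device used in Theorems~\ref{th3.8} and~\ref{th3.9}: rewrite ${\mathcal{T}}x^4$ as a quartic in a single variable, compute the inner determinants $\triangle_1^1,\triangle_3^1,\triangle_5^1,\triangle_7^1$, and apply Lemma~\ref{lem:25} to conclude it has no real root for the relevant parameters, handling the coordinate planes $x_\ell=0$ via Lemmas~\ref{lem:23} and~\ref{lem:21}.

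For \emph{necessity} I would argue by contradiction: assuming $\mathcal{T}$ positive definite and that none of (a)--(d) holds, I would exhibit a rational $x$ with ${\mathcal{T}}x^4<0$. Evaluating first at $x=(1,1,1)^{\top}$ (and its sign variants) already excludes the configurations for which $2(t_{1123}+t_{1223}+t_{1233})+(t_{1133}+t_{2233})$ is too small; the remaining finitely many choices of $(t_{1123},t_{1223},t_{1233})$ and of $t_{1133},t_{2233}$ are then grouped into a handful of cases and subcases, for each of which a tailored small-integer test vector is produced that makes ${\mathcal{T}}x^4$ negative, exactly in the style of Theorems~\ref{th3.8}--\ref{th3.9}. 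The general $(i,j,k)$ statement is finally recovered from the normalized one by the permutation symmetry noted at the outset.

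The main obstacle is the necessity direction: it is a long but routine finite case analysis, and the real work is organizing the parameter configurations so that each excluded one is killed by a single explicitly displayed negative test vector. A secondary, smaller difficulty is the sum-of-squares certificates for (b) and (d): unlike (a) and (c), the coupling between $x_1^3x_2$ and the trilinear term there may force one to invoke the determinant criterion of Lemma~\ref{lem:25} in place of a one-line square completion.
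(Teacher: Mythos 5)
Your proposal follows essentially the same route as the paper's proof: normalize to $t_{1112}=1$ with all other $t_{iiij}$-type entries zero, use Eq.~\eqref{eq:3} to pin down $t_{1122}=1$ and $t_{1133},t_{2233}\in\{0,1\}$, certify sufficiency by explicit sum-of-squares decompositions (your formulas for (a) and (c) coincide with the paper's (\ref{th3.10a}) and (\ref{th3.10c})) with the inner-determinant criterion of Lemma~\ref{lem:25} as the fallback for the coupled case, and establish necessity by the $x=(1,1,1)^{\top}$ test followed by a finite case analysis with small-integer test vectors. The only minor difference is that the paper manages an explicit square completion for case (d) as well (equation (\ref{th3.10d})) and reserves the determinant machinery solely for case (b), which is consistent with the contingency you describe.
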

\begin{proof}
Let $t_{iiij}\neq0$ and $t_{iiik}=t_{ijjj}=t_{ikkk}=t_{jjjk}=t_{jkkk}=0$ for $i,j,k\in\{1,2,3\}$, $i\neq j$, $i\neq k$, $j\neq k$.

``\textbf{if (Sufficiency)}."
(a) For $x=(x_1,x_2,x_3)^{\top}\in{\mathbb{R}}^3$, we have
\begin{equation}\label{th3.10a}
{\mathcal{T}}x^4\geq x_1^4+x_2^4+x_3^4+4t_{iiij}x_i^3x_j+6x_i^2x_j^2=(x_i^2+2t_{iiij}x_ix_j)^2+x_j^4+x_k^4+2x_i^2x_j^2\geq0.
\end{equation}

(b) If $t_{ijkk}=0$, $t_{iijk}t_{ijjk}t_{iiij}=1$, and $t_{1122}=t_{2233}=t_{1133}=1$. When $t_{iijk}=t_{ijjk}=t_{iiij}=1$.
Without loss the generality, we might take $t_{1112}=1$. Then when $t_{1233}=0$, and $t_{1123}=t_{1223}=1$, for $x=(x_1,x_2,x_3)^{\top}\in{\mathbb{R}}^3$
\begin{eqnarray}\label{x3-3}
{\mathcal{T}}x^4=x_1^4+x_2^4+x_3^4+4x_1^3x_2+12(x_1^2x_2x_3+x_1x_2^2x_3)+6(x_1^2x_2^2+x_2^2x_3^2+x_1^2x_3^2).
\end{eqnarray}

By Lemma \ref{lem:23}, when $x_1=0$, or $x_2=0$, or $x_3=0$, ${\mathcal{T}}x^4\geq0$, with equality if and only if $x_1=x_2=x_3=0$.

According the method in \cite{nka}, rewrite (\ref{x3-3}) as
\begin{equation} \label{x3-33}
{\mathcal{T}}x^4=x_3^4+6(x_1^2+x_2^2)x_3^2+12(x_1^2x_2+x_1x_2^2)x_3+x_1^4+x_2^4+4x_1x_2^3+6x_1^2x_2^2.
\end{equation}
Then the inner determinants corresponding to (\ref{x3-33}) are
$$\triangle_1^1=4,\ \ ~~~~~~~~\triangle_3^1=-48(x_1^2+x_2^2),$$
$$\triangle_5^1=-192\times[4x_1^6+(2x_1^3-x_1^2x_2)^2+21x_1^4x_2^2+25(x_1^2x_2+x_1x_2^2)^2+22x_1^2x_2^4+8x_2^6],$$
\begin{align*}
\triangle_7^1&=256\times(64x_1^{12}+192x_1^{11}x_2+336x_1^{10}x_2^2+2296x_1^9x_2^3+4845x_1^8x_2^4-588x_1^7x_2^5-3570x_1^6x_2^6\\
&~~~-2628x_1^5x_2^7 +1245x_1^4x_2^8+192x_1^3x_2^9+576x_1^2x_2^{10}+64x_2^{12}).
\end{align*}

If $x_2\neq0$,
\begin{align*}
\triangle_7^1&=256x_2^{12}\times(64y^{12}+192y^{11}+336y^{10}+2296y^9+4845y^8-588y^7-3570y^6-2628y^5\\
&~~~+1245y^4+192y^3+576y^2+64),
\end{align*}
where $y=\frac{x_1}{x_2}$. Using Lemma \ref{lem:26} for $\frac{\triangle_7^1}{256x_2^{12}}$, and we get the number of distinct real roots of $\frac{\triangle_7^1}{256x_3^{12}}=0$ with each $x_2\neq0$ is $0$.
\iffalse
The inner determinants corresponding to $\frac{\triangle_7^1}{256x_2^{12}}$ are
$$\triangle_{1,7}^1=768>0,\ \ \triangle_{3,7}^1\approx-7077888<0,\ \  \triangle_{5,7}^1\approx-1.0138\times10^{14}<0,$$ $$\triangle_{7,7}^1\approx-5.9504\times10^{20}<0,\ \
\triangle_{9,7}^1\approx1.3031\times10^{28}>0,\ \ \triangle_{11,7}^1\approx7.91488\times10^{35}>0,$$ $$\triangle_{13,7}^1\approx-1.3526\times10^{43}<0,\ \ \triangle_{15,7}^1\approx-2.3800\times10^{49}<0, \ \ \triangle_{17,7}^1\approx5.3721\times10^{57}>0,$$
$$\triangle_{19,7}^1\approx-1.6700\times10^{64}<0,\ \
\triangle_{21,7}^1\approx3.8350\times10^{64}>0, \ \ \triangle_{23,7}^1\approx1.0513\times10^{75}>0.$$
The number of distinct real roots $N_7$ of $\frac{\triangle_7^1}{256x_2^{12}}=0$ with each $x_2\neq0$ is
$$N_7=\mbox{var}[+,-,-,+,-,-,+,+,-,-,-,-,+]-\mbox{var}[+,+,-,-,-,+,+,-,-,+,-,+,+]=6-6=0.$$
\fi
Since $\frac{\triangle_7^1}{256x_2^{12}}=37>0$ for $y=0$, $\frac{\triangle_7^1}{256x_2^{12}}>0$ for any $y\in{\mathbb{R}}$, thus $\triangle_7^1>0$ for any $x=(x_1,x_2)^{\top}\in{\mathbb{R}}^2$ and $x_2\neq0$.

Then if $x_1,x_2\neq0$, the number of distinct real roots $N$ of ${\mathcal{T}}x^4=0$ is
$$N=\mbox{var}[+,-,-,+,+]-\mbox{var}[+,+,-,-,+]=2-2=0.$$
Thus, ${\mathcal{T}}x^4=0$ if and only if $x=(0,0,0)^{\top}$. By Lemma \ref{lem:21}, ${\mathcal{T}}$ is positive definite.  

Similarly, we also build the  positive definiteness of ${\mathcal{T}}$   when $-t_{iijk}=-t_{ijjk}=t_{iiij}=1$, $t_{iijk}=-t_{ijjk}=-t_{iiij}=1$ and $-t_{iijk}=t_{ijjk}=-t_{iiij}=1$.

(c) If $t_{ijkk}=\pm1$, $t_{iijk}=t_{ijjk}=0$ and $t_{1122}=t_{2233}=t_{1133}=1$, then for $x=(x_1,x_2,x_3)^{\top}\in{\mathbb{R}}^3$,
\begin{equation}\label{th3.10c}
{\mathcal{T}}x^4=(x_i^2+2t_{iiij}x_ix_j)^2+x_j^4+x_k^4+6(x_ix_k+t_{ijkk}x_jx_k)^2+2x_i^2x_j^2\geq0.
\end{equation}

(d) If $t_{iijk}=t_{ijkk}=0$, $t_{ijjk}=\pm1$ and $t_{1122}=t_{2233}=t_{1133}=1$.Then for $x=(x_1,x_2,x_3)^{\top}\in{\mathbb{R}}^3$,
\begin{align}\label{th3.10d}
{\mathcal{T}}x^4&=(x_i^2+2t_{iiij}x_ix_j+2t_{iiij}t_{ijjk}x_jx_k)^2+(x_j^2+2t_{ijjk}x_ix_k)^2+x_k^4\nonumber\\
&~~~+2(x_ix_j-t_{iiij}t_{ijjk}x_jx_k)^2+2x_i^2x_k^2\geq0.
\end{align}

Furthermore, in (\ref{th3.10a}), (\ref{th3.10c}) and (\ref{th3.10d}), it is easily verified that ${\mathcal{T}}x^4=0$ if and only if $x=(0,0,0)^{\top}$, and then, $\mathcal{T}$ is positive definite by Lemma \ref{lem:21}.

``\textbf{only if (Necessity)}."  Similar to the prove of Theorem \ref{th3.3}, we only need to consider $t_{iiij}=1$. And it follows from the positive definiteness of $\mathcal{T}$ with Eq. \eqref{eq:3} that \begin{center}
	$t_{iijj}=1$, $t_{iikk},t_{jjkk}\in\{0,1\}$.
\end{center}
Without loss the generality, we might take $t_{1112}=1$ and $t_{1113}=t_{1222}=t_{2223}=t_{2333}=t_{1333}=0$. Then for $x=(x_1,x_2,x_3)^{\top}\in{\mathbb{R}}^3$
\begin{align*}
{\mathcal{T}}x^4=&x_1^4+x_2^4+x_3^4+4x_1^3x_2+12(t_{1123}x_1^2x_2x_3+t_{1223}x_1x_2^2x_3+t_{1233}x_1x_2x_3^2)\\
&+6(x_1^2x_2^2+t_{1133}x_1^2x_3^2+t_{2233}x_2^2x_3^2).
\end{align*}
For $x=(1,1,1)^\top$, we have\begin{align*}
	{\mathcal{T}}x^4=&13+12(t_{1123}+t_{1223}+t_{1233})+6(t_{1133}+t_{2233})>0,
\end{align*}
So, the following cases  could not occur,
\begin{itemize}
	\item Two of $\{t_{1123},t_{1223},t_{1233}\}$ are $-1$ and the third one is $0$, and $t_{1133}+t_{2233}<2$;
	\item $t_{1123}=t_{1223}=t_{1233}=-1$.
\end{itemize}
%And ${\mathcal{T}}x^4<0$ with $x=(1,1,1)^{\top}$, or $x=(-2,1,1)^{\top}$, or $x=(1,-1,0)^{\top}$, or $x=(2,-1,1)^{\top}$, or $x=(1,1,-1)^{\top}$, or %$x=(3,2,5)^{\top}$, or $x=(1,2,2)^{\top}$, or $x=(1,-1,1)^{\top}$, or $x=(-1,1,1)^{\top}$, or $x=(-4,2,1)^{\top}$, or $x=(4,-2,1)^{\top}$, or %$x=(1,2,-2)^{\top}$, expect these situations which satisfy $(a)-(d)$.
Next we use a proof by contradiction to prove the necessity.

\textbf{Case 1.} $t_{1123}+1<t_{1223}+t_{1233}$. Let $x=(-1,1,1)^{\top}$, then
$${\mathcal{T}}x^4\leq x_1^4+x_2^4+x_3^4+4x_1^3x_2+12(x_1x_2^2x_3+x_1x_2x_3^2)+6(x_1^2x_2^2+x_1^2x_3^2+x_2^2x_3^2)=-7<0.$$
$t_{1223}+1<t_{1123}+t_{1233}$ is similarly.

\textbf{Case 2.} When $t_{1123}+1=t_{1223}+t_{1233}$ and $t_{1223}+1=t_{1123}+t_{1233}$, i.e., $t_{1233}=1$ and $t_{1123}=t_{1223}$.

\textbf{Subcase 2.1} When $t_{1123}=t_{1223}=1$. Let $x=(2,-1,1)^{\top}$, then
$${\mathcal{T}}x^4\leq x_1^4+x_2^4+x_3^4+4x_1^3x_2+12(x_1^2x_2x_3+x_1x_2^2x_3+x_1x_2x_3^2)+6(x_1^2x_2^2+x_1^2x_3^2+x_2^2x_3^2)=-8<0.$$

\textbf{Subcase 2.2} When $t_{1123}=t_{1223}=-1$. Let $x=(-2,1,1)^{\top}$, then
$${\mathcal{T}}x^4\leq x_1^4+x_2^4+x_3^4+4x_1^3x_2-12(x_1^2x_2x_3+x_1x_2^2x_3-x_1x_2x_3^2)+6(x_1^2x_2^2+x_1^2x_3^2+x_2^2x_3^2)=-8<0$$

\textbf{Subcase 2.3} When $t_{1123}=t_{1223}=0$, and $t_{2233}=0$ or $t_{1133}=0$. If $t_{2233}=0$, let $x=(1,-1,2)^{\top}$, then
$${\mathcal{T}}x^4\leq x_1^4+x_2^4+x_3^4+4x_1^3x_2+12x_1x_2x_3^2+6(x_1^2x_2^2+x_1^2x_3^2)=-4<0,$$
$t_{1133}=0$ is same.

\textbf{Case 3.} $t_{1123}+1=t_{1223}+t_{1233}$ and $t_{1223}+1>t_{1123}+t_{1233}$.

\textbf{Subcase 3.1} When $t_{1123}=t_{1233}=0$, $t_{1223}=1$, and $t_{2233}=0$ or $t_{1133}=0$. If $t_{2233}=0$, let $x=(-1,1,1)^{\top}$, then
$${\mathcal{T}}x^4\leq x_1^4+x_2^4+x_3^4+4x_1^3x_2+12x_1x_2^2x_3+6(x_1^2x_2^2+x_1^2x_3^2)=-1<0.$$
$t_{1133}=0$ is same.

\textbf{Subcase 3.2} When $t_{1123}=-1$ and $t_{1223}+t_{1233}=0$. Let $x=(-2,1,1)^{\top}$, then
$${\mathcal{T}}x^4\leq x_1^4+x_2^4+x_3^4+4x_1^3x_2-12x_1^2x_2x_3+6(x_1^2x_2^2+x_1^2x_3^2+x_2^2x_3^2)=-8<0.$$

\textbf{Case 4.} $t_{1123}+1>t_{1223}+t_{1233}$ and $t_{1223}+1=t_{1123}+t_{1233}$.

\textbf{Subcase 4.1} When $t_{1223}=t_{1233}=0$ and $t_{1123}=1$. Let $x=(8,-3,1)^{\top}$, then
$${\mathcal{T}}x^4\leq x_1^4+x_2^4+x_3^4+4x_1^3x_2+12x_1^2x_2x_3+6(x_1^2x_2^2+x_1^2x_3^2+x_2^2x_3^2)=-376<0.$$

\textbf{Subcase 4.2} When $t_{1223}=-1$, $t_{1123}=t_{1233}=0$, and $t_{2233}=0$ or $t_{1133}=0$. When $t_{2233}=0$, let $x=(1,-1,1)^{\top}$, then
$${\mathcal{T}}x^4\leq x_1^4+x_2^4+x_3^4+4x_1^3x_2-12x_1x_2^2x_3+6(x_1^2x_2^2+x_1^2x_3^2)=-1<0.$$
$t_{1133}=0$ is same.

\textbf{Subcase 4.3} When $t_{1223}=t_{1233}=-1$ and $t_{1123}=1$. Let $x=(2,-1,1)^{\top}$, then
$${\mathcal{T}}x^4\leq x_1^4+x_2^4+x_3^4+4x_1^3x_2+12(x_1^2x_2x_3-x_1x_2^2x_3-x_1x_2x_3^2)+6(x_1^2x_2^2+x_1^2x_3^2+x_2^2x_3^2)=-8<0.$$

\textbf{Case 5.} $t_{1123}+1>t_{1223}+t_{1233}$ and $t_{1223}+1>t_{1123}+t_{1233}$, i.e., $t_{1233}=0$ and $t_{1123}=t_{1223}$, or $t_{1233}=-1$ and $t_{1123}-1\leq t_{1223}\leq t_{1123}+1$.

\textbf{Subcase 5.1} When $t_{1233}=0$, $t_{1123}=t_{1223}=1$, and $t_{2233}=0$ or $t_{1133}=0$ now. When $t_{2233}=0$, let $x=(0,-1,1)^{\top}$, then
$${\mathcal{T}}x^4\leq x_1^4+x_2^4+x_3^4+4x_1^3x_2+12(x_1^2x_2x_3+x_1x_2^2x_3)+6(x_1^2x_2^2+x_1^2x_3^2)=-4<0.$$
$t_{1133}=0$ is same.

\textbf{Subcase 5.2} When $-t_{1233}=t_{1223}=1$ and $t_{1123}\in\{0,1\}$. Let $x=(1,2,-2)^{\top}$, then
$${\mathcal{T}}x^4\leq  x_1^4+x_2^4+x_3^4+4x_1^3x_2+12(x_1x_2^2x_3-x_1x_2x_3^2)+6(x_1^2x_2^2+x_1^2x_3^2+x_2^2x_3^2)=-7<0.$$

\textbf{Subcase 5.3} When $t_{1233}=t_{1223}=-1$ and $t_{1123}=0$. Let $x=(2,3,3)^{\top}$, then
$${\mathcal{T}}x^4\leq x_1^4+x_2^4+x_3^4+4x_1^3x_2-12(x_1x_2^2x_3+x_1x_2x_3^2)+6(x_1^2x_2^2+x_1^2x_3^2+x_2^2x_3^2)=-104<0.$$

\textbf{Subcase 5.4} When $t_{1233}=t_{1123}=-1$ and $t_{1223}=0$. Let $x=(-4,2,1)^{\top}$, then
$${\mathcal{T}}x^4\leq x_1^4+x_2^4+x_3^4+4x_1^3x_2-12(x_1^2x_2x_3+x_1x_2x_3^2)+6(x_1^2x_2^2+x_1^2x_3^2+x_2^2x_3^2)=-23<0.$$

\textbf{Subcase 5.5} When $-t_{1233}=t_{1123}=-1$ and $t_{1223}=0$. Let $x=(4,-2,1)^{\top}$, then
$${\mathcal{T}}x^4\leq  x_1^4+x_2^4+x_3^4+4x_1^3x_2+12(x_1^2x_2x_3-x_1x_2x_3^2)+6(x_1^2x_2^2+x_1^2x_3^2+x_2^2x_3^2)=-23<0.$$

\textbf{Subcase 5.6} When $t_{1233}=-1$, $t_{1123}=t_{1223}=0$, and $t_{2233}=0$ or $t_{1133}=0$. Let $x=(1,2,2)^{\top}$, then
$${\mathcal{T}}x^4\leq x_1^4+x_2^4+x_3^4+4x_1^3x_2-12x_1x_2x_3^2+6(x_1^2x_2^2+x_1^2x_3^2)=-7<0$$
with $t_{2233}=0$. Let $x=(3,2,5)^{\top}$, then
$${\mathcal{T}}x^4\leq x_1^4+x_2^4+x_3^4+4x_1^3x_2-12x_1x_2x_3^2+6(x_1^2x_2^2+x_2^2x_3^2)=-46<0$$
with $t_{1133}=0$. The necessity is proved.
\end{proof}

From Theorem \ref{th3.1}, Theorem \ref{th3.8}, Theorem \ref{th3.9}, and Theorem \ref{th3.10}, we have the following corollary.
\begin{corollary}\label{cor2}
Let $t_{iiii}=1$ and $t_{iiij}t_{ijjj}=0$, for all $i,j\in\{1,2,3\}$, $i\neq j$. Then $\mathcal{T}$ is positive definite if and only if conditions in Theorem \ref{th3.1}, or Theorem \ref{th3.8}, or Theorem \ref{th3.9}, or Theorem \ref{th3.10} are satisfied.
\end{corollary}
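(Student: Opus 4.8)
The plan is to read Corollary~\ref{cor2} as a pure case-merging consequence of the four preceding theorems, so that the work is almost entirely organisational. I would begin with two preliminary remarks. First, the assumptions $t_{iiii}=1$ for $i\in\{1,2,3\}$ and $t_{ijkl}\in\{-1,0,1\}$ say exactly that $\mathcal{T}\in\widehat{\mathcal{E}}_{4,3}$, so every one of Theorems~\ref{th3.1}, \ref{th3.8}, \ref{th3.9}, \ref{th3.10} is applicable once its extra hypothesis on the cubic entries is met. Second, positive definiteness is invariant under relabelling $\{1,2,3\}$: for a permutation $\sigma$ the relabelled tensor $\sigma\cdot\mathcal{T}$ satisfies $(\sigma\cdot\mathcal{T})x^4=\mathcal{T}(x_\sigma)^4$ with $x_\sigma=(x_{\sigma(1)},x_{\sigma(2)},x_{\sigma(3)})^\top$, so $\mathcal{T}$ is positive definite iff $\sigma\cdot\mathcal{T}$ is. Hence I may freely permute indices to match the index conventions used in Theorems~\ref{th3.9} and \ref{th3.10}.

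Next I would set up the partition. To each unordered pair $p=\{i,j\}\subset\{1,2,3\}$ associate the two cubic entries $t_{iiij},t_{ijjj}$; the standing hypothesis $t_{iiij}t_{ijjj}=0$ forces at least one of them to be $0$, so each $p$ is either \emph{degenerate} (both equal $0$) or \emph{active} ($t_{iiij}+t_{ijjj}\neq0$, i.e.\ exactly one equals $\pm1$). Let $r\in\{0,1,2,3\}$ be the number of active pairs. Then:
\begin{itemize}
\item if $r=0$, all six entries $t_{iiij}$ vanish, which is the hypothesis of Theorem~\ref{th3.1};
\item if $r=3$, all three pairs are active, which is the hypothesis of Theorem~\ref{th3.8};
\item if $r=2$, since any two edges of the triangle on $\{1,2,3\}$ share a vertex, after relabelling the two active pairs are $\{i,j\}$ and $\{j,k\}$ and the degenerate one is $\{i,k\}$, which is exactly the hypothesis of Theorem~\ref{th3.9};
\item if $r=1$, after relabelling the unique active pair is $\{i,j\}$ (and after a further $i\leftrightarrow j$ swap if needed we may take $t_{iiij}=\pm1$) while the other two pairs are degenerate, which is exactly the hypothesis of Theorem~\ref{th3.10}.
\end{itemize}
These four hypotheses are pairwise disjoint, since they are distinguished by the value of $r$, and within the class $\{t_{iiii}=1,\ t_{iiij}t_{ijjj}=0\}$ they are jointly exhaustive.

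To finish, in each of the four cases the cited theorem is an ``iff'' characterisation of positive definiteness. For the ``only if'' direction, a positive definite $\mathcal{T}$ satisfying the standing hypothesis lands (after a relabelling) in exactly one of the four regimes, and therefore meets the corresponding theorem's listed conditions; for the ``if'' direction, a $\mathcal{T}$ meeting the conditions of one of the four theorems automatically lies in that theorem's regime and so is positive definite by that theorem. I do not expect a genuine obstacle here: no new sum-of-squares identity or polynomial-root computation is needed, and the only points requiring care are the disjointness/exhaustiveness of the four regimes and the legitimacy of the relabellings in the $r=1,2$ cases, both of which follow at once from the count $r$ and from the permutation-invariance noted at the outset.
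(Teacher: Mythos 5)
Your proposal is correct and takes essentially the same route as the paper: the paper offers no written proof beyond asserting that the corollary follows from Theorems~\ref{th3.1}, \ref{th3.8}, \ref{th3.9} and \ref{th3.10}, and your case count by the number $r$ of active pairs (with the permutation-invariance remark justifying the relabellings for $r=1,2$) is exactly the implicit exhaustive-and-disjoint decomposition behind that assertion.
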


\begin{theorem}\label{th3.12}
Let ${\mathcal{T}}=(t_{ijkl})\in\widehat{{\mathcal{E}}}_{4,3}$ and $t_{iiij}t_{ijjj}=0$, $t_{jjjk}t_{jkkk}=t_{iiik}t_{ikkk}=-1$ for $i,j,k\in\{1,2,3\}$, $i\neq j$, $i\neq k$, $j\neq k$. Then $\mathcal{T}$ is positive semi-definite if and only if one of the following conditions is satisfied.
\begin{itemize}
  \item [(a)] $t_{iijk}=t_{ijjk}=0$, $t_{jjjk}t_{iiik}t_{ijkk}=1$, $t_{iijj}\in\{0,1\}$, and $t_{jjkk}=t_{iikk}=1$, for $i,j,k\in\{1,2,3\}$, $i\neq j$, $i\neq k$, $j\neq k$, if $t_{iiij}=t_{ijjj}=0$.
  \item [(b)] $t_{iijk}t_{iiij}t_{iiik}=1$, $t_{ijjk}=t_{ijkk}=0$ or $t_{iijk}t_{ijjk}t_{ijkk}=1$ with $t_{iiij}t_{ijkk}=1$, and $t_{1122}=t_{2233}=t_{1133}=1$ for $i,j,k\in\{1,2,3\}$, $i\neq j$, $i\neq k$, $j\neq k$, if $t_{ijjj}=0$ and $t_{iiij}t_{jkkk}t_{ikkk}=1$.
\end{itemize}
\end{theorem}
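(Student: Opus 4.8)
The plan is to follow the template already used for Theorems \ref{th3.1}--\ref{th3.10}: produce explicit nonnegativity certificates for the ``if'' direction, and in the ``only if'' direction eliminate every remaining sign pattern by a concrete test vector. First I would fix the labelling so that $\{i,j,k\}=\{1,2,3\}$ with $\{i,j\}$ the ``degenerate'' edge (the one carrying $t_{iiij}t_{ijjj}=0$) and $\{j,k\},\{i,k\}$ the two ``mixed-sign'' edges, each carrying $t\cdot t=-1$, i.e.\ with one of its two third-order entries $+1$ and the other $-1$; then write ${\mathcal{T}}x^{4}$ out in the monomial basis. Since $\widehat{{\mathcal{E}}}_{4,3}$ is closed under the coordinate sign changes $x_\ell\mapsto-x_\ell$ and under the transposition $i\leftrightarrow j$, and these preserve positive semi-definiteness, I would normalize exactly as in the proof of Theorem \ref{th3.3}: in case (a) take $t_{iiij}=t_{ijjj}=0$ together with one representative choice of signs on the two mixed edges, and in case (b) additionally take $t_{ijjj}=0$, $t_{iiij}=1$; every conclusion then transfers back to the original tensor.

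For the ``if'' direction I would handle (a) and (b) separately. In case (a) the only nonzero third-order entries lie on $\{j,k\}$ and $\{i,k\}$; using $t_{jjkk}=t_{iikk}=1$, the relation $t_{jjjk}t_{iiik}t_{ijkk}=1$ (equivalently $t_{ijkk}=t_{ikkk}t_{jkkk}$) and $t_{iijk}=t_{ijjk}=0$, I would produce a sum-of-squares representation of ${\mathcal{T}}x^{4}$; for $t_{iijj}=0$ this must use a genuine quadratic-form square, since then ${\mathcal{T}}x^{4}$ has a nonzero null vector such as $(1,-2,1)^{\top}$ and is only positive semi-definite, not definite. Case (b) splits on the two alternatives for $(t_{iijk},t_{ijjk},t_{ijkk})$: the alternative $t_{ijjk}=t_{ijkk}=0$, $t_{iijk}t_{iiij}t_{iiik}=1$ yields a clean sum of squares, while the alternative $t_{iijk}t_{ijjk}t_{ijkk}=1$, $t_{iiij}t_{ijkk}=1$ (with $t_{1122}=t_{1133}=t_{2233}=1$) need not close up visibly, and there I would argue as in Theorems \ref{th3.8}--\ref{th3.10}: Lemma \ref{lem:23} disposes of the coordinate planes, and off the axes I would regard ${\mathcal{T}}x^{4}$ as a quartic in one variable, form the inner determinants $\triangle_1^1,\triangle_3^1,\triangle_5^1,\triangle_7^1$ of $[\triangle^1]$, display each (as a polynomial in the remaining two variables) as $\pm$ a sum of squares, and apply Lemmas \ref{lem:25}--\ref{lem:26} to conclude that ${\mathcal{T}}x^{4}=0$ forces $x=0$. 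A final inspection of when ${\mathcal{T}}x^{4}=0$ then gives positive semi-definiteness in each subcase.

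For the ``only if'' direction I would use that every principal sub-tensor of a positive semi-definite tensor is positive semi-definite. Because $t_{jjjk},t_{jkkk},t_{iiik},t_{ikkk}$ are all $\pm1$, Lemma \ref{lem:23} applied on $\{j,k\}$ and $\{i,k\}$ forces $t_{jjkk}=t_{iikk}=1$, while on $\{i,j\}$ it gives $t_{iijj}\in\{0,1\}$, with $t_{iijj}=1$ whenever $t_{iiij}$ or $t_{ijjj}$ is nonzero; thus $t_{iiij}t_{ijjj}=0$ splits exactly into case (a) ($t_{iiij}=t_{ijjj}=0$) and case (b) ($t_{ijjj}=0$, $t_{iiij}\neq0$, with $t_{1122}=t_{1133}=t_{2233}=1$ forced). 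Evaluating ${\mathcal{T}}x^{4}$ at $x=(1,1,1)^{\top}$ and at vectors of the form $(1,\pm1,\mp1)^{\top}$, as in the earlier proofs, produces linear inequalities on $(t_{1123},t_{1223},t_{1233})$ that already rule out the most negative configurations; for each remaining configuration not listed in (a) or (b) I would exhibit a small integer vector $x=(\pm a,\pm b,\pm c)^{\top}$ at which, after replacing any still-free entries by their most favourable admissible values, ${\mathcal{T}}x^{4}<0$. This is a finite but lengthy enumeration over the sign patterns of $(t_{iijk},t_{ijjk},t_{ijkk})$ and of the two mixed edges, organized into ``Case''/``Subcase'' blocks as before.

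The hard part will be twofold. First, finding the correct nonnegativity certificates — the explicit sum-of-squares forms, and in the subcases where none is visible, carrying out the $[\triangle^1]$-determinant computation and verifying $\triangle_5^1<0$ and $\triangle_7^1>0$ (e.g.\ by displaying them as sums of squares and applying Lemma \ref{lem:25} to the residual one-variable quartic), so that ${\mathcal{T}}x^{4}$ is strictly positive off the coordinate axes. Second, making the necessity enumeration genuinely exhaustive: checking that the symmetry reductions are legitimate and that no admissible sign pattern outside (a), (b) escapes without a witnessing vector. The sum-of-squares/resultant bookkeeping is the real bottleneck; the rest is routine case-chasing.
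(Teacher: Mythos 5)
Your plan matches the paper's proof in all essentials: normalize by coordinate sign changes and index permutations as in Theorem \ref{th3.3}, certify sufficiency by explicit sum-of-squares identities where they close up and by the inner-determinant criterion of Lemma \ref{lem:25} (the matrix $[\triangle^1]$, with $\triangle_3^1,\triangle_5^1,\triangle_7^1$ signed via auxiliary squares) where they do not, and prove necessity by first forcing $t_{jjkk}=t_{iikk}=1$ and $t_{iijj}\in\{0,1\}$ from the two-dimensional principal sub-tensors and then eliminating every residual sign pattern with small integer test vectors after a few linear inequalities from evaluations such as $x=(1,1,1)^{\top}$. The only real divergences are in two awkward subcases of sufficiency. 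In (a) with $t_{iijj}=0$ you commit to a sum-of-squares certificate; one does exist in principle (every nonnegative ternary quartic is a sum of squares of quadratics, and here it must involve genuinely indefinite quadratics because the form has nontrivial zeros --- in the paper's normalization these sit at $(2,1,1)^{\top}$ and $(1,2,-1)^{\top}$, not at your guessed $(1,-2,1)^{\top}$), but the paper instead dehomogenizes to $g(y_1,y_3)={\mathcal{T}}x^4/x_2^4$, solves $\nabla g=0$, and verifies via positive definite Hessians that the two critical points are global minima with value $0$. Likewise, for the second alternative of (b) (the pattern $t_{iijk}t_{ijjk}t_{ijkk}=1$ with $t_{iiij}t_{ijkk}=1$) the paper avoids the determinant machinery and argues directly by splitting on the signs of $x_1x_2$ and $x_1x_3$ together with the evenness ${\mathcal{T}}x^4={\mathcal{T}}(-x)^4$; your determinant route would also work there but is heavier. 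Both versions are sound; the paper simply trades SOS and resultant bookkeeping for elementary calculus and sign chasing in exactly the two subcases you flagged as the bottleneck.
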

\begin{proof}
``\textbf{if (Sufficiency)}."
(a) If $t_{iijj}=1$, then for $x=(x_1,x_2,x_3)^{\top}\in{\mathbb{R}}^3$,
\begin{align*}
{\mathcal{T}}x^4&=(x_i^2+x_j^2-x_k^2+2t_{iiik}x_ix_k+2t_{jjjk}x_jx_k)^2+(2x_ix_j-t_{iiik}x_jx_k-t_{jjjk}x_ix_k)^2\\\
&~~~+(x_ix_k+t_{ijkk}x_jx_k)^2+2(x_i^2x_k^2+x_j^2x_k^2)\geq0,
\end{align*}
with equality if and only if $x=(0,0,0)^{\top}$.

Assume $t_{iijj}=0$. Without loss the generality, we might take $t_{1112}=t_{1222}=0$, then $t_{1123}=t_{1223}=t_{1122}=0$, $t_{2233}=t_{1133}=t_{1113}t_{2223}t_{1233}=1$. Then for $x=(x_1,x_2,x_3)^{\top}\in{\mathbb{R}}^3$,
$${\mathcal{T}}x^4=x_1^4+x_2^4+x_3^4+4t_{1113}(x_1^3x_3-x_1x_3^3+x_2^3x_3-x_2x_3^3)+12x_1x_2x_3^2+6(x_2^2x_3^2+x_1^2x_3^2)$$
with $t_{1113}t_{2223}=t_{1233}=1$, and
\begin{align*}
{\mathcal{T}}x^4&=x_1^4+(-x_2)^4+x_3^4+4t_{1113}[x_1^3x_3-x_1x_3^3+(-x_2)^3x_3-(-x_2)x_3^3]+12x_1(-x_2)x_3^2\\
&~~~+6[(-x_2)^2x_3^2+x_1^2x_3^2]
\end{align*}
with $t_{1113}t_{2223}=t_{1233}=-1$. So we only need to consider one of these two cases. Suppose $-t_{1113}=t_{2223}=1$. Then for $x=(x_1,x_2,x_3)^{\top}\in{\mathbb{R}}^3$,
$${\mathcal{T}}x^4=x_1^4+x_2^4+x_3^4-4(x_1^3x_3-x_1x_3^3-x_2^3x_3+x_2x_3^3)-12x_1x_2x_3^2+6(x_2^2x_3^2+x_1^2x_3^2).$$
By Lemma \ref{lem:23}, when $x_2=0$, ${\mathcal{T}}x^4\geq0$, with equality if and only if $x_1=x_2=x_3=0$.
When $x_2\neq0$, let $y_1:=\frac{x_1}{x_2}$ and $y_3:=\frac{x_3}{x_2}$, then
$${\mathcal{T}}x^4=x_2^4[y_1^4+1+y_3^4-4(y_1^3y_3-y_1y_3^3-y_3+y_3^3)-12y_1y_3^2+6(y_3^2+y_1^2y_3^2)]:=x_2^4g(y_1,y_3).$$
Thus, to prove ${\mathcal{T}}x^4\geq0$ for any $x=(x_1,x_2,x_3)^{\top}\in{\mathbb{R}}^3$, we just need to prove $g(y_1,y_3)\geq0$ for any $y=(y_1,y_3)^{\top}\in{\mathbb{R}}^2$. Let $\triangle g(y_1,y_3)=0$, i.e.,
$$\left\{
\begin{array}{llll}
4(y_1^3+y_3^3-3y_1^2y_3-3y_3+3y_1y_3^2)=0, \\
4(y_3^3+3y_1y_3^2-y_1^3+1-3y_3^2-6y_1y_3+3y_1^2y^3+3y_1)=0.
\end{array}
\right.$$
$\left\{
\begin{array}{llll}
y_1=2 \\
y_3=1
\end{array}
\right.$ and
$\left\{
\begin{array}{llll}
y_1=\frac{1}{2}\\
y_3=-\frac{1}{2}
\end{array}
\right.$
are solutions of $\triangle g(y_1,y_3)=0$, and the Hessian matrixes of $g$ are positive definite at theses two points. Therefore, $(2,1)^{\top}$ and $(\frac{1}{2},-\frac{1}{2})^{\top}$ are local minimum points of $g$ with $g(2,1)=g(\frac{1}{2},-\frac{1}{2})=0$, and so, they are global also. Then $g(y_1,y_3)\geq0$ for any $y=(y_1,y_3)^{\top}\in{\mathbb{R}}^2$, which implies ${\mathcal{T}}$ is positive semi-definite.

(b) Suppose $t_{1222}=0$, $t_{1112}=t_{2333}t_{1333}=1$. Then $t_{1113}t_{1123}=1$, $t_{1223}=t_{1233}=0$ or $t_{1233}=t_{1123}t_{1223}=1$. Firstly, let $t_{1123}=t_{2223}=t_{1113}=-t_{2333}=-t_{1333}=1$ and $t_{1223}=t_{1233}=0$. Then for $x=(x_1,x_2,x_3)^{\top}\in{\mathbb{R}}^3$,
\begin{equation}\label{x2}
{\mathcal{T}}x^4=x_1^4+x_2^4+x_3^4+4(x_1^3x_2+x_2^3x_3+x_1^3x_3-x_2x_3^3-x_1x_3^3)+12x_1^2x_2x_3+6(x_1^2x_2^2+x_2^2x_3^2+x_1^2x_3^2).
\end{equation}
By Lemma \ref{lem:23}, when $x_1=0$, or $x_2=0$, or $x_3=0$, ${\mathcal{T}}x^4\geq0$. For $x_1\ne0$,  $x_2\ne0$,  $x_3\ne0$, rewritting (\ref{x2}) as
\begin{equation} \label{x2-1}
{\mathcal{T}}x^4=x_2^4+4x_3x_2^3+6(x_1^2+x_3^2)x_2^2+4(3x_1^2x_3+x_1^3-x_3^3)x_2+x_1^4+x_3^4+6x_1^2x_3^2+4(x_1^3x_3-x_1x_3^3).
\end{equation}
Then the inner determinants corresponding to (\ref{x2-1}) are
$$\triangle_1^1=4,\ \ ~~~~~\triangle_3^1=-48x_1^2,~~~~~\triangle_5^1=-192\times(11x_1^6-8x_1^3x_3^3-8x_1^2x_3^4+12x_3^6),$$
\begin{align*}
\triangle_7^1&=256\times(27x_1^{12}-192x_1^9x_3^3-816x_1^8x_3^4-24x_1^6x_3^6-768x_1^5x_3^7 \\ &~~~-960x_1^4x_3^8-64x_1^3x_3^9+2112x_1^2x_3^{10}-768x_1^1x_3^{11}+80x_3^{12}).
\end{align*}
If $x_1\neq0$,
$$\triangle_5^1=-192x_1^6\times(11-8\overline{x}_3^3-8\overline{x}_3^4+12\overline{x}_3^6),$$
$$\triangle_7^1=256x_1^{12}(27-192\overline{x}_3^3-816\overline{x}_3^4-24\overline{x}_3^6-768\overline{x}_3^7-960\overline{x}_3^8-64\overline{x}_3^9+2112\overline{x}_3^{10}-768\overline{x}_3^{11}+80\overline{x}_3^{12}),$$
where $\overline{x}_3=\frac{x_3}{x_1}$. Using Lemma \ref{lem:26} for $\frac{\triangle_5^1}{-192x_1^{6}}$ and $\frac{\triangle_7^1}{256x_1^{12}}$, then we get the numbers of distinct real roots of $\frac{\triangle_5^1}{-192x_1^{6}}=0$ and $\frac{\triangle_7^1}{256x_1^{12}}=0$ with each $x_1\neq0$ are $0$.
\iffalse
The inner determinants corresponding to $\frac{\triangle_5^1}{-192x_1^{6}}$ are
$$\triangle_{1,5}^1=72>0,\ \ \triangle_{3,5}^1\approx1.3824\times 10^4>0,\ \ \triangle_{5,5}^1\approx-4792320<0,$$
$$ \triangle_{7,5}^1\approx3.2788\times10^{9}>0,\ \ \triangle_{9,5}^1\approx7.1894\times10^{12}>0,\ \ \triangle_{11,5}^1\approx-1.2482\times10^{16}<0,$$
The number of distinct real roots $N_5$ of $\frac{\triangle_5^1}{-192x_1^{6}}=0$ with each $x_1\neq0$ is
$$N_5=\mbox{var}[+,-,+,+,+,-,-]-\mbox{var}[+,+,+,-,+,+,-]=3-3=0.$$
\fi

Since $\frac{\triangle_5^1}{-192x_1^{6}}=11>0$ and $\frac{\triangle_7^1}{256x_1^{12}}=27>0$ for $\overline{x}_3=0$, $\frac{\triangle_5^1}{-192x_1^{6}}>0$ and $\frac{\triangle_7^1}{256x_1^{12}}>0$ for any $y\in{\mathbb{R}}$, thus $\triangle_5^1<0$ and $\triangle_7^1>0$ for any$ x=(x_1,x_3)^{\top}\in{\mathbb{R}}^2$ and $x_1\neq0$.
\iffalse
The inner determinants corresponding to $\frac{\triangle_7^1}{256x_1^{12}}$ are
$$\triangle_{1,7}^1=960>0,\ \ \triangle_{3,7}^1\approx1.9464\times10^8>0,\ \  \triangle_{5,7}^1\approx-2.2665\times10^{14}<0,$$ $$\triangle_{7,7}^1\approx-2.6170\times10^{20}<0,\ \
\triangle_{9,7}^1\approx3.1691\times10^{28}>0,\ \ \triangle_{11,7}^1\approx-2.0419\times10^{34}<0,$$ $$\triangle_{13,7}^1\approx-1.0780\times10^{42}<0,\ \ \triangle_{15,7}^1\approx1.9054\times10^{48}>0, \ \ \triangle_{17,7}^1\approx8.4499\times10^{54}>0,$$
$$\triangle_{19,7}^1\approx8.1350\times10^{59}>0,\ \
\triangle_{21,7}^1\approx-8.3847\times10^{64}<0, \ \ \triangle_{23,7}^1\approx8.7388\times10^{69}>0.$$
The number of distinct real roots $N_7$ of $\frac{\triangle_7^1}{256x_1^{12}}=0$ with each $x_1\neq0$ is
$$N_7=\mbox{var}[+,-,+,+,-,-,-,+,+,-,+,+,+]-\mbox{var}[+,+,+,-,-,+,-,-,+,+,+,-,+]=6-6=0.$$

And $\frac{\triangle_7^1}{256x_1^{12}}=27>0$ for $y=0$, $\frac{\triangle_7^1}{256x_1^{12}}>0$ for any $y\in{\mathbb{R}}$, thus $\triangle_7^1>0$ for any$ x=(x_1,x_3)^{\top}\in{\mathbb{R}}^2$ and $x_1\neq0$.
\fi

Then if $x_1,x_2\neq0$, the number of distinct real roots $N$ of ${\mathcal{T}}x^4=0$ is
$$N=\mbox{var}[+,-,-,+,+]-\mbox{var}[+,+,-,-,+]=2-2=0.$$
Thus, ${\mathcal{T}}x^4=0$ if and only if $x=(0,0,0)^{\top}$. By Lemma \ref{lem:21}, ${\mathcal{T}}$ is positive definite.

Next, let $t_{1223}=t_{1233}=t_{1123}=t_{2223}=t_{1113}=-t_{2333}=-t_{1333}=1$, then for $x=(x_1,x_2,x_3)^{\top}\in{\mathbb{R}}^3$,
\begin{align}\label{111}
{\mathcal{T}}x^4=&x_1^4+x_2^4+x_3^4+4(x_1^3x_2+x_2^3x_3+x_1^3x_3-x_2x_3^3-x_1x_3^3)+12x_1^2x_2x_3+6(x_1^2x_2^2+x_2^2x_3^2+x_1^2x_3^2)\nonumber\\
&+12x_1x_2x_3(x_2+x_3)\nonumber\\
=&(x_1^2+x_2^2-x_3^2+2x_1x_2+2x_2x_3+2x_1x_3)^2+4(x_2x_3+x_1x_3)^2-4x_1x_2^3\nonumber\\
=&(x_1^2+x_2^2-x_3^2+2x_2x_3+2x_1x_3)^2+(x_1x_3-x_1x_2-2x_2x_3)^2+(x_1x_3-x_1x_2)^2+2x_1^2x_2^2\nonumber\\
&+2x_1^2x_3^2+4x_1^3x_2+12x_1^2x_2x_3+12x_1x_2^2x_3.
\end{align}
Then, when $x_1x_2x_3\neq0$, from the second and third equation of (\ref{111}), we can infer that, ${\mathcal{T}}x^4>0$ if $x_1x_2<0$, ${\mathcal{T}}x^4>0$ if $x_1x_2>0$ and $x_1x_3>0$, respectively. If $x_1x_2>0$ and $x_1x_3<0$, from the first equation of (\ref{111}) and previous discussion, ${\mathcal{T}}x^4>0$ if $|x_2|\leq|x_3|$ with $x_1>0$ or $|x_2|\geq|x_3|$ with $x_1<0$. In fact, ${\mathcal{T}}x^4={\mathcal{T}}(-x)^4$, therefor, ${\mathcal{T}}x^4>0$ if $|x_2|\leq|x_3|$ with $x_1<0$ or $|x_2|\geq|x_3|$ with $x_1>0$.

By Lemma \ref{lem:23}, when $x_1=0$, or $x_2=0$, or $x_3=0$, ${\mathcal{T}}x^4\geq0$, with equality if and only if $x_1=x_2=x_3=0$.
Therefore, ${\mathcal{T}}x^4\geq0$ for any $x=(x_1,x_2,x_3)^{\top}\in{\mathbb{R}}^3$, and ${\mathcal{T}}x^4=0$ if and only if $x=(0,0,0)^{\top}$.

All other cases satisfying condition (b) can be transformed into similar forms of $(\ref{x2})$ or $(\ref{111})$. 

``\textbf{only if (Necessity)}." The conditions,
$t_{iiij}t_{ijjj}=0$, $t_{jjjk}t_{jkkk}=t_{iiik}t_{ikkk}=-1$ for $i,j,k\in\{1,2,3\}$, $i\neq j$, $i\neq k$, $j\neq k$, can be divided into three cases, 
\begin{itemize}
    \item [($\romannumeral1$)] $t_{iiij}=t_{ijjj}=0$;
    \item [($\romannumeral2$)] $t_{iiij}t_{jjjk}t_{ikkk}=-1$ and $t_{ijjj}=0$;
	\item [($\romannumeral3$)] $t_{iiij}t_{jjjk}t_{ikkk}=1$ and $t_{ijjj}=0$;
\end{itemize}
Then we only need to consider $t_{jjjk}=t_{ikkk}=1$, $t_{iiij}=t_{jjjk}=-t_{ikkk}=1$, and $t_{iiij}=t_{jjjk}=t_{ikkk}=1$, respectively.

Without loss the generality, suppose $t_{1112}t_{1222}=0$, $t_{2223}t_{2333}=t_{1113}t_{1333}=-1$. And it follows from the positive semi-definiteness of $\mathcal{T}$ with Eq. \eqref{eq:2} that \begin{center}
	$t_{1133}=t_{2233}=1$ and $t_{1122}\in\{0,1\}$, for ($\romannumeral1$);\\
$t_{iijj}=1$ for all $i,j\in\{1,2,3\}$ and $i\neq j$, for ($\romannumeral2$) and ($\romannumeral3$).
\end{center}

($\romannumeral1$) We might take $t_{1112}=t_{1222}=0$, $-t_{1113}=t_{1333}=t_{2223}=-t_{2333}=1$.
Then for $x=(x_1,x_2,x_3)^{\top}\in{\mathbb{R}}^3$
\begin{align*}
{\mathcal{T}}x^4&=(x_1+x_2+x_3)^4-4(x_1^3x_2+x_1x_2^3)-8(x_1^3x_3+x_2x_3^3)+12[(t_{1123}-1)x_1^2x_2x_3\\
&~~~+(t_{1223}-1)x_1x_2^2x_3+(t_{1233}-1)x_1x_2x_3^2]+6(t_{1122}-1)x_1^2x_2^2.
\end{align*}
For $x=(1,1,1)^\top$, we have\begin{align*}
	{\mathcal{T}}x^4=15+12(t_{1123}+t_{1223}+t_{1233})+6t_{1122}\geq0.
\end{align*}
So, $t_{1123}+t_{1223}+t_{1233}\leq-2$ could not occur. Then we only need to consider $t_{1123}+t_{1223}+t_{1233}>-2$.

\textbf{Case 1.} $t_{1123}=-1$.

\textbf{Subcase 1.1} When $t_{1223}+t_{1233}\leq 0$. Let $x=(3,1,1)^{\top}$, then
$${\mathcal{T}}x^4\leq (x_1+x_2+x_3)^4-4(x_1^3x_2+x_1x_2^3)-8(x_1^3x_3+x_2x_3^3)-24(x_1^2x_2x_3+x_1x_2^2x_3)=-7<0.$$

\textbf{Subcase 1.2} When $t_{1223}+t_{1233}>0$. Let $x=(-1,1,1)^{\top}$, then
$${\mathcal{T}}x^4\leq (x_1+x_2+x_3)^4-4(x_1^3x_2+x_1x_2^3)-8(x_1^3x_3+x_2x_3^3)-12(2x_1^2x_2x_3+x_1x_2^2x_3)=-3<0.$$

\textbf{Case 2.} $t_{1123}\neq-1$.

\textbf{Subcase 2.1} When $t_{1233}=0$ and $t_{1223}\geq t_{1123}$, or $t_{1233}=1$. Let $x=(-1,1,4)^{\top}$, then
$${\mathcal{T}}x^4\leq(x_1+x_2+x_3)^4-4(x_1^3x_2+x_1x_2^3)-8(x_1^3x_3+x_2x_3^3)-12x_1x_2x_3^2=-24<0.$$

\textbf{Subcase 2.2} When $t_{1233}=0$, $t_{1223}=0$ and $t_{1123}=1$. Let $x=(3,-1,1)^{\top}$, then
$${\mathcal{T}}x^4\leq (x_1+x_2+x_3)^4-4(x_1^3x_2+x_1x_2^3)-8(x_1^3x_3+x_2x_3^3)-12(x_1x_2^2x_3+x_1x_2x_3^2)=-7<0.$$

\textbf{Subcase 2.3} When $t_{1233}=0$ and $t_{1223}=-1$. Let $x=(2,-5,2)^{\top}$, then
$${\mathcal{T}}x^4\leq (x_1+x_2+x_3)^4-4(x_1^3x_2+x_1x_2^3)-8(x_1^3x_3+x_2x_3^3)-24x_1x_2^2x_3-12(x_1^2x_2x_3+x_1x_2x_3^2)=-87<0$$

\textbf{Subcase 2.4} When $t_{1233}=-1$ and $t_{1123}+t_{1223}\geq1$. Let $x=(1,1,-1)^{\top}$, then
$${\mathcal{T}}x^4\leq (x_1+x_2+x_3)^4-4(x_1^3x_2+x_1x_2^3)-8(x_1^3x_3+x_2x_3^3)-12(x_1^2x_2x_3+2x_1x_2x_3^2)=-3<0.$$

\textbf{Subcase 2.5} When $t_{1233}=-1$ and $t_{1123}=-t_{1223}=1$. Let $x=(3,-1,1)^{\top}$, then
$${\mathcal{T}}x^4\leq (x_1+x_2+x_3)^4-4(x_1^3x_2+x_1x_2^3)-8(x_1^3x_3+x_2x_3^3)-24(x_1x_2^2x_3+x_1x_2x_3^2)=-7<0$$

\textbf{Subcase 2.6} When $t_{1233}=-1$ and  $-t_{1123}=t_{1223}=1$. Let $x=(5,1,3)^{\top}$, then
$${\mathcal{T}}x^4\leq (x_1+x_2+x_3)^4-4(x_1^3x_2+x_1x_2^3)-8(x_1^3x_3+x_2x_3^3)-24(x_1^2x_2x_3+x_1x_2x_3^2)=-55<0$$

($\romannumeral2$) We might take $t_{1222}=0$ and $t_{1112}=t_{1113}=-t_{1333}=t_{2223}=-t_{2333}=t_{1122}=t_{2233}=t_{1133}=1$.
Then for $x=(x_1,x_2,x_3)^{\top}\in{\mathbb{R}}^3$
\begin{align*}
{\mathcal{T}}x^4=&(x_1+x_2+x_3)^4-4x_1x_2^3-8(x_1x_3^3+x_2x_3^3)\\
&+12[(t_{1123}-1)x_1^2x_2x_3+(t_{1223}-1)x_1x_2^2x_3+(t_{1233}-1)x_1x_2x_3^2].
\end{align*}
For $x=(1,1,4)^\top$, we have\begin{align*}
	{\mathcal{T}}x^4=-20+48(t_{1123}+t_{1223}+4t_{1233})\geq0.
\end{align*}
So, the following cases could not occur,
 \begin{itemize}
    \item $\{t_{1123},t_{1223},t_{1233}\}$ are not $1$;
    \item $t_{1233}\neq1$ and $t_{1123}=-1$ or $t_{1223}=-1$;
	\item $t_{1233}=-1$.
\end{itemize}
Next, we discuss other situations.

\textbf{Case 1.} $t_{1233}=1$.

\textbf{Subcase 1.1} When $t_{1123}\neq1$. Let $x=(-3,1,1)^{\top}$, then
$${\mathcal{T}}x^4\leq (x_1+x_2+x_3)^4-4x_1x_2^3-8(x_1x_3^3+x_2x_3^3)-12(x_1^2x_2x_3+2x_1x_2^2x_3)=-7<0.$$

\textbf{Subcase 1.2} When $t_{1123}=1$ and $t_{1223}\neq1$. Let $x=(1,-1,1)^{\top}$, then
$${\mathcal{T}}x^4\leq (x_1+x_2+x_3)^4-4x_1x_2^3-8(x_1x_3^3+x_2x_3^3)-12x_1x_2^2x_3=-7<0.$$

\textbf{Case 2.} $t_{1233}=0$, $\{t_{1123},t_{1223}\}$ are not $-1$, and $t_{1123}=1$ or $t_{1223}=1$

\textbf{Subcase 2.1} $t_{1123}=t_{1223}=1$. Let $x=(3,3,-2)^{\top}$, then
$${\mathcal{T}}x^4= (x_1+x_2+x_3)^4-4x_1x_2^3-8(x_1x_3^3+x_2x_3^3)-12x_1x_2x_3^2=-116<0.$$

\textbf{Subcase 2.2} When $t_{1223}=1$ and $t_{1123}=0$. Let $x=(-3,1,1)^{\top}$, then
$${\mathcal{T}}x^4\leq (x_1+x_2+x_3)^4-4x_1x_2^3-8(x_1x_3^3+x_2x_3^3)-12(x_1^2x_2x_3+x_1x_2x_3^2)=-43<0.$$

($\romannumeral3$) We might take $t_{1222}=0$ and $t_{1112}=-t_{1113}=t_{1333}=t_{2223}=-t_{2333}=1$. Then for $x=(x_1,x_2,x_3)^{\top}\in{\mathbb{R}}^3$
\begin{align*}
{\mathcal{T}}x^4=&(x_1+x_2+x_3)^4-4x_1x_2^3-8(x_1^3x_3+x_2x_3^3)\\
&+12[(t_{1123}-1)x_1^2x_2x_3+(t_{1223}-1)x_1x_2^2x_3+(t_{1233}-1)x_1x_2x_3^2].
\end{align*}
For $x=(-1,1,5)^\top$, we have\begin{align*}
	{\mathcal{T}}x^4=-31+60(t_{1123}-t_{1223}-5t_{1233})\geq0.
\end{align*}
So, the following cases could not occur,
 \begin{itemize}
    \item $t_{1233}=0$ and $t_{1123}\leq t_{1223}$;
	\item $t_{1233}=1$.
\end{itemize}
Next, we discuss other situations.

\textbf{Case 1.} $t_{1233}=0$ and $t_{1123}>t_{1223}$. Let $x=(7,-2,1)^{\top}$, then
$${\mathcal{T}}x^4\leq(x_1+x_2+x_3)^4-4x_1x_2^3-8(x_1^3x_3+x_2x_3^3)-24x_1x_2^2x_3-12(x_1^2x_2x_3+x_1x_2x_3^2)=-536<0.$$

\textbf{Case 2.} $t_{1233}=-1$.

\textbf{Subcase 2.1} When $t_{1223}=1$. Let $x=(1,5,-2)^{\top}$, then
$${\mathcal{T}}x^4\leq(x_1+x_2+x_3)^4-4x_1x_2^3-8(x_1^3x_3+x_2x_3^3)-24(x_1^2x_2x_3+x_1x_2x_3^2)=-148<0.$$

\textbf{Subcase 2.2} When $t_{1123}+t_{1223}\leq-1$, let $x=(2,2,3)^{\top}$, then
$${\mathcal{T}}x^4\leq(x_1+x_2+x_3)^4-4x_1x_2^3-8(x_1^3x_3+x_2x_3^3)-12x_1^2x_2x_3-24(x_1x_2^2x_3+x_1x_2x_3^2)=-15<0.$$

\textbf{Subcase 2.3} When $t_{1123}=-t_{1223}=1$. Let $x=(6,-5,1)^{\top}$, then
$${\mathcal{T}}x^4=(x_1+x_2+x_3)^4-4x_1x_2^3-8(x_1^3x_3+x_2x_3^3)-24(x_1x_2^2x_3+x_1x_2x_3^2)=-1552<0.$$

\textbf{Subcase 2.4} When $t_{1123}=t_{1223}=0$. Let $x=(7,-2,1)^{\top}$, then
$${\mathcal{T}}x^4=(x_1+x_2+x_3)^4-4x_1x_2^3-8(x_1^3x_3+x_2x_3^3)-24x_1x_2x_3^2-12(x_1^2x_2x_3+x_1x_2^2x_3)=-32<0.$$

\textbf{Subcase 2.5} When $t_{1123}=1$ and $t_{1223}=0$. Let $x=(4,4,-5)^{\top}$, then
$${\mathcal{T}}x^4=(x_1+x_2+x_3)^4-4x_1x_2^3-8(x_1^3x_3+x_2x_3^3)-12x_1x_2^2x_3-24x_1x_2x_3^2=-143<0.$$

The necessity is proved.
\end{proof}

\begin{corollary}\label{cor3}
Let ${\mathcal{T}}=(t_{ijkl})\in\widehat{{\mathcal{E}}}_{4,3}$ and $t_{iiij}t_{ijjj}=0$, $t_{jjjk}t_{jkkk}=t_{iiik}t_{ikkk}=-1$ for $i,j,k\in\{1,2,3\}$, $i\neq j$, $i\neq k$, $j\neq k$. Then $\mathcal{T}$ is positive definite if and only if one of the following conditions is satisfied.
\begin{itemize}
  \item [(a)] $t_{iijk}=t_{ijjk}=0$, $t_{jjjk}t_{iiik}t_{ijkk}=1$ $t_{iijj}=t_{jjkk}=t_{iikk}=1$, for $i,j,k\in\{1,2,3\}$, $i\neq j$, $i\neq k$, $j\neq k$, if $t_{iiij}=t_{ijjj}=0$.
  \item [(b)] $t_{iijk}t_{iiij}t_{iiik}=1$, $t_{ijjk}=t_{ijkk}=0$ or $t_{iijk}t_{ijjk}t_{ijkk}=1$ with $t_{iiij}t_{ijkk}=1$, and  $t_{iijj}=t_{jjkk}=t_{iikk}=1$ for $i,j,k\in\{1,2,3\}$, $i\neq j$, $i\neq k$, $j\neq k$, if $t_{ijjj}=0$ and $t_{iiij}=t_{jkkk}t_{ikkk}$.
\end{itemize}
\end{corollary}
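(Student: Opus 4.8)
\section*{Proof proposal for Corollary~\ref{cor3}}

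The plan is to deduce Corollary~\ref{cor3} directly from Theorem~\ref{th3.12} by sharpening the sum-of-squares and root-counting computations carried out in its proof from ``positive semi-definite'' to ``positive definite''. Since every positive definite tensor is positive semi-definite, Theorem~\ref{th3.12} already guarantees that a positive definite $\mathcal{T}$ satisfying the standing hypotheses must fall under condition~(a) or condition~(b) of that theorem; hence the entire task is to decide, within those two families, which tensors are in fact positive definite, and to match the answer with the statement above.

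First I would dispose of family~(b) of Theorem~\ref{th3.12}. In the proof of that theorem every admissible sign pattern in case~(b) was reduced either to the form $(\ref{x2})$, whose associated discriminant sequence was shown via the root-counting lemma (Lemma~\ref{lem:25}) to have no real root, so that ${\mathcal T}x^4=0$ forces $x=0$ by Lemma~\ref{lem:21}, or to the form $(\ref{111})$, which was shown to be nonnegative and to vanish only at the origin. Consequently every tensor satisfying Theorem~\ref{th3.12}(b) is already positive definite, so part~(b) of the corollary is exactly the re-indexed statement of Theorem~\ref{th3.12}(b). Here I would only remark that, since $t_{jjjk}t_{jkkk}=t_{iiik}t_{ikkk}=-1$ forces $t_{jkkk},t_{ikkk}\in\{-1,1\}$, the requirement $t_{iiij}=t_{jkkk}t_{ikkk}$ is equivalent to $t_{iiij}t_{jkkk}t_{ikkk}=1$, so the two formulations of the hypothesis agree; likewise $t_{iijj}=t_{jjkk}=t_{iikk}=1$ is the same as $t_{1122}=t_{2233}=t_{1133}=1$ since $\{i,j,k\}=\{1,2,3\}$.

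Next I would treat family~(a) of Theorem~\ref{th3.12}, where the only remaining freedom is $t_{iijj}\in\{0,1\}$. If $t_{iijj}=1$, the explicit sum-of-squares decomposition written in the proof of Theorem~\ref{th3.12}(a) can be checked to vanish only at $x=0$: the summand $2(x_i^2x_k^2+x_j^2x_k^2)$ forces $x_k=0$ or $x_i=x_j=0$, and in either case the first two squared terms then collapse to $x=0$; hence $\mathcal{T}$ is positive definite. If instead $t_{iijj}=0$, the proof of Theorem~\ref{th3.12}(a) exhibited, after the reduction to the two-variable polynomial $g(y_1,y_3)$, the nonzero global minimisers $(2,1)^{\top}$ and $(\frac{1}{2},-\frac{1}{2})^{\top}$ with $g=0$; lifting back produces a nonzero $x$ with ${\mathcal T}x^4=0$, so $\mathcal{T}$ is positive semi-definite but not positive definite. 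Thus inside family~(a) positive definiteness is characterised precisely by additionally imposing $t_{iijj}=1$, which is exactly part~(a) of the corollary, and combining the two cases yields the stated equivalence.

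The step I expect to be the most delicate is the equality analysis of the sum-of-squares forms inherited from Theorem~\ref{th3.12}: one has to be sure that the decomposition in case~(a) with $t_{iijj}=1$ is genuinely tight (zero only at the origin) and, dually, that the value $0$ of $g$ in the $t_{iijj}=0$ subcase is actually attained at an admissible nonzero point rather than merely approached. Both facts are already essentially contained in the proof of Theorem~\ref{th3.12}, so this amounts to careful bookkeeping rather than new mathematics, but it is the part where an oversight could invalidate the passage from semi-definiteness to definiteness.
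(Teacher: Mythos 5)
Your proposal is correct and follows exactly the route the paper intends: Corollary~\ref{cor3} is stated without a separate proof because it is read off from the computations already performed for Theorem~\ref{th3.12}, namely that every tensor in family~(b) and every tensor in family~(a) with $t_{iijj}=1$ has ${\mathcal T}x^4=0$ only at $x=0$, while the $t_{iijj}=0$ subcase of~(a) admits the nonzero zeros coming from the global minimisers $(2,1)^{\top}$ and $(\tfrac12,-\tfrac12)^{\top}$ of $g$. Your equality analysis of the sum-of-squares form and the observation that $t_{iiij}=t_{jkkk}t_{ikkk}$ is equivalent to $t_{iiij}t_{jkkk}t_{ikkk}=1$ under the standing sign constraints are both accurate.
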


\begin{theorem}\label{th3.13}
Let ${\mathcal{T}}=(t_{ijkl})\in\widehat{{\mathcal{E}}}_{4,3}$ and $t_{iiij}t_{ijjj}=t_{iiik}t_{ikkk}=0$, $t_{jjjk}t_{jkkk}=1$ for $i,j,k\in\{1,2,3\}$, $i\neq j$, $i\neq k$, $j\neq k$. Then $\mathcal{T}$ is positive semi-definite if and only if one of the following conditions is satisfied.
\begin{itemize}
  \item [(a)]  $t_{iiij}=t_{ijjj}=t_{iiik}=t_{ikkk}=0$, and
  \begin{itemize}
    \item [(a$_1$)] $t_{1123}=t_{1223}=t_{1233}=0$, $t_{jjkk}=1$ and $t_{iijj},t_{iikk}\in\{0,1\}$, or
    \item [(a$_2$)] $t_{iijk}=\pm1$, $t_{ijjk}=t_{ijkk}=0$, and $t_{1122}=t_{2233}=t_{1133}=1$, further, if $t_{iijk}t_{jjjk}=-1$, only need $t_{ijjk}=t_{ijkk}=0$, $t_{jjkk}=1$, and $t_{iijj}+t_{iikk}\geq1$.
  \end{itemize}
  \item [(b)] $t_{1123}=t_{1223}=t_{1233}=t_{ijjj}=t_{ikkk}=t_{iiik}=0$, $t_{iiij}=\pm1$, $t_{iijj}=t_{jjkk}=1$ and $t_{iikk}\in\{0,1\}$.
  \item [(c)] $t_{ijjj}=t_{ikkk}=t_{ijjk}=t_{ijkk}=0$, $t_{iijk}t_{iiij}t_{iiik}=t_{1122}=t_{2233}=t_{1133}=1$.
  \item [(d)] $t_{iiij}=t_{iiik}=0$, and $t_{ijjj}t_{ijkk}=t_{ikkk}t_{ijjk}=t_{ijjj}t_{ikkk}t_{iijk}=t_{ikkk}t_{jjjk}t_{ijkk}=t_{ijjj}t_{jjjk}t_{ijjk}=t_{1122}=t_{2233}=t_{1133}=1$.
\end{itemize}
\end{theorem}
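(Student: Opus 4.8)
The plan is to follow the two–track strategy already established in Section 3. Since $t_{jjjk}t_{jkkk}=1$ forces $t_{jjjk}=t_{jkkk}=\pm1$ with a common sign, while $t_{iiij}t_{ijjj}=t_{iiik}t_{ikkk}=0$ leaves, on each of the pairs $(i,j)$ and $(i,k)$, at least one skew coefficient equal to $0$, the admissible sign patterns organize themselves exactly into the four groups (a)--(d) of the statement. As in the proofs of Theorems \ref{th3.3}, \ref{th3.8} and \ref{th3.9}, I would exploit two symmetries to cut down the work: the permutation symmetry of the indices, and the substitution $x_\ell\mapsto -x_\ell$, which negates every monomial of odd degree in $x_\ell$ and hence reduces, e.g., the case $t_{jjjk}=t_{jkkk}=-1$ to $t_{jjjk}=t_{jkkk}=1$ (and similarly normalizes the nonzero coefficient on the $(i,j)$ or $(i,k)$ pair). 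After these normalizations it suffices to treat one representative configuration in each group.

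For the \emph{sufficiency} direction I would substitute the prescribed values into $\mathcal{T}x^4=\sum t_{ijkl}x_ix_jx_kx_l$ and, for each listed case, exhibit an explicit sum–of–squares certificate showing $\mathcal{T}x^4\ge0$. Cases (a$_1$), (b), (c), (d) and the subcases of (a$_2$) in which one of $t_{iijk},t_{ijjk},t_{ijkk}$ is $\pm1$ and the others vanish all admit decompositions of the familiar shape ``(square)$+$(square)$+2(\text{cross term})^2$'' built from the quadratics $x_i^2+2t_{iiij}x_ix_j$, $x_j^2+2t_{jjjk}x_jx_k$, etc., together with the diagonal term $6(x_i^2x_j^2+x_j^2x_k^2+x_i^2x_k^2)$; in the cases where this gives $\mathcal{T}x^4=0\Rightarrow x=0$, positive definiteness — hence positive semi-definiteness — follows from Lemma \ref{lem:21}. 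For the remaining hard subcases, where $t_{iijk},t_{ijjk},t_{ijkk}$ are simultaneously nonzero so no diagonal $6x_i^2x_j^2$ term is free to absorb them, I would use the method of \cite{nka}: first invoke Lemma \ref{lem:23} to dispose of the coordinate hyperplanes $x_i=0$, $x_j=0$, $x_k=0$; then, on the open set $x_ix_jx_k\neq0$, homogenize and dehomogenize in one ratio $y=x_p/x_q$, write $\mathcal{T}x^4$ as a quartic in the remaining variable with polynomial coefficients, compute the inner determinants $\triangle_1^1,\triangle_3^1,\triangle_5^1,\triangle_7^1$ of Lemma \ref{lem:25}, and show that $\triangle_7^1$ is (up to a positive monomial factor) a strictly positive degree–$12$ polynomial in $y$ via the sign-count of Lemma \ref{lem:25}; this makes the number of distinct real roots of $\mathcal{T}x^4=0$ equal to $0$, so $\mathcal{T}x^4>0$ there, and again Lemma \ref{lem:21} finishes.

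For the \emph{necessity} direction I would argue by contradiction against each configuration violating (a)--(d). Since every principal subtensor of a positive semi-definite tensor is positive semi-definite, Lemma \ref{lem:23} (equivalently Eq. \eqref{eq:2}) already forces $t_{iijj}\in\{0,1\}$ on all pairs and, in the subcases with a nonzero skew coefficient, $t_{iijj}=1$; evaluating $\mathcal{T}x^4$ at $x=(1,1,1)^\top$ then immediately excludes the most negative patterns (such as $t_{1123}=t_{1223}=t_{1233}=-1$). Splitting according to which of $t_{iiij},t_{ijjj}$ and of $t_{iiik},t_{ikkk}$ vanish — mirroring the case list (a)--(d) — and using the sign normalization above, for each surviving bad configuration I would produce a specific integer vector $x=(a,b,c)^\top$, of the type already used in the earlier theorems (e.g. $(3,2,-2)^\top$, $(-1,2,2)^\top$, $(1,-4,1)^\top$, $(-5,1,3)^\top$), at which a term-by-term majorization gives $\mathcal{T}x^4<0$, contradicting positive semi-definiteness.

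The main obstacle is entirely computational rather than conceptual, and it is twofold: on the sufficiency side, locating the correct SOS certificates for the borderline cases and, where none is apparent, carrying the real-root count through for the auxiliary degree-$12$ polynomials (verifying that the prescribed sequence of determinant signs really produces $N=0$); on the necessity side, the bookkeeping of enumerating all violating sign patterns of $(t_{iiij},t_{ijjj},t_{iiik},t_{ikkk},t_{iijk},t_{ijjk},t_{ijkk})$ and exhibiting a suitable witness vector for each. All the conceptual ingredients — Lemmas \ref{lem:21}, \ref{lem:23}, \ref{lem:25}, the reduction to two variables of \cite{nka}, and the index/sign symmetries — are already available, so the proof is long but routine.
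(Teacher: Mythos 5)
Your plan matches the paper's proof of Theorem \ref{th3.13} almost exactly: sufficiency via explicit sum-of-squares certificates after normalizing signs with $x_\ell\mapsto-x_\ell$ and index permutations, and necessity by splitting into the nine sign configurations of $(t_{iiij},t_{ijjj},t_{iiik},t_{ikkk})$, using Eq. \eqref{eq:2} and the evaluation at small integer points to kill each violating pattern. The only deviations are minor: you reserve the inner-determinant machinery of \cite{nka} for ``hard'' subcases, but in this particular theorem every sufficiency case --- including (d), where $t_{iijk},t_{ijjk},t_{ijkk}$ are all nonzero --- admits a direct SOS certificate built around $(x_j+t_{jjjk}x_k)^4$ or $(x_j^2+x_k^2+2t_{ijjj}x_ix_j+2t_{ikkk}x_ix_k+2t_{jjjk}x_jx_k)^2$, so that tool is never needed here; and beware that several listed cases (e.g.\ (a$_1$) with $t_{iijj}=t_{iikk}=0$, where ${\mathcal{T}}x^4$ vanishes at $x=(0,1,-t_{jjjk})^{\top}$) are only positive semi-definite, so the route through Lemma \ref{lem:21} and positive definiteness is unavailable there and one must verify ${\mathcal{T}}x^4\ge0$ directly.
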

\begin{proof}
``\textbf{if (Sufficiency)}." (a) Suppose $t_{iiij}=t_{ijjj}=t_{iiik}=t_{ikkk}=0$ for $i,j,k\in\{1,2,3\}$, $i\neq j$, $i\neq k$, $j\neq k$.

(a$_1$) If $t_{1123}=t_{1223}=t_{1233}=0$, $t_{jjkk}=1$ and $t_{iijj},t_{iikk}\in\{0,1\}$. Then for $x=(x_1,x_2,x_3)^{\top}\in{\mathbb{R}}^3$,
$${\mathcal{T}}x^4\geq x_i^4+(x_j+t_{jjjk}x_k)^4\geq0.$$

(a$_2$) If $t_{iijk}=\pm1$, $t_{ijjk}=t_{ijkk}=0$, and $t_{1122}=t_{2233}=t_{1133}=1$. Then for $x=(x_1,x_2,x_3)^{\top}\in{\mathbb{R}}^3$,
$${\mathcal{T}}x^4= x_i^4+(x_j+t_{jjjk}x_k)^4+6(x_ix_j+t_{iijk}x_ix_k)\geq0.$$
When $t_{iijk}t_{jjjk}=-1$, $t_{ijjk}=t_{ijkk}=0$, $t_{jjkk}=t_{iijj}=1$, and $t_{iikk}=0$, for $x=(x_1,x_2,x_3)^{\top}\in{\mathbb{R}}^3$,
$${\mathcal{T}}x^4= (-x_i^2+x_j^2+x_k^2+2t_{jjjk}x_jx_k)^2+2(x_ix_k+2t_{iijk}x_ix_j)^2\geq0,$$
when $t_{iijk}t_{jjjk}=-1$, $t_{ijjk}=t_{ijkk}=0$, $t_{jjkk}=t_{iikk}=1$, and $t_{iijj}=0$, for $x=(x_1,x_2,x_3)^{\top}\in{\mathbb{R}}^3$,
$${\mathcal{T}}x^4=(-x_i^2+x_j^2+x_k^2+2t_{jjjk}x_jx_k)^2+2(x_ix_j+2t_{iijk}x_ix_k)^2\geq0.$$

(b) Suppose $t_{1123}=t_{1223}=t_{1233}=0$, $t_{iijj}=t_{jjkk}=1$ and $t_{iikk}\in\{0,1\}$ with $t_{ijjj}=t_{ikkk}=t_{iiik}=0$, $t_{iiij}=\pm1$ for $i,j,k\in\{1,2,3\}$, $i\neq j$, $i\neq k$, $j\neq k$. Then for $x=(x_1,x_2,x_3)^{\top}\in{\mathbb{R}}^3$,
$${\mathcal{T}}x^4\geq (x_i^2+2t_{iiij}x_ix_j)^2+(x_j+t_{jjjk}x_k)^4+2x_i^2x_j^2\geq0.$$

(c) Suppose $t_{ijjj}=t_{ikkk}=0$, $t_{iijk}t_{iiij}t_{iiik}=1$, $t_{ijjk}=t_{ijkk}=0$ and $t_{1122}=t_{2233}=t_{1133}=1$ for $i,j,k\in\{1,2,3\}$, $i\neq j$, $i\neq k$, $j\neq k$. Then for $x=(x_1,x_2,x_3)^{\top}\in{\mathbb{R}}^3$,
$${\mathcal{T}}x^4=(x_i^2+2t_{iiij}x_ix_j+2t_{iiik}x_ix_k)^2+(x_j+t_{jjjk}x_k)^4+2(x_ix_j+2t_{iijk}x_ix_k)^2\geq0.$$

(d) Suppose $t_{iiij}=t_{iiik}=0$, $t_{ijjj}t_{ijkk}=t_{ikkk}t_{ijjk}=t_{ijjj}t_{ikkk}t_{iijk}=t_{ikkk}t_{jjjk}t_{ijkk}=t_{ijjj}t_{jjjk}t_{ijjk}=t_{1122}=t_{2233}=t_{1133}=1$ for $i,j,k\in\{1,2,3\}$, $i\neq j$, $i\neq k$, $j\neq k$. Then for $x=(x_1,x_2,x_3)^{\top}\in{\mathbb{R}}^3$,
$${\mathcal{T}}x^4=x_i^4+(x_j^2+x_k^2+2t_{ijjj}x_ix_j+2t_{ikkk}x_ix_k+2t_{jjjk}x_jx_k)^2+2(x_ix_j+t_{iijk}x_ix_k)^2\geq0.$$

``\textbf{only if (Necessity)}."  The conditions, 
$t_{iiij}t_{ijjj}=t_{iiik}t_{ikkk}=0$, $t_{jjjk}t_{jkkk}=1$ for $i,j,k\in\{1,2,3\}$, $i\neq j$, $i\neq k$, $j\neq k$, can be divided into nine cases.
\begin{flushleft}
  \begin{itemize}
    \item [($\romannumeral1$)] $t_{iiij}=t_{ijjj}=t_{iiik}=t_{ikkk}=0$;
    \item [($\romannumeral2$)] $t_{ijjj}=t_{ikkk}=t_{iiik}=0$, and $t_{iiij}=\pm1$;
    \item [($\romannumeral3$)] $t_{iiij}=t_{ikkk}=t_{iiik}=0$, and $t_{ijjj}=\pm1$;
    \item [($\romannumeral4$)] $t_{ijjj}=t_{ikkk}=0$, and $t_{iiij}t_{iiik}t_{jjjk}=1$;
    \item [($\romannumeral5$)] $t_{ijjj}=t_{ikkk}=0$, and $t_{iiij}t_{iiik}t_{jjjk}=-1$;
    \item [($\romannumeral6$)] $t_{iiij}=t_{iiik}=0$, and $t_{ijjj}t_{ikkk}t_{jjjk}=1$;
    \item [($\romannumeral7$)] $t_{iiij}=t_{iiik}=0$, and $t_{ijjj}t_{ikkk}t_{jjjk}=-1$;
    \item [($\romannumeral8$)] $t_{ijjj}=t_{iiik}=0$, and $t_{iiij}t_{ikkk}t_{jjjk}=1$;
    \item [($\romannumeral9$)] $t_{ijjj}=t_{iiik}=0$, and $t_{iiij}t_{ikkk}t_{jjjk}=-1$.
  \end{itemize}
\end{flushleft}
Similar to the prove of Theorem \ref{th3.3}, we only need to consider $t_{jjjk}=1$, $t_{iiij}=t_{jjjk}=1$, $t_{ijjj}=t_{jjjk}=1$, $t_{ijjj}=t_{ikkk}=t_{jjjk}=1$, $t_{ijjj}=-t_{ikkk}=t_{jjjk}=1$, $t_{iiij}=t_{iiik}=t_{jjjk}=1$, $t_{iiij}=-t_{iiik}=t_{jjjk}=1$, $t_{iiij}=t_{ikkk}=t_{jjjk}=1$ and $t_{iiij}=-t_{ikkk}=t_{jjjk}=1$, respectively.

Without loss the generality, suppose $t_{1112}t_{1222}=t_{1113}t_{1333}=0$, $t_{2223}t_{2333}=1$. And it follows from the positive semi-definiteness of $\mathcal{T}$ with Eq. \eqref{eq:2} that \begin{center}
$t_{2233}=1$ and $t_{1122},t_{1133}\in\{0,1\}$, for ($\romannumeral1$)\\
	$t_{1122}=t_{2233}=1$ and $t_{1133}\in\{0,1\}$, for ($\romannumeral2$) and ($\romannumeral3$);\\
$t_{iijj}=1$ for all $i,j\in\{1,2,3\}$ and $i\neq j$, for others.
\end{center}

($\romannumeral1$) We might take $t_{1112}=t_{1222}=t_{1113}=t_{1333}=0$, and $t_{2333}=t_{2223}=1$.
Then For $x=(x_1,x_2,x_3)^{\top}\in{\mathbb{R}}^3$,
\begin{align*}
{\mathcal{T}}x^4=&(x_1+x_2+x_3)^4-4(x_1^3x_2+x_1x_2^3+x_1^3x_3+x_1x_3^3)+12[(t_{1123}-1)x_1^2x_2x_3+(t_{1223}-1)x_1x_2^2x_3\\
&+(t_{1233}-1)x_1x_2x_3^2]+6[(t_{1122}-1)x_1^2x_2^2+(t_{1133}-1)x_1^2x_3^2].
\end{align*}
For $x=(1,6,-3)^\top$, we have\begin{align*}
	{\mathcal{T}}x^4=82-216(t_{1123}+6t_{1223}-3t_{1233})+54(4t_{1122}+t_{1133})\geq0,
\end{align*}
and for $x=(1,-3,6)^\top$, we have\begin{align*}
	{\mathcal{T}}x^4=82-216(t_{1123}-3t_{1223}+6t_{1233})+54(t_{1122}+4t_{1133})\geq0.
\end{align*}
So, the following cases could not occur,
 \begin{itemize}
    \item $t_{1223}=1$ or $t_{1233}=1$;
    \item $t_{1223}=-1$ and $t_{1233}=0$;
	\item $t_{1223}=0$ and $t_{1233}=-1$.
\end{itemize}
Next, we discuss other situations.

\textbf{Case 1.} $t_{1223}=t_{1233}=-1$. Let $x=(1,-6,3)^{\top}$, then
$${\mathcal{T}}x^4\leq (x_1+x_2+x_3)^4-4(x_1^3x_2+x_1x_2^3+x_1^3x_3+x_1x_3^3)-24(x_1^2x_2x_3+x_1x_2^2x_3+x_1x_2x_3^2)=-80<0.$$

\textbf{Case 2.} $t_{1223}=t_{1233}=0$.

\textbf{Case 2.1}  When $t_{1123}=1$, and $t_{1122}=0$ or $t_{2233}=0$. Without loss the generality, suppose $t_{1122}=1$. Let $x=(1,1,-1)^{\top}$, then
$${\mathcal{T}}x^4\leq(x_1+x_2+x_3)^4-4(x_1^3x_2+x_1x_2^3+x_1^3x_3+x_1x_3^3)-12(x_1x_2^2x_3+x_1x_2x_3^2)-6x_1^2x_2^2=-5<0.$$

\textbf{Case 2.2}  When $t_{1123}=-1$, and $t_{1122}=t_{1133}=0$. Let $x=(5,1,3)^{\top}$, then
\begin{align*}
{\mathcal{T}}x^4=&(x_1+x_2+x_3)^4-4(x_1^3x_2+x_1x_2^3+x_1^3x_3+x_1x_3^3)-24x_1^2x_2x_3\\
&-12(x_1x_2^2x_3+x_1x_2x_3^2)-6(x_1^2x_2^2+x_1^2x_3^2)=-19<0.
\end{align*}

($\romannumeral2$) We might take $t_{1222}=t_{1113}=t_{1333}=0$, and $t_{1112}=t_{2223}=t_{2333}=1$.
Then for $x=(x_1,x_2,x_3)^{\top}\in{\mathbb{R}}^3$
\begin{align*}
{\mathcal{T}}x^4=&(x_1+x_2+x_3)^4-4(x_1x_2^3+x_1^3x_3+x_1x_3^3)\\
&+12[(t_{1123}-1)x_1^2x_2x_3+(t_{1223}-1)x_1x_2^2x_3+(t_{1233}-1)x_1x_2x_3^2]+6(t_{1133}-1)x_1^2x_3^2.
\end{align*}
For $x=(1,8,-3)^\top$, we have\begin{align*}
	{\mathcal{T}}x^4=1042-288(t_{1123}+8t_{1223}-3t_{1233})+54t_{1133}\geq0.
\end{align*}
So, the following cases could not occur,
 \begin{itemize}
    \item $t_{1223}=1$;
    \item $t_{1123}=1$, $t_{1223}=0$ and $t_{1233}=-1$.
\end{itemize}
Next, we discuss other situations.

\textbf{Case 1.} $t_{1223}=0$.

\textbf{Subcase 1.1} When $t_{1233}=-1$ and $t_{1123}\neq1$. Let $x=(1,5,-3)^{\top}$, then
$${\mathcal{T}}x^4\leq (x_1+x_2+x_3)^4-4(x_1x_2^3+x_1^3x_3+x_1x_3^3)-12x_1x_2^2x_3-24(x_1^2x_2x_3+x_1x_2x_3^2)=-119<0.$$

\textbf{Subcase 1.2} When $t_{1233}\neq-1$ and $t_{1123}=-1$, let $x=(-6,3,1)^{\top}$, then
$${\mathcal{T}}x^4\leq (x_1+x_2+x_3)^4-4(x_1x_2^3+x_1^3x_3+x_1x_3^3)-24x_1^2x_2x_3-12(x_1x_2^2x_3+x_1x_2x_3^2)=-176<0.$$

\textbf{Subcase 1.3} When at least one of $\{t_{1123},t_{1233}\}$ is $1$ and the other one is not $-1$. Without loss the generality, suppose $t_{1233}=1$. Let $x=(1,-1,1)^{\top}$, then
$${\mathcal{T}}x^4\leq (x_1+x_2+x_3)^4-4(x_1x_2^3+x_1^3x_3+x_1x_3^3)-12(x_1^2x_2x_3+x_1x_2^2x_3)=-3<0.$$

\textbf{Case 2.} $t_{1223}=-1$.

\textbf{Subcase 2.1} When $t_{1123}=t_{1233}=-1$, let $x=(1,1,1)^{\top}$, then
$${\mathcal{T}}x^4\leq (x_1+x_2+x_3)^4-4(x_1x_2^3+x_1^3x_3+x_1x_3^3)-24(x_1^2x_2x_3+x_1x_2^2x_3+x_1x_2x_3^2)=-3<0.$$

\textbf{Subcase 2.2} When one of $\{t_{1123},t_{1233}\}$ is $0$ and the other one is $-1$. Let $x=(1,-4,2)^{\top}$, then
$${\mathcal{T}}x^4\leq (x_1+x_2+x_3)^4-4(x_1x_2^3+x_1^3x_3+x_1x_3^3)-12x_1^2x_2x_3-24(x_1x_2^2x_3+x_1x_2x_3^2)=-71<0.$$

\textbf{Subcase 2.3} When $\{t_{1123},t_{1233}\}$ are not $-1$. Let $x=(1,-1,1)^{\top}$, then
$${\mathcal{T}}x^4\leq (x_1+x_2+x_3)^4-4(x_1x_2^3+x_1^3x_3+x_1x_3^3)-12(x_1^2x_2x_3+x_1x_2x_3^2)-24x_1x_2^2x_3=-3<0.$$

($\romannumeral3$) We might take $t_{1112}=t_{1113}=t_{1333}=0$, and $t_{1222}=t_{2223}=t_{2333}=1$.
For $x=(x_1,x_2,x_3)^{\top}\in{\mathbb{R}}^3$
\begin{align*}
{\mathcal{T}}x^4=&(x_1+x_2+x_3)^4-4(x_1^3x_2+x_1^3x_3+x_1x_3^3)\\
&+12[(t_{1123}-1)x_1^2x_2x_3+(t_{1223}-1)x_1x_2^2x_3+(t_{1233}-1)x_1x_2x_3^2]+6(t_{1133}-1)x_1^2x_3^2.
\end{align*}
For $x=(1,-4,1)^\top$, we have\begin{align*}
	{\mathcal{T}}x^4=-78-48(t_{1123}-4t_{1223}+t_{1233})+6t_{1133}\geq0.
\end{align*}
So, the following cases could not occur,
 \begin{itemize}
    \item $t_{1223}=-1$;
    \item $t_{1223}=0$, and at most one of $\{t_{1123},t_{1233}\}$ is $1$.
\end{itemize}
Next, we discuss other situations.

\textbf{Case 1.} $t_{1223}=0$ and $t_{1123}=t_{1233}=-1$. Let $x=(1,2,-3)^{\top}$, then
$${\mathcal{T}}x^4\leq (x_1+x_2+x_3)^4-4(x_1^3x_2+x_1^3x_3+x_1x_3^3)-12x_1x_2^2x_3-24(x_1^2x_2x_3+x_1x_2x_3^2)=-32<0.$$

\textbf{Case 2.} $t_{1223}=1$.

\textbf{Case 2.1} When $t_{1233}\neq1$. Let $x=(1,4,-3)^{\top}$, then
$${\mathcal{T}}x^4\leq (x_1+x_2+x_3)^4-4(x_1^3x_2+x_1^3x_3+x_1x_3^3)-24x_1^2x_2x_3-12x_1x_2x_3^2=-24<0.$$

\textbf{Case 2.2} When $t_{1233}=1$ and $t_{1123}\geq0$. Let $x=(1,-2,3)^{\top}$, then
$${\mathcal{T}}x^4\leq (x_1+x_2+x_3)^4-4(x_1^3x_2+x_1^3x_3+x_1x_3^3)-12x_1^2x_2x_3=-24<0.$$

\textbf{Case 2.3} When $t_{1233}=-t_{1123}=1$. Let $x=(-1,1,1)^{\top}$, then
$${\mathcal{T}}x^4\leq (x_1+x_2+x_3)^4-4(x_1^3x_2+x_1^3x_3+x_1x_3^3)-24x_1^2x_2x_3=-11<0.$$

($\romannumeral4$) We might take $t_{1222}=t_{1333}=0$ and $t_{1112}=t_{1113}=t_{2223}=t_{2333}=t_{1122}=t_{2233}=t_{1133}=1$.
Then for $x=(x_1,x_2,x_3)^{\top}\in{\mathbb{R}}^3$
\begin{align*}
{\mathcal{T}}x^4=&(x_1+x_2+x_3)^4-4(x_1x_2^3+x_1x_3^3)\\
&+12[(t_{1123}-1)x_1^2x_2x_3+(t_{1223}-1)x_1x_2^2x_3+(t_{1233}-1)x_1x_2x_3^2].
\end{align*}
For $x=(-5,2,1)^\top$, we have\begin{align*}
	{\mathcal{T}}x^4=-44+96(5t_{1123}-2t_{1223}-t_{1233})\geq0,
\end{align*}
and for $x=(-5,1,2)^\top$, we have\begin{align*}
	{\mathcal{T}}x^4=-44+120(5t_{1123}-t_{1223}-2t_{1233})\geq0.
\end{align*}
So, the following cases could not occur,
 \begin{itemize}
    \item $t_{1123}=-1$;
	\item $t_{1123}=0$ and $t_{1223}=1$ or $t_{1233}=1$;
    \item $t_{1123}=t_{1223}=t_{1223}=0$.
\end{itemize}
Discuss other situations later.

\textbf{Case 1.} $t_{1123}=0$, at least one $\{t_{1223},t_{1233}\}$ is $-1$ and the other one is not $1$. Let $x=(1,-3,1)^{\top}$, then
$${\mathcal{T}}x^4\leq(x_1+x_2+x_3)^4-4(x_1x_2^3+x_1x_3^3)-12x_1^2x_2x_3-24(x_1x_2^2x_3+x_1x_2x_3^2)=-3<0.$$

\textbf{Case 2.} $t_{1123}=1$.

\textbf{Subcase 2.1} When $t_{1223}=1$ or $t_{1233}=1$. Without loss the generality, suppose $t_{1223}=1$. Let $x=(1,2,-1)^{\top}$, then
$${\mathcal{T}}x^4\leq(x_1+x_2+x_3)^4-4(x_1x_2^3+x_1x_3^3)=-12<0.$$

\textbf{Subcase 2.2} When at least one $\{t_{1223},t_{1233}\}$ is $-1$ and the other one is not $1$. Without loss the generality, suppose $t_{1233}=-1$. Let $x=(3,1,-2)^{\top}$, then
$${\mathcal{T}}x^4\leq(x_1+x_2+x_3)^4-4(x_1x_2^3+x_1x_3^3)-24(x_1x_2^2x_3+x_1x_2x_3^2)=-44<0.$$

($\romannumeral5$). We might take $t_{1222}=t_{1333}=0$, $t_{1112}=-t_{1113}=t_{2223}=t_{2333}=1$.
Then for $x=(x_1,x_2,x_3)^{\top}\in{\mathbb{R}}^3$
\begin{align*}
{\mathcal{T}}x^4=&(x_1+x_2+x_3)^4-4(x_1x_2^3+x_1x_3^3)-8x_1^3x_3\\
&+12[(t_{1123}-1)x_1^2x_2x_3+(t_{1223}-1)x_1x_2^2x_3+(t_{1233}-1)x_1x_2x_3^2].
\end{align*}
For  $x=(1,-2,3)^\top$, we have
\begin{align*}
	{\mathcal{T}}x^4=60-72(t_{1123}-2t_{1223}+3t_{1233})\geq0.
\end{align*}
So, the following cases could not occur,
 \begin{itemize}
    \item $t_{1123}=1$ and $-2t_{1223}+3t_{1233}\leq0$;
    \item $t_{1233}=1$ and $t_{1123}-2t_{1223}\geq-2$.
\end{itemize}
Discuss other situations later.

\textbf{Case 1.} $t_{1123}=1$ or $t_{1233}=1$.

\textbf{Subcase 1.1} When $t_{1123}=1$ and $t_{1223}>t_{1233}$. Let $x=(1,1,-1)^{\top}$, then
$${\mathcal{T}}x^4\leq(x_1+x_2+x_3)^4-4(x_1x_2^3+x_1x_3^3)-8x_1^3x_3-12x_1x_2x_3^2=-3<0.$$

\textbf{Subcase 1.2} When $t_{1123}=1$ and $t_{1223}=t_{1233}=-1$, let $x=(1,-1,1)^{\top}$, then
$${\mathcal{T}}x^4= (x_1+x_2+x_3)^4-4(x_1x_2^3+x_1x_3^3)-8x_1^3x_3-24(x_1x_2^2x_3+x_1x_2x_3^2)=-7<0.$$

\textbf{Subcase 1.3} When $t_{1233}=t_{1223}=1$ and $t_{1123}=-1$. Let $x=(-1,1,1)^{\top}$, then
$${\mathcal{T}}x^4=(x_1+x_2+x_3)^4-4(x_1x_2^3+x_1x_3^3)-8x_1^3x_3-24x_1^2x_2x_3=-7<0.$$

\textbf{Case 2.} $t_{1123}\neq1$ and $t_{1233}\neq1$.

\textbf{Subcase 2.1} When $t_{1223}=1$, or $t_{1223}=0$ and $t_{1233}=-1$. Let $x=(1,3,-3)^{\top}$, then
$${\mathcal{T}}x^4\leq (x_1+x_2+x_3)^4-4(x_1x_2^3+x_1x_3^3)-8x_1^3x_3-24(x_1^2x_2x_3+x_1x_2x_3^2)+12x_1x_2^2x_3=-83<0.$$

\textbf{Subcase 2.2} When $t_{1123}=t_{1223}=t_{1233}=0$. Let $x=(4,-2,1)^{\top}$, then
$${\mathcal{T}}x^4= (x_1+x_2+x_3)^4-4(x_1x_2^3+x_1x_3^3)-8x_1^3x_3-12(x_1^2x_2x_3+x_1x_2^2x_3+x_1x_2x_3^2)=-31<0.$$

\textbf{Subcase 2.3} When $t_{1223}=-1$ and $t_{1123}+t_{1233}\geq-1$. Let $x=(1,-3,2)^{\top}$, then
$${\mathcal{T}}x^4\leq(x_1+x_2+x_3)^4-4(x_1x_2^3+x_1x_3^3)-8x_1^3x_3-12x_1^2x_2x_3-24(x_1x_2^2x_3+x_1x_2x_3^2)=-12<0.$$

\textbf{Subcase 2.4} When $t_{1223}=t_{1123}=t_{1233}=-1$. Let $x=(1,1,1)^{\top}$, then
$${\mathcal{T}}x^4= (x_1+x_2+x_3)^4-4(x_1x_2^3+x_1x_3^3)-8x_1^3x_3-24(x_1^2x_2x_3+x_1x_2^2x_3+x_1x_2x_3^2)=-7<0.$$

($\romannumeral6$) We might take $t_{1112}=t_{1113}=0$ and $t_{1222}=t_{1333}=t_{2223}=t_{2333}=t_{1122}=t_{2233}=t_{1133}=1$.
Then for $x=(x_1,x_2,x_3)^{\top}\in{\mathbb{R}}^3$
\begin{align*}
{\mathcal{T}}x^4=&(x_1+x_2+x_3)^4-4(x_1^3x_2+x_1^3x_3)\\
&+12[(t_{1123}-1)x_1^2x_2x_3+(t_{1223}-1)x_1x_2^2x_3+(t_{1233}-1)x_1x_2x_3^2].
\end{align*}

\textbf{Case 1.} $t_{1223}\neq1$.

\textbf{Subcase 1.1} When $t_{1123}+t_{1233}\geq 2t_{1223}$. Let $x=(1,-3,1)^{\top}$, then
$${\mathcal{T}}x^4\leq (x_1+x_2+x_3)^4-4(x_1^3x_2+x_1^3x_3)-12(x_1^2x_2x_3+x_1x_2^2x_3+x_1x_2x_3^2)=-27<0.$$

\textbf{Subcase 1.2} When $t_{1223}=0$ and $t_{1233}=-1$. Let $x=(1,1,-2)^{\top}$, then
$${\mathcal{T}}x^4\leq (x_1+x_2+x_3)^4-4(x_1^3x_2+x_1^3x_3)-12x_1x_2^2x_3-24(x_1^2x_2x_3+x_1x_2x_3^2)=-20<0.$$

\textbf{Subcase 1.3} When $t_{1223}=t_{1233}=0$ and $t_{1123}=-1$. Let $x=(1,-5,2)^{\top}$, then
$${\mathcal{T}}x^4= (x_1+x_2+x_3)^4-4(x_1^3x_2+x_1^3x_3)-24x_1^2x_2x_3-12(x_1x_2^2x_3+x_1x_2x_3^2)=-92<0.$$

\textbf{Case 2.} $t_{1223}=1$.

\textbf{Subcase 2.1} When $t_{1123}\geq t_{1233}\neq1$. Let $x=(1,1,-2)^{\top}$, then
$${\mathcal{T}}x^4\leq(x_1+x_2+x_3)^4-4(x_1^3x_2+x_1^3x_3)-12(x_1^2x_2x_3+x_1x_2x_3^2)=-20<0.$$

\textbf{Subcase 2.2} When $t_{1123}<t_{1233}$. Let $x=(-1,1,1)^{\top}$, then
$${\mathcal{T}}x^4\leq (x_1+x_2+x_3)^4-4(x_1^3x_2+x_1^3x_3)-12x_1^2x_2x_3=-3<0.$$

($\romannumeral7$) We might take $t_{1112}=t_{1113}=0$ and $t_{1222}=-t_{1333}=t_{2223}=t_{2333}=1$.
Then for $x=(x_1,x_2,x_3)^{\top}\in{\mathbb{R}}^3$
\begin{align*}
{\mathcal{T}}x^4=&(x_1+x_2+x_3)^4-4(x_1^3x_2+x_1^3x_3)-8x_1x_3^3\\
&+12[(t_{1123}-1)x_1^2x_2x_3+(t_{1223}-1)x_1x_2^2x_3+(t_{1233}-1)x_1x_2x_3^2].
\end{align*}
For  $x=(1,-2,2)^\top$, we have
\begin{align*}
	{\mathcal{T}}x^4=-15-48(t_{1123}-2t_{1223}+2t_{1233})\geq0.
\end{align*}
So, the following cases could not occur,
 \begin{itemize}
    \item $t_{1223}<t_{1233}$;
    \item $t_{1223}=t_{1233}$ and $t_{1123}\geq0$.
\end{itemize}
Discuss other situations later.

\textbf{Case 1.} $t_{1123}=-1$ and $t_{1223}=t_{1233}$. Let $x=(1,-4,4)^{\top}$, then
$${\mathcal{T}}x^4=(x_1+x_2+x_3)^4-4(x_1^3x_2+x_1^3x_3)-8x_1x_3^3-24x_1^2x_2x_3=-127<0.$$

\textbf{Case 2.} $t_{1223}>t_{1233}$. Let $x=(1,7,-7)^{\top}$, then
$${\mathcal{T}}x^4\leq(x_1+x_2+x_3)^4-4(x_1^3x_2+x_1^3x_3)-8x_1x_3^3-24x_1^2x_2x_3-12x_1x_2x_3^2=-195<0.$$

($\romannumeral8$) We might take $t_{1222}=t_{1113}=0$, $t_{1112}=t_{1333}=t_{2223}=t_{2333}=1$.
For $x=(x_1,x_2,x_3)^{\top}\in{\mathbb{R}}^3$
\begin{align*}
{\mathcal{T}}x^4=&(x_1+x_2+x_3)^4-4(x_1x_2^3+x_1^3x_3)\\
&+12[(t_{1123}-1)x_1^2x_2x_3+(t_{1223}-1)x_1x_2^2x_3+(t_{1233}-1)x_1x_2x_3^2].
\end{align*}
For  $x=(1,3,-5)^\top$, we have
\begin{align*}
	{\mathcal{T}}x^4=-267-180(t_{1123}+3t_{1223}-5t_{1233})\geq0.
\end{align*}
So, the following cases could not occur,
 \begin{itemize}
    \item $t_{1233}\neq1$ and $t_{1223}\geq t_{1233}$;
    \item $t_{1123}=t_{1223}=t_{1233}=1$.
\end{itemize}
Discuss other situations later.

\textbf{Case 1.} $t_{1223}=t_{1233}=1$ and $t_{1123}\neq1$. Let $x=(-1,1,1)^{\top}$, then
$${\mathcal{T}}x^4\leq (x_1+x_2+x_3)^4-4(x_1x_2^3+x_1^3x_3)-12x_1^2x_2x_3=-3<0.$$

\textbf{Case 2.} $t_{1223}<t_{1233}$.

\textbf{Subcase 2.1} When $t_{1223}=0$ and $t_{1123}\geq0$, or $t_{1223}=-1$. Let $x=(1,-4,2)^{\top}$, then
$${\mathcal{T}}x^4\leq (x_1+x_2+x_3)^4-4(x_1x_2^3+x_1^3x_3)-12(x_1^2x_2x_3+x_1x_2^2x_3)=-39<0.$$

\textbf{Subcase 2.2} When $t_{1223}=0$ and $t_{1123}=-1$, let $x=(-1,1,1)^{\top}$, then
$${\mathcal{T}}x^4= (x_1+x_2+x_3)^4-4(x_1x_2^3+x_1^3x_3)-24x_1^2x_2x_3-12x_1x_2^2x_3=-3<0.$$

($\romannumeral9$) We might take $t_{1222}=t_{1113}=0$, $t_{1112}=-t_{1333}=t_{2223}=t_{2333}=1$.
Then for $x=(x_1,x_2,x_3)^{\top}\in{\mathbb{R}}^3$
\begin{align*}
{\mathcal{T}}x^4=&(x_1+x_2+x_3)^4-4(x_1x_2^3+x_1^3x_3)-8x_1x_3^3\\
&+12[(t_{1123}-1)x_1^2x_2x_3+(t_{1223}-1)x_1x_2^2x_3+(t_{1233}-1)x_1x_2x_3^2].
\end{align*}

\textbf{Case 1.} $t_{1223}\leq t_{1233}$. Let $x=(1,-6,6)^{\top}$, then
$${\mathcal{T}}x^4= (x_1+x_2+x_3)^4-4(x_1x_2^3+x_1^3x_3)-8x_1x_3^3-24x_1^2x_2x_3=-23<0.$$

\textbf{Case 2.} $t_{1223}>t_{1233}$. Let $x=(1,4,-4)^{\top}$, then
$${\mathcal{T}}x^4\leq (x_1+x_2+x_3)^4-4(x_1x_2^3+x_1^3x_3)-8x_1x_3^3-24x_1^2x_2x_3-12x_1x_2x_3^2=-111<0.$$

The necessity is proved.
\end{proof}

\begin{theorem}\label{th3.14}
Let ${\mathcal{T}}=(t_{ijkl})\in\widehat{{\mathcal{E}}}_{4,3}$ and $t_{iiij}t_{ijjj}=t_{iiik}t_{ikkk}=0$, $t_{jjjk}t_{jkkk}=-1$ for $i,j,k\in\{1,2,3\}$, $i\neq j$, $i\neq k$, $j\neq k$. Then $\mathcal{T}$ is positive definite if and only if one of the following conditions is satisfied.
\begin{itemize}
  \item [(a)]  $t_{iiij}=t_{ijjj}=t_{iiik}=t_{ikkk}=0$, and
  \begin{itemize}
    \item [(a$_1$)] $t_{1123}=t_{1223}=t_{1233}=0$, $t_{jjkk}=1$ and $t_{iijj},t_{iikk}\in\{0,1\}$, or
    \item [(a$_2$)] $t_{iijk}=\pm1$, $t_{ijjk}=t_{ijkk}=0$, and $t_{1122}=t_{2233}=t_{1133}=1$.
  \end{itemize}
  \item [(b)] $t_{iiij}=\pm1$, $t_{ijjj}=t_{ikkk}=t_{iiik}=0$, and
  \begin{itemize}
   \item [(b$_1$)] $t_{1123}=t_{1223}=t_{1233}=0$, $t_{iijj}=t_{jjkk}=1$ and $t_{iikk}\in\{0,1\}$, or
   \item [(b$_2$)] $t_{ijkk}=0$, and $t_{iijk}t_{jkkk}=t_{ijjk}t_{iiij}t_{jkkk}=t_{1122}=t_{2233}=t_{1133}=1$.
  \end{itemize}
  \item [(c)] $t_{ijjj}=\pm1$, $t_{iiij}=t_{ikkk}=t_{iiik}=0$, $t_{1122}=t_{2233}=t_{1133}=1$, and
  \begin{itemize}
   \item [(c$_1$)] $t_{iijk}=t_{ijkk}=0$, and $t_{ijjj}t_{jjjk}t_{ijjk}=1$, or
   \item [(c$_2$)] $t_{ijjk}=t_{ijkk}=0$, and $t_{iijk}t_{jkkk}=1$.
  \end{itemize}
  \item [(d)] $t_{ijjj}=t_{ikkk}=t_{ijjk}=t_{ijkk}=0$, and $t_{iijk}t_{iiij}t_{iiik}=t_{1122}=t_{2233}=t_{1133}=1$.
  \item [(e)] $t_{iiij}=t_{iiik}=0$, $t_{jjjk}=t_{1122}=t_{2233}=t_{1133}=1$, and
  \begin{itemize}
   \item [(e$_1$)] $t_{ijjj}t_{ikkk}=-t_{iijk}=-t_{ijkk}t_{ijjj}=1$, and $t_{ijjk}=0$, or
   \item [(e$_2$)] $t_{ijjj}t_{ikkk}=-t_{iijk}=-t_{ijjk}t_{ijjj}=-1$, and $t_{ijkk}=0$.
  \end{itemize}
  \item [(f)] $t_{ijjj}=t_{iiik}=t_{iijk}=0$, $t_{ikkk}t_{jkkk}t_{ijkk}=t_{1122}=t_{2233}=t_{1133}=1$, and
   \begin{itemize}
   \item [(f$_1$)] $t_{ijjk}=0$, or
   \item [(f$_2$)] $t_{iiij}t_{ikkk}t_{jjjk}=-t_{ijjk}t_{ikkk}=-t_{ijkk}t_{iiij}=1$.
  \end{itemize}
\end{itemize}
\end{theorem}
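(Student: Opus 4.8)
The plan is to follow the same two–track strategy used for Theorems \ref{th3.9}--\ref{th3.13}. Since $t_{jjjk}t_{jkkk}=-1$ and every diagonal entry is $1$, Eq.~\eqref{eq:3} applied to the $(j,k)$ principal subtensor forces $t_{jjkk}=1$, so by Lemma \ref{lem:24} the $(j,k)$ subtensor is itself positive definite; this is the structural reason a positive definite tensor is now possible, in contrast with Theorem \ref{th3.13}, where $t_{jjjk}t_{jkkk}=1$ produces the degenerate factor $(x_j+t_{jjjk}x_k)^4$ and only semidefiniteness can occur. First I would write out ${\mathcal{T}}x^4$ explicitly under $t_{iiii}=1$, $t_{iiij}t_{ijjj}=t_{iiik}t_{ikkk}=0$, $t_{jjjk}t_{jkkk}=-1$, and use Eq.~\eqref{eq:3} to pin down the admissible values of $t_{1122},t_{1133},t_{2233}$. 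Then I would partition the hypothesis by how many and which of the ``$i$–related'' cubic entries $t_{iiij},t_{ijjj},t_{iiik},t_{ikkk}$ are nonzero (none of them; $t_{iiij}\neq0$ only; $t_{ijjj}\neq0$ only; $\{t_{iiij},t_{iiik}\}$ nonzero; $\{t_{ijjj},t_{ikkk}\}$ nonzero; $\{t_{iiij},t_{ikkk}\}$ nonzero), which is exactly the list (a)--(f); in each branch only one representative of the sign/index pattern needs treating, since the substitutions $x_\ell\mapsto-x_\ell$ together with index symmetry reduce the rest to it, just as in the proofs of Theorems \ref{th3.9} and \ref{th3.13}.

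For the ``if'' direction I would, case by case, produce an explicit sum–of–squares representation of ${\mathcal{T}}x^4$ of the shape seen in \eqref{th3.9a2}, \eqref{th3.10c} and \eqref{th3.10d}: a square of a quadratic form encoding the positive definite $(j,k)$ block together with its coupling to $x_i$, plus squares such as $(x_ix_j+t_{ijjk}x_ix_k)^2$ or $(x_ix_k+t_{ijkk}x_jx_k)^2$, plus leftover $x_i^4$ and $2x_i^2x_j^2$ terms; one then checks by inspection that ${\mathcal{T}}x^4=0$ forces $x=0$ and invokes Lemma \ref{lem:21}. For the one or two branches that resist a clean decomposition (as in Theorem \ref{th3.12}(b) or the second subcase of Theorem \ref{th3.13}(b)), I would first use Lemma \ref{lem:23} to dispose of the coordinate hyperplanes $x_1=0$, $x_2=0$, $x_3=0$, then regard ${\mathcal{T}}x^4$ as a quartic in one variable with coefficients polynomial in the other two, compute the inner determinants $\triangle_1^1,\triangle_3^1,\triangle_5^1,\triangle_7^1$, and show $\triangle_3^1<0$, $\triangle_5^1<0$, $\triangle_7^1>0$ on the relevant open cone; the sign sequences are then $[+,-,-,+,+]$ versus $[+,+,-,-,+]$, so by Lemma \ref{lem:25} the number of distinct real roots is $0$ and ${\mathcal{T}}x^4$ vanishes only at the origin. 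The degree–$12$ factor inside $\triangle_7^1$ is handled by a further application of Lemma \ref{lem:25} after normalising one variable, exactly as in the SOS–free cases of Theorems \ref{th3.12} and \ref{th3.13}.

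For the ``only if'' direction I would assume ${\mathcal{T}}$ positive definite, note Eq.~\eqref{eq:3} gives $t_{iijj}\in\{0,1\}$ with the usual constraints, and then evaluate ${\mathcal{T}}x^4$ at a few small integer vectors such as $(1,1,1)^\top$ to eliminate whole families of sign patterns of $(t_{1123},t_{1223},t_{1233})$ and of the remaining quadratic entries at once. For every configuration not appearing among (a)--(f) I would exhibit a specific integer vector $x$ (of the types $(\pm1,\pm1,\pm1)^\top$, $(\pm2,\mp1,\pm1)^\top$, $(\pm3,\pm1,\pm1)^\top$, $(\pm5,\pm2,\pm1)^\top$, and so on, used throughout Theorems \ref{th3.1}--\ref{th3.13}) at which ${\mathcal{T}}x^4<0$, organised into Cases and Subcases indexed by the nine sign–type branches of the hypothesis, mirroring the bookkeeping in the proof of Theorem \ref{th3.13}.

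The main obstacle is twofold. In the ``if'' direction, producing for each branch a genuinely nonnegative decomposition with the correct zero set is delicate: some branches need the root–counting detour rather than an SOS, and there one must verify the signs of $\triangle_5^1$ and $\triangle_7^1$ over all sign combinations of the two free variables and confirm that the degree–$12$ polynomial inside $\triangle_7^1$ has no real root via Lemma \ref{lem:25}. In the ``only if'' direction the difficulty is sheer combinatorial volume: each of the nine branches spawns many subconfigurations of $(t_{1123},t_{1223},t_{1233})$ and of $t_{1122},t_{1133},t_{2233}$, and a witness vector must be exhibited for each. Both are routine in spirit but lengthy; the remainder is bookkeeping around Lemmas \ref{lem:21}, \ref{lem:23}, \ref{lem:24} and \ref{lem:25}.
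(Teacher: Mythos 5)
Your proposal follows essentially the same route as the paper's own proof: sum-of-squares decompositions for most sufficiency branches with the inner-determinant root-counting method of Lemma \ref{lem:25} reserved for the resistant cases (b$_2$) and (c$_2$), symmetry reductions to one sign representative per branch, and integer witness vectors organised by the same partition of the nonzero cubic entries for necessity (the paper uses seven necessity cases rather than nine, since the $\{t_{iiij},t_{ikkk}\}$ branch splits only once more by the sign of $t_{iiij}t_{ikkk}t_{jjjk}$). The plan is sound; what remains is exactly the casework you identify, which the paper carries out in full.
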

\begin{proof}
``\textbf{if (Sufficiency)}."
(a) Suppose $t_{iiij}=t_{ijjj}=t_{iiik}=t_{ikkk}=0$ for $i,j,k\in\{1,2,3\}$, $i\neq j$, $i\neq k$, $j\neq k$.

(a$_1$) If $t_{1123}=t_{1223}=t_{1233}=0$, $t_{jjkk}=1$ and $t_{iijj},t_{iikk}\in\{0,1\}$. Then for $x=(x_1,x_2,x_3)^{\top}\in{\mathbb{R}}^3$,
\begin{equation}\label{th3.14a1}
{\mathcal{T}}x^4\geq x_i^4+(t_{jjjk}x_j^2+t_{jkkk}x_k^2+2x_jx_k)^2+4x_j^2x_k^2\geq0.
\end{equation}

(a$_2$) If $t_{iijk}=\pm1$, $t_{ijjk}=t_{ijkk}=0$, and $t_{1122}=t_{2233}=t_{1133}=1$. Then for $x=(x_1,x_2,x_3)^{\top}\in{\mathbb{R}}^3$,
\begin{equation}\label{th3.14a2}
{\mathcal{T}}x^4=x_i^4+(t_{jjjk}x_j+t_{jkkk}x_k+2x_jx_k)^2+4x_j^2x_k^2+6(x_ix_j+t_{iijk}x_ix_k)^2\geq0.
\end{equation}

(b) Suppose $t_{iiij}=\pm1$, and $t_{ijjj}=t_{ikkk}=t_{iiik}=0$ for $i,j,k\in\{1,2,3\}$, $i\neq j$, $i\neq k$, $j\neq k$.

(b$_1$) If $t_{1123}=t_{1223}=t_{1233}=0$, $t_{iijj}=t_{jjkk}=1$ and $t_{iikk}\in\{0,1\}$. Then for $x=(x_1,x_2,x_3)^{\top}\in{\mathbb{R}}^3$,
\begin{equation}\label{th3.14b1}
{\mathcal{T}}x^4\geq (x_i^2+2t_{iiij}x_ix_j)^2+(t_{jjjk}x_j^2+t_{jkkk}x_k^2+2x_jx_k)^2+4x_j^2x_k^2+x_i^2x_j^2\geq0.
\end{equation}

(b$_2$) If $t_{ijkk}=0$, and $t_{iijk}t_{jkkk}=t_{ijjk}t_{iiij}t_{jkkk}=t_{1122}=t_{2233}=t_{1133}=1$.

Suppose $t_{1112}=t_{2223}=-t_{2333}=t_{1122}=t_{2233}=t_{1133}=1$, $t_{1222}=t_{1113}=t_{1333}=0$ then $t_{1233}=0$, $t_{1123}=t_{1223}=-1$. Then for $x=(x_1,x_2,x_3)^{\top}\in{\mathbb{R}}^3$,
\begin{align}\label{th9x3}
{\mathcal{T}}x^4&=x_1^4+x_2^4+x_3^4+4(x_1^3x_2+x_2^3x_3-x_2x_3^3)-12(x_1^2x_2x_3+x_1x_2^2x_3)+6(x_1^2x_2^2+x_2^2x_3^2+x_1^2x_3^2)\nonumber\\
&=(x_1^2-x_2^2+x_3^2+2x_1x_2-2x_2x_3)^2+(2x_1x_2-2x_1x_3+x_2x_3)^2+3x_2^2x_3^2+4x_1x_2^3.
\end{align}
By this equation can deduce ${\mathcal{T}}x^4>0$ if $x_1x_2>0$.

By Lemma \ref{lem:23}, when $x_1=0$, or $x_2=0$, or $x_3=0$, ${\mathcal{T}}x^4\geq0$, with equality if and only if $x_1=x_2=x_3=0$.

According the method in \cite{nka}, rewrite (\ref{th9x3}) as
\begin{equation} \label{th9x33}
{\mathcal{T}}x^4=x_3^4-4x_2x_3^3+6(x_1^2+x_2^2)x_3^2-[12(x_1^2x_2+x_1x_2^2)+4x_2^3]x_3+x_1^4+x_2^4+4x_1x_2^3+6x_1^2x_2^2.
\end{equation}
Then the inner determinants corresponding to (\ref{th9x33}) are
$$\triangle_1^1=4,\ \ ~~~~~~~~\triangle_3^1=-48x_1^2,$$
$$\triangle_5^1=-192\times(8x_1^6-4x_1^5x_2+12x_1^3x_2^3+19x_1^2x_2^4-36x_1x_2^5+12x_2^6),$$
\begin{align*}
\triangle_7^1&=256\times(64x_1^{12}+192x_1^{11}x_2-240x_1^{10}x_2^2-512x_1^9x_2^3+1824x_1^8x_2^4+408x_1^7x_2^5-4368x_1^6x_2^6\\
&~~~-360x_1^5x_2^7+6213x_1^4x_2^8-2904x_1^3x_2^9-648x_1^2x_2^{10}+288x_1x_2^{11}+80x_2^{12}).
\end{align*}

If $x_1,x_2\neq0$ and the signs of $x_1$ and $x_2$ are different,
$$\triangle_3^1=-48x_1^2<0,$$
$$\triangle_5^1=-192\times[4x_1^6-4x_1^5x_2+(2x_1^3+3x_2^3)^2+19x_1^2x_2^4-36x_1x_2^5+3x_2^6]<0.$$

\begin{align*}
\triangle_7^1&=256x_2^{12}\times(64y^{12}+192y^{11}-240y^{10}-512y^9+1824y^8+408y^7-4368y^6-360y^5+6213y^4\\
&~~~-2904y^3-648y^2+288y+80),
\end{align*}
where $y=\frac{x_1}{x_2}$. Using Lemma \ref{lem:26} to $\frac{\triangle_7^1}{256x_2^{12}}$, then we get the numbers of distinct real roots of $\frac{\triangle_7^1}{256x_2^{12}}=0$ with each $x_2\neq0$ are $0$.
\iffalse
The inner determinants corresponding to $\frac{\triangle_7^1}{256x_2^{12}}$ are
$$\triangle_{1,7}^1=768>0,\ \ \triangle_{3,7}^1\approx49545216>0,\ \  \triangle_{5,7}^1\approx-1.1393\times10^{13}<0,$$ $$\triangle_{7,7}^1\approx7.9856\times10^{18}>0,\ \
\triangle_{9,7}^1\approx8.4169\times10^{25}>0,\ \ \triangle_{11,7}^1\approx1.4016\times10^{33}>0,$$ $$\triangle_{13,7}^1\approx-9.1310\times10^{39}<0,\ \ \triangle_{15,7}^1\approx-2.0526\times10^{47}<0, \ \ \triangle_{17,7}^1\approx7.1997\times10^{53}>0,$$
$$\triangle_{19,7}^1\approx5.8801\times10^{59}>0,\ \
\triangle_{21,7}^1\approx-2.1741\times10^{65}<0, \ \ \triangle_{23,7}^1\approx2.5687\times10^{67}>0.$$
The number of distinct real roots $N_7$ of $\frac{\triangle_7^1}{256x_2^{12}}=0$ with each $x_2\neq0$ is
$$N_7=\mbox{var}[+,-,+,+,+,-,+,+,-,-,+,+,+]-\mbox{var}[+,+,+,-,+,+,+,-,-,+,+,-,+]=6-6=0.$$
\fi
Since $\frac{\triangle_7^1}{256x_2^{12}}=37>0$ for $y=0$, $\frac{\triangle_7^1}{256x_2^{12}}>0$ for any $y\in{\mathbb{R}}$, thus $\triangle_7^1>0$ for any$ x=(x_1,x_2)^{\top}\in{\mathbb{R}}^2$ and $x_2\neq0$.

Then if $x_1x_2<0$ , the number of distinct real roots $N$ of ${\mathcal{T}}x^4=0$ is
$$N=\mbox{var}[+,-,-,+,+]-\mbox{var}[+,+,-,-,+]=2-2=0.$$
Thus, ${\mathcal{T}}x^4=0$ if and only if $x=(0,0,0)^{\top}$. By Lemma \ref{lem:21}, ${\mathcal{T}}$ is positive definite.

All other cases satisfying condition (b$_2$) can be transformed into similar forms of $(\ref{th9x3})$.

(c) Suppose $t_{ijjj}=\pm1$, $t_{iiij}=t_{ikkk}=t_{iiik}=0$, and $t_{1122}=t_{2233}=t_{1133}=1$ for $i,j,k\in\{1,2,3\}$, $i\neq j$, $i\neq k$, $j\neq k$.

(c$_1$) If $t_{iijk}=t_{ijkk}=0$, and $t_{ijjj}t_{jjjk}t_{ijjk}=1$. Then for $x=(x_1,x_2,x_3)^{\top}\in{\mathbb{R}}^3$,
\begin{align}\label{th3.14c1}
{\mathcal{T}}x^4&\geq x_i^4+(t_{jjjk}x_j^2+t_{jkkk}x_k^2+2x_jx_k+2t_{ijjj}t_{jjjk}x_ix_j)^2+(2x_ix_k+t_{ijjj}x_jx_k)^2\nonumber\\
&~~~+2(x_ix_j+t_{ijjk}x_jx_k)^2+x_j^2x_k^2+2x_i^2x_k^2\geq0.
\end{align}

(c$_2$) If $t_{ijjk}=t_{ijkk}=0$, and $t_{iijk}t_{jkkk}=1$.

Suppose $t_{1222}=t_{2223}=-t_{2333}=t_{1122}=t_{2233}=t_{1133}=1$, $t_{1112}=t_{1113}=t_{1333}=0$ then $t_{1123}=0$, $t_{1223}=t_{1223}=0$. Then for $x=(x_1,x_2,x_3)^{\top}\in{\mathbb{R}}^3$,
\begin{equation}\label{th9x3-2}
{\mathcal{T}}x^4=x_1^4+x_2^4+x_3^4+4(x_1x_2^3+x_2^3x_3-x_2x_3^3)-12x_1^2x_2x_3+6(x_1^2x_2^2+x_2^2x_3^2+x_1^2x_3^2).
\end{equation}

By Lemma \ref{lem:23}, when $x_1=0$, or $x_2=0$, or $x_3=0$, ${\mathcal{T}}x^4\geq0$, with equality if and only if $x_1=x_2=x_3=0$.

According the method in \cite{nka}, rewrite (\ref{th9x3}) as
\begin{equation} \label{th9x3-23}
{\mathcal{T}}x^4=x_3^4-4x_2x_3^3+6(x_1^2+x_2^2)x_3^2-(12x_1^2x_2+4x_2^3)x_3+x_1^4+x_2^4+4x_1^3x_2+6x_1^2x_2^2.
\end{equation}
Then the inner determinants corresponding to (\ref{th9x3-23}) are
$$\triangle_1^1=4,\ \ ~~~~~~~~\triangle_3^1=-48x_1^2,$$
$$\triangle_5^1=-768\times(2x_1^6-x_1^3x_2^3-2x_1^2x_2^4+3x_2^6)=-768\times[(x_1^3-\frac{x_2^3}{2})^2+(x_1^3-x_1x_2^2)^2+(x_1^2x_2-\frac{3x_2^3}{2})^2+x_1^4x_2^2+\frac{x_2^6}{2}],$$
$$\triangle_7^1=4096\times(4x_1^{12}+12x_1^9x_2^3+24x_1^8x_2^4-15x_1^6x_2^6-60x_1^5x_2^7-60x_1^4x_2^8+58x_1^3x_2^9+132x_1^2x_2^{10}+48x_1x_2^{11}+5x_2^{12}).$$
If $x_1\neq0$, then
$$\triangle_7^1=4096x_1^{12}\times(4+12y^3+24y^4-15y^6-60y^7-50y^8+58y^9+132y^{10}+48y^{11}+5y^{12}),$$
where $y=\frac{x_2}{x_1}$.  Using Lemma \ref{lem:26} to $\frac{\triangle_7^1}{4096x_1^{12}}$, then we get the numbers of distinct real roots of $\frac{\triangle_7^1}{4096x_1^{12}}=0$ with each $x_1\neq0$ are $0$.
\iffalse
The inner determinants corresponding to $\frac{\triangle_7^1}{4096x_1^{12}}$ are
$$\triangle_{1,7}^1=60>0,\ \ \triangle_{3,7}^1\approx4.7520\times10^4>0,\ \  \triangle_{5,7}^1\approx-3.9766\times10^{6}<0,$$ $$\triangle_{7,7}^1\approx-5.0699\times10^{11}<0,\ \
\triangle_{9,7}^1\approx1.0653\times10^{15}>0,\ \ \triangle_{11,7}^1\approx-7.5721\times10^{18}<0,$$ $$\triangle_{13,7}^1\approx-8.1725\times10^{23}<0,\ \ \triangle_{15,7}^1\approx1.8695\times10^{28}>0, \ \ \triangle_{17,7}^1\approx5.2118\times10^{31}>0,$$
$$\triangle_{19,7}^1\approx-4.8059\times10^{33}<0,\ \
\triangle_{21,7}^1\approx-1.8745\times10^{37}<0, \ \ \triangle_{23,7}^1\approx2.5195\times10^{39}>0.$$
The number of distinct real roots $N_7$ of $\frac{\triangle_7^1}{4096x_1^{12}}=0$ with each $x_2\neq0$ is
$$N_7=\mbox{var}[+,-,+,+,-,-,-,+,+,-,-,+,+]-\mbox{var}[+,+,+,-,-,+,-,-,+,+,-,-,+]=6-6=0.$$
\fi
Since $\frac{\triangle_7^1}{4069x_1^{12}}=4>0$ for $y=0$, $\frac{\triangle_7^1}{4096x_1^{12}}>0$ for any $y\in{\mathbb{R}}$, thus $\triangle_7^1>0$ for any$ x=(x_1,x_2)^{\top}\in{\mathbb{R}}^2$ and $x_1\neq0$.

Then if $x_1x_2\neq0$, the number of distinct real roots $N$ of ${\mathcal{T}}x^4=0$ is
$$N=\mbox{var}[+,-,-,+,+]-\mbox{var}[+,+,-,-,+]=2-2=0.$$
Thus, ${\mathcal{T}}x^4=0$ if and only if $x=(0,0,0)^{\top}$. By Lemma \ref{lem:21}, ${\mathcal{T}}$ is positive definite.

All other cases satisfying condition (c$_2$) can be transformed into similar forms of $(\ref{th9x3-2})$.

(d) Suppose $t_{ijjj}=t_{ikkk}=t_{ijjk}=t_{ijkk}=0$, and $t_{iijk}t_{iiij}t_{iiik}=t_{1122}=t_{2233}=t_{1133}=1$ for $i,j,k\in\{1,2,3\}$, $i\neq j$, $i\neq k$, $j\neq k$. Then for $x=(x_1,x_2,x_3)^{\top}\in{\mathbb{R}}^3$,
\begin{equation}\label{th3.14d}\aligned
{\mathcal{T}}x^4=&(x_i^2+2t_{iiij}x_ix_j+3t_{iiik}x_ix_k)^2+(t_{jjjk}x_j^2+t_{jkkk}x_k^2+2x_jx_k)^2\\&+4x_j^2x_k^2+2(x_ix_j+t_{iijk}x_ix_k)^2\geq0.\endaligned
\end{equation}

(e) Suppose $t_{iiij}=t_{iiik}=0$, $t_{jjjk}=t_{1122}=t_{2233}=t_{1133}=1$ for $i,j,k\in\{1,2,3\}$, $i\neq j$, $i\neq k$, $j\neq k$.

(e$_1$) If $t_{ijjj}t_{ikkk}=-t_{iijk}=-t_{ijkk}t_{ijjj}=1$, and $t_{ijjk}=0$. Then for $x=(x_1,x_2,x_3)^{\top}\in{\mathbb{R}}^3$,
\begin{align}\label{th3.14e1}
{\mathcal{T}}x^4&=(x_i^2-x_jx_k)^2+(x_j^2-x_k^2+2x_jx_k+2t_{ijjj}x_ix_j-2t_{ikkk}x_ix_k)^2+(x_ix_k+t_{ikkk}x_jx_k-x_ix_j)^2\nonumber\\
&~~~+(x_jx_k+t_{ikkk}x_ix_j)^2+(x_ix_k-t_{ikkk}x_jx_k)^2\geq0.
\end{align}
(e$_2$) If $t_{ijjj}t_{ikkk}=-t_{iijk}=-t_{ijjk}t_{ijjj}=-1$, and $t_{ijkk}=0$. Then for $x=(x_1,x_2,x_3)^{\top}\in{\mathbb{R}}^3$,
\begin{align}\label{th3.14e2}
{\mathcal{T}}x^4&=(x_i^2+x_jx_k)^2+(x_j^2-x_k^2+2x_jx_k+2t_{ijjj}x_ix_j-2t_{ikkk}x_ix_k)^2+(x_ix_k+t_{ijjj}x_jx_k+x_ix_j)^2\nonumber\\
&~~~+(x_jx_k+t_{ijjj}x_ix_k)^2+(x_ix_j-t_{ijjj}x_jx_k)^2\geq0.
\end{align}

(f) Suppose $t_{ijjj}=t_{iiik}=t_{iijk}=0$, and $t_{ikkk}t_{jkkk}t_{ijkk}=t_{1122}=t_{2233}=t_{1133}=1$ for $i,j,k\in\{1,2,3\}$, $i\neq j$, $i\neq k$, $j\neq k$.

(f$_1$) If $t_{ijjk}=0$. Then for $x=(x_1,x_2,x_3)^{\top}\in{\mathbb{R}}^3$,
\begin{align}\label{th3.14f1}
{\mathcal{T}}x^4&=(x_i^2+2t_{iiij}x_ix_j)^2+(t_{jjjk}x_j^2+t_{jkkk}x_k^2+2x_jx_k+2t_{jkkk}t_{ikkk}x_ix_k)^2+2(x_ix_k+t_{ijkk}x_jx_k)^2\nonumber\\
&~~~+2(x_ix_j-t_{ijkk}x_jx_k)^2\geq0.
\end{align}

(f$_2$) $t_{iiij}t_{ikkk}t_{jjjk}=-t_{ijjk}t_{ikkk}=-t_{ijkk}t_{iiij}=1$. Then for $x=(x_1,x_2,x_3)^{\top}\in{\mathbb{R}}^3$,
\begin{equation}\label{th3.14f2}\aligned
{\mathcal{T}}x^4=&(x_i^2+2t_{iiij}x_ix_j+2t_{iiij}t_{ijjk}x_jx_k)^2+(t_{jjjk}x_j^2+t_{jkkk}x_k^2+2x_jx_k+2t_{jkkk}t_{ikkk}x_ix_k)^2\\
&+(x_ix_k+t_{ijkk}x_jx_k-t_{iiij}t_{ijjk}x_ix_j)^2+(x_ix_k+t_{ijkk}x_jx_k)^2\\&+(x_ix_j+t_{ijjk}x_jx_k)^2\geq0.
\endaligned\end{equation}

Furthermore, in (\ref{th3.14a1}), (\ref{th3.14a2}), (\ref{th3.14b1}), (\ref{th3.14c1}), and (\ref{th3.14d})-(\ref{th3.14f2}), it is easily verified that ${\mathcal{T}}x^4=0$ if and only if $x=(0,0,0)^{\top}$, and then, $\mathcal{T}$ is positive definite.

``\textbf{only if (Necessity)}." The conditions,
$t_{iiij}t_{ijjj}=t_{iiik}t_{ikkk}=0$, $t_{jjjk}t_{jkkk}=-1$ for $i,j,k\in\{1,2,3\}$, $i\neq j$, $i\neq k$, $j\neq k$, can be divided into seven cases.
\begin{itemize}
    \item [($\romannumeral1$)] $t_{iiij}=t_{ijjj}=t_{iiik}=t_{ikkk}=0$;
	\item [($\romannumeral2$)] $t_{ijjj}=t_{ikkk}=t_{iiik}=0$, and $t_{iiij}=\pm1$;
    \item [($\romannumeral3$)] $t_{iiij}=t_{ikkk}=t_{iiik}=0$, and $t_{ijjj}=\pm1$;
    \item [($\romannumeral4$)] $t_{ijjj}=t_{ikkk}=0$, and $t_{iiij}t_{iiik}t_{jjjk}=1$ ($t_{iiij}t_{iiik}t_{jkkk}=1$ is similar);
    \item [($\romannumeral5$)] $t_{iiij}=t_{iiik}=0$, and $t_{ijjj}t_{ikkk}t_{jjjk}=1$ ($t_{ijjj}t_{ikkk}t_{jkkk}=1$ is similar);
    \item [($\romannumeral6$)] $t_{ijjj}=t_{iiik}=0$, and $t_{iiij}t_{ikkk}t_{jjjk}=1$;
    \item [($\romannumeral7$)]  $t_{ijjj}=t_{iiik}=0$, and $t_{iiij}t_{ikkk}t_{jjjk}=-1$.
\end{itemize}
Similar to the prove of Theorem \ref{th3.3}, we only need to consider $t_{jjjk}=1$, $t_{iiij}=t_{jjjk}=1$, $t_{ijjj}=t_{jjjk}=1$, $t_{iiij}=t_{iiik}=t_{jjjk}=1$, $t_{ijjj}=t_{ikkk}=t_{jjjk}=1$, $t_{iiij}=t_{ikkk}=t_{jjjk}=1$ and $t_{iiij}=-t_{ikkk}=-t_{jjjk}=1$ respectively.

Without loss the generality, suppose $t_{1112}t_{1222}=t_{1113}t_{1333}=0$, $t_{2223}t_{2333}=-1$. And it follows from the positive definiteness of $\mathcal{T}$ with Eq. \eqref{eq:3} that \begin{center}
$t_{2233}=1$ and $t_{1122},t_{1133}\in\{0,1\}$, for ($\romannumeral1$)\\
	$t_{1122}=t_{2233}=1$ and $t_{1133}\in\{0,1\}$, for ($\romannumeral2$) and ($\romannumeral3$);\\
$t_{iijj}=1$ for all $i,j\in\{1,2,3\}$ and $i\neq j$, for others.
\end{center}

($\romannumeral1$) We might take $t_{1112}=t_{1222}=t_{1113}=t_{1333}=0$, and $t_{2223}=-t_{2333}=1$.
Then for $x=(x_1,x_2,x_3)^{\top}\in{\mathbb{R}}^3$
\begin{align*}
{\mathcal{T}}x^4=&(x_1+x_2+x_3)^4-4(x_1^3x_2+x_1x_2^3+x_1^3x_3+x_1x_3^3)-8x_2x_3^3+12[(t_{1123}-1)x_1^2x_2x_3\\
&+(t_{1223}-1)x_1x_2^2x_3+(t_{1233}-1)x_1x_2x_3^2]+6[(t_{1122}-1)x_1^2x_2^2+(t_{1133}-1)x_1^2x_3^2].
\end{align*}
For $x=(1,1,3)^\top$, we have\begin{align*}
	{\mathcal{T}}x^4=41+36(t_{1123}+t_{1223}+3t_{1233})+6(t_{1122}+9t_{1133})>0,
\end{align*}
So, the following cases  could not occur, \begin{itemize}
	\item $t_{1233}=-1$ and at least one of $\{t_{1123},t_{1223}\}$ is $-1$;
	\item $t_{1233}=-1$ and $t_{1123}=t_{1223}=0$;
    \item $t_{1133}=0$ and $t_{1123}+t_{1223}+3t_{1233}\leq-2$.
\end{itemize}
Next, we discuss other situations.

\textbf{Case 1.} $t_{1233}=1$.

\textbf{Subcase 1.1} When $t_{1123}\leq t_{1223}$. Let $x=(-1,1,3)^{\top}$, then
$${\mathcal{T}}x^4\leq (x_1+x_2+x_3)^4-4(x_1^3x_2+x_1x_2^3+x_1^3x_3+x_1x_3^3)-8x_2x_3^3=-7<0.$$

\textbf{Subcase 1.2} When $t_{1123}>t_{1223}$. Let $x=(1,-1,1)^{\top}$, then
$${\mathcal{T}}x^4\leq (x_1+x_2+x_3)^4-4(x_1^3x_2+x_1x_2^3+x_1^3x_3+x_1x_3^3)-8x_2x_3^3-12x_1x_2^2x_3=-3<0.$$

\textbf{Case 2.} $t_{1233}=-1$ and $t_{1123}+t_{1223}>0$. Let $x=(1,1,-1)^{\top}$, then
$${\mathcal{T}}x^4\leq (x_1+x_2+x_3)^4-4(x_1^3x_2+x_1x_2^3+x_1^3x_3+x_1x_3^3)-8x_2x_3^3-12x_1^2x_2x_3-24x_1x_2x_3^2=-3<0.$$

\textbf{Case 3.} $t_{1233}=0$.

\textbf{Subcase 3.1} When $t_{1223}=0$, $t_{1123}=1$, and  $t_{1122}=0$ or $t_{1133}=0$. Let $x=(2,-2,1)^{\top}$, then
$${\mathcal{T}}x^4\leq (x_1+x_2+x_3)^4-4(x_1^3x_2+x_1x_2^3+x_1^3x_3+x_1x_3^3)-8x_2x_3^3-12(x_1x_2^2x_3+x_1x_2x_3^2)-6x_1^2x_2^2=-39<0$$
when $t_{1122}=0$, let $x=(4,-2,3)^{\top}$, then
$${\mathcal{T}}x^4\leq (x_1+x_2+x_3)^4-4(x_1^3x_2+x_1x_2^3+x_1^3x_3+x_1x_3^3)-8x_2x_3^3-12(x_1x_2^2x_3+x_1x_2x_3^2)-6x_1^2x_3^2=-79<0$$
when $t_{1133}=0$.

\textbf{Subcase 3.2} When $t_{1223}=0$, $t_{1123}=-1$, and $t_{1122}=0$ or $t_{1133}=0$. Let $x=(-3,2,1)^{\top}$, then
$${\mathcal{T}}x^4\leq (x_1+x_2+x_3)^4-4(x_1^3x_2+x_1x_2^3+x_1^3x_3+x_1x_3^3)-8x_2x_3^3-24x_1^2x_2x_3-12(x_1x_2^2x_3+x_1x_2x_3^2)-6x_1^2x_2^2=-16<0$$
when $t_{1122}=0$, let $x=(-3,1,2)^{\top}$, then
$${\mathcal{T}}x^4\leq (x_1+x_2+x_3)^4-4(x_1^3x_2+x_1x_2^3+x_1^3x_3+x_1x_3^3)-8x_2x_3^3-24x_1^2x_2x_3-12(x_1x_2^2x_3+x_1x_2x_3^2)-6x_1^2x_3^2=-64<0$$
when $t_{1133}=0$.

\textbf{Subcase 3.3} When $t_{1223}=1$ and $t_{1123}\neq1$. Let $x=(1,3,-1)^{\top}$, then
$${\mathcal{T}}x^4\leq (x_1+x_2+x_3)^4-4(x_1^3x_2+x_1x_2^3+x_1^3x_3+x_1x_3^3)-8x_2x_3^3-12(x_1^2x_2x_3+x_1x_2x_3^2)=-7<0.$$

\textbf{Subcase 3.4} When $t_{1223}=1$ and $t_{1123}=1$. Let $x=(-1,1,1)^{\top}$, then
$${\mathcal{T}}x^4\leq (x_1+x_2+x_3)^4-4(x_1^3x_2+x_1x_2^3+x_1^3x_3+x_1x_3^3)-8x_2x_3^3-24x_1^2x_2x_3-12x_1x_2x_3^2=-3<0.$$

\textbf{Subcase 3.5} When $t_{1223}=-1$ and $t_{1123}\neq-1$. Let $x=(1,-3,1)^{\top}$, then
$${\mathcal{T}}x^4\leq (x_1+x_2+x_3)^4-4(x_1^3x_2+x_1x_2^3+x_1^3x_3+x_1x_3^3)-8x_2x_3^3-24x_1x_2^2x_3-12(x_1^2x_2x_3+x_1x_2x_3^2)=-7<0.$$

\textbf{Subcase 3.6} When $t_{1223}=t_{1123}=-1$. Let $x=(1,1,1)^{\top}$, then
$${\mathcal{T}}x^4\leq (x_1+x_2+x_3)^4-4(x_1^3x_2+x_1x_2^3+x_1^3x_3+x_1x_3^3)-8x_2x_3^3-12x_1x_2x_3^2-24(x_1^2x_2x_3+x_1x_2^2x_3)=-3<0.$$

($\romannumeral2$) We might take $t_{1222}=t_{1113}=t_{1333}=0$, and $t_{1112}=t_{2223}=-t_{2333}=1$.
Then for $x=(x_1,x_2,x_3)^{\top}\in{\mathbb{R}}^3$
\begin{align*}
{\mathcal{T}}x^4=&(x_1+x_2+x_3)^4-4(x_1x_2^3+x_1^3x_3+x_1x_3^3)-8x_2x_3^3\\
&+12[(t_{1123}-1)x_1^2x_2x_3+(t_{1223}-1)x_1x_2^2x_3+(t_{1233}-1)x_1x_2x_3^2]+6(t_{1133}-1)x_1^2x_3^2.
\end{align*}

\textbf{Case 1.} $t_{1233}=1$.

\textbf{Subcase 1.1} When $t_{1123}\leq t_{1223}$. Let $x=(-1,1,3)^{\top}$, then
$${\mathcal{T}}x^4\leq (x_1+x_2+x_3)^4-4(x_1x_2^3+x_1^3x_3+x_1x_3^3)-8x_2x_3^3=-11<0.$$

\textbf{Subcase 1.2} When $t_{1123}>t_{1223}$. Let $x=(1,-1,1)^{\top}$, then
$${\mathcal{T}}x^4\leq (x_1+x_2+x_3)^4-4(x_1x_2^3+x_1^3x_3+x_1x_3^3)-8x_2x_3^3-12x_1x_2^2x_3=-7<0.$$

\textbf{Case 2.} $t_{1233}=-1$.

\textbf{Subcase 2.1} When $t_{1123}+t_{1223}\leq0$. Let $x=(1,1,3)^{\top}$, then
$${\mathcal{T}}x^4\leq (x_1+x_2+x_3)^4-4(x_1x_2^3+x_1^3x_3+x_1x_3^3)-8x_2x_3^3-24(x_1^2x_2x_3+x_1x_2x_3^2)=-3<0.$$

\textbf{Subcase 2.2} When $t_{1123}\geq0$ and $t_{1223}=1$. Let $x=(1,2,-1)^{\top}$, then
$${\mathcal{T}}x^4\leq (x_1+x_2+x_3)^4-4(x_1x_2^3+x_1^3x_3+x_1x_3^3)-8x_2x_3^3-12x_1^2x_2x_3-24x_1x_2x_3^2=-16<0.$$

\textbf{Subcase 2.3} When $t_{1123}=1$ and $t_{1223}=0$. Let $x=(5,-2,1)^{\top}$, then
$${\mathcal{T}}x^4\leq (x_1+x_2+x_3)^4-4(x_1x_2^3+x_1^3x_3+x_1x_3^3)-8x_2x_3^3-12x_1x_2^2x_3-24x_1x_2x_3^2=-88<0.$$

\textbf{Case 3.} $t_{1233}=0$.

\textbf{Subcase 3.1} When $t_{1123}=-1$ and $t_{1223}\neq-1$. Let $x=(-7,3,1)^{\top}$, then
$${\mathcal{T}}x^4\leq (x_1+x_2+x_3)^4-4(x_1x_2^3+x_1^3x_3+x_1x_3^3)-8x_2x_3^3-24x_1^2x_2x_3-12(x_1x_2^2x_3+x_1x_2x_3^2)=-307<0.$$

\textbf{Subcase 3.2} When $t_{1123}=t_{1223}=-1$ and $t_{1133}=0$. Let $x=(-3,1,1)^{\top}$, then
$${\mathcal{T}}x^4= (x_1+x_2+x_3)^4-4(x_1x_2^3+x_1^3x_3+x_1x_3^3)-8x_2x_3^3-12x_1x_2x_3^2-24(x_1^2x_2x_3+x_1x_2^2x_3)-6x_1^2x_3^2=-37<0.$$

\textbf{Subcase 3.3} When $t_{1123}\neq-1$ and $t_{1223}=1$. Let $x=(3,9,-4)^{\top}$, then
$${\mathcal{T}}x^4\leq (x_1+x_2+x_3)^4-4(x_1x_2^3+x_1^3x_3+x_1x_3^3)-8x_2x_3^3-12(x_1^2x_2x_3+x_1x_2x_3^2)=-140<0.$$

\textbf{Subcase 3.4} When $t_{1123}\neq-1$ and $t_{1223}=-1$. Let $x=(1,-3,1)^{\top}$, then
$${\mathcal{T}}x^4\leq (x_1+x_2+x_3)^4-4(x_1x_2^3+x_1^3x_3+x_1x_3^3)-8x_2x_3^3-24x_1x_2^2x_3-12(x_1^2x_2x_3+x_1x_2x_3^2)=-19<0$$

\textbf{Subcase 3.5} When $t_{1123}=1$ and $t_{1223}=0$. Let $x=(6,-3,1)^{\top}$, then
$${\mathcal{T}}x^4\leq (x_1+x_2+x_3)^4-4(x_1x_2^3+x_1^3x_3+x_1x_3^3)-8x_2x_3^3-12(x_1x_2^2x_3+x_1x_2x_3^2)=-392<0.$$

($\romannumeral3$) We might take $t_{1112}=t_{1113}=t_{1333}=0$, and $t_{1222}=t_{2223}=-t_{2333}=1$.
For $x=(x_1,x_2,x_3)^{\top}\in{\mathbb{R}}^3$
\begin{align*}
{\mathcal{T}}x^4=&(x_1+x_2+x_3)^4-4(x_1^3x_2+x_1^3x_3+x_1x_3^3)-8x_2x_3^3\\
&+12[(t_{1123}-1)x_1^2x_2x_3+(t_{1223}-1)x_1x_2^2x_3+(t_{1233}-1)x_1x_2x_3^2]+6(t_{1133}-1)x_1^2x_3^2.
\end{align*}
For $x=(2,-8,1)^\top$, we have\begin{align*}
	{\mathcal{T}}x^4=-79-192(2t_{1123}-8t_{1223}+t_{1233})+24t_{1133}>0.
\end{align*}
So, the following cases could not occur,
 \begin{itemize}
    \item $t_{1223}=-1$;
    \item $t_{1223}=0$ and $t_{1123}=1$;
	\item $t_{1223}=t_{1123}=0$ and $t_{1233}\neq-1$.
\end{itemize}
Next, we discuss other situations.

\textbf{Case 1.} $t_{1223}=0$.

\textbf{Subcase 1.1} When $t_{1123}\leq0$ and $t_{1233}=-1$. Let $x=(1,1,3)^{\top}$, then
$${\mathcal{T}}x^4\leq (x_1+x_2+x_3)^4-4(x_1^3x_2+x_1^3x_3+x_1x_3^3)-8x_2x_3^3-24x_1x_2x_3^2-12(x_1^2x_2x_3+x_1x_2^2x_3)=-3<0.$$

\textbf{Subcase 1.2} When $t_{1123}=-1$ and $t_{1233}=1$. Let $x=(-3,1,2)^{\top}$, then
$${\mathcal{T}}x^4\leq (x_1+x_2+x_3)^4-4(x_1^3x_2+x_1^3x_3+x_1x_3^3)-8x_2x_3^3-24x_1^2x_2x_3-12x_1x_2^2x_3=-4<0.$$

\textbf{Subcase 1.3} When $t_{1123}=-1$, $t_{1233}=0$ and $t_{1133}=0$. Let $x=(-3,1,3)^{\top}$, then
$${\mathcal{T}}x^4=(x_1+x_2+x_3)^4-4(x_1^3x_2+x_1^3x_3+x_1x_3^3)-8x_2x_3^3-24x_1^2x_2x_3-12(x_1x_2^2x_3+x_1x_2x_3^2)-6x_1^2x_3^2=-161<0.$$

\textbf{Case 2.} $t_{1223}=1$.

\textbf{Subcase 2.1} When $t_{1233}=1$. Let $x=(-1,1,3)^{\top}$, then
$${\mathcal{T}}x^4\leq (x_1+x_2+x_3)^4-4(x_1^3x_2+x_1^3x_3+x_1x_3^3)-8x_2x_3^3=-11<0.$$

\textbf{Subcase 2.2} When $t_{1123}=t_{1233}=0$ and $t_{1133}=0$. Let $x=(-3,2,5)^{\top}$, then
$${\mathcal{T}}x^4= (x_1+x_2+x_3)^4-4(x_1^3x_2+x_1^3x_3+x_1x_3^3)-8x_2x_3^3-12(x_1^2x_2x_3+x_1x_2x_3^2)-6x_1^2x_3^2=-118<0.$$

\textbf{Subcase 2.3} When $t_{1233}=0$ and $t_{1123}=1$. Let $x=(4,4,-3)^{\top}$, then
$${\mathcal{T}}x^4\leq (x_1+x_2+x_3)^4-4(x_1^3x_2+x_1^3x_3+x_1x_3^3)-8x_2x_3^3-12x_1x_2x_3^2=-63<0.$$

\textbf{Subcase 2.4} When $t_{1233}=0$ and $t_{1123}=-1$. Let $x=(-1,1,1)^{\top}$, then
$${\mathcal{T}}x^4\leq (x_1+x_2+x_3)^4-4(x_1^3x_2+x_1^3x_3+x_1x_3^3)-8x_2x_3^3-24x_1^2x_2x_3-12x_1x_2x_3^2=-7<0.$$

\textbf{Subcase 2.5}  When $t_{1233}=-1$ and $t_{1123}\neq-1$. Let $x=(3,5,-4)^{\top}$, then
$${\mathcal{T}}x^4\leq (x_1+x_2+x_3)^4-4(x_1^3x_2+x_1^3x_3+x_1x_3^3)-8x_2x_3^3-12x_1^2x_2x_3-24x_1x_2x_3^2=-124<0.$$

\textbf{Subcase 2.6} When $t_{1123}=t_{1233}=-1$. Let $x=(1,1,3)^{\top}$, then
$${\mathcal{T}}x^4\leq (x_1+x_2+x_3)^4-4(x_1^3x_2+x_1^3x_3+x_1x_3^3)-8x_2x_3^3-24(x_1^2x_2x_3+x_1x_2x_3^2)=-3<0.$$

($\romannumeral4$). We might take $t_{1222}=t_{1333}=0$ and $t_{1112}=t_{1113}=t_{2223}=-t_{2333}=1$.
Then for $x=(x_1,x_2,x_3)^{\top}\in{\mathbb{R}}^3$
\begin{align*}
{\mathcal{T}}x^4=&(x_1+x_2+x_3)^4-4(x_1x_2^3+x_1x_3^3)-8x_2x_3^3\\
&+12[(t_{1123}-1)x_1^2x_2x_3+(t_{1223}-1)x_1x_2^2x_3+(t_{1233}-1)x_1x_2x_3^2].
\end{align*}
For $x=(-6,1,2)^\top$, we have\begin{align*}
	{\mathcal{T}}x^4=-199+144(6t_{1123}-t_{1223}-2t_{1233})>0.
\end{align*}
So, the following cases could not occur,
 \begin{itemize}
    \item $t_{1123}=-1$;
    \item $t_{1123}=0$ and $t_{1233}\neq-1$
	\item $t_{1123}=0$ and $t_{1223}=1$.
\end{itemize}
Next, we discuss other situations.

\textbf{Case 1.} $t_{1123}=0$.

\textbf{Subcase 1.1} When $t_{1223}=0$ and $t_{1233}=-1$. Let $x=(-4,1,1)^{\top}$, then
$${\mathcal{T}}x^4= (x_1+x_2+x_3)^4-4(x_1^3x_2+x_1x_3^3)-8x_2x_3^3-24x_1x_2x_3^2-12(x_1^2x_2x_3+x_1x_2^2x_3)=-8<0.$$

\textbf{Subcase 1.2} When $t_{1223}=t_{1233}=-1$. Let $x=(1,1,3)^{\top}$, then
$${\mathcal{T}}x^4= (x_1+x_2+x_3)^4-4(x_1x_2^3+x_1x_3^3)-8x_2x_3^3-12x_1^2x_2x_3-24(x_1x_2^2x_3+x_1x_2x_3^2)=-27<0.$$

\textbf{Case 2.} $t_{1123}=1$.

\textbf{Subcase 2.1} When $t_{1223}<t_{1233}$. Let $x=(1,-1,1)^{\top}$, then
$${\mathcal{T}}x^4\leq (x_1+x_2+x_3)^4-4(x_1x_2^3+x_1x_3^3)-8x_2x_3^3-12x_1x_2^2x_3=-3<0.$$

\textbf{Subcase 2.2} When $t_{1223}>t_{1233}$. Let $x=(1,1,-1)^{\top}$, then
$${\mathcal{T}}x^4\leq (x_1+x_2+x_3)^4-4(x_1x_2^3+x_1x_3^3)-8x_2x_3^3-12x_1x_2x_3^2=-3<0.$$

\textbf{Subcase 2.3} When $t_{1223}=t_{1233}=1$. Let $x=(-1,1,3)^{\top}$, then
$${\mathcal{T}}x^4= (x_1+x_2+x_3)^4-4(x_1x_2^3+x_1x_3^3)-8x_2x_3^3=-23<0.$$

\textbf{Subcase 2.4} When $t_{1223}=t_{1233}=-1$. Let $x=(1,2,4)^{\top}$,
$${\mathcal{T}}x^4= (x_1+x_2+x_3)^4-4(x_1x_2^3+x_1x_3^3)-8x_2x_3^3-24(x_1x_2^2x_3+x_1x_2x_3^2)=-63<0.$$

($\romannumeral5$) We might take $t_{1112}=t_{1113}=0$ and $t_{1222}=t_{1333}=t_{2223}=-t_{2333}=1$.
Then for $x=(x_1,x_2,x_3)^{\top}\in{\mathbb{R}}^3$
\begin{align*}
{\mathcal{T}}x^4=&(x_1+x_2+x_3)^4-4(x_1^3x_2+x_1^3x_3)-8x_2x_3^3\\
&+12[(t_{1123}-1)x_1^2x_2x_3+(t_{1223}-1)x_1x_2^2x_3+(t_{1233}-1)x_1x_2x_3^2].
\end{align*}
For $x=(3,-10,2)^\top$, we have\begin{align*}
	{\mathcal{T}}x^4=-1471-720(3t_{1123}-10t_{1223}+2t_{1233})>0.
\end{align*}
So, the following cases could not occur,
 \begin{itemize}
    \item $t_{1223}=-1$;
    \item $t_{1223}=0$ and $t_{1123}\neq-1$;
	\item $t_{1223}=0$ and $t_{1233}=1$.
\end{itemize}
Next, we discuss other situations.

\textbf{Case 1.} $t_{1123}\neq-1$ and $t_{1223}=t_{1233}=0$. Let $x=(-1,1,2)^{\top}$, then
$${\mathcal{T}}x^4= (x_1+x_2+x_3)^4-4(x_1^3x_2+x_1^3x_3)-8x_2x_3^3-24x_1^2x_2x_3-12(x_1x_2^2x_3+x_1x_2x_3^2)=-12<0.$$

\textbf{Case 2.} $t_{1223}=1$.

\textbf{Subcase 2.1} When $t_{1233}\neq-1$. Let $x=(-1,1,3)^{\top}$, then
$${\mathcal{T}}x^4\leq (x_1+x_2+x_3)^4-4(x_1^3x_2+x_1^3x_3)-8x_2x_3^3-12x_1x_2x_3^2=-11<0.$$

\textbf{Subcase 2.2} When $t_{1233}=-1$ and $t_{1123}\neq-1$. Let $x=(1,1,-1)^{\top}$, then
$${\mathcal{T}}x^4\leq (x_1+x_2+x_3)^4-4(x_1^3x_2+x_1^3x_3)-8x_2x_3^3-12x_1^2x_2x_3-24x_1x_2x_3^2=-3<0.$$

\textbf{Subcase 2.3} When $t_{1123}=t_{1233}=-1$. Let $x=(-2,2,1)^{\top}$, then
$${\mathcal{T}}x^4= (x_1+x_2+x_3)^4-4(x_1^3x_2+x_1^3x_3)-8x_2x_3^3-24(x_1^2x_2x_3+x_1x_2x_3^2)=-64<0.$$

($\romannumeral6$) We might take $t_{1222}=t_{1113}=0$ and $t_{1112}=t_{1333}=t_{2223}=-t_{2333}=1$.
Then for $x=(x_1,x_2,x_3)^{\top}\in{\mathbb{R}}^3$
\begin{align*}
{\mathcal{T}}x^4=&(x_1+x_2+x_3)^4-4(x_1x_2^3+x_1^3x_3)-8x_2x_3^3\\
&+12[(t_{1123}-1)x_1^2x_2x_3+(t_{1223}-1)x_1x_2^2x_3+(t_{1233}-1)x_1x_2x_3^2].
\end{align*}
For $x=(-1,1,4)^\top$, we have\begin{align*}
	{\mathcal{T}}x^4=-44+48(t_{1123}-t_{1223}-4t_{1233})>0.
\end{align*}
So, the following cases could not occur,
 \begin{itemize}
    \item $t_{1233}=1$;
    \item $t_{1233}=0$ and $t_{1123}\leq t_{1223}$.
\end{itemize}
Next, we discuss other situations.

\textbf{Case 1.} $t_{1233}=0$.

\textbf{Subcase 1.1} When $t_{1123}\neq-1$ and $t_{1223}=-1$. Let $x=(1,-3,1)^{\top}$, then
$${\mathcal{T}}x^4\leq (x_1+x_2+x_3)^4-4(x_1x_2^3+x_1^3x_3)-8x_2x_3^3-24x_1x_2^2x_3-12(x_1^2x_2x_3+x_1x_2x_3^2)=-15<0.$$

\textbf{Subcase 1.2} When $t_{1123}=1$ and $t_{1223}=0$. Let $x=(6,-4,1)^{\top}$,
$${\mathcal{T}}x^4= (x_1+x_2+x_3)^4-4(x_1x_2^3+x_1^3x_3)-8x_2x_3^3-12(x_1x_2^2x_3+x_1x_2x_3^2)=-79<0.$$

\textbf{Case 2.} $t_{1233}=-1$.

\textbf{Subcase 2.1} When $t_{1123}+t_{1223}\geq1$. Let $x=(1,1,-1)^{\top}$, then
$${\mathcal{T}}x^4= (x_1+x_2+x_3)^4-4(x_1x_2^3+x_1^3x_3)-8x_2x_3^3-12x_1^2x_2x_3-24x_1x_2x_3^2=-3<0.$$

\textbf{Subcase 2.2} When $t_{1123}=1$ and $t_{1223}=-1$, let $x=(6,-4,1)^{\top}$, then
$${\mathcal{T}}x^4= (x_1+x_2+x_3)^4-4(x_1x_2^3+x_1^3x_3)-8x_2x_3^3-24(x_1x_2^2x_3+x_1x_2x_3^2)=-943<0.$$

\textbf{Subcase 2.3} When $t_{1123}=-1$ and $t_{1223}\neq-1$. Let $x=(-6,3,1)^{\top}$, then
$${\mathcal{T}}x^4= (x_1+x_2+x_3)^4-4(x_1x_2^3+x_1^3x_3)-8x_2x_3^3-12x_1x_2^2x_3-24(x_1^2x_2x_3+x_1x_2x_3^2)=-8<0.$$

\textbf{Subcase 2.4} When $t_{1123}=t_{1223}=-1$. Let $x=(1,1,1)^{\top}$, then
$${\mathcal{T}}x^4= (x_1+x_2+x_3)^4-4(x_1x_2^3+x_1^3x_3)-8x_2x_3^3-24(x_1^2x_2x_3+x_1x_2^2x_3+x_1x_2x_3^2)=-7<0.$$

($\romannumeral7$) We might take $t_{1222}=t_{1113}=0$ and $t_{1112}=t_{1333}=-t_{2223}=t_{2333}=1$.
Then for $x=(x_1,x_2,x_3)^{\top}\in{\mathbb{R}}^3$
\begin{align*}
{\mathcal{T}}x^4=&(x_1+x_2+x_3)^4-4(x_1x_2^3+x_1^3x_3)-8x_2^3x_3\\
&+12[(t_{1123}-1)x_1^2x_2x_3+(t_{1223}-1)x_1x_2^2x_3+(t_{1233}-1)x_1x_2x_3^2].
\end{align*}
For $x=(1,1,-3)^\top$, we have\begin{align*}
	{\mathcal{T}}x^4=-3-36(t_{1123}+t_{1223}-3t_{1233})>0.
\end{align*}
So, the following cases could not occur,
 \begin{itemize}
    \item $t_{1233}=-1$;
    \item $t_{1233}=0$ and $t_{1123}+t_{1223}\geq 0$.
\end{itemize}
Next, we discuss other situations.

\textbf{Case 1.} $t_{1233}=0$.

\textbf{Subcase 1.1} When $t_{1223}=-1$ and $t_{1123}\neq1$. Let $x=(7,27,10)^{\top}$, then
$${\mathcal{T}}x^4\leq (x_1+x_2+x_3)^4-4(x_1x_2^3+x_1^3x_3)-8x_2^3x_3-24x_1x_2^2x_3-12(x_1^2x_2x_3+x_1x_2x_3^2)=-1668<0.$$

\textbf{Subcase 1.2} When $t_{1223}=0$ and $t_{1123}=-1$. Let $x=(-6,2,1)^{\top}$, then
$${\mathcal{T}}x^4\leq (x_1+x_2+x_3)^4-4(x_1x_2^3+x_1^3x_3)-8x_2^3x_3-24x_1^2x_2x_3-12(x_1x_2^2x_3+x_1x_2x_3^2)=-223<0.$$

\textbf{Case 2.} $t_{1233}=1$.

\textbf{Subcase 2.1} When $t_{1123}=-1$. Let $x=(-6,2,1)^{\top}$, then
$${\mathcal{T}}x^4\leq (x_1+x_2+x_3)^4-4(x_1x_2^3+x_1^3x_3)-8x_2^3x_3-24(x_1^2x_2x_3+x_1x_2^2x_3)=-79<0.$$

\textbf{Subcase 2.2} When $t_{1123}>t_{1223}$. Let $x=(1,-1,1)^{\top}$, then
$${\mathcal{T}}x^4\leq (x_1+x_2+x_3)^4-4(x_1x_2^3+x_1^3x_3)-8x_2^3x_3-12x_1x_2^2x_3=-3<0.$$

\textbf{Subcase 2.3} When $t_{1223}=1$. Let $x=(-1,3,1)^{\top}$, then
$${\mathcal{T}}x^4\leq (x_1+x_2+x_3)^4-4(x_1x_2^3+x_1^3x_3)-8x_2^3x_3=-23<0.$$

The necessity is proved.
\end{proof}

\begin{theorem}\label{th3.15}
Let ${\mathcal{T}}=(t_{ijkl})\in\widehat{{\mathcal{E}}}_{4,3}$ and $t_{iiij}t_{ijjj}=0$, $-t_{jjjk}t_{jkkk}=t_{iiik}t_{ikkk}=1$ for $i,j,k\in\{1,2,3\}$, $i\neq j$, $i\neq k$, $j\neq k$. Then $\mathcal{T}$ is positive semi-definite if and only if
$t_{1122}=t_{2233}=t_{1133}=t_{iijk}t_{iiij}t_{iiik}=t_{ijkk}t_{ikkk}t_{jkkk}=t_{iiij}t_{iiik}t_{jkkk}=1$ and $t_{ijjk}=t_{ijjj}=0$.
\end{theorem}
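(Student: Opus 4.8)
Since $\mathcal{T}\in\widehat{\mathcal{E}}_{4,3}$ we have $t_{iiii}=1$, so $\mathcal{T}x^4$ has the shape $\sum_i x_i^4+4\sum_{i\ne j}t_{iiij}x_i^3x_j+6\sum_{i<j}t_{iijj}x_i^2x_j^2+12\sum t_{iijk}x_i^2x_jx_k$ for $x=(x_1,x_2,x_3)^\top\in\mathbb{R}^3$. A first observation is that the hypothesis already pins down three of the $t_{iijj}$: since $t_{jjjk},t_{jkkk},t_{iiik},t_{ikkk}$ are all nonzero, Lemma~\ref{lem:23} applied to the $2$-dimensional principal subtensors on the index pairs $\{j,k\}$ and $\{i,k\}$ forces $t_{jjkk}=t_{iikk}=1$ whenever $\mathcal{T}$ is positive semi-definite, while the pair $\{i,j\}$ leaves only $t_{iijj}\in\{0,1\}$ open. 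I prove the two implications separately.

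Sufficiency. Assume the listed equalities and write $a:=t_{iiij}$, $b:=t_{iiik}\in\{-1,1\}$; they determine every entry, namely $t_{ikkk}=b$, $t_{jkkk}=t_{iiij}t_{iiik}=ab$, $t_{jjjk}=-ab$, $t_{iijk}=ab$, $t_{ijkk}=a$, $t_{ijjk}=t_{ijjj}=0$, $t_{iijj}=t_{iikk}=t_{jjkk}=1$. Replacing $x_j$ by $-x_j$ sends $(a,b)\mapsto(-a,b)$ and replacing $x_k$ by $-x_k$ sends $(a,b)\mapsto(a,-b)$ while preserving this structure, so it suffices to treat $a=b=1$; then $\mathcal{T}x^4$ becomes an explicit ternary quartic $P$ with $P|_{x_j=0}=(x_i+x_k)^4$. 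On $\{x_j=0\}$ this gives $\mathcal{T}x^4=(x_i+x_k)^4\ge0$ (equivalently the $\{i,k\}$-subtensor is positive semi-definite, $t_{iikk}=1$, by Lemma~\ref{lem:23}). On $\{x_j\ne0\}$ I would, following~\cite{nka}, regard $P$ as a monic quartic in $x_i$ with $(x_j,x_k)$ as parameters (the coefficient of $x_i^4$ is $t_{iiii}=1>0$), form the inner determinants $\triangle_1^1,\triangle_3^1,\triangle_5^1,\triangle_7^1$ of Lemma~\ref{lem:25}, divide out the appropriate powers of $x_j$ and substitute $\overline{x}_k=x_k/x_j$, and check that the resulting sign patterns give distinct-real-root count $N=0$ for every $\overline{x}_k\in\mathbb{R}$ — exactly the device used in the sufficiency parts of Theorems~\ref{th3.9} and~\ref{th3.12}--\ref{th3.14}. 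Hence $P>0$ on $\{x_j\ne0\}$, so $\mathcal{T}x^4\ge0$; since $P$ vanishes at $(x_i,x_j,x_k)=(1,0,-1)$, $\mathcal{T}$ is positive semi-definite but not positive definite.

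I expect the sufficiency to be the main obstacle, for a structural reason: unlike nearly every case in Theorems~\ref{th3.1}--\ref{th3.14}, here $\mathcal{T}x^4$ is \emph{not} a sum of squares of quadratic forms, so no short SOS certificate is available. Indeed $\mathcal{T}x^4$ vanishes on the whole line $\{x_j=0,\ x_i+x_k=0\}$, which forces any quadratic-Gram representation to contain a square of the form $\big((x_i+x_k)^2+2x_ix_j+2x_jx_k+\gamma x_j^2\big)^2$ (the two coefficients $2$ being forced, $\gamma$ free), after which $\mathcal{T}x^4$ minus that square equals $x_j^2$ times a ternary quadratic form that is not positive semi-definite for any $\gamma$. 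Thus the resultant/root-counting machinery of Lemma~\ref{lem:25} (together with Lemma~\ref{lem:21}) is genuinely needed here, and carrying out the $\triangle_5^1,\triangle_7^1$ computations and certifying their signs is the technical heart of the argument.

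Necessity. Assume $\mathcal{T}$ is positive semi-definite. By \eqref{eq:2} each $t_{iijj}\in\{0,1\}$, and as above $t_{jjkk}=t_{iikk}=1$. Using sign flips of $x_i$ and $x_k$ I would normalize $t_{jjjk}=t_{iiik}=1$ (hence $t_{jkkk}=-1$, $t_{ikkk}=1$) and split on $(t_{iiij},t_{ijjj})$ subject to $t_{iiij}t_{ijjj}=0$: in the branches with $t_{iiij}=0$ (any $t_{ijjj}$) and the branch $t_{iiij}=+1,\ t_{ijjj}=0$ the target equality $t_{iiij}t_{iiik}t_{jkkk}=1$ fails, so $\mathcal{T}$ cannot be positive semi-definite and one exhibits a small integer vector $x$ with $\mathcal{T}x^4<0$; in the remaining branch $t_{iiij}=-1,\ t_{ijjj}=0$ I would run a proof by contradiction over the finitely many values of $(t_{iijk},t_{ijjk},t_{ijkk})\in\{-1,0,1\}^3$ and of $t_{iijj}\in\{0,1\}$, producing in each leaf a vector $x$ with $\mathcal{T}x^4<0$ unless $t_{iijk}=t_{iiij}t_{iiik}$, $t_{ijkk}=t_{ikkk}t_{jkkk}$, $t_{ijjk}=0$ and $t_{iijj}=1$; these equalities together with the normalization are exactly the asserted conditions $t_{1122}=t_{2233}=t_{1133}=t_{iijk}t_{iiij}t_{iiik}=t_{ijkk}t_{ikkk}t_{jkkk}=t_{iiij}t_{iiik}t_{jkkk}=1$, $t_{ijjk}=t_{ijjj}=0$. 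The necessity is thus routine in spirit but long; its only genuine difficulty is the bookkeeping of the case tree and, in each leaf, the choice of one explicit small-integer witness falsifying positive semi-definiteness, exactly as in the necessity arguments of Theorems~\ref{th3.13}--\ref{th3.14}.
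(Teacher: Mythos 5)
Your skeleton matches the paper's: the stated conditions determine every entry up to the two signs $a=t_{iiij}$, $b=t_{iiik}$, the sign-flip reduction to $a=b=1$ is exactly the paper's ``we only need to consider'' step, and the surviving polynomial is the paper's $(\ref{-1-0-1})$, namely $(x_1+x_2+x_3)^4-4x_1x_2^3-8x_2^3x_3-12x_1x_2^2x_3$ for $(i,j,k)=(1,2,3)$. Your necessity plan (normalize the $\{i,k\}$- and $\{j,k\}$-data by flipping $x_i,x_k$, branch on $(t_{iiij},t_{ijjj})$, observe that only $t_{iiij}\neq0$ with $t_{iiij}t_{iiik}t_{jkkk}=1$ can survive, and kill every other leaf with an integer witness) is the same finite case tree the paper executes over its cases ($\romannumeral1$)--($\romannumeral5$); but you supply none of the several dozen witnesses, so that half remains a roadmap. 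On sufficiency you genuinely diverge: the paper does \emph{not} use Lemma~\ref{lem:25} here — it dehomogenizes by $x_2$, sets $g(y_1,y_3)=(y_1+1+y_3)^4-4y_1-8y_3-12y_1y_3$, solves $\nabla g=0$ exactly (three critical points), checks the Hessians and concludes $g>0$ from the positive minimum value. Your inner-determinant alternative could in principle also certify positivity on $\{x_2\neq0\}$, but the $\triangle_5^1,\triangle_7^1$ sign analysis is left entirely undone, and it is the more delicate route (those determinants may vanish or change sign on curves in the parameter plane even where the quartic in $x_i$ has no real roots).

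More importantly, the structural reason you give for needing that machinery is wrong: the quartic \emph{is} a sum of squares. Writing $u=x_1+x_3$, $v=x_1-x_3$, one checks directly that
\[
(x_1+x_2+x_3)^4-4x_1x_2^3-8x_2^3x_3-12x_1x_2^2x_3=\Bigl(u^2+2ux_2-\tfrac12x_2^2\Bigr)^2+\tfrac13\Bigl(3vx_2+x_2^2\Bigr)^2+\tfrac5{12}x_2^4,
\]
both sides being equal to $u^4+4u^3x_2+6(x_1^2+x_3^2)x_2^2-4x_3x_2^3+x_2^4$. This is precisely the case $\gamma=-\tfrac12$ of the representation you claimed impossible: subtracting $\bigl((x_i+x_k)^2+2x_ix_j+2x_jx_k-\tfrac12x_j^2\bigr)^2$ leaves $x_j^2\bigl(3v^2+2vx_j+\tfrac34x_j^2\bigr)$, and $3v^2+2vx_j+\tfrac34x_j^2$ has discriminant $4-9<0$, hence \emph{is} positive semi-definite — so your assertion that the remainder fails ``for any $\gamma$'' breaks down here. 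The identity also exhibits the zero set $\{x_2=0,\ x_1+x_3=0\}$ and gives $\mathcal{T}x^4\geq\tfrac5{12}x_2^4>0$ off that plane, so the sufficiency — which you correctly single out as the technical heart, and which costs the paper a two-variable critical-point analysis — collapses to one displayed line. This error does not invalidate the method you propose, but it removes its justification and conceals the simplest available proof.
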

\begin{proof}
``\textbf{if (Sufficiency)}."
Assume $t_{iiij}=t_{iiik}=t_{ikkk}=t_{jkkk}=-t_{jjjk}=1$. Without loss the generality, we might take $t_{1222}=0$, and $t_{1112}=t_{1333}=t_{1113}=t_{2333}=-t_{2223}=1$ then $t_{1122}=t_{2233}=t_{1133}=t_{1123}=t_{1233}=1$ and $t_{1223}=0$. Then for $x=(x_1,x_2,x_3)^{\top}\in{\mathbb{R}}^3$,
\begin{equation}\label{-1-0-1}
{\mathcal{T}}x^4=(x_1+x_2+x_3)^4-4x_1x_2^3-8x_2^3x_3-12x_1x_2^2x_3.
\end{equation}
When $x_2=0$, by Lemma \ref{lem:23}, ${\mathcal{T}}x^4\geq0$, with equality if and only if $x_1=x_3=0$.

When $x_2\neq0$, let $y_1:=\frac{x_1}{x_2}$ and $y_3:=\frac{x_3}{x_2}$, then
$${\mathcal{T}}x^4=x_2^4[(y_1+1+y_3)^4-4y_1-8y_3-12y_1y_3:=x_2^4g(y_1,y_3).$$
Thus, to prove ${\mathcal{T}}x^4\geq0$ for any $x=(x_1,x_2,x_3)^{\top}\in{\mathbb{R}}^3$, we just need to prove $g(y_1,y_3)\geq0$ for any $y=(y_1,y_3)^{\top}\in{\mathbb{R}}^2$. Let $\triangle g(y_1,y_3)=0$, i.e.,
$$\left\{
\begin{array}{llll}
4[(y_1+1+y_3)^3-1-3y_3]=0, \\
4[(y_1+1+y_3)^3-2-3y_1]=0.
\end{array}
\right.$$
$\left\{
\begin{array}{llll}
y_1=-\frac{2}{3} \\
y_3=-\frac{1}{3}
\end{array}
\right.$,
$\left\{
\begin{array}{llll}
y_1=-\frac{8+3\sqrt{6}}{12}\\
y_3=-\frac{4+3\sqrt{6}}{12}
\end{array}
\right.$ and
$\left\{
\begin{array}{llll}
y_1=-\frac{8-3\sqrt{6}}{12}\\
y_3=-\frac{4-3\sqrt{6}}{12}
\end{array}
\right.$
are solutions of $\triangle g(y_1,y_3)=0$, and the Hessian matrixes of $g$ are positive definite at $(-\frac{8+3\sqrt{6}}{12},-\frac{4+3\sqrt{6}}{12})$ and $(-\frac{8-3\sqrt{6}}{12},-\frac{4-3\sqrt{6}}{12})$, the Hessian matrixes of $g$ is positive semi-definite at $(-\frac{2}{3},-\frac{1}{3})$. Therefore, $(-\frac{8+3\sqrt{6}}{12},-\frac{4+3\sqrt{6}}{12})$ and $(-\frac{8-3\sqrt{6}}{12},-\frac{4-3\sqrt{6}}{12})$ are local minimum points of $g$, and so, they are global also. Since $g(-\frac{2}{3},-\frac{1}{3})>g(-\frac{8+3\sqrt{6}}{12},-\frac{4+3\sqrt{6}}{12})=g(-\frac{8-3\sqrt{6}}{12},-\frac{4-3\sqrt{6}}{12})>0$. Then $g(y_1,y_3)>0$ for any $y=(y_1,y_3)^{\top}\in{\mathbb{R}}^2$, which implies ${\mathcal{T}}$ is positive semi-definite.

All other cases which satisfying
$t_{iijj}=t_{jjkk}=t_{iikk}=t_{iijk}t_{iiij}t_{iiik}=t_{ijkk}t_{ikkk}t_{jkkk}=t_{iiij}t_{iiik}t_{jkkk}=1$ and $t_{ijjk}=0$ when $t_{ijjj}=0$ and $t_{iiik}t_{ikkk}=-t_{jkkk}t_{jjjk}=1$, can be transformed into similar forms of $(\ref{-1-0-1})$.

``\textbf{only if (Necessity)}." The conditions,
$t_{iiij}t_{ijjj}=0$, $-t_{jjjk}t_{jkkk}=t_{iiik}t_{ikkk}=1$ for $i,j,k\in\{1,2,3\}$, $i\neq j$, $i\neq k$, $j\neq k$, can be divided into five cases, i.e.,
\begin{itemize}
    \item [($\romannumeral1$)] $t_{iiij}=t_{ijjj}=0$;
	\item [($\romannumeral2$)] $t_{ijjj}=0$ and $t_{iiij}t_{jkkk}t_{ikkk}=1$;
    \item [($\romannumeral3$)] $t_{ijjj}=0$ and $t_{iiij}t_{jkkk}t_{ikkk}=-1$;
    \item [($\romannumeral4$)] $t_{iiij}=0$ and $t_{ijjj}t_{jjjk}t_{iiik}=1$;
    \item [($\romannumeral5$)] $t_{iiij}=0$ and $t_{ijjj}t_{jjjk}t_{iiik}=-1$.
\end{itemize}
Similar to the prove of Theorem \ref{th3.3}, we only need to consider $t_{iiik}=t_{jkkk}=1$, $t_{iiij}=t_{jkkk}=t_{ikkk}=1$, $t_{iiij}=-t_{jkkk}=t_{ikkk}=1$, $t_{ijjj}=t_{jjjk}=t_{iiik}=1$ and $t_{ijjj}=-t_{jjjk}=t_{iiik}=-1$ respectively.
Without loss the generality, suppose $t_{1112}t_{1222}=0$, $-t_{2223}t_{2333}=t_{1113}t_{1333}=1$. And it follows from the positive semi-definiteness of $\mathcal{T}$ with Eq. \eqref{eq:3} that \begin{center}
	$t_{1133}=t_{2233}=1$ and $t_{11122}\in\{0,1\}$, for ($\romannumeral1$);\\
$t_{iijj}=1$ for all $i,j\in\{1,2,3\}$ and $i\neq j$, for others.
\end{center}

($\romannumeral1$) We might take $t_{1112}=t_{1222}=0$, and $t_{1113}=t_{2333}=1$.
Then for $x=(x_1,x_2,x_3)^{\top}\in{\mathbb{R}}^3$
\begin{align*}
{\mathcal{T}}x^4=&(x_1+x_2+x_3)^4-4(x_1^3x_2+x_1x_2^3)-8x_2^3x_3\\
&+12[(t_{1123}-1)x_1^2x_2x_3+(t_{1223}-1)x_1x_2^2x_3+(t_{1233}-1)x_1x_2x_3^2]+6(t_{1122}-1)x_1^2x_2^2.
\end{align*}
For $x=(1,1,-4)^\top$, we have\begin{align*}
	{\mathcal{T}}x^4=-62-48(t_{1123}+t_{1223}-4t_{1233})+6t_{1122}\geq0,
\end{align*}
So, the following cases  could not occur, \begin{itemize}
	\item $t_{1233}=-1$;
	\item $t_{1233}=0$ and $t_{1123}+t_{1223}\geq-1$.
\end{itemize}
Next, we discuss other situations.

\textbf{Case 1.} $t_{1233}=0$ and and $t_{1123}=t_{1223}=-1$. Let $x=(-5,1,4)^{\top}$, then
$${\mathcal{T}}x^4\leq (x_1+x_2+x_3)^4-4(x_1^3x_2+x_1x_2^3)-8x_2^3x_3-12x_1x_2x_3^2-24(x_1^2x_2x_3+x_1x_2^2x_3)=-472<0.$$

\textbf{Case 2.} $t_{1233}=1$

\textbf{Subcase 2.1} When $t_{1123}\neq1$. Let $x=(-3,1,4)^{\top}$, then
$${\mathcal{T}}x^4\leq (x_1+x_2+x_3)^4-4(x_1^3x_2+x_1x_2^3)-8x_2^3x_3-12x_1^2x_2x_3-24x_1x_2^2x_3=-40<0.$$

\textbf{Subcase 2.2} When $t_{1123}=1$ and $t_{1223}\neq-1$. Let $x=(5,1,-2)^{\top}$, then
$${\mathcal{T}}x^4\leq (x_1+x_2+x_3)^4-4(x_1^3x_2+x_1x_2^3)-8x_2^3x_3-12x_1x_2^2x_3=-120<0.$$

\textbf{Subcase 2.3} When $t_{1123}=1$ and $t_{1223}=-1$, let $x=(1,-1,1)^{\top}$, then
$${\mathcal{T}}x^4\leq (x_1+x_2+x_3)^4-4(x_1^3x_2+x_1x_2^3)-8x_2^3x_3-24x_1x_2^2x_3=-7<0.$$

($\romannumeral2$) We might take $t_{1222}=0$ and $t_{1112}=t_{2333}=t_{1333}=1$.
Then for $x=(x_1,x_2,x_3)^{\top}\in{\mathbb{R}}^3$
\begin{align*}
{\mathcal{T}}x^4=&(x_1+x_2+x_3)^4-4x_1x_2^3-8x_2^3x_3\\
&+12[(t_{1123}-1)x_1^2x_2x_3+(t_{1223}-1)x_1x_2^2x_3+(t_{1233}-1)x_1x_2x_3^2].
\end{align*}
For $x=(-5,2,1)^\top$, we have\begin{align*}
	{\mathcal{T}}x^4=-128+120(5t_{1123}-2t_{1223}-t_{1233})\geq0,
\end{align*}
So, the following cases  could not occur, \begin{itemize}
	\item $t_{1123}=-1$;
    \item $t_{1123}=0$ and $t_{1223}\neq-1$;
	\item $t_{1123}=0$ and $t_{1233}=1$.
\end{itemize}
Next, we discuss other situations.

\textbf{Case 1.} $t_{1123}=0$, $t_{1223}=-1$ and $t_{1233}\neq1$. Let $x=(3,1,-6)^{\top}$, then
$${\mathcal{T}}x^4\leq (x_1+x_2+x_3)^4-4x_1x_2^3-8x_2^3x_3-24x_1x_2^2x_3-12(x_1^2x_2x_3+x_1x_2x_3^2)=-164<0.$$

\textbf{Case 2.} $t_{1123}=1$.

\textbf{Subcase 2.1} When $t_{1233}\neq1$. Let $x=(3,1,-6)^{\top}$, then
$${\mathcal{T}}x^4\leq (x_1+x_2+x_3)^4-4x_1x_2^3-8x_2^3x_3-24x_1x_2^2x_3-12x_1x_2x_3^2=-812<0.$$

\textbf{Subcase 2.2} When $t_{1223}=t_{1233}=1$. Let $x=(-1,1,1)^{\top}$, then
$${\mathcal{T}}x^4= (x_1+x_2+x_3)^4-4x_1x_2^3-8x_2^3x_3=-3<0.$$

\textbf{Subcase 2.3} When $t_{1223}=-1$ and $t_{1233}=1$. Let $x=(1,-1,1)^{\top}$, then
$${\mathcal{T}}x^4= (x_1+x_2+x_3)^4-4x_1x_2^3-8x_2^3x_3-24x_1x_2^2x_3=-11<0.$$

($\romannumeral3$) We might take $t_{1222}=0$ and $t_{1112}=-t_{2333}=t_{1333}=1$.
Then for $x=(x_1,x_2,x_3)^{\top}\in{\mathbb{R}}^3$
\begin{align*}
{\mathcal{T}}x^4=&(x_1+x_2+x_3)^4-4x_1x_2^3-8x_2x_3^3\\
&+12[(t_{1123}-1)x_1^2x_2x_3+(t_{1223}-1)x_1x_2^2x_3+(t_{1233}-1)x_1x_2x_3^2].
\end{align*}
For $x=(-5,1,12)^\top$, we have\begin{align*}
	{\mathcal{T}}x^4=-3948+720(5t_{1123}-t_{1223}-12t_{1233})\geq0,
\end{align*}
So, the following cases  could not occur, \begin{itemize}
	\item $t_{1233}=1$;
    \item $t_{1233}=0$ and $t_{1123}\neq1$;
	\item $t_{1233}=0$ and $t_{1223}\neq-1$.
\end{itemize}
Next, we discuss other situations.

\textbf{Case 1.} $t_{1233}=0$, $t_{1123}=-1$ and $t_{1223}=-1$. Let $x=(2,-5,2)^{\top}$, then
$${\mathcal{T}}x^4= (x_1+x_2+x_3)^4-4x_1x_2^3-8x_2x_3^3-24x_1x_2^2x_3-12x_1x_2x_3^2=-599<0.$$

\textbf{Case 2.} $t_{1233}=-1$.

\textbf{Subcase 2.1} When $t_{1123}\neq-1$. Let $x=(5,1,-6)^{\top}$, then
$${\mathcal{T}}x^4\leq (x_1+x_2+x_3)^4-4x_1x_2^3-8x_2x_3^3-12x_1^2x_2x_3-24(x_1x_2^2x_3+x_1x_2x_3^2)=-92<0.$$

\textbf{Subcase 2.2} When $t_{1123}=-1$ and $t_{1223}\neq-1$. Let $x=(-10,5,1)^{\top}$, then
$${\mathcal{T}}x^4\leq (x_1+x_2+x_3)^4-4x_1x_2^3-8x_2x_3^3-12x_1x_2^2x_3-24(x_1^2x_2x_3+x_1x_2x_3^2)=-2584<0.$$

\textbf{Subcase 2.3} When $t_{1123}=t_{1223}=-1$. Let $x=(1,1,1)^{\top}$, then
$${\mathcal{T}}x^4= (x_1+x_2+x_3)^4-4x_1x_2^3-8x_2x_3^3-24(x_1^2x_2x_3+x_1x_2^2x_3+x_1x_2x_3^2)=-3<0.$$

($\romannumeral4$) We might take $t_{1112}=0$ and $t_{1222}=t_{2223}=t_{1113}=1$.
Then for $x=(x_1,x_2,x_3)^{\top}\in{\mathbb{R}}^3$
\begin{align*}
{\mathcal{T}}x^4=&(x_1+x_2+x_3)^4-4x_1^3x_2-8x_2x_3^3\\
&+12[(t_{1123}-1)x_1^2x_2x_3+(t_{1223}-1)x_1x_2^2x_3+(t_{1233}-1)x_1x_2x_3^2].
\end{align*}
For $x=(2,-5,1)^\top$, we have\begin{align*}
	{\mathcal{T}}x^4=-24-120(2t_{1123}-5t_{1223}+t_{1233})\geq0,
\end{align*}
So, the following cases could not occur, \begin{itemize}
	\item $t_{1223}=-1$;
    \item $t_{1223}=0$ and $t_{1123}=1$;
	\item $t_{1223}=0$, $t_{1123}=0$ and $t_{1233}\neq-1$.
\end{itemize}
Next, we discuss other situations.

\textbf{Case 1.} $t_{1223}=0$.

\textbf{Subcase 1.1} When $t_{1123}=0$ and $t_{1233}=-1$. Let $x=(6,1,-3)^{\top}$, then
$${\mathcal{T}}x^4= (x_1+x_2+x_3)^4-4x_1^3x_2-8x_2x_3^3-24x_1x_2x_3^2-12(x_1^2x_2x_3+x_1x_2^2x_3)=-176<0.$$

\textbf{Subcase 1.2} When $t_{1123}=-1$. Let $x=(-3,1,1)^{\top}$, then
$${\mathcal{T}}x^4\leq (x_1+x_2+x_3)^4-4x_1^3x_2-8x_2x_3^3-12x_1x_2^2x_3-24(x_1^2x_2x_3+x_1x_2x_3^2)=-7<0.$$

\textbf{Case 2.} $t_{1223}=1$.

\textbf{Subcase 2.1} When $t_{1123}\leq t_{1233}$. Let $x=(-1,1,1)^{\top}$, then
$${\mathcal{T}}x^4= (x_1+x_2+x_3)^4-4x_1^3x_2-8x_2x_3^3=-3<0.$$

\textbf{Subcase 2.2} When $t_{1123}> t_{1233}$. Let $x=(1,1,-1)^{\top}$, then
$${\mathcal{T}}x^4= (x_1+x_2+x_3)^4-4x_1^3x_2-8x_2x_3^3-12x_1x_2x_3^2=-7<0.$$

($\romannumeral5$) We might take $t_{1112}=0$ and $t_{1222}=-t_{2223}=t_{1333}=1$.
Then for $x=(x_1,x_2,x_3)^{\top}\in{\mathbb{R}}^3$
\begin{align*}
{\mathcal{T}}x^4=&(x_1+x_2+x_3)^4-4x_1^3x_2-8x_2^3x_3\\
&+12[(t_{1123}-1)x_1^2x_2x_3+(t_{1223}-1)x_1x_2^2x_3+(t_{1233}-1)x_1x_2x_3^2].
\end{align*}
For $x=(4,1,-6)^\top$, we have\begin{align*}
	{\mathcal{T}}x^4=-495-288(4t_{1123}+t_{1223}-6t_{1233})\geq0,
\end{align*}
So, the following cases could not occur, \begin{itemize}
	\item $t_{1233}=-1$;
    \item $t_{1233}=0$ and $t_{1123}\neq-1$;
	\item $t_{1123}=t_{1223}=t_{1233}=1$.
\end{itemize}
Next, we discuss other situations.

\textbf{Case 1.} $t_{1233}=0$ and $t_{1123}=-1$.

\textbf{Subcase 1.1} When $t_{1223}\neq-1$. Let $x=(-1,1,1)^{\top}$, then
$${\mathcal{T}}x^4\leq (x_1+x_2+x_3)^4-4x_1^3x_2-8x_2^3x_3-24x_1^2x_2x_3-12(x_1x_2^2x_3+x_1x_2x_3^2)=-3<0.$$

\textbf{Subcase 1.2} When $t_{1223}=-1$. Let $x=(-5,1,2)^{\top}$, then
$${\mathcal{T}}x^4= (x_1+x_2+x_3)^4-4x_1^3x_2-8x_2^3x_3-12x_1x_2x_3^2-24(x_1^2x_2x_3+x_1x_2^2x_3)=-220<0.$$

\textbf{Case 2.} $t_{1233}=1$ and at least one of $\{t_{1123},t_{1223}\}$ is not $1$.

\textbf{Subcase 2.1} When $t_{1123}\leq t_{1223}$. Let $x=(-1,1,1)^{\top}$, then
$${\mathcal{T}}x^4\leq (x_1+x_2+x_3)^4-4x_1^3x_2-8x_2^3x_3-12(x_1^2x_2x_3+x_1x_2^2x_3)=-3<0.$$

\textbf{Subcase 2.2} When $t_{1123}=1$ and $t_{1223}=0$. Let $x=(5,1,-2)^{\top}$, then
$${\mathcal{T}}x^4= (x_1+x_2+x_3)^4-4x_1^3x_2-8x_2^3x_3-12x_1x_2^2x_3=-108<0.$$

\textbf{Subcase 2.3} When $t_{1223}=-1$. Let $x=(3,-5,3)^{\top}$, then
$${\mathcal{T}}x^4\leq (x_1+x_2+x_3)^4-4x_1^3x_2-8x_2^3x_3-12x_1^2x_2x_3-24x_1x_2^2x_3=-239<0.$$

The necessity is proved.
\end{proof}

\begin{remark}\label{r1}
Let ${\mathcal{T}}=(t_{ijkl})\in\widehat{{\mathcal{E}}}_{4,3}$. When $t_{iiij}t_{ijjj}=t_{jjjk}t_{jkkk}=-t_{iiik}t_{ikkk}=1$, $t_{iiij}t_{ijjj}=t_{jjjk}t_{jkkk}=t_{iiik}t_{ikkk}=-1$, and $t_{iiij}t_{ijjj}=t_{jjjk}t_{jkkk}=1$ with $t_{iiik}t_{ikkk}=0$
for $i,j,k\in\{1,2,3\}$, $i\neq j$, $i\neq k$, $j\neq k$, there are some $x=(x_1,x_2,x_3)^{\top}\in{\mathbb{R}}^3$ such that ${\mathcal{T}}x^4<0$, i.e., $\mathcal{T}$ are not positive semi-definite under these conditions.
\end{remark}

Thee following conclusions can obtained from Theorem \ref{th3.1}, Corollary \ref{cor1}, Theorem \ref{th3.8}, Theorem \ref{th3.9}, Theorem \ref{th3.10}, Theorem \ref{th3.13}, and Theorem \ref{th3.14}.

\begin{corollary}\label{cor4}
Let ${\mathcal{T}}=(t_{ijkl})\in{\mathcal{E}}_{4,3}$ and $t_{iiii}=0$, $t_{jjjj}=t_{kkkk}=1$ for $i,j,k\in\{1,2,3\}$, $i\neq j$, $j\neq k$, $i\neq k$. Then $\mathcal{T}$ is positive semi-definite if and only if $t_{iiij}=t_{iiik}=0$ and one of the following conditions is satisfied.
\begin{itemize}
\item [(a)] $t_{ijjj}=t_{jjjk}=t_{ikkk}=t_{jkkk}=0$, and
\begin{itemize}
  \item [(a$_1$)] $t_{1123}=t_{1223}=t_{1233}=t_{iijj}=t_{jjkk}=t_{iikk}=1$, or
  \item [(a$_2$)] $t_{1123}=t_{1223}=t_{1233}=0$, and $t_{iijj},t_{jjkk},t_{iikk}\in\{0,1\}$, or
  \item [(a$_3$)] Two of $\{t_{1123},t_{1223},t_{1233}\}$ are $-1$, and the other one is $1$, $t_{1122}=t_{2233}=t_{1133}=1$, or
  \item [(a$_4$)] $t_{iijk}=\pm1$, $t_{ijjk}=t_{ijkk}=0$, the other one is $1$ or $-1$, and $t_{iijj}=t_{iikk}=1$ and $t_{jjkk}\in\{0,1\}$.
\end{itemize}
\item [(b)] $t_{jjjk}=0$, $t_{iijj}=t_{jjkk}=t_{iikk}=1$,$t_{iijk}=t_{ijjk}=0$ and $t_{ijjj}t_{jkkk}t_{ikkk}=t_{ijkk}t_{ijjj}=\pm1$.
\item [(c)] $t_{jkkk}=\pm1$, $t_{ijjj}=t_{jjjk}=t_{ikkk}=0$, and
\begin{itemize}
\item [(c$_1$)] $t_{1123}=t_{1223}=t_{1233}=0$, and $t_{jjkk}=1$, $t_{iikk},t_{iijj}\in\{0,1\}$, or
\item [(c$_2$)] $t_{iijk}=\pm1$, $t_{ijkk}=t_{ijjk}=0$ and $t_{1122}=t_{2233}=t_{1133}=1$, or
\item [(c$_3$)] $t_{iijk}=t_{ijkk}=0$, $t_{ijjk}=\pm1$ and $t_{1122}=t_{2233}=t_{1133}=1$.
\end{itemize}
\item [(d)] $t_{ijjj}=t_{jjjk}=0$, $t_{ikkk},t_{jkkk}\in\{-1,1\}$, $t_{ikkk}t_{jkkk}t_{ijkk}=t_{iikk}=t_{jjkk}=1$, $t_{iijk}=t_{ijjk}=0$, and $t_{iijj}\in\{0,1\}$.
\item [(e)]  $t_{ijjj},t_{ikkk}\in\{-1,1\}$, $t_{jjjk}=t_{jkkk}=0$ and
  \begin{itemize}
    \item [(e$_1$)] $t_{1123}=t_{1223}=t_{1233}=0$, and $t_{jjkk}\in\{0,1\}$, $t_{iijj}=t_{iikk}=1$, or
    \item [(e$_2$)] $t_{ijkk}=t_{ijjk}=0$, $t_{iijk}t_{ijjj}t_{ikkk}=1$, and $t_{1122}=t_{2233}=t_{1133}=1$.
  \end{itemize}
\item [(f)] $t_{ijjj},t_{jkkk}\in\{-1,1\}$ $t_{ikkk}=t_{jjjk}=t_{1123}=t_{1223}=t_{1233}=0$, and $t_{iikk}\in\{0,1\}$, $t_{iijj}=t_{jjkk}=1$.
\item [(g)]  $t_{jjjk}=t_{jkkk}=\pm1$, and
  \begin{itemize}
    \item [(g$_1$)] $t_{ijjj}=t_{ikkk}=t_{1123}=t_{1223}=t_{1233}=0$, $t_{jjkk}=1$ and $t_{iijj},t_{iikk}\in\{0,1\}$, or
    \item [(g$_2$)] $t_{iijk}=\pm1$, $t_{ijjj}=t_{ikkk}=t_{ijjk}=t_{ijkk}=0$, and $t_{1122}=t_{2233}=t_{1133}=1$, or
    \item [(g$_3$)] $t_{ijjj}t_{ijkk}=t_{ikkk}t_{ijjk}=t_{ijjj}t_{ikkk}t_{iijk}=t_{ikkk}t_{jjjk}t_{ijkk}=t_{ijjj}t_{jjjk}t_{ijjk}=1$ and $t_{1122}=t_{2233}=t_{1133}=1$.
      \end{itemize}
\item [(h)]  $t_{jjjk}=-t_{jkkk}=\pm1$  and
  \begin{itemize}
    \item [(h$_1$)] $t_{ijjj}=t_{ikkk}=t_{1123}=t_{1223}=t_{1233}=0$, $t_{jjkk}=1$ and $t_{iijj},t_{iikk}\in\{0,1\}$, or
    \item [(h$_2$)] $t_{iijk}=\pm1$, $t_{ijjj}=t_{ikkk}=t_{ijjk}=t_{ijkk}=0$, and $t_{1122}=t_{2233}=t_{1133}=1$, or
    \item [(h$_3$)]$t_{ijjj}=\pm1$, $t_{ikkk}=t_{iijk}=t_{ijkk}=0$, $t_{ijjj}t_{jjjk}t_{ijjk}=t_{1122}=t_{2233}=t_{1133}=1$.
  \end{itemize}
\end{itemize}
\end{corollary}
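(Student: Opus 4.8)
The plan is to reduce the statement to the positive-\emph{definiteness} classifications already proved for $\widehat{\mathcal{E}}_{4,3}$. Since $t_{iiii}=0$ while $\mathcal{T}$ is positive semi-definite, Lemma \ref{lem:26} forces $t_{iiij}=t_{iiik}=0$ and $t_{iijj},t_{iikk}\ge 0$, hence $t_{iijj},t_{iikk}\in\{0,1\}$; so these conditions may be assumed in both directions, and $\mathcal{T}x^4$ then contains none of the monomials $x_i^4,\ x_i^3x_j,\ x_i^3x_k$. Let $\widehat{\mathcal{T}}\in\widehat{\mathcal{E}}_{4,3}$ be obtained from $\mathcal{T}$ by switching the single diagonal entry $t_{iiii}$ from $0$ to $1$, so that
\[
\widehat{\mathcal{T}}x^4=\mathcal{T}x^4+x_i^4 ,\qquad x=(x_1,x_2,x_3)^{\top}\in\mathbb{R}^3 .
\]
As $\widehat t_{iiij}=\widehat t_{iiik}=0$, the tensor $\widehat{\mathcal{T}}$ falls under the hypotheses of exactly one of Theorems \ref{th3.1}, \ref{th3.8}, \ref{th3.9}, \ref{th3.10}, \ref{th3.13}, \ref{th3.14}, sorted by the value of $t_{jjjk}t_{jkkk}$ and by $m:=\#\{\,\text{nonzero entries among }t_{ijjj},t_{jjjk},t_{jkkk},t_{ikkk}\,\}$: $t_{jjjk}t_{jkkk}=1$ gives Theorem \ref{th3.13}, $t_{jjjk}t_{jkkk}=-1$ gives Theorem \ref{th3.14}, while for $t_{jjjk}t_{jkkk}=0$ the values $m=0,1,2,3$ give Theorems \ref{th3.1}, \ref{th3.10}, \ref{th3.9}, \ref{th3.8} respectively. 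Up to a relabeling of $\{i,j,k\}$, these six alternatives are precisely the families (a), (c), (d)--(f), (b), (g), (h) of the corollary, and Remark \ref{r1} shows the configurations not on this list support no positive semi-definite $\mathcal{T}$.

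For \textbf{necessity}, if $\mathcal{T}$ is positive semi-definite then $\widehat{\mathcal{T}}x^4\ge x_i^4\ge0$, so $\widehat{\mathcal{T}}$ is positive semi-definite, and it can vanish at some $x\ne0$ only if $x_i=0$ and $C(x_j,x_k):=\mathcal{T}(0,x_j,x_k)^4=0$; here $C$ is a binary quartic with unit leading and trailing coefficients which is moreover $\ge 0$ (being the restriction of $\mathcal{T}x^4$ to $x_i=0$), so by Lemma \ref{lem:23} it is positive definite unless $t_{jjkk}=1$ and $t_{jjjk}t_{jkkk}=1$. Hence away from the degenerate configuration $t_{jjjk}t_{jkkk}=1$ the tensor $\widehat{\mathcal{T}}$ is in fact positive definite, and the ``only if'' halves of Theorems \ref{th3.1}, \ref{th3.8}, \ref{th3.9}, \ref{th3.10}, \ref{th3.14} deliver exactly the conditions recorded in (a)--(f) and (h); in the degenerate configuration, positive semi-definiteness of $\mathcal{T}$ additionally forces $t_{jjkk}=1$, $\widehat{\mathcal{T}}$ is merely positive semi-definite, and the ``only if'' half of Theorem \ref{th3.13} gives (g).

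For \textbf{sufficiency}, one recycles the certificates from the proofs of those theorems. Under the listed conditions, the proof of the matching theorem exhibits $\widehat{\mathcal{T}}x^4$ as a sum of squares (or, in a few branches, runs a real-root count via Lemmas \ref{lem:25}--\ref{lem:26}), and the square $\bigl(x_i^2+2t_{iiij}x_ix_j+2t_{iiik}x_ix_k\bigr)^2$ occurring there degenerates to $x_i^4$ once $t_{iiij}=t_{iiik}=0$. Subtracting $x_i^4$ then leaves $\mathcal{T}x^4=\widehat{\mathcal{T}}x^4-x_i^4$ expressed as a sum of the remaining squares, so $\mathcal{T}x^4\ge0$; in the root-counting branches the argument of Lemma \ref{lem:25} is applied directly to $\mathcal{T}x^4$, which already lacks $x_i^4,\ x_i^3x_j,\ x_i^3x_k$. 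The main obstacle is purely organizational: one must match each family (a)--(h), under the correct permutation of indices, to its source theorem; verify in every single branch that the source sum-of-squares genuinely isolates the summand $x_i^4$ (so that dropping it preserves nonnegativity); and make sure the sole essentially degenerate configuration $t_{jjjk}t_{jkkk}=1$ is routed to the positive-\emph{semi}-definiteness Theorem \ref{th3.13} rather than to a positive-definiteness statement. Once this dictionary is fixed, every remaining check is mechanical and duplicates one already performed in the proofs of Theorems \ref{th3.1}--\ref{th3.15}.
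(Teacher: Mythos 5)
Your reduction to the theorems for $\widehat{\mathcal{E}}_{4,3}$ via $\widehat{\mathcal{T}}x^4=\mathcal{T}x^4+x_i^4$ is the right frame, and your sufficiency sketch (checking that each source sum-of-squares isolates the summand $x_i^4$ so that it can be dropped) matches what the paper does. But the necessity direction has a genuine gap. From positive semi-definiteness of $\mathcal{T}$ you correctly get that $\widehat{\mathcal{T}}$ is positive (semi-)definite, and hence that the entries satisfy the ``only if'' conditions of the matching theorem among Theorems \ref{th3.1}, \ref{th3.8}, \ref{th3.9}, \ref{th3.10}, \ref{th3.13}, \ref{th3.14}. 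You then assert that these conditions ``deliver exactly the conditions recorded'' in the corollary. They do not: the corollary's lists are \emph{strictly stronger}. Several branches that are sufficient for $\widehat{\mathcal{T}}$ (with $t_{iiii}=1$) to be positive (semi-)definite fail for $\mathcal{T}$ (with $t_{iiii}=0$), precisely because removing $x_i^4$ destroys positivity there. For instance, the second sub-branch of Theorem \ref{th3.8}(b), namely $t_{ijjj}t_{jkkk}t_{ikkk}=-t_{iijk}t_{jkkk}=-t_{ijjk}t_{ikkk}=1$ with $t_{ijkk}=0$, gives a positive definite $\widehat{\mathcal{T}}$, yet for the corresponding $\mathcal{T}$ the paper exhibits $x=(5,1,1)^{\top}$ with $\mathcal{T}x^4=-8<0$; this branch is accordingly absent from the corollary's item (b). The same pruning happens in items (c)--(f) and (h) relative to Theorems \ref{th3.9}, \ref{th3.10}, \ref{th3.14}.

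Consequently, to prove necessity you must do more than cite the theorems: you must show that every pruned branch genuinely fails, i.e.\ produce for each such configuration an explicit $x$ with $\mathcal{T}x^4<0$ (even though $\widehat{\mathcal{T}}x^4\geq0$). This is exactly the content of the paper's proof of the corollary, which lists eleven such configurations (its cases ($\romannumeral1$)--($\romannumeral11$)) and a counterexample vector for each; note the explicit ``$-x_1^4$'' appearing in those estimates, which is where the missing diagonal term bites. Your argument as written would certify as positive semi-definite tensors that are not, so this step cannot be waved through; the cut-down from the theorems' condition lists to the corollary's shorter lists is the actual mathematical work here, and it is absent from your proposal.
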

\begin{proof}
``\textbf{if (Sufficiency)}." Sufficiency can be obtained from Theorem \ref{th3.1}, Corollary \ref{cor1}, Theorem \ref{th3.8}, Theorem \ref{th3.9}, Theorem \ref{th3.10}, Theorem \ref{th3.13}, Theorem \ref{th3.14} and their proof procedures.

``\textbf{only if (Necessity)}." At the same time, according Theorem \ref{th3.1}, Corollary \ref{cor1}, Theorem \ref{th3.8}, Theorem \ref{th3.9}, Theorem \ref{th3.10}, Theorem \ref{th3.13}, Theorem \ref{th3.14}, necessity can be obtained only by proving there are some points $x=(x_1,x_2,x_3)^{\top}\in{\mathbb{R}}^3$ such that ${\mathcal{T}}x^4<0$ under the following situations.
\begin{itemize}
  \item [($\romannumeral1 )$] $t_{ijjj}t_{jkkk}t_{ikkk}=-t_{iijk}t_{jkkk}=-t_{ijjk}t_{ikkk}=t_{1122}=t_{2233}=t_{1133}=1$ and $t_{jjjk}=t_{ijkk}=0$.
  \item [($\romannumeral2$)] $t_{ikkk},t_{jkkk}\in\{-1,1\}$, $t_{ijjj}=t_{jjjk}=0$, and $t_{ikkk}t_{jkkk}t_{ijkk}=t_{ikkk}t_{ijjk}=t_{jkkk}t_{iijk}=t_{1122}=t_{2233}=t_{1133}=1$.
  \item [($\romannumeral3$)] $t_{ijjj},t_{ikkk}\in\{-1,1\}$, $t_{jjjk}=t_{jkkk}=t_{ijjk}=t_{ijkk}=0$, and $-t_{ikkk}t_{ijjj}t_{iijk}=t_{1122}=t_{2233}=t_{1133}=1$.
  \item [($\romannumeral4$)] $t_{ijjj},t_{ikkk}\in\{-1,1\}$, $t_{jjjk}=t_{jkkk}=t_{ijkk}=0$, and $-t_{ikkk}t_{ijjj}t_{iijk}=-t_{ijjk}t_{ikkk}=t_{1122}=t_{2233}=t_{1133}=1$.
  \item [($\romannumeral5$)] $t_{ijjj},t_{ikkk}\in\{-1,1\}$, $t_{jjjk}=t_{jkkk}=t_{ijjk}=0$, and $-t_{ikkk}t_{ijjj}t_{iijk}=-t_{ijkk}t_{ijjj}=t_{1122}=t_{2233}=t_{1133}=1$.
  \item [($\romannumeral6$)] $t_{ijjj},t_{jkkk},t_{iijk}\in\{-1,1\}$, $t_{ikkk}=t_{jjjk}=t_{ijkk}=t_{ijjk}=0$, and $t_{1122}=t_{2233}=t_{1133}=1$.
  \item [($\romannumeral7$)] $t_{ijjj},t_{jkkk}\in\{-1,1\}$, $t_{ikkk}=t_{jjjk}=t_{ijkk}=0$, and $-t_{jkkk}t_{ijjj}t_{ijjk}=-t_{iijk}t_{jkkk}=t_{1122}=t_{2233}=t_{1133}=1$.
  \item [($\romannumeral8$)] $t_{jjjk}=t_{jkkk}=\pm1$, $t_{ijjj}=t_{ikkk}=t_{ijjk}=t_{ijkk}=0$, and $-t_{iijk}t_{jjjk}=t_{jjkk}=t_{iijj}+t_{iikk}=1$.
  \item [($\romannumeral9$)] $t_{jjjk}=-t_{jkkk}=\pm1$, $t_{ijjj}=\pm1$, $t_{ikkk}=t_{ijjk}=t_{ijkk}=0$, $t_{iijk}t_{jkkk}=t_{1122}=t_{2233}=t_{1133}=1$.
  \item [($\romannumeral10$)] $t_{jjjk}=-t_{jkkk}=\pm1$, $t_{jjjk}=t_{ijjj}t_{ikkk}=-t_{iijk}=-t_{ijkk}t_{ijjj}=t_{1122}=t_{2233}=t_{1133}=1$, and $t_{ijjk}=0$.
  \item [($\romannumeral11$)] $t_{jjjk}=-t_{jkkk}=\pm1$, $t_{jjjk}=-t_{ijjj}t_{ikkk}=t_{iijk}=t_{ijjk}t_{ijjj}=t_{1122}=t_{2233}=t_{1133}=1$, and $t_{ijkk}=0$.
\end{itemize}
Then we only need to consider $t_{ijjj}=t_{jkkk}=t_{ikkk}=1$, $t_{ikkk}=t_{jkkk}=1$, $t_{ijjj}=t_{ikkk}=1$, $t_{ijjj}=t_{ikkk}=1$, $t_{ijjj}=t_{ikkk}=1$, $t_{ijjj}=t_{jkkk}=\pm t_{iijk}=1$, $t_{ijjj}=t_{jkkk}=1$, $t_{jjjk}=t_{jkkk}=1$, $t_{ijjj}=t_{jjjk}=-t_{jkkk}=1$, $t_{ijjj}=t_{jjjk}=-t_{jkkk}=1$ and $t_{ijjj}=t_{jjjk}=-t_{jkkk}=1$, respectively. Without loss the generality, suppose $t_{1111}=t_{1112}=t_{1113}=0$.

($\romannumeral1$) $t_{1222}=t_{2333}=t_{1333}=-t_{1123}=-t_{1223}=t_{1122}=t_{2233}=t_{1133}=1$ and $t_{2223}=t_{1233}=0$. Then for $x=(5,1,1)^{\top}$,
$${\mathcal{T}}x^4\leq (x_1+x_2+x_3)^4-4(x_1^3x_2+x_1^3x_3+x_2^3x_3)-x_1^4-24(x_1^2x_2x_3+x_1x_2^2x_3)-12x_1x_2x_3^2=-8<0.$$

($\romannumeral2$) $t_{2223}=t_{1222}=0$ and $t_{2333}=t_{1333}=t_{1123}=t_{1223}=t_{1233}=t_{1122}=t_{2233}=t_{1133}=1$. Then for $x=(2,1,-1)^{\top}$,
$${\mathcal{T}}x^4\leq (x_1+x_2+x_3)^4-4(x_1^3x_2+x_1^3x_3+x_1x_2^3+x_2^3x_3)-x_1^4=-4<0.$$

($\romannumeral3$) $t_{2223}=t_{2333}=t_{1223}=t_{1233}=0$ and $-t_{1123}=t_{1222}=t_{1333}=t_{1122}=t_{2233}=t_{1133}=1$. Then for $x=(-4,1,1)^{\top}$,
$${\mathcal{T}}x^4\leq x_2^4+x_3^4+4(x_1x_2^3+x_1x_3^3)-12x_1^2x_2x_3+6(x_1^2x_2^2+x_2^2x_3^2+x_1^2x_3^2)=-24<0.$$

($\romannumeral4$) $t_{2223}=t_{2333}=t_{1233}=0$ and $t_{1222}=t_{1333}=-t_{1123}=-t_{1223}=t_{1122}=t_{2233}=t_{1133}=1$. Then for $x=(5,1,1)^{\top}$,
$${\mathcal{T}}x^4\leq x_2^4+x_3^4+4(x_1x_2^3+x_1x_3^3)-12(x_1^2x_2x_3+x_1x_2^2x_3)+6(x_1^2x_2^2+x_2^2x_3^2+x_1^2x_3^2)=-12<0.$$

($\romannumeral5$) $t_{2223}=t_{2333}=t_{1223}=0$ and $t_{1222}=t_{1333}=-t_{1123}=-t_{1233}=t_{1122}=t_{2233}=t_{1133}=1$. Then for $x=(5,1,1)^{\top}$,
$${\mathcal{T}}x^4\leq x_2^4+x_3^4+4(x_1x_2^3+x_1x_3^3)-12(x_1^2x_2x_3+x_1x_2x_3^2)+6(x_1^2x_2^2+x_2^2x_3^2+x_1^2x_3^2)=-12<0.$$

($\romannumeral6$) $t_{1333}=t_{2223}=t_{1223}=t_{1233}=0$ and $t_{1222}=t_{2333}=t_{1123}=t_{1122}=t_{2233}=t_{1133}=1$. Then for $x=(2,-1,1)^{\top}$,
$${\mathcal{T}}x^4\leq x_2^4+x_3^4+4(x_1x_2^3+x_2x_3^3)+12x_1^2x_2x_3+6(x_1^2x_2^2+x_2^2x_3^2+x_1^2x_3^2)=-4<0.$$
If $t_{1333}=t_{2223}=t_{1123}=t_{1223}=0$ and $t_{1222}=t_{2333}=-t_{1123}=t_{1122}=t_{2233}=t_{1133}=1$. Then for $x=(-4,1,1)^{\top}$,
$${\mathcal{T}}x^4\leq x_2^4+x_3^4+4(x_1x_2^3+x_2x_3^3)-12x_1^2x_2x_3+6(x_1^2x_2^2+x_2^2x_3^2+x_1^2x_3^2)=-4<0.$$

($\romannumeral7$) $t_{2223}=t_{1333}=t_{1233}=0$ and $t_{1222}=t_{2333}=-t_{1123}=-t_{1223}=t_{1122}=t_{2233}=t_{1133}=1$. Then for $x=(5,1,1)^{\top}$,
$${\mathcal{T}}x^4\leq x_2^4+x_3^4+4(x_1x_2^3+x_2x_3^3)-12(x_1^2x_2x_3+x_1x_2^2x_3)+6(x_1^2x_2^2+x_2^2x_3^2+x_1^2x_3^2)=-28<0.$$

($\romannumeral8$) $t_{1222}=t_{1333}=t_{1223}=t_{1233}=t_{1122}=0$ and $t_{2223}=t_{2333}=-t_{1123}=t_{2233}=t_{1133}=1$. Then for $x=(5,1,1)^{\top}$,
$${\mathcal{T}}x^4\leq x_2^4+x_3^4+4(x_2^3x_3+x_2x_3^3)-12x_1^2x_2x_3+6(x_2^2x_3^2+x_1^2x_3^2)=-134<0.$$
The conditions that $t_{1122}=1$ and $t_{1133}=0$ is similar.

($\romannumeral9$) $t_{1333}=t_{1223}=t_{1233}=0$ and $t_{1222}=t_{2223}=-t_{2333}=-t_{1123}=t_{1122}=t_{2233}=t_{1133}=1$. Then for $x=(-4,1,1)^{\top}$,
$${\mathcal{T}}x^4\leq x_2^4+x_3^4+4(x_1x_2^3+x_2^3x_3-x_2x_3^3)-12x_1^2x_2x_3+6(x_1^2x_2^2+x_2^2x_3^2+x_1^2x_3^2)=-8<0.$$

($\romannumeral10$) $t_{1223}=0$, and $t_{1222}=t_{1333}=t_{2223}=-t_{2333}=-t_{1123}=-t_{1233}=t_{1122}=t_{2233}=t_{1133}=1$. Then for $x=(5,1,1)^{\top}$,
$${\mathcal{T}}x^4\leq x_2^4+x_3^4+4(x_1x_2^3+x_1x_3^3+x_2^3x_3-x_2x_3^3)-12(x_1^2x_2x_3+x_1x_2x_3^2)+6(x_1^2x_2^2+x_2^2x_3^2+x_1^2x_3^2)=-12<0.$$

($\romannumeral11$) $t_{1233}=0$, and $t_{1222}=-t_{1333}=t_{2223}=-t_{2333}=t_{1123}=t_{1223}=t_{1122}=t_{2233}=t_{1133}=1$. Then for $x=(3,1,-1)^{\top}$,
$${\mathcal{T}}x^4\leq x_2^4+x_3^4+4(x_1x_2^3-x_1x_3^3+x_2^3x_3-x_2x_3^3)+12(x_1^2x_2x_3+x_1x_2^2x_3)+6(x_1^2x_2^2+x_2^2x_3^2+x_1^2x_3^2)=-4<0.$$
The necessity is proved.
\end{proof}

\begin{corollary}\label{cor5}
Let ${\mathcal{T}}=(t_{ijkl})\in{\mathcal{E}}_{4,3}$ and $t_{iiii}=t_{jjjj}=0$, $t_{kkkk}=1$ for $i,j,k\in\{1,2,3\}$, $i\neq j$, $j\neq k$, $i\neq k$. Then $\mathcal{T}$ is positive semi-definite if and only if $t_{iiij}=t_{iiik}=t_{ijjj}=t_{jjjk}=0$ and one of the following conditions is satisfied.
\begin{itemize}
\item [(a)] $t_{ikkk}=t_{jkkk}=0$, and
\begin{itemize}
  \item [(a$_1$)] $t_{1123}=t_{1223}=t_{1233}=t_{iijj}=t_{jjkk}=t_{iikk}=1$, or
  \item [(a$_2$)] $t_{1123}=t_{1223}=t_{1233}=0$, and $t_{iijj},t_{jjkk},t_{iikk}\in\{0,1\}$, or
  \item [(a$_3$)] Two of $\{t_{iijk},t_{ijjk},t_{ijkk}\}$ are $-1$, and the other one is $1$, $t_{iijj}=t_{jjkk}=t_{iikk}=1$, or
  \item [(a$_4$)] Two of $\{t_{iijk},t_{ijjk},t_{ijkk}\}$ are $0$, the other one is $1$ or $-1$, and $t_{iijj}=t_{iikk}=1$ and $t_{jjkk}\in\{0,1\}$ with $|t_{iijk}|=1$.
\end{itemize}
\item [(b)] $t_{jkkk}=\pm1$, $t_{ikkk}=0$, and
\begin{itemize}
\item [(b$_1$)] $t_{1123}=t_{1223}=t_{1233}=0$, and $t_{jjkk}=1$, $t_{iikk},t_{iijj}\in\{0,1\}$, or
\item [(b$_2$)] $t_{iijk}=\pm1$, $t_{ijkk}=t_{ijjk}=0$ and $t_{1122}=t_{2233}=t_{1133}=1$.
\end{itemize}
\item [(c)] $t_{ikkk},t_{jkkk}\in\{-1,1\}$, $t_{ikkk}t_{jkkk}t_{ijkk}=t_{iikk}=t_{jjkk}=1$, $t_{iijk}=t_{ijjk}=0$, and $t_{iijj}\in\{0,1\}$.
\end{itemize}
\end{corollary}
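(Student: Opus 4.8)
The plan is to imitate the derivation of Corollary~\ref{cor4}, reducing everything to the sum-of-squares decompositions and the explicit negative witnesses already produced in Theorems~\ref{th3.1}, \ref{th3.8}, \ref{th3.9}, \ref{th3.10}, \ref{th3.13}, \ref{th3.14} and Corollaries~\ref{cor1}, \ref{cor4}. First I would invoke Lemma~\ref{lem:26} twice: since $t_{iiii}=t_{jjjj}=0$, positive semi-definiteness forces $t_{iiij}=t_{iiik}=t_{ijjj}=t_{jjjk}=0$ and $t_{iijj},t_{iikk},t_{jjkk}\ge 0$, hence $t_{iijj},t_{iikk},t_{jjkk}\in\{0,1\}$. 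With these vanishings and $t_{kkkk}=1$, the associated form collapses to
\begin{align*}
{\mathcal{T}}x^4=&x_k^4+4t_{ikkk}x_ix_k^3+4t_{jkkk}x_jx_k^3+6(t_{iijj}x_i^2x_j^2+t_{iikk}x_i^2x_k^2+t_{jjkk}x_j^2x_k^2)\\
&+12(t_{iijk}x_i^2x_jx_k+t_{ijjk}x_ix_j^2x_k+t_{ijkk}x_ix_jx_k^2),
\end{align*}
so the only free parameters are $t_{ikkk},t_{jkkk}\in\{-1,0,1\}$, the three quadratic couplings in $\{0,1\}$, and $t_{iijk},t_{ijjk},t_{ijkk}\in\{-1,0,1\}$. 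Because $t_{iiii}=t_{jjjj}=0$ symmetrically, the indices $i$ and $j$ enter interchangeably, so it suffices to treat three mutually exclusive cases on the unordered pair $\{t_{ikkk},t_{jkkk}\}$: both zero (case~(a)), exactly one nonzero, normalized to $t_{jkkk}=\pm1$, $t_{ikkk}=0$ (case~(b)), and both nonzero (case~(c)).

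For sufficiency I would, in each listed sub-case, exhibit ${\mathcal{T}}x^4$ as a nonnegative sum of squares. These are obtained from the decompositions already recorded earlier by deleting the terms $x_i^4$, $x_j^4$ and the cubic cross-terms $x_i^3x_j$, $x_ix_j^3$, $x_i^3x_k$, $x_j^3x_k$ (all of which vanish here), which never destroys nonnegativity; when a quadratic coupling that earlier was forced to equal $1$ is allowed to be $0$, the same square still applies with that summand dropped. For instance, case~(a) reuses the squares of Theorem~\ref{th3.1}(a)--(d) and Corollary~\ref{cor1} (with the pure fourth powers $x_i^4,x_j^4$ absent), case~(b) after a regrouping reduces to squares of the type $6x_i^2(x_j+t_{jkkk}t_{iijk}x_k)^2+x_k^2\big((x_k+2t_{jkkk}x_j)^2+2x_j^2\big)\ge 0$, and case~(c), where the listed conditions give $t_{ijkk}=t_{ikkk}t_{jkkk}$, is handled directly by
\begin{align*}
{\mathcal{T}}x^4=&x_k^2\big(x_k+2t_{ikkk}x_i+2t_{jkkk}x_j\big)^2+2x_k^2\big(t_{ikkk}x_i+t_{jkkk}x_j\big)^2+6t_{iijj}x_i^2x_j^2\ge0.
\end{align*}
Since only $\ge 0$ is needed for positive semi-definiteness, the ``equality only at the origin'' verifications of the quoted theorems are not required.

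For necessity I would first note that the four vanishings are compulsory (established above), and then show that for every assignment of the remaining parameters not appearing in (a)--(c) there is an explicit $x=(x_1,x_2,x_3)^\top\in\mathbb{R}^3$ with ${\mathcal{T}}x^4<0$. Most witnesses are inherited verbatim (after the appropriate permutation of $\{i,j,k\}$) from the necessity proofs of Theorems~\ref{th3.1}, \ref{th3.10}, \ref{th3.13}, \ref{th3.14}, specialized by setting the relevant pure fourth powers to $0$; the finitely many residual configurations — precisely those created because two diagonal entries are now $0$ and so couplings such as $t_{iijj}$ may legitimately vanish — are dispatched by a short list of test vectors, exactly in the style of the necessity part of Corollary~\ref{cor4}. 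The main obstacle is the bookkeeping: one must check that the split on $\{t_{ikkk},t_{jkkk}\}$ together with the sub-parameters exhausts $\{-1,0,1\}^{3}\times\{0,1\}^{3}\times\{-1,0,1\}^{2}$ modulo the $i\leftrightarrow j$ symmetry, match each surviving case to the correct earlier decomposition under the right relabeling of $\{i,j,k\}$ — delicate in case~(c), since relabeling moves the unique nonzero diagonal — and confirm that none of the ``newly allowed'' assignments slips through both the square list and the witness list.
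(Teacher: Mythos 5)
Your overall strategy coincides with the paper's: both proofs reduce via Lemma~\ref{lem:26} to $t_{iiij}=t_{iiik}=t_{ijjj}=t_{jjjk}=0$ and $t_{iijj},t_{iikk},t_{jjkk}\in\{0,1\}$, then reuse the sum-of-squares certificates and negative witnesses of Theorems~\ref{th3.1}, \ref{th3.9}, \ref{th3.10} and Corollaries~\ref{cor1}, \ref{cor4} (these transfer because deleting $x_i^4+x_j^4$ only decreases the form), and finally treat the residual configurations directly. Your sufficiency part is actually more explicit than the paper's one-line citation; the identity you give for case (c) expands correctly to the reduced form. In (b$_2$) the correct square is $6x_i^2(x_j+t_{iijk}x_k)^2+x_k^2\bigl((x_k+2t_{jkkk}x_j)^2+2x_j^2\bigr)$; your coefficient $t_{jkkk}t_{iijk}$ produces the wrong sign on the cross term when $t_{jkkk}=-1$, a repairable slip.

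The genuine gap is in the necessity. The paper's entire necessity argument consists of isolating the one pattern that is positive semi-definite under the hypotheses of Corollary~\ref{cor4} (a single vanishing diagonal) but ceases to be so here, namely $t_{jkkk}=\pm1$, $t_{ikkk}=t_{iijk}=t_{ijkk}=0$, $t_{ijjk}=\pm1$, $t_{1122}=t_{2233}=t_{1133}=1$, and exhibiting the witness $x=(1,2,-1)^{\top}$, for which
$\mathcal{T}x^4\le x_3^4+4x_2x_3^3+12x_1x_2^2x_3+6(x_1^2x_2^2+x_2^2x_3^2+x_1^2x_3^2)=-1<0$.
You assert that such residual cases are ``dispatched by a short list of test vectors'' but neither identify which configurations they are nor produce a single vector; you also attribute their appearance to the quadratic couplings being allowed to vanish, whereas the actual mechanism is the disappearance of the monomial $x_j^4$, which makes a pattern with all couplings equal to $1$ (certified nonnegative in Corollary~\ref{cor4}(c$_3$) precisely by the $x_j^4$ term) go negative. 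Until that configuration is named and a witness computed, the necessity direction is not proved. I would also press you on your final ``bookkeeping'' check: carrying it out honestly requires deciding, for each pattern such as $t_{ijkk}=\pm1$, $t_{iijk}=t_{ijjk}=0$, $t_{iikk}=t_{jjkk}=1$, whether it belongs on the square list or the witness list, and this is exactly where the content of the corollary lies.
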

\begin{proof} ``\textbf{if (Sufficiency)}." Sufficiency can be obtained from Theorem \ref{th3.1}, Corollary \ref{cor1}, Theorem \ref{th3.9}, Theorem \ref{th3.10}, Corollary \ref{cor4} and their proof procedures.

``\textbf{only if (Necessity)}." At the same time, according Theorem \ref{th3.1}, Corollary \ref{cor1}, Theorem \ref{th3.9}, Theorem \ref{th3.10}, Corollary \ref{cor4}, necessity can be obtained only by proving there are some points $x=(x_1,x_2,x_3)^{\top}\in{\mathbb{R}}^3$ such that ${\mathcal{T}}x^4<0$ when $t_{jkkk}=\pm 1$, $t_{ikkk}=t_{iijk}=t_{ijkk}=0$, $t_{ijjk}=\pm1$ and $t_{1122}=t_{2233}=t_{1133}=1$.

Suppose $t_{1111}=t_{2222}=t_{1112}=t_{1113}=t_{2223}=t_{1222}=t_{1333}=t_{1123}=t_{1233}=0$, and $t_{1122}=t_{2233}=t_{1133}=1$. Then for
$x=(1,2,-1)^{\top}$,
$${\mathcal{T}}x^4\leq x_3^4+4x_2x_3^3+12x_1x_2^2x_3+6(x_1^2x_2^2+x_2^2x_3^2+x_1^2x_3^2)=-1<0.$$
Then when $t_{2333}=t_{1223}=1$. $-t_{2333}=t_{1223}=1$, $t_{2333}=t_{1223}=-1$ and $t_{2333}=-t_{1223}=1$, the proof are similar.
The necessity is proved.
\end{proof}

\section{Conclusions} For a $4$th order $3$-dimensional symmetric tensor ${\mathcal{T}}=(t_{ijkl})$ with entries $t_{ijkl}\in\{-1,0,1\}$, 
we mainly give the necessary and sufficient conditions of its  positive (semi-)definiteness by separating  its entries into groups. And even more specifically,   we put such a class of tensors  in categories based on the values of $t_{1112}t_{1222}$, $t_{2223}t_{2333}$ and $t_{1333}t_{1113}$, and then dicuss recpectively their positive (semi-)definiteness conditions by using the different argument techniques with the help of themselves distinct structure.  

\section*{Competing interest}
The author declares that he has no known competing financial interests or personal relationships that could have appeared to influence the work reported in this paper.
%\section*{Authors' contributions}Yisheng Song is the sole author.
\section*{Availability of data and materials}
This manuscript has no associated data or the data will not be deposited. [Author’s comment: This is a theoretical study and there are no external data associated with the manuscript].
\section*{Funding}
This  work was supported by
the National Natural Science Foundation of P.R. China (Grant No.12171064), by The team project of innovation leading talent in chongqing (No.CQYC20210309536) and by the Foundation of Chongqing Normal university (20XLB009).
%\section*{Declaration of competing interest}

%\section*{Acknowledgements}

%\bibliographystyle{elsarticle-num}
%\bibliography{psdt}

\end{document}